\newcommand{\Q}{\mathbb Q}
\newcommand{\N}{\mathbb N}
\newcommand{\Z}{\mathbb Z}
\newcommand{\C}{\mathbb C}
\newcommand{\p}{\mathfrak p}
\newcommand{\Gal}{\mathrm{Gal}}
\renewcommand{\epsilon}{\varepsilon}
\newcommand{\Kur}{K_{\mathrm{nr}}}
\newcommand{\Nur}{N_{\mathrm{nr}}}
\newcommand{\Lur}{L_{\mathrm{nr}}}
\newcommand{\Zpnr}{\mathbb Z_p^\mathrm{nr}}
\newcommand{\Qpab}{{\mathbb Q_p^\mathrm{ab}}}
\newcommand{\Qpram}{{\mathbb Q_p^\mathrm{ram}}}
\newcommand{\Qpnr}{{\mathbb Q_p^\mathrm{nr}}}
\newcommand{\tur}{t^\mathrm{nr}}
\newcommand{\omeganr}{\omega^\mathrm{nr}}
\newcommand{\omegaram}{\omega^\mathrm{ram}}
\newcommand{\calP}{\mathcal{P}}
\newcommand{\calM}{\mathcal M}
\newcommand{\calN}{\mathcal N}
\newcommand{\mm}{\mathfrak M}
\newcommand{\QGamma}{\Q[\Gamma]}
\newcommand{\T}{\mathcal T}
\newcommand{\chinr}{\chi^{\mathrm{nr}}}
\newcommand{\chiQpnr}{\chi^{\mathrm{nr}}_\Qp}
\newcommand{\BdR}{B_\mathrm{dR}}
\newcommand{\Bst}{B_\mathrm{st}}
\newcommand{\Bcris}{B_\mathrm{cris}}
\newcommand{\DdR}{D_\mathrm{dR}}
\newcommand{\Ucris}{U_\mathrm{cris}}
\newcommand{\ucris}{{u_\mathrm{cris}}}
\newcommand{\Dcris}{D_\mathrm{cris}}
\newcommand{\Ind}{\mathrm{Ind}}
\newcommand{\Irr}{\mathrm{Irr}}
\newcommand{\coker}{\mathrm{coker}}
\newcommand{\comp}{\mathrm{comp}}
\newcommand{\norm}{\mathcal{N}}
\newcommand{\trace}{\mathcal{T}}
\newcommand{\id}{\mathrm{id}}
\newcommand{\inv}{\mathrm{inv}}
\newcommand{\rec}{\mathrm{rec}}
\newcommand{\sseq}{\subseteq}
\newcommand{\Qp}{{\Q_p}}
\newcommand{\Zp}{{\Z_p}}
\newcommand{\ZpG}{{\Z_p[G]}}
\newcommand{\QpG}{{\Q_p[G]}}
\newcommand{\tensor}{\otimes}
\newcommand{\CEP}{C_{EP}^{na}(N/K, V)}
\newcommand{\DEPV}{\Delta_{EP}^{na}(N/K, V)}
\newcommand{\DEPT}{\Delta_{EP}^{na}(N/K, T)}
\newcommand{\lra}{\longrightarrow}
\newcommand{\lla}{\longleftarrow}
\newcommand{\calF}{\mathcal F}
\newcommand{\calL}{\mathcal{L}}
\newcommand{\calI}{\mathcal{I}}
\newcommand{\frd}{\mathfrak d}
\newcommand{\frD}{\mathfrak D}
\newcommand{\frf}{\mathfrak f}
\newcommand{\OK}{\mathcal{O}_K}
\newcommand{\ON}{\mathcal{O}_N}
\newcommand{\OL}{\mathcal{O}_L}
\newcommand{\DdRN}{\DdR^N}
\newcommand{\OKG}{{\OK[G]}}
\newcommand{\BdRG}{{\BdR[G]}}
\newcommand{\ra}{\rightarrow}
\newcommand{\calO}{\mathcal O}
\newcommand{\Opt}{\calO_p^t}
\newcommand{\Qpc}{\Q_p^c}
\newcommand{\QpcG}{\Q_p^c[G]}
\newcommand{\Hom}{\mathrm{Hom}}
\newcommand{\Aut}{\mathrm{Aut}}
\newcommand{\Map}{\mathrm{Map}}
\newcommand{\picprod}{\tensor}
\newcommand{\nr}{\mathrm{Nrd}}
\newcommand{\Nnr}{\calI_{N/K}(\chinr)}
\newcommand{\Lnr}{\calI_{L/K}(\chinr)}
\newcommand{\Mnr}{\calI_{M/K}(\chinr)}
\newcommand{\rk}{\mathrm{rk}}
\newcommand{\tfW}{\tilde f_{3,W}}
\newcommand{\fW}{f_{3,W}}
\newcommand{\homcont}{\Hom_\mathrm{cont}}
\newcommand{\Breu}{\mathrm{Breu}}
\newcommand{\PMod}{{\rm PMod}}
\newcommand{\unit}{{\bf{1}}}
\newcommand{\chiold}{\chi^\mathrm{old}}
\newcommand{\fFL}{f_{\calF,L}}
\newcommand{\fFN}{f_{\calF,N}}
\newcommand{\tfFL}{\tilde{f}_{\calF,L}}
\newcommand{\Qpx}{\Q_p^*}
\newcommand{\Qptimes}{\Q_p^\times}
\newcommand{\od}{\mathrm{od}}
\newcommand{\ev}{\mathrm{ev}}
\newcommand{\all}{\mathrm{all}}
\newcommand{\calG}{\mathcal G}
\newtheorem{thm_intro}{Theorem}
\newtheorem{thm}{Theorem}[subsection]
\newtheorem{lemma}[thm]{Lemma}
\newtheorem{coroll}[thm]{Corollary}
\newtheorem{prop}[thm]{Proposition}
\theoremstyle{remark}
\newtheorem{remark_intro}{Remark}
\newtheorem{remark}[thm]{Remark}
\newtheorem{conjecture}[thm]{Conjecture}
\theoremstyle{definition}
\title{The equivariant local $\epsilon$-constant conjecture \\ for unramified twists of $\Z_p(1)$}
\author{Werner Bley and Alessandro Cobbe}
\date{}
\begin{document}
\maketitle

\begin{abstract}
We study the equivariant local epsilon constant conjecture, denoted by $\CEP$, as formulated in various forms by
Kato, Benois and Berger, Fukaya and Kato and others, for certain $1$-dimensional twists $T = \Zp(\chinr)(1)$ of $\Zp(1)$.
Following ideas of recent work of Izychev and Venjakob we prove that for $T = \Zp(1)$ a conjecture of Breuning
is equivalent to $\CEP$. As our main result we show the validity of $\CEP$ for certain wildly and weakly ramified
abelian extensions $N/K$. A crucial step in the proof is the construction of an explicit  representative of
$R\Gamma(N, T)$.  

\end{abstract}

\begin{section}{Introduction}
Let $F/E$ denote a Galois extension of number fields and put $\Gamma := \Gal(F/E)$. We fix a motive $M$ which is defined over $E$ and admits an action of the group algebra $\QGamma$. Let $L(M, s)$ denote the equivariant complex $L$-function attached to $M$ and $F/E$ (see \cite[Sec.~4]{BuFl2001}). We shall always assume that this $L$-function satisfies a functional equation of the following form relating the $L$-functions attached to $M$ and its Kummer dual $M^*(1)$:
\[L(M,s) = \epsilon(M,s) \frac{L_\infty(M^*(1), -s)}{L_\infty(M,s)} L(M^*(1), -s).\]
Here $L_\infty$ is an Euler factor at infinity and the equivariant $\epsilon$-factor $\epsilon(M,s)$ decomposes into local factors 
\[\epsilon(M,s) = \prod_v \epsilon_v(M,s)\]
with $v$ running through the set of finite places of $E$. For some more details see \cite[Conj.~7]{BuFl2001}. 

In this manuscript we are interested in a local equivariant $\epsilon$-constant conjecture which relates $\epsilon_v(M,s)$ in a specific way to certain local cohomology groups of $R\Gamma(G_{E_v}, M_p)$ associated to the $p$-adic representation $V = M_p$. The validity of this $\epsilon$-constant conjecture for all finite places $v$ of $E$ is closely related to the compatibility of the equivariant Tamagawa number conjectures for $M$ and its Kummer dual $M^*(1)$ with the functional equation. Indeed, Conjecture \cite[Conj.~8]{BuFl2001} is a semi-local analogue of the local $\epsilon$-constant  conjecture and is directly related to the functional equation compatibility of the equivariant Tamagawa number conjecture, see \cite[Coroll.~1]{BuFl2001}.

So henceforth we fix a prime number $p$ and let $N/K$ denote a finite Galois extension of $p$-adic number fields with group $G := \Gal(N/K)$. We write $G_K$ (resp. $G_N$) for the absolute Galois group of $K$ (resp. $N$) and let $V$ denote a $p$-adic representation of $G_K$. Let $T \sseq V$ be a $G_K$-stable $\Zp$-sublattice such that $V = T \tensor_\Zp \Qp$.

Following seminal work of Fontaine and Perrin-Riou \cite{F-PR} several authors have formulated $\epsilon$-constant conjectures in this general context, see, e.g., Benois and Berger in \cite{BenBer} or Fukaya and Kato in \cite{FukKato}. For the relations between the different formulations of the conjecture we refer the reader to \cite{Iz}. We will use the notation of \cite[Conj.~5.1]{IV} and write $\CEP$ for the equivariant local $\epsilon$-constant conjecture. We will recall and formulate the conjecture in Section \ref{statement of conj}.

In a more specialized situation Breuning in \cite{Breuning04} has formulated an $\epsilon$-constant conjecture for $T = \Zp(1)$ in terms of relative algebraic $K$-groups. Extending ideas of \cite{BlBu03} he proved his conjecture, amongst other cases, for tamely ramified extensions $N/K$. Breuning's conjecture clearly has to be equivalent to $\CEP$ for $V = \Qp(1)$, see e.g. \cite[Appendix]{IV}. As a by-product of our work we will obtain a rigorous proof of this assertion.

Izychev and Venjakob consider in \cite{IV} the case where $T = \Zp(\chinr)(1)$ is a one-dimensional unramified twist of $\Zp(1)$. More precisely, if $\chinr$ is the restriction of an unramified character $\chiQpnr \colon G_\Qp \lra \Z_p^\times$ with $\chinr |_{G_N} \ne 1$, they reformulate $\CEP$ in the spirit of Breuning and adapt his proof of the tame case to show the validity of $\CEP$ for tamely ramified extensions $N/K$ and $V = \Qp(\chinr)(1)$. In the appendix of \cite{IV} they also indicate how to adapt Breuning's arguments to prove the tame case for unramified characters as above with $\chinr |_{G_N} = 1$.

We also recall that Benois and Berger in \cite[Th.~4.22]{BenBer} have proved the conjecture $\CEP$ for arbitrary crystalline representations $V$ of $G_K$ where $K/\Qp$ is unramified and $N$ is a finite subfield of $K(\mu_{p^\infty}) / K$. Here $\mu_{p^\infty}$ denotes the group of all $p$-power roots of unity. 

In this manuscript we will focus on weakly and wildly ramified extensions $N/K$ of an unramified extension $K/\Qp$. We recall that $N/K$ is weakly ramified if the second ramification group in lower numbering is trivial. As above we let $\chinr$ be an unramified character of $G_K$ which is the restriction of an unramified character $\chiQpnr \colon G_\Qp \lra \Z_p^\times$. We require no assumption on the restriction of $\chinr$ on $G_N$. The main result of our work is as follows.

\begin{thm_intro}\label{main theorem}
Let $p$ be an odd prime and let $K/\Qp$ be the unramified extension of degree $m$.
Let $N/K$ be a weakly and wildly ramified finite abelian extension with cyclic ramification group. Let $d$ denote the inertia degree of $N/K$ and assume that $m$ and $d$ are relatively prime. Then Conjecture $\CEP$ is true for $N/K$ and $V = \Qp(\chinr)(1)$.
\end{thm_intro}

\begin{remark_intro}
The assumptions of the theorem imply that the ramification group is cyclic of order $p$ (cf. \cite[Coroll.~3.4]{PickettVinatier}). 
More precisely, $|G| = pd, |G_0| = |G_1| = p$ and $|G_i| = 1$ for $i \ge 2$. Here $G_i$ for $i \ge 0$ denotes a higher ramification subgroup.
\end{remark_intro}
\begin{remark_intro}
The proof of Theorem \ref{main theorem} is an adaptation of the methods in \cite{BC} where we prove Breuning's conjecture for $T = \Zp(1)$ and extensions $N/K$ as in the theorem. If $\chinr |_{G_N} = 1$, it is almost straightforward to extend this result for $T = \Zp(\chinr)(1)$ because then twisting commutes with taking $G_N$-cohomology.
\end{remark_intro}
\begin{remark_intro}\label{remarkintro4}
If $\chinr |_{G_N}\ne 1$, the proof of the above weakly and wildly ramified case is much more involved than the tame case of Izychev and Venjakob because of the following fact. In the tame case the cohomology groups of $R\Gamma(N, T)$ turn out  to be perfect (see \cite[Prop.~2.1]{IV}) whereas this is in general no longer true in our weakly and wildly ramified case. This makes the computation of a certain refined Euler characteristic much more technically difficult.  
\end{remark_intro}

We recall that by \cite[Example~5.20]{Iz} the representation $T = \Zp(\chinr)(1)$ is naturally isomorphic to the $p$-adic Tate module $T_p\calF$ of a one-dimensional Lubin-Tate formal group $\calF$ defined over $\Zp$. Conversely, every one-dimensional Lubin-Tate formal group $\calF$ defined over $\Zp$ gives rise to an unramified twist of $\Zp(1)$ as above.

We let $\Kur$ denote the maximal unramified extension of $K$ and put $K_1 := \Kur \cap N$. We write $\Nur$ for the maximal unramified extension of $N$. Let $N_0$ denote the completion of $\Nur = N\Kur$ and write $\widehat{N_0^\times}$ for the $p$-completion of $N_0^\times$. We set
\[
\Nnr := \Ind_{\Gal(\Nur/K_1)}^{\Gal(\Nur/K)} \left( \widehat{N_0^\times} (\chinr) \right)
\]
and usually identify $\Nnr$ with $\prod_{i=1}^{d_{N/K}} \widehat{N_0^\times}$, where $d_{N/K}$ denotes the inertia degree of $N/K$. We will endow $\Nnr$ with a natural action of $\Gal(\Kur/K) \times G$. If $\chinr|_{G_N} \ne 1$ we set $\omega_N := v_p( 1 - \chinr(F_N))$ where $F_N$ denotes the Frobenius of $N$. Let $F=F_K$ be the Frobenius of $K$.

In order to deal with the difficulties indicated in Remark \ref{remarkintro4} above we construct an exact sequence
\begin{equation}\label{serre analogue}
0 \lra \calF(\p_N) \lra \Nnr  \xrightarrow{(F-1) \times 1} \Nnr \lra \Z / p^{\omega_L}\Z (\chinr) \lra 0
\end{equation}
and show the following theorem which may also be of independent interest.

\begin{thm_intro}\label{main theorem 2}
Let $N/K$ be a finite Galois extension of $p$-adic fields. Let $\chinr \colon G_K \lra \Z_p^\times$ be an unramified character as above
and assume that $\chinr|_{G_N} \ne 1$. Then the complex
\[C^\bullet_{N, T} := \left[ \Nnr\xrightarrow{(F-1)\times 1}\Nnr \right]\]
with non-trivial modules in degree 1 and 2 represents $R\Gamma(N,T)$.
\end{thm_intro}

\begin{remark_intro}
The exact sequence (\ref{serre analogue}) should be seen as an analogue of the fundamental short exact sequence of \cite[Exercises XIII, \S 5]{SerreLocalFields} which by \cite[Th.~4.20]{BreuPhd} is essentially a representative of $R\Gamma(G, \Zp(1))$. Note that in the case  $\chinr|_{G_N} \ne 1$ it does not seem to be possible to derive Theorem \ref{main theorem 2} from Serre's result by a simple twisting argument.
\end{remark_intro}

In Section \ref{determinants etc} we recall some preliminary results on $K$-theory, determinant functors and refined Euler characteristics which we will need throughout the manuscript. After these preparations we formulate in Section \ref{statement of conj} the conjecture $\CEP$ following the exposition of \cite{IV} and then give a short description of the organization of the paper. 

{\bf Notations: } We fix some standard notations which will be used throughout the manuscript. Given a finite field extension $M/L$, we will denote the norm and the trace  by $\norm_{M/L}$ and $\trace_{M/L}$ respectively. If $L/\Qp$ is a finite extension we write $L^c$ for the algebraic closure and set $G_L := \Gal(L^c/L)$. We let $\Lur$ denote the maximal unramified extension of $L$ and write $F_L$ for the arithmetic Frobenius of $L$. Note that $F_L$ is a topological generator of $\Gal(\Lur/L)$. As usual, we let $\OL$ denote the valuation ring of $L$ and write $\p_L = \pi_L\OL$ for the maximal ideal. 

We write $d_L$ for the absolute inertia degree and $e_L$ for the absolute ramification degree. If $M/L$ is a finite extension we let $d_{M/L}$ and $e_{M/L}$ denote the relative inertia and ramification degree, respectively. Then we have $d_M = d_{M/L}d_L$, $e_M = e_{M/L}e_L$ and $F_M = F_L^{d_{M/L}}$. We will denote the inertia group by $I_{M/L}$.

We let $L_0$ be the completion of $\Lur$ with respect to the $p$-adic topology. If $A$ is an abelian group we denote $p$-completion by $\widehat{A}$, i.e.,
\[\widehat{A} = \varprojlim_n A/p^nA.\]
Note that $\widehat A$ is in a natural way a $\Zp$-module. We will denote by $\nu_L \colon \widehat{L^\times} \tensor_\Zp \Qp \lra \Qp$ the map induced by the normalized valuation of $L^\times$.

If $\Sigma$ is any ring we denote its centre by $Z(\Sigma)$. 
\end{section}

\begin{section}{\texorpdfstring{$K$-theory, determinants and refined Euler characteristics}{K-theory, determinants and refined Euler characteristics}}
\label{determinants etc}
Let $\Lambda$ be any unital ring. Let $\PMod(\Lambda)$ denote the category of finitely generated projective $\Lambda$-modules and write $\PMod(\Lambda)^\bullet$ for the category of bounded complexes of such modules. We also write $D(\Lambda)$ for the derived category of complexes of $\Lambda$-modules and $D^{p}(\Lambda)$ for the full triangulated subcategory of $D(\Lambda)$ of perfect complexes. We recall that a complex $C^\bullet$ of $\Lambda$-modules is perfect, if and only if there exists a complex $P^\bullet \in \PMod(\Lambda)^\bullet$ and a quasi-isomorphism $P^\bullet \lra C^\bullet$. We say that a $\Lambda$-module $N$ is perfect, if the complex $N[0]$ belongs to $D^{p}(\Lambda)$.

Our main references for the theory of Picard categories, determinant functors and virtual objects are \cite[Sec.~2]{BuFl2001} and \cite{BrBuAdditivity}. For a more explicit approach we refer the reader to \cite[Sec.~1]{FukKato}. By Remark~1.2.10 of loc.cit. both approaches should be equivalent.

Let $V(\Lambda)$ denote the Picard category of virtual objects associated to $\PMod(\Lambda)$ and write $[ \cdot ]_\Lambda$ for the universal determinant functor
\[[ \cdot ]_\Lambda \colon \left( \PMod(\Lambda), is \right) \lra V(\Lambda),\]
where  $\left( \PMod(\Lambda), is \right)$ denotes the subcategory of all isomorphisms in $\PMod(\Lambda)$. By \cite[Prop.~2.1]{BuFl2001} this functor extends to a functor
\[[ \cdot ]_\Lambda \colon \left( D^{p}(\Lambda), is \right) \lra V(\Lambda).\]
We recall that $V(\Lambda)$ is equipped with a canonical bifunctor $(L, M) \mapsto L \picprod M$. We fix a unit object $\unit_{V(\Lambda)}$ and for each object $L$ an inverse $L^{-1}$ with an isomorphism $L \picprod L^{-1} \cong \unit_{V(\Lambda)}$. Each element of $V(\Lambda)$ is of the form $[P]_\Lambda \picprod [Q]_\Lambda^{-1}$ for modules $P, Q \in \PMod(\Lambda)$. Furthermore, $[P]_\Lambda$ and $[Q]_\Lambda$ are isomorphic in $V(\Lambda)$ if and only if the classes of $P$ and $Q$ in $K_0(\Lambda)$ coincide.

For any Picard category $\calP$ we define $\pi_0(\calP)$ to be the group of isomorphism classes of objects of $\calP$ and set $\pi_1(\calP) := \Aut_\calP(\unit_\calP)$. The groups $\pi_0(V(\Lambda))$ and $\pi_1(V(\Lambda))$ are naturally isomorphic to $K_0(\Lambda)$ and $K_1(\Lambda)$, respectively.

In the sequel we fix rings $\Lambda$ and $\Sigma$ and a ring homomorphism $\Lambda \lra \Sigma$ such that $\Sigma$ is flat as a right $\Lambda$-module. We assume that $\Sigma$ is noetherian and regular, that is every finitely generated $\Sigma$-module has finite projective dimension. We write $P_\Sigma = \Sigma \tensor_\Lambda P$ for the scalar extension map if $P$ is a $\Lambda$-module or a complex of $\Lambda$-modules. By abuse of notation we will write $[P]_\Sigma$ in place of $[\Sigma\otimes_\Lambda P]_\Sigma$. We recall that there is a canonical exact sequence of algebraic $K$-groups
\[K_1(\Lambda) \lra K_1(\Sigma) \xrightarrow{\partial^1_{\Lambda, \Sigma}} K_0(\Lambda, \Sigma) 
\xrightarrow{\partial^0_{\Lambda, \Sigma}} K_0(\Lambda) \lra K_0(\Sigma),\]
where $K_0(\Lambda, \Sigma)$ is the relative algebraic $K$-group defined by \cite[page~215]{Swan}. We recall also that elements in $K_0(\Lambda, \Sigma)$ are represented by triples $[P, g, Q]$ where $P, Q \in \PMod(\Lambda)$ and $g \colon P_\Sigma \lra Q_\Sigma$ is an isomorphism of $\Sigma$-modules.

If $\Sigma = L[G]$ for a finite group $G$ and a finite field extension $L/\Qp$ the reduced norm map induces an isomorphism
\[\nr_\Sigma \colon K_1(\Sigma) \lra Z(\Sigma)^\times\]
by \cite[Th.~(45.3)]{CRII}. The same is true for algebraically closed fields $L$, in particular for $L = \Qpc$.
We set $\hat\partial^1_{\Lambda, \Sigma} := \partial^1_{\Lambda, \Sigma} \circ \nr_{\Sigma}^{-1}$.

The scalar extension functor $\PMod(\Lambda) \lra \PMod(\Sigma)$ extends to a monoidal functor $V(\Lambda) \lra V(\Sigma)$ and we again write $L \mapsto L_\Sigma$ for $L \in V(\Lambda)$. Still following closely the exposition of \cite{BrBuAdditivity} we define a tensor category $V(\Lambda, \Sigma)$ as follows. Objects in $V(\Lambda, \Sigma)$ are pairs $(L, \lambda)$ with $L \in V(\Lambda)$ and $\lambda$ an isomorphism $L_\Sigma \lra \unit_{V(\Sigma)}$ in $V(\Sigma)$. A morphism $(L, \lambda) \lra (M, \mu)$ is an isomorphism $\alpha \colon L \lra M$ in $V(\Lambda)$ such that $\mu \circ \alpha_\Sigma = \lambda$. The product of $(L, \lambda)$ and $(M, \mu)$ is $(L \picprod M, \nu)$ where $\nu \colon (L \picprod M)_\Sigma \lra \unit_{V(\Sigma)}$ is the isomorphism
\[(L \picprod M)_\Sigma \lra L_\Sigma \picprod M_\Sigma \xrightarrow{\lambda \picprod \mu} 
\unit_{V(\Sigma)} \picprod  \unit_{V(\Sigma)} \lra   \unit_{V(\Sigma)}. \] 

By \cite[Lemma~5.1]{BrBuAdditivity} the category $V(\Lambda, \Sigma)$ is a Picard category and there is a natural isomorphism
\begin{equation}\label{yar 2}
i_{\Lambda, \Sigma} \colon \pi_0 \left( V(\Lambda, \Sigma)\right) \cong K_0(\Lambda, \Sigma).
\end{equation}

For a complex $P \in \PMod(\Lambda)^\bullet$ we define objects $P^\ev$ and $P^\od$ in $\PMod(\Lambda)$ by
\[P^\ev := \bigoplus_{i\ \mathrm{ even}} P^i, \quad P^\od := \bigoplus_{i\ \mathrm{ odd}} P^i.\]
Similarly we define
\[H^\ev(P) := \bigoplus_{i\ \mathrm{ even}} H^i(P), \quad H^\od(P) := \bigoplus_{i\ \mathrm{ odd}} H^i(P) \quad
H^\all(P) := \bigoplus_{i \in \Z} H^i(P).\]
Let $P \in D^p(\Lambda)$ and let $t \colon H^\ev(P_\Sigma) \lra H^\od(P_\Sigma)$ be an isomorphism of $\Sigma$-modules. We refer to $t$ as a trivialization and to the pair $(P, t)$ as a trivialized complex.

By \cite[Def.~5.5]{BrBuAdditivity} there is a (refined) Euler characteristic $\chi_{\Lambda, \Sigma}(P, t)\in K_0(\Lambda, \Sigma)$. We briefly recall the construction. The element $\chi_{\Lambda, \Sigma}(P, t)$ is defined by 
\[\chi_{\Lambda, \Sigma}(P, t)=i_{\Lambda, \Sigma}(([U]_\Lambda, \kappa(P,t))),\] 
where $([U]_\Lambda, \kappa(P,t)) \in V(\Lambda, \Sigma)$ is defined as follows. Let $a \colon U \lra P$ be a quasi-isomorphism with $U \in \PMod(\Lambda)^\bullet$ and let $\kappa(P,t) \colon [U]_\Sigma \lra \unit_{V(\Sigma)}$ be the isomorphism in $V(\Sigma)$ given by the composite
\begin{equation}\label{triv def}
\begin{split}
  [U]_\Sigma \xrightarrow{\eta_{U_\Sigma}} [H(U_\Sigma)]_\Sigma & \xrightarrow{H(a_\Sigma)}
[H(P_\Sigma)]_\Sigma \xrightarrow{\pi_{H(P_\Sigma)}} [H^\ev(P_\Sigma)[0]]_\Sigma \picprod [H^\od(P_\Sigma)[1]]_\Sigma \\
& \xrightarrow{[t] \picprod \id} [H^\od(P_\Sigma)[0]]_\Sigma \picprod [H^\od(P_\Sigma)[1]]_\Sigma
 \xrightarrow{\mu_{H^\od(P_\Sigma)[0]}} \unit_{V(\Sigma)}. 
\end{split}
\end{equation}
Here $\eta_{U_\Sigma}$ is defined in \cite[Prop.~3.1]{BrBuAdditivity}, $\pi_{H(P_\Sigma)}$ by \cite[Prop.~4.4]{BrBuAdditivity} and ${\mu_{H^\od(P_\Sigma)[0]}}$ by \cite[Lemma 2.3]{BrBuAdditivity}.

For computational applications it is often convenient to use an explicit construction of a refined Euler characteristic due to Burns in \cite{BuWhitehead}. We still consider a ring homomorphism $\Lambda \lra \Sigma$ as above, but now assume in addition that $\Sigma$ is semi-simple. We fix an object $P \in D^p(\Lambda)$ and a trivialisation $t \colon H^\ev(P_\Sigma) \lra H^\od(P_\Sigma)$. Following \cite{BrBuAdditivity} we write $\chiold_{\Sigma, \Lambda}(P, t^{-1}) \in K_0(\Lambda, \Sigma)$ for the refined Euler characteristic defined by Burns in \cite{BuWhitehead}. If we write $B^i(P_\Sigma) \sseq P^i_\Sigma$ for the boundaries of $P_\Sigma$ and set $B^\od(P_\Sigma) := \bigoplus_{i\ \mathrm{odd}}B^i(P_\Sigma)$, then by \cite[Th.~6.2]{BrBuAdditivity} we have
\begin{equation}\label{old and new}
-\chiold_{\Lambda, \Sigma}(P, t^{-1}) = \chi_{\Lambda, \Sigma}(P,t) + \partial^1_{\Lambda, \Sigma}((B^\od(P_\Sigma), -\id)).
\end{equation}
\end{section}

\begin{section}{The twisted local epsilon constant conjecture}\label{statement of conj}
In this section we formulate $\CEP$ following the exposition of \cite{IV}. We recall the necessary notations and results to the extend that we can relate $\CEP$ to Breuning's conjecture. For further details we refer the reader to \cite{BenBer} and \cite{IV}. 

\begin{subsection}{\texorpdfstring{The conjecture $\CEP$}{The conjecture CEPna(N/K,V)}}\label{subsectCEP}
We will briefly recall the definition of local $\epsilon$-constants following the expositions of \cite{BenBer} and \cite{IV}.

Let $E$ denote a large enough field of characteristic $0$ such that $E$ contains the $p$-power roots of unity $\mu_{p^\infty}$ and the roots of unity of order $p-1$. Let $K/\Qp$ be a finite extension. We write $\mu_K$ for the unique Haar measure on $K$ normalized by $\mu_K(\OK) = 1$. Let $\psi \colon K \lra E^\times$ denote an additive continuous character.

We recall that the theory of Langlands and Deligne (cf. \cite{Del73}) associates to each $E$-linear representation $V$ of the Weil group of $K$ a local constant $\epsilon(V, \psi, \mu_K)$. The basic properties of these $\epsilon$-constants are listed in \cite[Sec.~2.3]{BenBer}.

We fix a compatible system $\xi = \left( \xi_{p^n} \right)_{n \ge 1}$ of $p$-power roots of unity and define an additive character $\psi_\xi \colon \Qp \lra E^\times$ by $\psi_\xi |_\Zp = 1$ and $\psi_\xi(p^{-n}) = \xi_{p^n}$.

Let now $N/K$ be a finite Galois extension with group $G$. We assume that $E$ is large enough such that every irreducible character $\chi$ of $G$ can be realized over $E$. For each character $\chi$ of $G$ we fix an $E$-linear representation $V_\chi$ with character $\chi$ and write $V_\chi^*$ for the contragredient representation.

Following \cite[Sec.~3]{IV} we define the element
\[\epsilon_D(N/K, V) := \left( \epsilon\left( D_{pst}\left(\Ind_{K/\Qp}(V \tensor_E V_\chi^*), \psi_\xi, \mu_\Qp \right)\right)\right)_{\chi \in \Irr(G)}\]
in $\prod_{\chi \in \Irr(G)} (\Qpc)^\times \cong Z(\Qpc[G])^\times$. For all unexplained notation we refer the reader to \cite[Sec.~3]{IV}.

We will apply the notation introduced and explained in \cite[Sec.~1.1]{BenBer}. In particular, $\Bcris, \Bst$ and $\BdR$ denote the $p$-adic period rings constructed by Fontaine. If $V$ is a $p$-adic representation of $G_K$, we put
\[\DdR^K(V) := \left( \BdR \tensor_\Qp V \right)^{G_K}, \quad \Dcris^K(V) := \left( \Bcris \tensor_\Qp V \right)^{G_K}.\]
The $K$-vector space $\DdR^K(V)$ is finite dimensional and filtered. The tangent space of $V$ over $K$ is defined by
\[t_V(K) := \DdR^K(V) / \mathrm{Fil}^0 \DdR^K(V).\]
Finally, we write $\exp_V \colon t_V(K) \lra H^1(K, V)$ for the exponential map of Bloch and Kato. 

For any $\Qp$-vector space $W$ we write $W^* = \Hom_\Qp(W, \Qp)$ for its $\Qp$-linear dual. For convenience we usually write $t^*_V(K)$ instead of $t_V(K)^*$. 

Let $N/K$ be a finite Galois extension of $p$-adic fields with group $G$. We set $\Lambda := \ZpG$, $\Omega := \QpG$, $\tilde\Lambda := \overline{\Zpnr}[G]$ and $\tilde\Omega := \overline{\Qpnr}[G]$, where $\overline{\Zpnr}$ denotes the ring of integers in the completion $\overline{\Qpnr}$ of $\Qpnr$.

We recall the fundamental $7$-term exact sequence of $\QpG$-modules of \cite[(2.2)]{BenBer}
\begin{equation}\label{7terms}\begin{split}
0\lra &H^0(N,V)\lra\Dcris^N(V)\xrightarrow{1-\phi}\Dcris^N(V)\oplus t_V(N)\xrightarrow{\exp_V} H^1(N,V) \xrightarrow{\exp_V^*}\\
&\quad\lra \Dcris^N(V^*(1))^*\oplus t_{V^*(1)}^*(N)\lra\Dcris^N(V^*(1))^*\lra H^2(N,V)\lra 0.
\end{split}\end{equation}
In the following we will freely use the properties of determinant functors as, for example, formulated in \cite[Prop.~2.1]{BuFl2001}. As in \cite[Sec.~4]{IV} we use

\begin{equation}\label{tangent ses}
0 \lra t_{V^*(1)}^*(N) \lra \DdRN(V) \lra t_V(N) \lra 0
\end{equation}
and by \cite[Prop.~2.1]{BuFl2001} we obtain an isomorphism
\begin{equation}
  \label{cep1}
 \resizebox{\hsize}{!}{$[\DdRN(V)]_{\QpG} \picprod [R\Gamma(N,V)]_\QpG \to [t_{V^*(1)}^*(N)]_\QpG \picprod [t_V(N)]_\QpG \picprod\left(\bigotimes_i 
[H^i(N,V)]_\QpG^{(-1)^i}\right).$} 
\end{equation}
Using the commutativity constraint, the right hand side of (\ref{cep1}) is canonically isomorphic to 
\begin{equation}\label{cep2}
\begin{split}
& [H^0(N, V)]_{\QpG} \picprod [\Dcris^N(V)]^{-1}_\QpG \picprod  [\Dcris^N(V)]_\QpG \picprod  [t_V(N)]_\QpG 
\picprod [H^1(N, V)]^{-1}_\QpG \picprod \\
& [t_{V^*(1)}^*(N)]_\QpG \picprod [\Dcris^N(V^*(1))^*]_\QpG \picprod [\Dcris^N(V^*(1))^*]_\QpG^{-1} 
\picprod [H^2(N, V)]_\QpG. 
\end{split}\end{equation}

Now the $7$-term exact sequence (\ref{7terms}) induces an isomorphism from the object in (\ref{cep2}) to $\unit_{V(\QpG)}$, so that we obtain an isomorphism

\begin{equation}
  \label{cep3}
\delta'(N/K, V) \colon [\DdRN(V)]_\QpG \picprod [R\Gamma(N, V)]_\QpG \lra \unit_{V(\QpG)}.
\end{equation}
We put 
\[\DEPV := [R\Gamma(N, V)]_\QpG \picprod [\Ind_{N/\Qp}(V)]_\QpG \in V(\QpG).\]
By \cite[Lemme~2.13]{BenBer} we have an isomorphism $\psi \colon \DdRN(V) \lra \DdR^\Qp(\Ind_{N/\Qp}(V))$ which induces a canonical comparison isomorphism
\[\comp_V \colon \BdR \tensor_\Qp \DdRN(V) \lra \BdR \tensor_\Qp \Ind_{N/\Qp}(V), \quad c\tensor x \mapsto c\psi(x).\]
The morphism $\comp_V$ gives the isomorphism
\[\tilde\alpha(N/K, V)  \colon [\DdRN (V)]_\BdRG^{-1} \picprod [\Ind_{N/\Qp}(V)]_\BdRG \lra \unit_{V(\BdRG)}.\] 
Following \cite[Sec.~1.1]{BenBer} we put
\[h_i(V) := \dim_\Qp\left( \mathrm{Fil}^i \DdR^\Qp(V) /  \mathrm{Fil}^{i+1} \DdR^\Qp(V) \right)\]
and
\[t_H(V) := \sum_{i \in \Z} ih_i(V).\]
Furthermore, we set
\[\Gamma^*(V) := \prod_{i \in \Z} \Gamma^*(-i)^{h_i(V)},\]
where
\[\Gamma^*(i) =
\begin{cases}
  (i-1)! & \text{if } i > 0 \\ \frac{(-1)^i}{(-i)!} &\text{if } i \le 0.
\end{cases}\]
Multiplying $\tilde\alpha(N/K, V)$ by $t^{t_H(V)}$ where $t := \log [\xi] \in \BdR$ is the uniformizer of $\BdR^+$ associated with $\xi = \left( \xi_{p^n} \right)_{n \ge 0}$, we get
\begin{equation}\label{alphaV}
\alpha(N/K, V)  \colon [\DdRN (V)]_\BdRG^{-1} \picprod [\Ind_{N/\Qp}(V)]_\BdRG \lra \unit_{V(\BdRG)}.
\end{equation} 
Composing $\alpha(N/K, V)$ with the automorphism of $\unit_{V(\BdRG)}$ induced by the element $\Gamma^*(V) \epsilon_D(N/K, V)$ we obtain
\begin{equation}\label{betaV}
\beta(N/K, V)  \colon  [\DdRN (V)]_\BdRG^{-1} \picprod  [\Ind_{N/\Qp}(V)]_\BdRG \lra \unit_{V(\BdRG)}.
\end{equation}
We also define $\beta'(N/K,V)$ as the following composite
\begin{equation}\label{defbeta'}
\begin{split}
&  \DEPV_\BdRG\\
&\quad= [R\Gamma(N, V)]_\BdRG \picprod [\Ind_{N/\Qp}(V)]_\BdRG \\
&\quad= [R\Gamma(N, V)]_\BdRG \!\picprod\!  [\DdRN(V)]_\BdRG \picprod [\DdRN(V)]_\BdRG^{-1}\picprod [\Ind_{N/\Qp}(V)]_\BdRG\\
&\quad\xrightarrow{\delta'(N/K, V)} [\DdRN(V)]_\BdRG^{-1} \picprod   [\Ind_{N/\Qp}(V)]_\BdRG \\
&\quad\xrightarrow{\alpha(N/K, V)} \unit_{V(\BdRG)}.
\end{split}\end{equation}

Note that in order to apply $\delta'(N/K,V)$ in the above definition it is necessary to apply the commutativity constraint first.

The isomorphism $\delta(N/K, V)$ is defined analogously, just writing $\beta(N/K,V)$ in place of $\alpha(N/K,V)$ in the above formula. Since according to \cite[Lemme~2.15]{BenBer} the isomorphism $\beta(N/K, V)$ is actually defined over $\tilde\Omega$, the same is true for $\delta(N/K, V)$, so that we actually have an isomorphism
\begin{equation}\label{yar 3}
\delta(N/K, V)\colon\DEPV_{\tilde\Omega}\lra\unit_{V({\tilde\Omega})}.
\end{equation}

We define
\[\DEPT :=  [R\Gamma(N, T)]_\ZpG \picprod  [\Ind_{N/\Qp}(T)]_\ZpG \in V(\ZpG).\]
The element $\DEPT$ actually does  not depend on the choice of $T$ and is therefore a canonical element in $V(\ZpG)$ whose base change gives $\DEPV$. We can now finally formulate the epsilon constant conjecture (compare to \cite[Conj.~5.1]{IV} or \cite[Conj.~2.19]{BenBer}).

\begin{conjecture}\label{Ca}
If $V = \Qp \tensor_\Zp T$ is potentially semistable and if $N/K$ is a finite Galois extension, then the class $[ (\DEPT, \delta(N/K, V)) ] \in \pi_0(V(\ZpG, \tilde\Omega))$ is trivial in $\pi_0(V(\tilde\Lambda, \tilde\Omega))$.
\end{conjecture}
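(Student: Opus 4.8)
The plan is to attack Conjecture \ref{Ca} in the only way that currently seems feasible: reduce it to the vanishing of a single explicit class in the relative $K$-group $K_0(\tilde\Lambda,\tilde\Omega)$, and then establish that vanishing, family by family, from an explicit representative of $R\Gamma(N,T)$. In the first step I would use the isomorphism $i_{\tilde\Lambda,\tilde\Omega}$ of (\ref{yar 2}) to translate the triviality of $[(\DEPT,\delta(N/K,V))]$ in $\pi_0(V(\tilde\Lambda,\tilde\Omega))$ into the statement that a canonical element of $K_0(\tilde\Lambda,\tilde\Omega)$ — built from the refined Euler characteristic of the trivialized complex $(R\Gamma(N,T),\delta(N/K,V))$ via the $7$-term sequence (\ref{7terms}), (\ref{tangent ses}) and the chain (\ref{cep1})--(\ref{cep3}) — is zero. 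This reformulation is valid for every potentially semistable $V$ and every finite Galois $N/K$, and it turns the conjecture into a concrete comparison, component by component over $\Irr(G)$ after applying reduced norms, between a determinant of local cohomology and the product of local $\epsilon$-factors, periods $t^{t_H(V)}$ and $\Gamma^*(V)$-factors entering $\beta(N/K,V)$ in (\ref{betaV}).

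The second step is to compute both sides. For this one needs, for the given $N/K$ and $V$, a bounded complex of projective $\Zp[G]$-modules together with an explicit quasi-isomorphism to $R\Gamma(N,T)$: in the unramified-twist setting this is precisely Theorem \ref{main theorem 2} together with the sequence (\ref{serre analogue}); in the tame case the cohomology is perfect in each degree; and in the crystalline-cyclotomic case one has the results of Benois--Berger. With such a representative in hand I would compute the refined Euler characteristic $\chi_{\tilde\Lambda,\tilde\Omega}$ (or, for hands-on calculations, the variant $\chiold$ related to it by (\ref{old and new})) attached to the trivialization $\delta'(N/K,V)$, feed in the contributions of $\comp_V$, $t^{t_H(V)}$, $\Gamma^*(V)$ and $\epsilon_D(N/K,V)$, and use inductivity in degree zero together with the behaviour of $\epsilon$-constants under induction to reduce to characters and to the ground field. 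For $T=\Zp(\chinr)(1)$ the identification with the Tate module of a Lubin--Tate formal group over $\Zp$ makes the de Rham data and the Bloch--Kato exponential completely explicit, which is what makes the comparison tractable.

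The main obstacle, and the reason only restricted cases are within reach, lies in the second step and is twofold. First, once $N/K$ is wildly — and not tamely — ramified the groups $H^i(N,T)$ need no longer be perfect over $\Zp[G]$, so no choice of representative is forced on us and one must construct, by hand, a genuine length-$\le 2$ complex of projectives with explicitly known differentials and an explicit quasi-isomorphism to $R\Gamma(N,T)$; carrying this out is exactly what forces the hypotheses (weak ramification, cyclic ramification group, coprimality of $m$ and $d_{N/K}$) present in the cases treated here, and a uniform such construction for arbitrary $N/K$ is not available. Second, $\delta(N/K,V)$ is a priori only defined over $\tilde\Omega$, so proving that the class dies in $\pi_0(V(\tilde\Lambda,\tilde\Omega))$ — and not merely in $\pi_0(V(\tilde\Omega,\tilde\Omega))$ — requires matching denominators in the period and $\epsilon$-factor contributions against the precise $\Zp[G]$-module structure of the cohomology, i.e.\ working prime by prime or through explicit Wedderburn components, which is out of reach for general potentially semistable $V$. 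A complete proof of Conjecture \ref{Ca} in full generality would therefore require both a uniform integral representative of $R\Gamma(N,T)$ and a general integral computation of $\epsilon_D(N/K,V)$; absent those, the realistic plan is to push the explicit-representative method through as many families of extensions and representations as the available structure theory allows, which is the route taken in the present paper.
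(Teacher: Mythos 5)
What you have written is not (and could not be) a proof of Conjecture~\ref{Ca} itself, since the statement is a conjecture that the paper leaves open in general; but your strategy coincides with the paper's actual route for the cases it does settle, namely translating the class into the vanishing of an explicit element of a relative $K$-group (the equivalences of Conjectures~\ref{Ca}, \ref{Cb} and \ref{Cc} in Proposition~\ref{Conj equiv}), constructing the explicit perfect representative of $R\Gamma(N,T)$ of Theorem~\ref{fundamental 2}, and then computing the refined Euler characteristic $C_{N/K}$, the term $\Ucris$ and the Gau\ss-sum expression of $\epsilon_D(N/K,V)$ for the restricted family of Theorem~\ref{main theorem}. Your diagnosis of the obstacles --- non-perfect cohomology in the wildly ramified case forcing a hand-made projective representative, and the rationality/integrality questions over $\tilde\Lambda$ --- matches exactly where the paper's hypotheses (weak ramification, cyclic ramification group, $(m,d)=1$) enter.
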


\end{subsection}

\begin{subsection}{\texorpdfstring{$\CEP$ for unramified twists of $\Zp(1)$}{CEPna(N/K,V) for unramified twists of Zp(1)}}\label{sit}
We specialize now to the situation considered in this manuscript. 

Let $p$ be a prime and let $K$ be a finite extension of $\Qp$. Let $\chiQpnr \colon G_\Qp \lra \Z_p^\times$ be a continuous unramified character and write $\chinr := \chiQpnr |_{G_K}$ for its restriction to $G_K$. We write $\chi^{cyc} \colon G_K \lra \Z_p^\times$ for the cyclotomic character and consider the $p$-adic representation
\[T := \Zp(\chinr)(\chi^{cyc}) =  \Zp(\chinr)(1).\]
Explicitly, $T$ is $\Zp$ as an abelian group equipped with the $G_K$-action defined by $gx := \chinr(g)\chi^{cyc}(g)x$ for $g \in G_K$ and $x \in \Zp$. We put $V := \Qp \tensor_\Zp T$.

Let $\varphi = F_\Qp$ denote the absolute Frobenius automorphism and set $u := \chiQpnr(\varphi)$. Let $\calF$ denote the Lubin-Tate formal group associated with $\pi := up$. By \cite[Example~5.20]{Iz} the representation $T$ is the restriction to $G_K$ of the $p$-adic Tate module $T_p\calF$. 

Let $N/K$ be a finite Galois extension with group $G$. As usual we write $\calF(\p_N)$ for the formal $\Zp$-module $\p_N$ with the $\Zp$-module structure induced by the formal group law $\calF$.

We define the complex $R\Gamma(N, T)$ as the $G_N$-invariants of the continuous standard resolution of $T$ (cf. \cite[Ch.~I, \S 2]{NSW}). As usual, we write $H^i(N, T)$ for the cohomology groups of $R\Gamma(N, T)$. We recall some well-known facts about these cohomology groups. We write $v_p \colon \Qp \lra \Z \cup \{\infty\}$ for the normalized valuation map and set
\begin{equation}\label{omega def}
\omega = \omega_N := v_p(1 - \chinr(F_N)).
\end{equation}
Note that $\omega \in \Z$ if and only if $\chinr|_{G_N} \ne 1$.

\begin{lemma}\label{galois cohomology}
Assume the notation of Subsection \ref{sit}.
\begin{enumerate}

\item[(a)] If $\chinr |_{G_N} \ne 1$, then
\begin{enumerate}
\item[(i)] $H^1(N, T) \cong \left( \widehat{N_0^\times}(\chinr) \right)^{\Gal(N_0/N)} \cong \calF(\p_N)$,
\item[(ii)] $H^2(N, T) \cong \left( \Zp/p^\omega\Zp \right) (\chinr)$ , 
\item[(iii)] $H^i(N, T) = 0 \text{ for } i \ne 1,2$.
\end{enumerate}

\item[(b)] If $\chinr |_{G_N} = 1$, then
\begin{enumerate}
\item[(i)] $H^1(N, T) \cong \widehat{N^\times}(\chinr)$,
\item[(ii)] $H^2(N, T) \cong \Zp (\chinr)$,
\item[(iii)] $H^i(N, T) = 0 \text{ for } i \ne 1,2$.
\end{enumerate}

\end{enumerate}
\end{lemma}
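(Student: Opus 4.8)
The plan is to compute $H^i(N,T)$ directly from local Tate/Galois cohomology, treating the two cases according to whether $\chinr|_{G_N}$ is trivial or not, and in each case reducing to the classical cohomology of $\Zp(1)$ via Kummer theory. First I would record that $T = \Zp(\chinr)(1)$ is a free $\Zp$-module of rank one with a continuous $G_K$-action, so $R\Gamma(N,T)$ is a perfect complex with cohomology concentrated in degrees $0,1,2$ (local fields of $p$-adic type have cohomological dimension $2$ for $p$-torsion coefficients, and since $T$ has no $G_N$-invariants unless the action is trivial, I first dispose of $H^0$). For $H^0(N,T) = T^{G_N}$: the action of $G_N$ on $T$ factors through $\chinr\cdot\chi^{cyc}$, and since $\chi^{cyc}$ is infinitely ramified while $\chinr$ is unramified, the restriction to $G_N$ is nontrivial and $H^0(N,T)=0$ in both cases, giving (iii) in the range $i=0$; the vanishing for $i\ge 3$ is $\mathrm{cd}_p(G_N)=2$.

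Next I would handle the main computation via Kummer theory. For any finite extension $N'/N$ inside $N_0 = \widehat{\Nur}$ we have the Kummer isomorphisms $H^1(N',\Zp(1)) \cong \widehat{(N')^\times}$ and $H^2(N',\Zp(1)) \cong \Zp$ (the invariant map), and these are compatible with the $\Gal$-action. To twist by the unramified character $\chinr$, I would pass to $N_0$: since $\Gal(N_0/N) \cong \Gal(\Nur/N)$ is procyclic topologically generated by the Frobenius $F_N$, and $\chinr$ is unramified, one has
\[
H^i\bigl(N, \Zp(\chinr)(1)\bigr) \;\cong\; H^i\bigl(N_0/N,\; H^0(N_0,\Zp(1))(\chinr)\;\big\backslash\; \text{(Hochschild--Serre)}\bigr),
\]
more precisely a Hochschild--Serre / inflation-restriction spectral sequence for $1 \to \Gal(N_0^c/N_0) \to \Gal(N_0^c/N) \to \Gal(N_0/N) \to 1$ collapses because $\Gal(N_0/N)$ has cohomological dimension $1$, yielding short exact sequences
\[
0 \to H^1(\Gal(N_0/N), M^{G_{N_0}}) \to H^1(N,T) \to H^0(\Gal(N_0/N), H^1(N_0, \Zp(1))(\chinr)) \to 0
\]
and similarly in degree $2$, where $M = \Zp(1)$-cohomology. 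Computing the Frobenius-invariants and coinvariants of $\widehat{N_0^\times}(\chinr)$ and of $\Zp(\chinr)$ under $F_N - 1$, which acts as multiplication by $\chinr(F_N)^{-1}$ (or $\chinr(F_N)$, depending on normalization) on the $\chinr$-part, is where the factor $1 - \chinr(F_N)$ enters: on $\Zp(\chinr)$ the map $1 - \chinr(F_N)$ has cokernel $\Zp/p^{\omega}\Zp$ and kernel $0$ when $\chinr|_{G_N}\ne 1$, and is identically zero when $\chinr|_{G_N}=1$.

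Assembling: in case (a), the kernel term vanishes, $H^1(N,T)$ is identified with the $\Gal(N_0/N)$-invariants of $\widehat{N_0^\times}(\chinr)$, which is $\bigl(\widehat{N_0^\times}(\chinr)\bigr)^{\Gal(N_0/N)}$, and I would invoke Lubin--Tate theory (the explicit isomorphism $T \cong T_p\calF$ recalled in the excerpt) to identify this with $\calF(\p_N)$; then $H^2(N,T)$ becomes the $(1-\chinr(F_N))$-cokernel of $\Zp(\chinr)$, i.e. $(\Zp/p^\omega\Zp)(\chinr)$. In case (b), twisting is trivial on $G_N$ so $H^1(N,T) \cong \widehat{N^\times}(\chinr)$ directly from Kummer theory and $H^2(N,T) \cong \Zp(\chinr)$ from the invariant map, with the twist by $\chinr$ only affecting the residual $G$-action. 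The main obstacle is the bookkeeping in case (a): making precise the identification of $\bigl(\widehat{N_0^\times}(\chinr)\bigr)^{\Gal(N_0/N)}$ with $\calF(\p_N)$ — this requires the Lubin--Tate description of the local points and care that the $p$-completion interacts correctly with the (profinite) Frobenius action, rather than any delicate spectral-sequence issue, since all the relevant groups have cohomological dimension $\le 1$.
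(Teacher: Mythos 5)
Your computation is correct in outline and is essentially the standard argument: the paper itself disposes of part (b) by exactly your observation that twisting by a character trivial on $G_N$ commutes with cohomology (reducing to the classical case $T=\Zp(1)$), and for part (a) it simply cites \cite[Prop.~2.1]{IV}, whose proof is the unramified descent you sketch (inflation--restriction along $N_0/N$, Kummer theory over $N_0$, and the Lubin--Tate identification of the Frobenius-fixed points with $\calF(\p_N)$). Two remarks. First, the one substantive input your sketch leaves implicit is the surjectivity of $\chinr(F_N)F_N-1$ on the completed units $\widehat{U_{N_0}}$ (the twisted analogue of Serre/Neukirch's lemma; Lemma \ref{Neukirch} in the paper): without it you cannot conclude that the coinvariants of $\widehat{N_0^\times}(\chinr)$ reduce to the valuation quotient, which is where $H^2(N,T)\cong(\Zp/p^{\omega}\Zp)(\chinr)$ actually comes from; relatedly, Hochschild--Serre with $\Zp$-coefficients should be run at finite level $T/p^n$ and then passed to the limit, with the Mittag--Leffler condition checked. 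Second, the paper's genuinely different route to part (a) is the one announced in Remark \ref{our coh proof}: it constructs the explicit two-term representative $\left[\Nnr\xrightarrow{(F-1)\times 1}\Nnr\right]$ of $R\Gamma(N,T)$ (Theorems \ref{FpnZpomega} and \ref{fundamental 2}), from which the cohomology is read off; that approach costs more work up front but yields the perfect representative needed later for the Euler-characteristic computation, which your spectral-sequence argument does not provide.
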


\begin{proof}
Part (a) is shown in \cite[Prop.~2.1]{IV}.
If  $\chinr |_{G_N} = 1$, then taking cohomology commutes with twisting, i.e., $H^i(N, A(\chinr)) = H^i(N, A)(\chinr)$ for each $G_N$-module $A$. Hence part (b) is immediate from \cite[Prop.~4.11]{BreuPhd}.
\end{proof}

In Section \ref{RGamma(N,T)} we will give an alternative proof of part (a) by explicitly constructing a representative for $R\Gamma(N, T)$. For details see Remark \ref{our coh proof}.

\begin{remark}
In \cite{BC} and \cite{Chinburg85} the symbol $N_0$ denotes the maximal unramified extension of $N$ which in
the present manuscript is denoted by $\Nur$. In order to relate the results and computations of \cite{BC}
to the results and computations of this manuscript it is therefore worth noting that $\widehat{\Nur^\times}=\widehat{N_0^\times}$.
Indeed, we have for each  natural number $n$ the canonical short exact sequence 
\[
0 \lra \Nur^\times / \left(\Nur^\times \cap (N_0^\times)^{p^n}\right) \lra N_0^\times / ( N_0^\times)^{p^n} \lra  N_0^\times / \Nur^\times ( N_0^\times)^{p^n}
\lra 0.
\]
It is an easy exercise to show that $ \Nur^\times ( N_0^\times)^{p^n}= N_0^\times$ and $\Nur^\times \cap (N_0^\times)^{p^n} = (\Nur^\times)^{p^n}$.
Hence we have a natural isomorphism $\Nur^\times / (\Nur^\times)^{p^n} \cong N_0^\times / ( N_0^\times)^{p^n}$.
\end{remark}

\begin{remark}
In our situation we have 
\[h_i(V) =
\begin{cases}
  1 &\text{if } i = -1 \\ 0   &\text{if } i \ne -1. 
\end{cases}\]
Indeed, by \cite[page~148]{FO} or \cite[page~5]{IV}, $t_{V^*(1)}(\Qp)\hookrightarrow \C_p((\chinr)^{-1})(-1)^{G_N}=0$ and hence $t_{V^*(1)}(\Qp) = 0$. Then the short exact sequence (\ref{tangent ses}) implies that $\DdR^\Qp(V) = t_V(\Qp)$, i.e. $\mathrm{Fil}^{0}\DdR^\Qp(V) = 0$. Furthermore the element $e_{\chinr_{\Qp,1}}=\tur\cdot t^{-1}\otimes (v\otimes \xi)$ defined in \cite[page~11]{IV} is a nonzero element in $\mathrm{Fil}^{-1}\DdR^\Qp(V)$. Since $\dim_\Qp(\DdR^\Qp(V)) = 1$, we deduce that $\mathrm{Fil}^{-1}\DdR^\Qp(V) = \DdR^\Qp(V)$, so that there is only one jump in the filtration, namely at $i=-1$.

Hence we obtain $t_H(V) = -1$ and $\Gamma^*(V) = \Gamma^*(1) = 1$.
\end{remark}
In this case we shall now reformulate Conjecture \ref{Ca} in the language of relative algebraic $K$-groups and refined Euler characteristics.

Following \cite{IV} and \cite{Breuning04} we define an element  $\tilde R_{N/K} \in K_0(\ZpG, \tilde\Omega)$ by
\[\tilde R_{N/K}=C_{N/K}+\Ucris+\partial^1_{\ZpG, \BdR[G]}(t)+\partial^1_{\ZpG, \BdR[G]}(\epsilon_D(N/K,V)),\]
where each of the terms is an element in $K_0(\ZpG,\BdR[G])$. After briefly recalling the definition of $C_{N/K}$ we give the formulation of the conjecture due to Izychev and Venjakob, and then, following the approach of Breuning, modify $\tilde R_{N/K}$ by the so-called unramified term in order to obtain an element $R_{N/K} \in K_0(\ZpG, \QpG)$. In Section \ref{eps} we will write $\epsilon_D(N/K,V)$ in terms of Galois Gau\ss\ sums in order to tie up to the notion of $\epsilon$-constants as it is used in Breuning's conjecture \cite[Conj.~3.2]{Breuning04}.

The element $C_{N/K} \in K_0(\ZpG,\tilde\Omega)$ is defined by

\begin{equation}\label{CNK}
C_{N/K}=-\chi_{\ZpG,\BdRG}(M^\bullet,\lambda^{-1})
\end{equation}
where the trivialized complex $(M^\bullet, \lambda^{-1})$ is given by
\begin{equation}\label{Mbullet def}
M^\bullet=R\Gamma(N,T)\oplus \Ind_{N/\Qp}T[0]
\end{equation}
and 
\begin{eqnarray*}
\lambda &\colon& H^\od(M^\bullet)_{\BdR[G]} \lra H^\ev(M^\bullet)_{\BdR[G]}, \\
\lambda &=&
\begin{cases}
 \comp\circ\exp_V^{-1} & \text{ if } \chinr|_{G_N} \ne 1 \\
  \comp\circ\exp_V^{-1}+ \nu_N & \text{ if } \chinr|_{G_N} = 1.
\end{cases}
\end{eqnarray*}
Note that the minus sign in the definition of $C_{N/K}$ is due to a different convention about Euler characteristics in comparison to \cite{IV}.
 
The explicit computation of $C_{N/K}$ is the technical heart of this paper. The computation is based on the construction of an explicit representative for $R\Gamma(N,T)$ (see Section \ref{RGamma(N,T)}) which is of independent interest. Our representative should be considered as the analogue of Serre's explicit representative of the local fundamental class (see \cite[Prop.~6.1]{Chinburg85} or \cite[Th.~4.20]{BreuPhd}).

The term $\Ucris$ will be defined in Section \ref{Ucris}. In the case $\chinr|_{G_N} \ne 1$ our definition agrees with the one given in \cite[Sec.~5]{IV}. In Section \ref{Ucris} we will also make the comparison to the correction term $M_{N/K}$ which occurs in (the generalization of) Breuning's Conjecture 3.2 in \cite{Breuning04}.

\begin{conjecture}\label{Cb}  
Under the current assumptions $\tilde R_{N/K} = 0$ in $K_0(\tilde\Lambda, \tilde\Omega)$.
\end{conjecture}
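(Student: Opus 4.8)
The plan is to prove Conjecture \ref{Cb} under the hypotheses of Theorem \ref{main theorem} (so $p$ odd, $K/\Qp$ unramified of degree $m$, and $N/K$ weakly and wildly ramified abelian with cyclic ramification group and inertia degree $d$ prime to $m$, so that by the first Remark $|G|=pd$ and $|G_0|=|G_1|=p$, $|G_i|=1$ for $i\ge 2$). Since $\tilde R_{N/K}=C_{N/K}+\Ucris+\partial^1_{\ZpG,\BdRG}(t)+\partial^1_{\ZpG,\BdRG}(\epsilon_D(N/K,V))$, and since Conjecture \ref{Cb} is the $K$-theoretic reformulation of $\CEP$ (Conjecture \ref{Ca}), the strategy --- following \cite{BC} for $T=\Zp(1)$ and \cite{Breuning04,BlBu03} for the tame case --- is to compute each of the four summands explicitly and check that they cancel. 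A first reduction, following Breuning, replaces $\tilde R_{N/K}$ by the finer invariant $R_{N/K}\in K_0(\ZpG,\QpG)$ obtained by subtracting the \emph{unramified term}, which absorbs $\partial^1(t)$, the descent from $\tilde\Omega$ to $\QpG$, and the purely unramified part of $\epsilon_D$; it then suffices to prove $R_{N/K}=0$ in $K_0(\ZpG,\QpG)$, and here the coprimality $\gcd(m,d)=1$ lets one treat the $p$-primary and the prime-to-$p$ part of the relative $K$-group independently.

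The technical heart is the computation of $C_{N/K}=-\chi_{\ZpG,\BdRG}(M^\bullet,\lambda^{-1})$ with $M^\bullet=R\Gamma(N,T)\oplus\Ind_{N/\Qp}T[0]$ and $\lambda=\comp\circ\exp_V^{-1}$. Here I would feed in the explicit two-term representative $C^\bullet_{N,T}=[\Nnr\xrightarrow{(F-1)\times 1}\Nnr]$ of $R\Gamma(N,T)$ provided by Theorem \ref{main theorem 2}, together with the four-term exact sequence (\ref{serre analogue}), which identifies $H^1(N,T)\cong\calF(\p_N)$ and $H^2(N,T)\cong(\Zp/p^{\omega_N}\Zp)(\chinr)$ in terms of explicit modules. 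Because $\Nnr$ is not $\ZpG$-projective --- equivalently, the cohomology is not perfect (Remark \ref{remarkintro4}) --- one cannot read the refined Euler characteristic off a two-term projective complex; instead I would either resolve $\Nnr$ (equivalently $\calF(\p_N)$) by finitely generated projective $\ZpG$-modules built from $\OKG$ and the relative different $\frD_{N/K}$, or work through Burns's description (\ref{old and new}) of $\chiold$, which only requires $M^\bullet\in D^p(\ZpG)$. The weakly ramified hypothesis ($G_2=1$) is precisely what makes the module structures of $\calF(\p_N)$, of $\frD_{N/K}$, and of the restriction-of-scalars lattice explicitly computable, generalising the $\Zp(1)$-computation of \cite{BC}; the unramified twist enters only through $\omega_N=v_p(1-\chinr(F_N))$, which governs $H^2$. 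Throughout one must track the Bloch--Kato exponential $\exp_V\colon t_V(N)\to H^1(N,V)$ and the comparison map $\comp$ through the identification $H^1(N,V)\otimes\Qp\cong\calF(\p_N)\otimes\Qp$, so that $C_{N/K}$ ultimately becomes a $\QpG$-valued invariant expressed in terms of the Lubin--Tate logarithm on $\calF(\p_N)$ and $\frD_{N/K}$.

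For the remaining terms, $\Ucris$ is identified --- as in Section \ref{Ucris} --- with (a twist of) Breuning's correction term $M_{N/K}$ together with the crystalline contribution; since $K/\Qp$ is unramified, its computation reduces to a calculation over $\Zp[G]$ that is controlled by the cyclotomic case settled in \cite[Th.~4.22]{BenBer}. The term $\partial^1_{\ZpG,\BdRG}(\epsilon_D(N/K,V))$ is rewritten in Section \ref{eps} in terms of (twisted) Galois Gau\ss\ sums; for a weakly and wildly ramified extension whose ramification group has order $p$ these Gau\ss\ sums are explicitly known through the Artin conductor and classical Gau\ss\ sum evaluations, and twisting by the unramified character $\chinr$ multiplies them by an unramified unit whose image in $K_1(\BdRG)$ is controlled by $\omega_N$. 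Using the normalisation $\Gamma^*(V)=1$ and $t_H(V)=-1$ computed in the excerpt, I would then assemble the explicit formulae for $C_{N/K}$, $\Ucris$ and the Gau\ss\ sum term and verify that the contributions cancel in $K_0(\ZpG,\QpG)$, whence $R_{N/K}=0$ and so $\tilde R_{N/K}=0$.

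The step I expect to be the main obstacle is exactly the one flagged in Remark \ref{remarkintro4}: the non-perfectness of $R\Gamma(N,T)$. In the tame case $R\Gamma(N,T)$ is perfect (\cite[Prop.~2.1]{IV}) and $C_{N/K}$ is merely a difference of classes of projective modules, whereas in the weakly wildly ramified case the refined Euler characteristic genuinely involves the $\ZpG$-module structure of $\calF(\p_N)$ and of $\frD_{N/K}$. The delicate point will be to show that the resulting ``wild'' $\QpG$-valued corrections appearing in $C_{N/K}$ match exactly those produced by $\Ucris$ and by the Galois Gau\ss\ sums; doing so requires making the explicit representative of Theorem \ref{main theorem 2} and the Serre-type sequence (\ref{serre analogue}) compatible with the Bloch--Kato exponential in a completely explicit way, which is where the bulk of the calculation lies.
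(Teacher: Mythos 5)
Your plan follows the same overall route as the paper (pass from $\tilde R_{N/K}$ to $R_{N/K}$ via the unramified term, compute $C_{N/K}$ from the explicit representative of Theorem \ref{main theorem 2}, express $\epsilon_D(N/K,V)$ through Galois Gau\ss\ sums, and check cancellation), but several of the steps you sketch would not go through as stated. First, you propose to control $\Ucris$ by the cyclotomic case of Benois--Berger \cite[Th.~4.22]{BenBer}; that theorem concerns crystalline representations over subfields of $K(\mu_{p^\infty})/K$ and gives no handle on the correction term for the unramified twist over a wildly ramified $N/K$. The paper instead computes $\Ucris$ directly (Proposition \ref{Ucris prop}) by exhibiting a $\Qp[G/I]$-basis of $\Dcris^N(V^*(1))^*$ built from a normal basis element and the period $\tur$ and writing down the matrices of $1-\phi$ and $1-\phi^*$; the identification with Breuning's $M_{N/K}$ holds only for $\chinr=1$ (Remark \ref{Breu MNK}). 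Second, the decisive arithmetic input for the cancellation is missing from your plan: the comparison of the norm resolvent $\theta$ (coming from Lemma \ref{IV6.1} with the specific lattice $\calL=\p_N^{p+1}$, generated by $p^2\alpha_M\theta_2$) with $\tau_K(\chi\phi)$ rests on the Pickett--Vinatier self-dual integral normal basis theorem, imported through \cite[Prop.~5.1.5, 5.2.1]{BC}; ``classical Gau\ss\ sum evaluations'' do not supply this identity, and the paper flags it as the crucial step. Third, the claimed use of $(m,d)=1$ to split $K_0(\ZpG,\QpG)$ into $p$-primary and prime-to-$p$ parts is neither what the paper does nor justified: rationality and the descent from $\tilde\Omega$ are handled by $U_{N/K}$, the Galois-equivariance relations for $\theta$ and $T_{N/K}$, and Taylor's fixed point theorem (Propositions \ref{RNK rational} and \ref{Conj equiv}), while coprimality enters only through the construction of $\tilde K'$, $\theta_2$ and the element $\tilde m$ with $m\tilde m\equiv 1\pmod d$ in the matrix computations.

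The technical heart is also left essentially undone. The paper splits $M^\bullet$ along the projective sublattice $X(\calL)$, absorbs $\partial^1_{\ZpG,\BdRG}(t)$ into the resolvent via Lemma \ref{IV6.1}, and then computes $\chi_{\ZpG,\BdRG}(M^\bullet(\calL),\lambda_2^{-1})$ by hand: for $\omega=0$ via the surjection $\fW$ and an explicit $\ZpG$-basis of $\ker\fW$ (Proposition \ref{prop 117}), and for $\omega>0$ via the representative $F^\bullet=[\ker f_4\to W'\oplus W\to\ZpG z_0]$ constructed from Theorem \ref{fundamental 2}, with explicit generators $t_1,t_2,r_k,s_{j,k}$ and determinant evaluations (Proposition \ref{prop 118}); the $\omega=0$ versus $\omega>0$ dichotomy, which your plan does not mention, changes both the cohomology and the matrices. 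Your alternative of resolving $\calF(\p_N)$ by projectives ``built from $\OKG$ and $\frD_{N/K}$'' is not developed, and there is a final subtlety specific to the twisted case that your plan does not address: unlike in \cite{BC}, one cannot conclude by passing to a maximal order (the conjecture is not known there), so one must verify that the final representative $\tilde{\tilde r}$ \emph{and} its inverse lie in $\overline{\Zpnr}[G]$. Your proposal stops exactly at ``verify that the contributions cancel,'' which is where the actual proof lives.
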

In the case $\chinr|_{G_N} \ne 1$ this is precisely the reformulation of 
Conjecture \ref{Ca} as stated in \cite[(5.1)]{IV}.

We now follow the approach of \cite{Breuning04} to define a modified element $R_{N/K}$ in $K_0(\ZpG,\QpG)$. We write $\Opt$ for the ring of integers in the maximal tamely ramified
extension of $\Qp$ in $\Qpc$. Let $\iota \colon K_0(\ZpG, \Qpc[G]) \lra K_0(\Opt[G], \Qpc[G])$ be the natural scalar extension map. We recall that by Taylor's fixed point theorem the restriction of $\iota$ to the subgroup $K_0(\ZpG, \QpG)$ is injective. Let us define $U_{N/K}$ as in \cite{Breuning04}, then by \cite[Prop.~2.12]{Breuning04} we have $\iota(U_{N/K}) = 0$.  We define
\begin{equation}\label{defRNK}
R_{N/K}=C_{N/K} + \Ucris+\partial^1_{\ZpG, \BdR[G]}(t) - U_{N/K} + \partial^1_{\ZpG, \BdR[G]}(\epsilon_D(N/K,V)),
\end{equation}
so that $R_{N/K} = \tilde R_{N/K} -  U_{N/K}$. Using some of the properties of $U_{N/K}$ with respect to the action of $\Gal(\Qpc / \Qp)$ one can prove that $R_{N/K} \in K_0(\ZpG, \QpG)$. We will do this in Subsection \ref{CNK 1} and we will also prove that if $T = \Zp(1)$, then the following conjecture is precisely Breuning's Conjecture \cite[Conj.~3.2]{Breuning04}.

\begin{conjecture}\label{Cc}
Under the current assumptions $R_{N/K} = 0$ in $K_0(\ZpG,\QpG)$.
\end{conjecture}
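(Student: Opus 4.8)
The plan is to prove the vanishing $R_{N/K}=0$ in $K_0(\ZpG,\QpG)$ in the case covered by Theorem~\ref{main theorem}, i.e.\ for $p$ odd, $K/\Qp$ unramified of degree $m$, and $N/K$ a weakly and wildly ramified abelian extension whose cyclic ramification group has order coprime to $m$; by the remark following Theorem~\ref{main theorem} we then have $|G|=pd$ with $|G_0|=|G_1|=p$, $|G_i|=1$ for $i\ge 2$, and since $G/G_0$ is cyclic of order $d$ and $\gcd(p,d)=1$, the group $G$ is itself cyclic of order $pd$. First I would settle the formal points: that $R_{N/K}$ really lies in $K_0(\ZpG,\QpG)$ — using that $R_{N/K}=\tilde R_{N/K}-U_{N/K}$ is fixed by $\Gal(\Qpc/\Qp)$, that $\iota(U_{N/K})=0$, and Taylor's fixed point theorem — and that the vanishing of $R_{N/K}$ is equivalent to Conjecture~\ref{Cb}, hence to $\CEP$ (Conjecture~\ref{Ca}). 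Since taking $G_N$-cohomology commutes with twisting by an unramified character when $\chinr|_{G_N}=1$, in that case the claim follows from the corresponding statement for $T=\Zp(1)$ proved in \cite{BC}; so the real content is the case $\chinr|_{G_N}\ne 1$, to which I now restrict.

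For that case the strategy is to compute, term by term, the five contributions to $R_{N/K}$ appearing in~(\ref{defRNK}): the arithmetic term $C_{N/K}$, the crystalline correction $\Ucris$, the period term $\partial^1_{\ZpG,\BdRG}(t)$, the unramified term $U_{N/K}$ (which vanishes after applying $\iota$), and the $\epsilon$-factor term $\partial^1_{\ZpG,\BdRG}(\epsilon_D(N/K,V))$. The decisive input is Theorem~\ref{main theorem 2}: the two-term complex $C^\bullet_{N,T}=[\,\Nnr\xrightarrow{(F-1)\times 1}\Nnr\,]$ in degrees $1,2$ represents $R\Gamma(N,T)$. Using the description $\Nnr\cong\prod_{i=1}^{d}\widehat{N_0^\times}$ with its action of $\Gal(\Kur/K)\times G$ and descending to $\ZpG$, I would pass from $C^\bullet_{N,T}$ (together with $\Ind_{N/\Qp}T[0]$) to an explicit perfect representative of $M^\bullet$ as in~(\ref{Mbullet def}) and compute $C_{N/K}=-\chi_{\ZpG,\BdRG}(M^\bullet,\lambda^{-1})$ directly. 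Because $H^1(N,T)\cong\calF(\p_N)$ and $H^2(N,T)\cong(\Zp/p^\omega\Zp)(\chinr)$ are in general not perfect $\ZpG$-modules, the Euler characteristic cannot be read off from cohomology; instead I would use Burns's formula~(\ref{old and new}), carrying along the odd-boundary correction $\partial^1_{\ZpG,\BdRG}((B^\od(M^\bullet_\BdRG),-\id))$, with the Serre-type four-term sequence~(\ref{serre analogue}) supplying the precise relation between $\calF(\p_N)$, the transition map $(F-1)\times 1$, and the finite cyclic quotient $\Z/p^{\omega}\Z(\chinr)$. The trivialization built from $\comp\circ\exp_V^{-1}$ has to be made explicit: I would choose a basis of $\DdR^N(V)$ adapted to the Lubin-Tate structure (built from the element $e_{\chinr_{\Qp,1}}=\tur\cdot t^{-1}\otimes (v\otimes \xi)$ of \cite{IV} and its scaling by $t$), follow it through the comparison isomorphism $\comp_V$ and the Bloch-Kato exponential into $H^1(N,V)\cong\calF(\p_N)\otimes_{\Zp}\Qp$, and thereby express $C_{N/K}$ as the class of an explicit resolvent-type element of $Z(\QpG)^\times$ together with the boundary correction.

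In parallel I would handle the correction terms. The $\epsilon$-factor $\epsilon_D(N/K,V)$ is to be rewritten in terms of (twisted) Galois Gau\ss\ sums, as in Section~\ref{eps}, so as to match the $\epsilon$-constants of \cite[Conj.~3.2]{Breuning04}; $\Ucris$ is identified with the crystalline term of \cite[Sec.~5]{IV} and compared with Breuning's $M_{N/K}$; $U_{N/K}$ is the unramified term of \cite{Breuning04}; and $\partial^1_{\ZpG,\BdRG}(t)$ is the standard period contribution, computable by the same recipe as for $T=\Zp(1)$. Adding the five terms, the arithmetic part coming from $C_{N/K}$ — essentially the resolvent of a generator of $\calF(\p_N)$ as a module over $\Z_p[G_0]$, plus the boundary correction dictated by the imperfect cohomology — should cancel against the Galois Gau\ss\ sum term precisely as in the $T=\Zp(1)$ computation of \cite{BC}. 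Here weak ramification ($G_2=1$) is what makes $\calF(\p_N)$ close enough to an induced/free module over $\Z_p[G_0]$ for a clean generator to exist, and the hypothesis $\gcd(m,d)=1$ is what forces the inertia and residue-degree contributions to decouple, so that the cancellation can be verified componentwise in $Z(\Qpc[G])^\times\cong\prod_{\chi\in\Irr(G)}(\Qpc)^\times$.

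The hard part will be the explicit evaluation of $C_{N/K}$. In the tame case of \cite{IV} the cohomology is perfect and $C_{N/K}$ collapses to a resolvent; here one must genuinely work with the four-term sequence~(\ref{serre analogue}) and the two-term complex of Theorem~\ref{main theorem 2}, and push the extra boundary term through every subsequent cancellation. Pinning down the $\Z_p[G_0]$-module structure of $\calF(\p_N)$ in the weakly wildly ramified situation, and computing the image of a module generator under the Bloch-Kato exponential finely enough to recognise it in the Gau\ss-sum/resolvent expression, is where the technical weight sits; by contrast the reductions, the $\Gal(\Qpc/\Qp)$-descent, and the evaluation of $\Ucris$, $U_{N/K}$ and $\partial^1_{\ZpG,\BdRG}(t)$ should be comparatively routine once the methods of \cite{BC} are available.
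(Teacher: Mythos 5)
Your plan is correct in outline and follows essentially the same route as the paper: reduce to the setting of Theorem~\ref{main theorem}, dispose of the case $\chinr|_{G_N}=1$ by twisting and \cite{BC}, and otherwise compute the five terms of (\ref{defRNK}) separately, with the explicit representative of $R\Gamma(N,T)$ from Theorem~\ref{main theorem 2} feeding Burns's formula (\ref{old and new}) for $C_{N/K}$, the Gau\ss-sum expression for $\epsilon_D(N/K,V)$, and the final character-by-character cancellation against the resolvent as in \cite{BC}. The only caveat is that the proposal stops at the level of strategy; the substance of the paper's argument is the case split $\omega=0$ versus $\omega>0$ (governed by the projectivity of $\calF(\p_N)$), the explicit $\ZpG$-basis of $\ker f_4$, and the integrality check of $\tilde{\tilde r}^{\pm 1}$, none of which is carried out here.
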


\begin{prop}\label{Conj equiv}
In the present setting the Conjectures \ref{Ca}, \ref{Cb} and \ref{Cc} are equivalent.
\end{prop}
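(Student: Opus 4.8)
The plan is to prove the two equivalences ``Conjecture \ref{Ca} $\Leftrightarrow$ Conjecture \ref{Cb}'' and ``Conjecture \ref{Cb} $\Leftrightarrow$ Conjecture \ref{Cc}'' in turn. For the first I would transport the assertion about the trivialised virtual object $(\DEPT,\delta(N/K,V))\in V(\ZpG,\tilde\Omega)$ into the relative algebraic $K$-group by means of the isomorphism $i_{\ZpG,\tilde\Omega}$ of (\ref{yar 2}). Concretely, one unwinds the composite defining $\delta(N/K,V)$ --- namely $\delta'(N/K,V)$, which packages the fundamental $7$-term sequence (\ref{7terms}) together with the tangent sequence (\ref{tangent ses}), followed by $\comp_V$, by the factor $t^{t_H(V)}$ and by the factor $\Gamma^*(V)\epsilon_D(N/K,V)$, cf.\ (\ref{defbeta'}) --- and matches the resulting class in $K_0(\ZpG,\tilde\Omega)$ term by term with $\tilde R_{N/K}=C_{N/K}+\Ucris+\partial^1_{\ZpG,\BdRG}(t)+\partial^1_{\ZpG,\BdRG}(\epsilon_D(N/K,V))$: the refined Euler characteristic $C_{N/K}$ of $(M^\bullet,\lambda^{-1})$ absorbs $[R\Gamma(N,T)]$, $[\Ind_{N/\Qp}T]$, the comparison isomorphism $\comp_V$ and the Bloch--Kato part of (\ref{7terms}); the term $\Ucris$ absorbs its crystalline part; and $\partial^1(t)$, $\partial^1(\epsilon_D)$ absorb the two remaining scalar factors (the Remark recording $\Gamma^*(V)=1$ and $t_H(V)=-1$ keeps this bookkeeping transparent). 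For $\chinr|_{G_N}\ne 1$ this identification is precisely \cite[(5.1)]{IV}; for $\chinr|_{G_N}=1$ one obtains it the same way, and since here $H^i(N,A(\chinr))=H^i(N,A)(\chinr)$ for every $G_N$-module $A$, the statement reduces to Breuning's $K$-theoretic reformulation for $T=\Zp(1)$ twisted by $\chinr$ --- which is exactly why the trivialisation $\lambda$ must in this case be enlarged by $\nu_N$, the free part of $H^2(N,T)$ in Lemma~\ref{galois cohomology}(b) forcing the valuation term into the Euler characteristic count. Granting $i_{\ZpG,\tilde\Omega}((\DEPT,\delta(N/K,V)))=\pm\tilde R_{N/K}$, the equivalence follows because the isomorphisms $i_{\ZpG,\tilde\Omega}$ and $i_{\tilde\Lambda,\tilde\Omega}$ are compatible with scalar extension along $\ZpG\to\tilde\Lambda$, so that triviality of $(\DEPT,\delta(N/K,V))$ in $\pi_0(V(\tilde\Lambda,\tilde\Omega))$ is the same as $\tilde R_{N/K}=0$ in $K_0(\tilde\Lambda,\tilde\Omega)$, the sign being irrelevant since both conjectures assert vanishing.

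For the equivalence of Conjectures \ref{Cb} and \ref{Cc} I would start from the identity $R_{N/K}=\tilde R_{N/K}-U_{N/K}$ recorded after (\ref{defRNK}); being the difference of two elements already lying in $K_0(\ZpG,\tilde\Omega)$, the term $U_{N/K}$ also lies in $K_0(\ZpG,\tilde\Omega)$. I would then use the inputs that $R_{N/K}\in K_0(\ZpG,\QpG)$, to be proved in Subsection~\ref{CNK 1}, and that $\iota(U_{N/K})=0$ in $K_0(\Opt[G],\Qpc[G])$ by \cite[Prop.~2.12]{Breuning04}. The crucial intermediate point is that the image of $U_{N/K}$ under scalar extension to $K_0(\tilde\Lambda,\tilde\Omega)$ vanishes: both this image and $\iota(U_{N/K})$ become trivial after a further common scalar extension to the group ring over the $p$-adic completion of $\Opt$, which contains both $\overline{\Zpnr}$ and $\Opt$; since a version of Taylor's fixed-point theorem over the complete unramified base $\overline{\Zpnr}$ makes $K_0(\tilde\Lambda,\tilde\Omega)$ inject into the corresponding relative $K$-group of this larger ring, the image of $U_{N/K}$ in $K_0(\tilde\Lambda,\tilde\Omega)$ must be zero. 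Consequently $\tilde R_{N/K}$ and $R_{N/K}$ have the same image in $K_0(\tilde\Lambda,\tilde\Omega)$, so Conjecture~\ref{Cb} is equivalent to the vanishing of the image of $R_{N/K}$ there; and since $R_{N/K}\in K_0(\ZpG,\QpG)$ and $K_0(\ZpG,\QpG)\hookrightarrow K_0(\tilde\Lambda,\tilde\Omega)$ (again by Taylor's theorem, combined with the standard injectivity $K_0(\ZpG,\QpG)\hookrightarrow K_0(\ZpG,\Qpc[G])$), this is in turn equivalent to $R_{N/K}=0$ in $K_0(\ZpG,\QpG)$, i.e.\ to Conjecture~\ref{Cc}.

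I expect the second step to be the main obstacle, specifically the two descent statements for relative $K$-groups over $\tilde\Lambda=\overline{\Zpnr}[G]$: that $U_{N/K}$ dies after unramified base change, and that $K_0(\ZpG,\QpG)$ injects into $K_0(\tilde\Lambda,\tilde\Omega)$. Unlike $\ZpG$, the ring $\overline{\Zpnr}[G]$ is not a $\Zp$-order and its relative $K$-theory is not controlled by a locally free class group, so making these statements precise requires a careful comparison of the various scalar-extension maps through a sufficiently large complete coefficient ring. By contrast, the first step is essentially a bookkeeping matter once the $\chinr|_{G_N}=1$ variant of the Izychev--Venjakob reformulation, obtained by twisting Breuning's, is in place; its only genuinely new feature is the correct incorporation of the term $\nu_N$.
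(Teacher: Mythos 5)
Your first equivalence is fine and essentially the paper's own route: once one knows $\Gamma^*(V)=1$ and $\delta(N/K,V)=\epsilon_D(N/K,V)\beta'(N/K,V)$, the identification of $i_{\ZpG,\BdRG}(\DEPT,\delta(N/K,V))$ with $\tilde R_{N/K}$ is immediate from the defining equation (\ref{Ucris def}) of $\Ucris$ (this is Lemma \ref{conj equiv first}); your term-by-term matching is just a more laborious phrasing of the same bookkeeping, and your use of Proposition \ref{RNK rational} and of the injectivity of $j_{\ZpG,\tilde\Lambda}$ on $K_0(\ZpG,\QpG)$ in the final reduction also agrees with the paper.

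The genuine gap is in your treatment of $U_{N/K}$. You need $j_{\ZpG,\tilde\Lambda}(U_{N/K})=0$ in $K_0(\tilde\Lambda,\tilde\Omega)$, and you try to deduce this formally from $\iota(U_{N/K})=0$ by passing to the group ring over the $p$-adic completion of $\Opt$ and invoking ``a version of Taylor's fixed-point theorem over $\overline{\Zpnr}$''. This deduction does not work as stated: vanishing after base change to $\Opt[G]$ only says that $U_{N/K}$ is represented by (the reduced norm of) a unit of $\Opt[G]$, and such units need not come from $\overline{\Zpnr}[G]$ --- already for trivial $G$ an element like $1+p^{1/\ell}$ ($\ell\neq p$) is a unit of $\Opt$ whose class in $K_0(\overline{\Zpnr},\C_p)$ is nonzero, so killing a class over the tame ring says nothing about the completed unramified ring. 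To repair this you would need precisely the injectivity of $K_0(\tilde\Lambda,\tilde\Omega)$ in the relative $K$-group of the completed tame group ring, i.e.\ a fixed-point theorem for the tame extension over the base $\overline{\Zpnr}$; this is not among the results you may cite (Taylor's theorem, as used here, concerns descent from $\Opt[G]$, resp.\ $\overline{\Zpnr}[G]$, down to $\ZpG$), and you acknowledge yourself that you have not established it. The paper avoids this entirely: the proof of \cite[Prop.~2.12]{Breuning04} constructs a representative of $U_{N/K}$ which already becomes trivial after the unramified scalar extension, so that $j_{\ZpG,\tilde\Lambda}(U_{N/K})=0$ follows directly (alternatively one could rerun that argument using property (\ref{invar 0})), and then $j_{\ZpG,\tilde\Lambda}(R_{N/K})=\tilde R_{N/K}$ together with Proposition \ref{RNK rational} and Taylor's fixed point theorem for $j_{\ZpG,\tilde\Lambda}|_{K_0(\ZpG,\QpG)}$ finishes the proof. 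As it stands, your argument for the equivalence of Conjectures \ref{Cb} and \ref{Cc} is incomplete at exactly this point.
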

The proof will be given in Subsection \ref{CNK 1}. In the rest of the manuscript we write $\CEP$ for the equivalent conjectures \ref{Ca}, \ref{Cb} and \ref{Cc}.
\end{subsection}

\begin{subsection}{Organization of the manuscript}
To end this section we briefly describe the organization of the manuscript. In Section \ref{RGamma(N,T)} we construct an explicit representative of $R\Gamma(N, T)$. Section \ref{Ucris} is dedicated to the definition of the term $\Ucris$ and the comparison of $\Ucris$ to Breuning's correction term $M_{N/K}$. In the following Section \ref{eps} we discuss epsilon constants and express them in terms of Galois Gau\ss\  sums. This will allow us to compare the term $\partial^1_{\ZpG, \BdR[G]}(\epsilon_D(N/K, V))$ to Breuning's element $T_{N/K}$. In Section \ref{computationCNK} we express $C_{N/K}$ as a sum of a norm resolvent and the refined Euler characteristic of $R\Gamma(N,T)$. 
With these preparations at hand we study the Galois action of the individual terms in the definition of $R_{N/K}$, prove Proposition \ref{Conj equiv} and compute the refined Euler characteristic $C_{N/K}$. As a by-product we obtain the proof of the equivalence of Breuning's Conjecture \cite[Conj.~3.2]{Breuning04} and $\CEP$ if $T = \Zp(1)$.
Finally in Section \ref{proof of theorem} we provide the proof of Theorem \ref{main theorem}.
\end{subsection}
\end{section}

\begin{section}{\texorpdfstring{A representative for $R\Gamma(N,T)$}{A representative for R Gamma(N,T)}}\label{RGamma(N,T)}
In this section we construct an explicit representative for the complex $R\Gamma(N,T)$.
We assume the situation described at the beginning of Subsection \ref{sit}.

\begin{subsection}{Some preliminary results}\label{preliminary results}
\begin{lemma}\label{UL1hat}
Let $L/K$ denote a finite Galois extension. Then we have a canonical exact sequence
\[0 \lra \calF(\p_L) \lra \left( \widehat{L_0^\times}(\chinr) \right)^{\Gal(L_0/L)} 
\stackrel{\nu_L}\lra \left( \Zp(\chinr) \right)^{\Gal(L_0/L)} \]
of $I_{L/K}$-modules. In particular, if $\chinr|_{G_L} \ne 1$, then there is an isomorphism
\[\fFL\colon\calF(\p_L) \lra \left( \widehat{L_0^\times}(\chinr) \right)^{\Gal(L_0/L)}.\]
Explicitly $\fFL$ is given by a power series of the form $1+\epsilon^{-1} X+\deg\geq 2$, where $\epsilon\in \overline{\Zpnr}^\times$ is such that $u=\epsilon^{\varphi-1}$.
\end{lemma}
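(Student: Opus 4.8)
\emph{Proof strategy.} The plan is to realise $\fFL$ through a Lubin-Tate twisting isomorphism defined over $R:=\overline{\Zpnr}$ and then to deduce the whole exact sequence by an explicit power-series computation. Let $\widehat{\mathbb{G}}_m$ denote the multiplicative formal group, i.e.\ the Lubin-Tate formal group over $\Zp$ attached to the uniformiser $p$, so that $\widehat{\mathbb{G}}_m(\p_{L_0})=1+\p_{L_0}$ and $[a]_{\widehat{\mathbb{G}}_m}$ acts on it by $z\mapsto z^a$ for $a\in\Zp$. Since $\calF$ is by definition the Lubin-Tate formal group for the uniformiser $\pi=up$, Lubin-Tate theory provides, for any $\epsilon\in R^\times$ with $\epsilon^{\varphi-1}=u$ (such $\epsilon$ exist because $\varphi-1\colon R^\times\to R^\times$ is surjective, and are unique up to $\Zp^\times$), an isomorphism of formal groups $\theta\colon\calF\xrightarrow{\,\sim\,}\widehat{\mathbb{G}}_m$ over $R$ with $\theta(X)=\epsilon^{-1}X+(\deg\geq 2)$ satisfying the Frobenius relation $\varphi(\theta)=[u^{-1}]_{\widehat{\mathbb{G}}_m}\circ\theta$, where $\varphi$ acts on the coefficients of $\theta$. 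Iterating, and using that $[u^{-1}]_{\widehat{\mathbb{G}}_m}$ has $\Zp$-coefficients and hence commutes with $\varphi$, together with $F_L|_R=\varphi^{d_L}$ and $\chinr(F_L)=\chiQpnr(\varphi^{d_L})=u^{d_L}$, one obtains
\begin{equation}\tag{$\ast$}
\varphi^{d_L}(\theta)=[\,\chinr(F_L)^{-1}\,]_{\widehat{\mathbb{G}}_m}\circ\theta.
\end{equation}

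Next I would define $\fFL\colon\calF(\p_L)\to\widehat{L_0^\times}$ by sending $x\in\calF(\p_L)=\p_L$ to the image in $\widehat{L_0^\times}$ of $1+\theta(x)\in 1+\p_{L_0}=\widehat{\mathbb{G}}_m(\p_{L_0})$. Because $\theta$ is an isomorphism of formal groups and $1+\p_{L_0}$ is $p$-adically complete (so that $\widehat{\calO_{L_0}^\times}=1+\p_{L_0}$, the prime-to-$p$ roots of unity being killed under $p$-completion, and $\widehat{L_0^\times}\cong(1+\p_{L_0})\oplus\Zp$ with $\nu_L$ the projection to $\Zp$), this $\fFL$ is an injective group homomorphism into $\widehat{\calO_{L_0}^\times}$, of the asserted shape $1+\epsilon^{-1}X+(\deg\ge 2)$, and $\nu_L\circ\fFL=0$. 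Its image lies in $(\widehat{L_0^\times}(\chinr))^{\Gal(L_0/L)}$: as $\Gal(L_0/L)$ is topologically generated by $F_L$, which fixes $x\in\p_L$ and acts on the coefficients of $\theta$ by $\varphi^{d_L}$, relation $(\ast)$ gives
\[
F_L\bigl(1+\theta(x)\bigr)=1+[\,\chinr(F_L)^{-1}\,]_{\widehat{\mathbb{G}}_m}\bigl(\theta(x)\bigr)=\bigl(1+\theta(x)\bigr)^{\chinr(F_L)^{-1}},
\]
i.e.\ $\chinr(F_L)\cdot F_L(\fFL(x))=\fFL(x)$, which is precisely the defining relation of the $\chinr$-twisted $\Gal(L_0/L)$-invariants. (A different choice of $\epsilon$ changes $\fFL$ by a $\Zp^\times$-power map and hence leaves the image unchanged, so the sequence is canonical.)

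It then remains to check exactness at the middle term, i.e.\ that the image of $\fFL$ equals $\ker(\nu_L)\cap(\widehat{L_0^\times}(\chinr))^{\Gal(L_0/L)}=(\widehat{\calO_{L_0}^\times}(\chinr))^{\Gal(L_0/L)}=\{\,y\in 1+\p_{L_0}:\chinr(F_L)\,F_L(y)=y\,\}$. Given such a $y$, write $y=1+z$ with $z\in\p_{L_0}$ and set $w:=\theta^{-1}(z)\in\calF(\p_{L_0})$ (legitimate since $\theta'(0)\in R^\times$). Applying $F_L$ to the coefficients of $\theta^{-1}$ transforms it into $(\varphi^{d_L}\theta)^{-1}$, which by $(\ast)$ equals $\theta^{-1}\circ[\,\chinr(F_L)\,]_{\widehat{\mathbb{G}}_m}$; moreover $F_L(z)=F_L(y)-1=y^{\chinr(F_L)^{-1}}-1$, and $[\,\chinr(F_L)\,]_{\widehat{\mathbb{G}}_m}$ acts on $1+\p_{L_0}$ as the $\chinr(F_L)$-th power map, so a short computation yields $F_L(w)=\theta^{-1}\!\bigl([\,\chinr(F_L)\,]_{\widehat{\mathbb{G}}_m}(F_L(z))\bigr)=\theta^{-1}(z)=w$. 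Since $\calF$ is defined over $\Zp$ and $L_0^{F_L}=L$ ($L$ being the fixed field of $\Gal(L_0/L)$), it follows that $w\in\calF(\p_{L_0})^{F_L}=\calF(\p_L)$, and $\fFL(w)=1+\theta(\theta^{-1}(z))=y$; thus the sequence is exact. The $I_{L/K}$-equivariance is routine: each $\sigma\in I_{L/K}$ lifts uniquely to an automorphism of $\Lur$ fixing $\Kur$, hence $R$, and commuting with $F_L$, and for this action $\fFL$ and $\nu_L$ are equivariant because $\sigma$ fixes the coefficients of $\theta$ and preserves valuations. Finally, if $\chinr|_{G_L}\neq 1$ then $\chinr(F_L)\neq 1$, so $\chinr(F_L)-1$ acts injectively on $\Zp$, whence $(\Zp(\chinr))^{\Gal(L_0/L)}=0$ and $\fFL$ becomes an isomorphism.

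The only genuinely external ingredient is the twisting isomorphism $\theta$ of the first paragraph, with its precise normalisation ($\theta(X)=\epsilon^{-1}X+\cdots$ for $\epsilon$ with $\epsilon^{\varphi-1}=u$) and the Frobenius relation $(\ast)$; everything else is formal manipulation with power series and $p$-completions. I expect the normalisation of $\theta$ to be the delicate point, since it is exactly what forces the image of $\fFL$ into the $\chinr$-twisted invariants and what produces the power series $1+\epsilon^{-1}X+(\deg\ge 2)$ appearing in the statement.
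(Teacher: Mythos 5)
Your proof is correct and follows essentially the same route as the paper: the paper obtains the isomorphism $\calF(\p_L)\xrightarrow{\sim}\left(U_{L_0}^{(1)}(\chinr)\right)^{\Gal(L_0/L)}$ by citing Lubin--Rosen and the Lubin--Tate twisting power series $\theta(X)=\epsilon^{-1}X+\deg\ge 2$ of Neukirch, Koroll.~V.2.3, and then twists and takes $\Gal(L_0/L)$-fixed points of the valuation sequence $0\to U_{L_0}^{(1)}\to\widehat{L_0^\times}\to\Zp\to 0$, exactly as you do. The only difference is that you unpack the cited twisting isomorphism and its Frobenius relation into an explicit verification, with the correct normalisation.
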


\begin{proof}
We have $L_0^\times \cong \pi_L^\Z \times \kappa^\times \times U_{L_0}^{(1)}$, where $\kappa$ denotes the residue class field of $L_0$. Since any element of $\kappa^\times$ has coprime to $p$ order and $U_{L_0}^{(1)}$ is $p$-complete we obtain $\widehat{L_0^\times} \cong \Zp \times U_{L_0}^{(1)}$, and hence  a canonical short exact sequence
\begin{equation}\label{val ses}
  0 \lra U_{L_0}^{(1)} \lra \widehat{L_0^\times} \lra \Zp \lra 0.
\end{equation}
From \cite[Lemma~page~237]{LubinRosen} we derive the existence of an isomorphism
\[\tilde f\colon\calF(\p_L)\lra \left( U_{L_0}^{(1)}(\chinr) \right)^{\Gal(L_0/L)}.\]
Up to the obvious isomorphism $U_{L_0}^{(1)}\cong G_m(\p_{L_0})$, this isomorphism is given by the uniquely defined power series $\theta(X)=\epsilon^{-1} X+\deg\geq 2$ of \cite[Koroll.~V.2.3]{Neukirch92}.

The result follows by twisting and taking $\Gal(L_0/L)$-fixed points in (\ref{val ses}).
\end{proof}

We set $\calG := \Gal(K_0/K) \times \Gal(L/K)$. In analogy to \cite[(6.2)]{Chinburg85} we set
\[\Lnr := \Ind_{\Gal(L_0/K)}^\calG  \left( \widehat{L_0^\times}(\chinr) \right)\]
and we usually identify $\Lnr$ with $\left( \widehat{L_0^\times} \right)^{d_{L/K}}$. Recalling that $F = F_K$ denotes the Frobenius automorphism of $K$, note that the $\calG$-module structure on $\Lnr$ is characterized by
\begin{equation}\label{rule 1}
(F\times 1)\cdot[x_1,x_2,\dots,x_{d_{L/K}}]=[x_{d_{L/K}}^{F_L\chinr(F_L)},x_1,x_2,\dots,x_{d_{L/K}-1}],
\end{equation}
and
\begin{equation}\label{rule 2}
(F^{-n}\times \sigma)\cdot[x_1,x_2,\dots,x_{d_{L/K}}]=[x_1^{\tilde\sigma\chinr(\tilde\sigma)},
x_2^{\tilde\sigma\chinr(\tilde\sigma)},\dots,x_{d_{L/K}}^{\tilde\sigma\chinr(\tilde\sigma)}],
\end{equation}
where $F^{-n}$ and $\sigma\in \Gal(L/K)$ have the same restriction to $L \cap K_0$ and $\tilde\sigma \in \Gal(L_0/K)$ is uniquely defined by $\tilde\sigma|_{K_0}=F^{-n}$ and $\tilde\sigma|_L=\sigma$. 

We also have a $G_K$- and a $\Gal(L/K)$-action on $\Lnr$, via the natural maps $G_K\to\Gal(L_0/K)\hookrightarrow\calG$ and $\Gal(L/K) \hookrightarrow \calG$. Note that in the case $\chinr|_{G_L} = 1$ we have
\begin{equation}\label{twist trivial}
 \Lnr=\Ind_{\Gal(L_0/K)}^{\calG}  \left( \widehat{L_0^\times}(\chinr) \right)
\cong
 \left( \Ind_{\Gal(L_0/K)}^{\calG}   \widehat{L_0^\times}\right)(\chinr)
\end{equation}
both as $\Gal(L/K)$- and $G_K$-modules. The following lemma clarifies the structure of $\Lnr$ as a $\Gal(L/K)$-module in the general situation.

\begin{lemma}\label{LurInd}
If we identify the inertia group $I_{L/K}$ with $\Gal(L_0/K_0)$, then there is a $\Gal(L/K)$-isomorphism
\[\tfFL \colon \Ind_{I_{L/K}}^{\Gal(L/K)}\widehat{L_0^\times} \lra \Lnr.\]
In particular, $\Lnr$ is $\Gal(L/K)$-cohomologically trivial.
\end{lemma}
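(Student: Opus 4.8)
\section*{Proof sketch}

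The plan is to recognize $\Lnr$, once its $\calG$-action has been restricted to the subgroup $\Gal(L/K) = 1 \times \Gal(L/K) \hookrightarrow \calG$, as a module induced from the inertia subgroup, and to read off the induced structure via Mackey's double coset formula. First I would record the subgroup geometry inside $\calG = \Gal(K_0/K) \times \Gal(L/K)$: since $L_0 = K_0 L$, the rule $g \mapsto (g|_{K_0}, g|_L)$ embeds $\Gal(L_0/K)$ as a subgroup $\calH \le \calG$; an element of $\calH$ lies in $1 \times \Gal(L/K)$ precisely when it is trivial on $K_0$, i.e.\ lies in $\Gal(L_0/K_0) = I_{L/K}$, so $(1\times\Gal(L/K)) \cap \calH = 1 \times I_{L/K}$; and since $\Gal(L_0/K) \twoheadrightarrow \Gal(K_0/K)$, one gets $\calG = (1\times\Gal(L/K)) \cdot \calH$, so that there is a single $(1\times\Gal(L/K))$--$\calH$ double coset in $\calG$.

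Consequently Mackey's formula for $\mathrm{Res}_{1\times\Gal(L/K)}\Ind_\calH^\calG(\widehat{L_0^\times}(\chinr))$ collapses to a single summand:
\[
\mathrm{Res}_{\Gal(L/K)}\Lnr \;\cong\; \Ind_{I_{L/K}}^{\Gal(L/K)}\bigl(\mathrm{Res}_{I_{L/K}}\widehat{L_0^\times}(\chinr)\bigr).
\]
The decisive point is that the $\chinr$-twist collapses on the inertia group: $\chinr$ is unramified, hence trivial on the inertia subgroup of $G_K$, and every element of $I_{L/K} = \Gal(L_0/K_0)$ restricts trivially to $\Kur$, so $\mathrm{Res}_{I_{L/K}}\widehat{L_0^\times}(\chinr) = \widehat{L_0^\times}$ with its ordinary $\Gal(L_0/K_0)$-action. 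Identifying $1 \times I_{L/K}$ with $I_{L/K} \subseteq \Gal(L/K)$, this is exactly the claimed isomorphism $\tfFL$. The same identification can be made by hand from $\Lnr \cong (\widehat{L_0^\times})^{d_{L/K}}$: by (\ref{rule 1}) and (\ref{rule 2}) the group $\Gal(L/K)$ permutes the $d_{L/K}$ coordinates through its cyclic quotient $\Gal((L\cap\Kur)/K) \cong \Z/d_{L/K}$, transitively and with stabilizer $I_{L/K}$ of the first coordinate --- on which, by (\ref{rule 2}) with trivial Frobenius part, $\sigma$ acts by $x \mapsto x^{\sigma\chinr(\sigma)} = x^{\sigma}$ --- so that matching coordinates with coset representatives of $I_{L/K}$ exhibits the induced structure.

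For the final clause I would combine this with Shapiro's lemma and the analogous Mackey decomposition for restriction to an arbitrary subgroup of $\Gal(L/K)$: together these express the Tate cohomology of $\Lnr$ over every subgroup of $\Gal(L/K)$ as a direct sum of Tate cohomology groups of (conjugates of) $\widehat{L_0^\times}$ over subgroups of $I_{L/K}$. Hence it suffices that $\widehat{L_0^\times}$ is cohomologically trivial over $\Gal(L_0/K_0)$, which is a standard fact about local fields with algebraically closed residue field: $\mathrm{cd}(K_0) \le 1$ forces $L_0^\times$ to be $\Gal(L_0/K_0)$-cohomologically trivial (via vanishing of the relevant Brauer groups and Hilbert's Theorem 90), and this transfers to the $p$-completion $\widehat{L_0^\times}$ because $L_0$ contains only finitely many $p$-power roots of unity.

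The hard part is not conceptual --- once the subgroup intersections are identified, the isomorphism is essentially a formal Mackey computation --- but bookkeeping: verifying the single-double-coset statement and, if one prefers to argue with the explicit model (\ref{rule 1})--(\ref{rule 2}), keeping straight which Frobenius acts on which coordinate and confirming that the $\chinr$-twist really does disappear on $I_{L/K}$. The cohomological triviality of $\widehat{L_0^\times}$ over the inertia group is the one genuinely external ingredient and should be quoted from the literature rather than reproved.
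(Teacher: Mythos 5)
Your argument is correct, and for the main isomorphism it takes a genuinely different route from the paper. The paper constructs $\tfFL$ explicitly by the formula $\tfFL(\sigma\otimes x)=(1\times\sigma)\cdot[x,1,\dots,1]$ and then verifies well-definedness, surjectivity and injectivity by direct computation with the action rules (\ref{rule 1})--(\ref{rule 2}); you instead identify the subgroup geometry $(1\times\Gal(L/K))\cap\Gal(L_0/K)=1\times I_{L/K}$ and $(1\times\Gal(L/K))\cdot\Gal(L_0/K)=\calG$, so that Mackey's double coset formula collapses $\mathrm{Res}_{\Gal(L/K)}\Lnr$ to a single induced summand, with the $\chinr$-twist disappearing on $I_{L/K}$ because $\chinr$ is unramified. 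Both arguments are valid (the double-coset count and the triviality of the twist on inertia are exactly right), and your coordinate description of the orbit/stabilizer structure is the same computation the paper performs by hand, just packaged conceptually; the trade-off is that the paper's explicit map is immediately available in the coordinate model $\Lnr\cong(\widehat{L_0^\times})^{d_{L/K}}$ used throughout, whereas your version is shorter and makes the ``single orbit with stabilizer $I_{L/K}$'' structure transparent. For the cohomological triviality your treatment essentially coincides with the paper's (Serre's \cite[Prop.~XIII.5.14]{SerreLocalFields} for $L_0^\times$, passage to the $p$-completion via \cite[Lemma~4.9]{BreuPhd}, then Shapiro), except that you correctly flag the extra Mackey step needed to handle restriction to arbitrary subgroups of $\Gal(L/K)$, which the paper subsumes under ``Shapiro's Lemma''; your parenthetical justification for why cohomological triviality survives $p$-completion is looser than the cited lemma, but you rightly defer that point to the literature.
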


\begin{proof}
We identify $\Ind_{I_{L/K}}^{\Gal(L/K)}\widehat{L_0^\times}$ with $\Z_p[\Gal(L/K)]\otimes_{\Z_p[I_{L/K}]}\widehat{L_0^\times}$ and set
\[\tfFL(\sigma\otimes x) := (1\times\sigma)\cdot[x,1,\dots,1],\]
for $x\in\widehat{L_0^\times}$ and $\sigma\in\Gal(L/K)$. To check that this is well-defined we take $\tau\in I_{L/K}$ and write $\tilde\tau$ for the corresponding element in $\Gal(L_0/K_0)$. Then
\[\begin{split}
\tfFL(\sigma\tau\otimes x)
&=(1\times\sigma\tau)\cdot [x,1,\dots,1]=(1\times\sigma)(1\times\tau)\cdot [x,1,\dots,1]\\
&=(1\times\sigma)\cdot [x^{\tilde\tau\chinr(\tilde\tau)},1,\dots,1]=(1\times\sigma)\cdot [x^{\tilde\tau},1,\dots,1]\\
&=\tfFL(\sigma\otimes x^{\tilde\tau}).
\end{split}\]
To prove surjectivity, we check that all elements of the form $[1,\dots,1,x,1,\dots,1]$ are in the image of $\tfFL$. Let $i+1$ be the index of $x$ and let us take $\sigma\in\Gal(L/K)$ such that $\sigma|_{L\cap K_0}=F^{-i}$. Then
\[\begin{split}
\tfFL(\sigma\otimes x^{\tilde\sigma^{-1}\chinr(\tilde\sigma)^{-1}})&
=(1\times\sigma)\cdot[x^{\tilde\sigma^{-1}\chinr(\tilde\sigma)^{-1}},1,\dots,1]\\
&=(F^i\times 1)(F^{-i}\times\sigma)\cdot[x^{\tilde\sigma^{-1}\chinr(\tilde\sigma)^{-1}},1,\dots,1]\\
&=(F^i\times 1)\cdot[x,1,\dots,1]=[1,\dots,1,x,1,\dots,1].
\end{split}\]
It remains to prove injectivity. Let us take $\prod_i \sigma_i\otimes x_i\in\ker \tfFL$. Clearly we can assume that $0 \le i \le d_{L/K}-1$ and $\sigma_i|_{L\cap K_0}=F^{-i}$. Then we calculate
\[\tfFL\left(\prod_{i=0}^{d_{L/K}-1} \sigma_i\otimes x_i\right)=
[x_0^{\tilde\sigma_0\chinr(\tilde\sigma_0)},x_1^{\tilde\sigma_1\chinr(\tilde\sigma_1)},\dots, x_{d_{L/K}-1}^{\tilde\sigma_{d_{L/K}-1}\chinr(\tilde\sigma_{d_{L/K}-1})}].\]
Since we are assuming that the element is in the kernel, we deduce that all of the $x_i$ are equal to $1$. Hence $\tfFL$ is injective. The isomorphism is $\Gal(L/K)$-invariant by construction.

By \cite[Prop.~XIII.5.14]{SerreLocalFields} the $I_{L/K}$-module $L_0^\times$ is $I_{L/K}$-cohomologically trivial.
Using \cite[Lemma~4.9]{BreuPhd} the same holds true for the $p$-completion $\widehat{L_0^\times}$. By Shapiro's Lemma
we finally derive that $\Ind_{I_{L/K}}^{\Gal(L/K)}\widehat{L_0^\times}$ is $\Gal(L/K)$-cohomologically trivial.
\end{proof}

We slightly generalize a well-known lemma, see e.g. \cite[Lemma~V.2.1]{Neukirch92}.

\begin{lemma}\label{Neukirch}
Given $c\in \widehat{U_{L_0}}$ there exists $x\in \widehat{U_{L_0}}$ such that $x^{\chinr(F_L)F_L-1}=c$. If $x_n$ is such a solution modulo $U_{L_0}^{p^n}$, then we can assume that $x\equiv x_n\pmod{U_{L_0}^{p^n}}$. In particular if $c\in U_{L_0}^{(1)}$, then we can assume $x\in U_{L_0}^{(1)}$.
\end{lemma}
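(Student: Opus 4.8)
The plan is to adapt the classical surjectivity argument for the operator $\sigma - 1$ on the unit group of a local field (as in \cite[Lemma~V.2.1]{Neukirch92}) to the twisted operator $\chinr(F_L)F_L - 1$ acting on $\widehat{U_{L_0}}$, using a successive-approximation scheme level by level in the filtration by higher unit groups. First I would reduce to the principal units: since $\widehat{U_{L_0}}$ decomposes (up to the prime-to-$p$ part, which is killed by $p$-completion) as the product of $U^{(1)}_{L_0}$ with the Teichmüller representatives, and the residue field of $L_0$ is $\overline{\mathbb F_p}$, the relevant group is really $U^{(1)}_{L_0}$, which is $\varprojlim_n U^{(1)}_{L_0}/U^{(n)}_{L_0}$-complete; this already explains the last two assertions once surjectivity is established on each graded piece. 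The key point is that the Frobenius $F_L = F_K^{d_{L/K}}$ acts on the residue field $\overline{\mathbb F_p}$ of $L_0$ as $x \mapsto x^{q}$ (a power of the absolute Frobenius), and $\chinr(F_L) \in \Z_p^\times$ acts through its reduction mod $p$, which is a unit in $\overline{\mathbb F_p}$.

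The main steps, carried out in order: (1) On the graded piece $U^{(n)}_{L_0}/U^{(n+1)}_{L_0} \cong \overline{\mathbb F_p}$ (via $1 + \pi^n a \mapsto \bar a$, for a fixed uniformizer $\pi$ of $L_0$, which we may take in $K$ or in $\Z_p$ since $L_0/L$ is unramified), the operator $\chinr(F_L)F_L - 1$ induces the additive-group map $\bar a \mapsto \bar\chinr(F_L)\,\bar a^{q} - \bar a$, i.e. essentially $\wp$-type operator $x \mapsto c x^{q} - x$ with $c \in \overline{\mathbb F_p}^\times$. Such a map on $\overline{\mathbb F_p}$ is surjective, because over an algebraically closed field the polynomial $cX^{q} - X - \bar d$ has a root for every $\bar d$. (2) Given $c \in \widehat{U_{L_0}}$, one builds the solution $x$ inductively: choose $x^{(0)}$ solving the equation modulo $U^{(1)}_{L_0}$, and given $x^{(n)}$ with $(x^{(n)})^{\chinr(F_L)F_L - 1} \equiv c \pmod{U^{(n+1)}_{L_0}}$, use surjectivity on the $(n+1)$-st graded piece to correct $x^{(n)}$ by an element of $U^{(n+1)}_{L_0}$ to obtain $x^{(n+1)}$. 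The resulting sequence converges in $\widehat{U_{L_0}}$ to the desired $x$. (3) The refinement ``if $x_n$ solves the congruence modulo $U_{L_0}^{p^n}$ then we may take $x \equiv x_n$'' follows simply by starting the induction from $x_n$ rather than from scratch, and the statement about $U^{(1)}_{L_0}$ is the special case $n$ large enough together with the fact that the correction terms all lie in higher unit groups.

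The main obstacle I anticipate is bookkeeping the precise action of $F_L$ and $\chinr(F_L)$ on the graded pieces and checking that the induced map is genuinely the surjective Artin--Schreier-type operator $x \mapsto cx^q - x$ with the right $q$ and $c$ — in particular that $F_L$ really acts on $\kappa = \overline{\mathbb F_p}$ as $x \mapsto x^q$ where $q = |\mathcal O_L/\p_L|$ and not some other power, and that the uniformizer can be chosen fixed by $F_L$ so that it commutes past the operator. Once one is careful that over $\overline{\mathbb F_p}$ every such operator is surjective (this uses only algebraic closedness, so no separability subtlety arises), the successive-approximation argument is entirely routine and parallels \cite[Lemma~V.2.1]{Neukirch92} verbatim.
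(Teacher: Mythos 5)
Your proposal is correct and is essentially the paper's own argument: the paper reduces modulo $U_{L_0}^{p^n}$, replaces the $p$-adic exponent $\chinr(F_L)$ by an integer $\psi_n\equiv\chinr(F_L)\pmod{p^n}$, and then invokes the proof of \cite[Lemma~V.2.1]{Neukirch92} with $F_L\psi_n$ in place of $\varphi$ — which is precisely the successive approximation through the higher-unit filtration that you write out, resting on completeness of $L_0$ and surjectivity of $x\mapsto cx^{q}-x$ on the algebraically closed residue field. The one point to tighten is the refinement $x\equiv x_n\pmod{U_{L_0}^{p^n}}$: your induction runs along the filtration $(U^{(m)}_{L_0})_m$ while the congruence is taken modulo $p^n$-th powers, so rather than ``starting the induction from $x_n$'' it is cleaner to observe that the operator $\theta:=\chinr(F_L)F_L-1$ commutes with $y\mapsto y^{p^n}$, whence the defect $c\cdot (x_n^{\theta})^{-1}\in U_{L_0}^{p^n}$ equals $d^{p^n}$ for some $d$ and is corrected by $y^{p^n}$ with $y^{\theta}=d$.
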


\begin{proof}
Let $n\in\N$, let $\psi_n\in\Z$ be such that $\psi_n\equiv\chinr(F_L)\pmod{p^n}$. First of all we want to determine an $x_n\in U_{L_0}/U_{L_0}^{p^n}$ such that $x_n^{\chinr(F_L) F_L-1}\equiv x_n^{\psi_n F_L-1}\equiv c\pmod{U_{L_0}^{p^n}}$. Let $c_n\in U_{L_0}$ be congruent to $c$ modulo $U_{L_0}^{p^n}$, then it is enough to find $x_n\in U_{L_0}$ such that $x_n^{\psi_n F_L-1}=c_n$. The same proof as in \cite[Lemma~V.2.1]{Neukirch92} works, simply writing $F_L\psi$ instead of $\varphi$ everywhere. (Note that here we are using that $L_0$ is complete.) Moreover, if we assume that we already have an $x_{n-1}\in U_{L_0}/U_{L_0}^{p^{n-1}}$ such that $x_{n-1}^{\chinr(F_L) F_L-1}\equiv x_{n-1}^{\psi_n F_L-1}\equiv c\pmod{U_{L_0}^{p^{n-1}}}$, then we can find $x_n$ with the additional property that $x_n\equiv x_{n-1}\pmod{U_{L_0}^{p^{n-1}}}$.

By passing to the limit we obtain an element $x\in \widehat{U_{L_0}}$ such that $x^{\chinr(F_L)F_L-1}=c$. The remaining assertions are now straightforward.
\end{proof}
\end{subsection}

\begin{subsection}{The construction of the complex}
We continue to assume the situation of Subsection \ref{sit}.
\begin{thm}[Serre]\label{Serrethm}
If $\chinr(F_L)=1$, then we have an exact sequence
\[0\lra \widehat{L^\times}(\chinr)\lra\Lnr\lra\Lnr\lra\Z_p(\chinr)\lra0\]
of $\Zp[\Gal(L/K)]$-modules.
\end{thm}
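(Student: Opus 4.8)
The plan is to deduce the statement from Serre's classical fundamental exact sequence by a twisting argument. Note first that, $\chinr$ being unramified and $F_L$ a topological generator of $\Gal(\Lur/L)$, the condition $\chinr(F_L)=1$ is equivalent to $\chinr|_{G_L}=1$; under this hypothesis the character $\chinr\colon G_K\lra\Z_p^\times$ factors through $\Gal(L/K)$, so the twist $M\mapsto M(\chinr)=M\tensor_\Zp\Z_p(\chinr)$ is a well-defined exact autoequivalence of the category of $\Zp[\Gal(L/K)]$-modules (indeed $\Z_p(\chinr)$ is free of rank one over $\Zp$), and it commutes with $G_L$-cohomology. By Lemma \ref{galois cohomology}(b) the outer terms $\widehat{L^\times}(\chinr)$ and $\Z_p(\chinr)$ are then precisely $H^1(L,T)$ and $H^2(L,T)$, as they must be.

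Next I would invoke Serre's fundamental exact sequence in the explicit form recalled in \cite[Th.~4.20]{BreuPhd} (see also \cite[Prop.~6.1]{Chinburg85} and \cite[Exercises XIII, \S 5]{SerreLocalFields}): for the trivial character it provides an exact sequence of $\Zp[\Gal(L/K)]$-modules
\[0\lra\widehat{L^\times}\lra\calI_{L/K}(\unit)\xrightarrow{(F-1)\times 1}\calI_{L/K}(\unit)\lra\Zp\lra 0,\]
where $\calI_{L/K}(\unit):=\Ind_{\Gal(L_0/K)}^\calG\widehat{L_0^\times}$ and the maps are in fact $\calG$-equivariant. Applying the exact twisting functor $-(\chinr)$, and using that $\calG$-equivariant maps are automatically compatible with the twist, one obtains an exact sequence
\[0\lra\widehat{L^\times}(\chinr)\lra\calI_{L/K}(\unit)(\chinr)\xrightarrow{(F-1)\times 1}\calI_{L/K}(\unit)(\chinr)\lra\Z_p(\chinr)\lra 0.\]
Finally I would identify $\calI_{L/K}(\unit)(\chinr)$ with $\Lnr$ via the canonical isomorphism (\ref{twist trivial}), which is simultaneously an isomorphism of $\Gal(L/K)$- and $G_K$-modules, hence of $\calG$-modules. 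Since the endomorphism $(F-1)\times 1$ is defined purely in terms of the $\calG$-action through the rule (\ref{rule 1}) (and $\chinr(F_L)=1$ makes the twisting factor in that rule disappear), this identification intertwines the two copies of $(F-1)\times 1$, and the twisted sequence becomes the asserted one.

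The one step needing genuine care is the bookkeeping in the last paragraph: one must match the module and the operator appearing in the cited form of Serre's sequence with $\calI_{L/K}(\unit)$ and $(F-1)\times 1$, and verify that (\ref{twist trivial}) really does carry the first operator to the second. This is routine but should be written out. An essentially equivalent, perhaps more transparent, way to organize the argument is to observe that Serre's sequence is an explicit representative of $R\Gamma(L,\Zp(1))$ spliced with its cohomology in degrees $1$ and $2$; since $\chinr|_{G_L}=1$ implies that twisting by $\chinr$ commutes with taking $G_L$-cohomology, the $\chinr$-twist of that representative represents $R\Gamma(L,T)$, and unwinding it through (\ref{twist trivial}) and Lemma \ref{galois cohomology}(b) produces exactly the four-term exact sequence of the theorem.
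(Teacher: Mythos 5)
Your proposal is correct and follows essentially the same route as the paper: the paper's proof likewise deduces the statement from \cite[Prop.~6.1]{Chinburg85} (Serre's fundamental exact sequence) by $p$-completing and twisting, using the isomorphism (\ref{twist trivial}) to identify the middle terms with $\Lnr$. The bookkeeping you flag at the end --- checking that (\ref{twist trivial}) intertwines the two copies of $(F-1)\times 1$ --- is precisely what the paper leaves implicit in its one-line proof.
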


\begin{proof}
Taking into account the isomorphism (\ref{twist trivial}) this follows directly 
from  \cite[Prop.~6.1]{Chinburg85} by $p$-completing and twisting.
\end{proof}

Now we want an analogous result for the case $\chinr(F_L)\neq 1$.

We set 
\[\omega_L := v_p(1 - \chinr(F_L)),\]
which is finite when $\chinr(F_L)\neq 1$.

For $n>\omega_L$ we put
\[ V_n=\{y\in L_0^\times/(L_0^\times)^{p^n}:\ \nu_L(y)\equiv 0\pmod{p^{\omega_L}}\}\]
and
\[W_n=\{y\in L_0^\times/(L_0^\times)^{p^n}:\ y^{\chinr(F_L)F_L-1}=1\}.\]
Note that when $\omega_L=0$ the congruence in the definition of $V_n$ is always trivially satisfied, so that $V_n=L_0^\times/(L_0^\times)^{p^n}$.

\begin{lemma}\label{Neu comm diag}
Assume $\chinr(F_L)\neq 1$. If $m \ge n \ge \omega_L$, then we have a commutative diagram of exact sequences:
\[\xymatrix{0\ar[rr]&&W_{m}\ar[rr]\ar[d]^{\tau_{m,n}}&&L_0^\times/(L_0^\times)^{p^{m}}
\ar[rr]^-{\chinr(F_L)F_L-1}\ar[d]^{\pi_{m,n}}   &&V_{m}\ar[rr]\ar[d]^{\sigma_{m,n}}&&0\\
0\ar[rr]&&W_{n}\ar[rr]&&L_0^\times/(L_0^\times)^{p^{n}}\ar[rr]^-{\chinr(F_L)F_L-1}&&V_{n}
\ar[rr]&&0.}\]
Here $\pi_{m,n}$ denotes the canonical projection and $\tau_{m,n}$ and $\sigma_{m,n}$ are induced by $\pi_{m,n}$. Moreover, $\pi_{m,n}$ and $\sigma_{m,n}$ are surjective and the projective system $\left( W_n \right)_n$ satisfies the Mittag-Leffler condition.
\end{lemma}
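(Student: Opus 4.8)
The plan is to dispatch the three assertions in turn --- exactness of the two rows, commutativity of the diagram together with surjectivity of $\pi_{m,n}$ and $\sigma_{m,n}$, and the Mittag--Leffler condition --- all of them resting on the single structural input of Lemma~\ref{Neukirch}. Write $g:=\chinr(F_L)F_L-1$ for the common map in the rows, so $g(x)=F_L(x)^{\chinr(F_L)}x^{-1}$; this is well defined on $L_0^\times/(L_0^\times)^{p^n}$ because on a group killed by $p^n$ the power by the $p$-adic unit $\chinr(F_L)$ depends only on $\chinr(F_L)\bmod p^n$. As in the proof of Lemma~\ref{UL1hat} one has $\widehat{L_0^\times}\cong\Z_p\times\widehat{U_{L_0}}$, and choosing a uniformizer $\pi_L\in\OL$ makes this splitting equivariant for $F_L$ (which fixes $\pi_L$ and preserves $\nu_L$), hence stable under $g$; reducing modulo $p^n$ it identifies $L_0^\times/(L_0^\times)^{p^n}$ with $\Z/p^n\Z\oplus\widehat{U_{L_0}}/p^n$, on which $g$ acts as multiplication by $\chinr(F_L)-1$ on the first summand and as the reduction of $g|_{\widehat{U_{L_0}}}$ on the second.

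For the rows, exactness at $W_n$ and in the middle is just the definition $W_n=\ker g$. Since $F_L$ preserves $\nu_L$ one has $\nu_L(g(x))=(\chinr(F_L)-1)\nu_L(x)$ in $\Z/p^n\Z$, which lies in $p^{\omega_L}\Z/p^n\Z$ because $v_p(\chinr(F_L)-1)=\omega_L$; hence $g$ does land in $V_n$. From the diagonal description, the image of $g$ is $\bigl(p^{\omega_L}\Z/p^n\Z\bigr)\oplus\bigl(\widehat{U_{L_0}}/p^n\bigr)$, the second summand being all of $\widehat{U_{L_0}}/p^n$ by Lemma~\ref{Neukirch}; since under the identification $V_n$ is exactly this subgroup, $g$ is onto $V_n$, and the two rows are exact.

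Next, $\pi_{m,n}$ is the canonical projection, well defined since $(L_0^\times)^{p^m}\sseq(L_0^\times)^{p^n}$ for $m\ge n$; it is obviously surjective, preserves $\nu_L$, and commutes with $g$ at the two levels --- reducing $\chinr(F_L)\bmod p^m$ to $\chinr(F_L)\bmod p^n$ does not change the power operation on the $p^n$-torsion quotient. Hence $\pi_{m,n}$ carries $W_m$ into $W_n$ and $V_m$ into $V_n$, the induced maps being $\tau_{m,n}$ and $\sigma_{m,n}$, and the diagram commutes. Surjectivity of $\sigma_{m,n}$ is then a short chase: given $v\in V_n$, write $v=g(x)$ by the previous paragraph, lift $x$ to $\tilde x$ along the surjection $\pi_{m,n}$, and observe $\sigma_{m,n}(g(\tilde x))=\pi_{m,n}(g(\tilde x))=g(\pi_{m,n}(\tilde x))=v$.

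Finally, for the Mittag--Leffler property the $g$-stable splitting gives $W_n=\ker\bigl(\chinr(F_L)-1\text{ on }\Z/p^n\Z\bigr)\oplus\ker\bigl(g\text{ on }\widehat{U_{L_0}}/p^n\bigr)$ compatibly with all transition maps $\tau_{m,n}$. On the first summand the kernel is $p^{n-\omega_L}\Z/p^n\Z$, whose image under $\pi_{m,n}$ is $p^{m-\omega_L}\Z/p^n\Z$ and hence vanishes once $m\ge n+\omega_L$; so those images stabilise. On the second summand the transition maps are in fact surjective: if $\bar u\in\widehat{U_{L_0}}/p^n$ satisfies $g(\bar u)=0$, lift it to $u\in\widehat{U_{L_0}}$, write $g(u)=p^nv$, and by surjectivity of $g$ on $\widehat{U_{L_0}}$ (Lemma~\ref{Neukirch}) choose $w$ with $v=g(w)$; then $u-p^nw$ lies in $\ker(g\text{ on }\widehat{U_{L_0}})$ and reduces modulo $p^n$ to $\bar u$, so its class in $\ker(g\text{ on }\widehat{U_{L_0}}/p^m)$ is the desired lift. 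Thus for each $n$ the images $\tau_{m,n}(W_m)$ are eventually constant, which is the Mittag--Leffler condition. The one place where care is needed is the first paragraph --- the $F_L$-equivariant splitting of $\widehat{L_0^\times}$ and the level-by-level compatibility of the $\chinr(F_L)$-power operation; granting Lemma~\ref{Neukirch}, everything else is formal.
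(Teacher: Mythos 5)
Your proof is correct, and all three parts rest on the same two inputs as the paper's argument: Lemma \ref{Neukirch} for the unit part and $v_p(\chinr(F_L)-1)=\omega_L$ for the valuation part. The organization, however, is genuinely different and worth comparing. You make the decomposition $L_0^\times\cong\pi_L^{\Z}\times U_{L_0}$ explicit, observe once that it is stable under $g=\chinr(F_L)F_L-1$ (because $F_L$ fixes $\pi_L$), and then read off everything summand by summand: exactness of the rows becomes ``image of $g$ equals $p^{\omega_L}\Z/p^n\Z\oplus U_{L_0}/U_{L_0}^{p^n}=V_n$'', and the Mittag--Leffler condition becomes a direct computation of the images $\tau_{m,n}(W_m)$, which are $p^{m-\omega_L}\Z/p^n\Z$ plus the full unit-part kernel (the latter because the transition maps on unit-part kernels are surjective, by your lifting argument or equally by the compatibility clause of Lemma \ref{Neukirch}). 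The paper performs the same splitting only implicitly and element-wise in the surjectivity proof (writing $c=c'[\pi_L]^{\nu_L(c)}$ and solving $\nu_L(c)\equiv k(\chinr(F_L)-1)$ separately), and for Mittag--Leffler it goes through the snake lemma, identifying $\ker(\sigma_{m,n})$ with $(L_0^\times)^{p^n}/(L_0^\times)^{p^m}$ and showing $\coker(\tau_{m,n})$ stabilizes; both routes land on the same threshold $m\ge n+\omega_L$. Your version buys a cleaner, more structural Mittag--Leffler argument (images computed directly rather than cokernels via a diagram chase); the one point to keep an eye on is the identification $\widehat{U_{L_0}}/p^n\cong U_{L_0}/U_{L_0}^{p^n}$ used when you lift kernel elements to $\widehat{U_{L_0}}$, which holds here because $\widehat{U_{L_0}}\cong U_{L_0}^{(1)}$ is $p$-complete and $\kappa^\times$ is $p$-divisible, exactly as in the proof of Lemma \ref{UL1hat}.
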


\begin{proof}
To prove the exactness of the rows we must show that
\[L_0^\times/(L_0^\times)^{p^{n}}\xrightarrow{\chinr(F_L)F_L -1}V_n\] 
is surjective. If $c \in V_n$, then $\nu_L(c)\in p^{\omega_L}\Z/p^n\Z$, so that there exists $k \in \Z$ such that 
\[\nu_L(c) \equiv k(\chinr(F_L)-1) \pmod{p^n\Z}.\] 
We set 
\[c' = \frac{c}{[\pi_L]^{\nu_L(c)}} \in U_{L_0}/(U_{L_0})^{p^n} 
\subseteq L_0^\times / \left(  L_0^\times \right)^{p^n},\] 
where $[\pi_L]$ denotes the class of $\pi_L$ modulo $(L_0^\times)^{p^n}$. By Lemma \ref{Neukirch} there exists $x' \in U_{L_0} / (U_{L_0})^{p^n}$ such that 
\[(x')^{\chinr(F_L)F_L-1}=c'. \]
Then we set $x=x'[\pi_L]^k$ and obtain
\[x^{\chinr(F_L)F_L-1}=(x'[\pi_L]^k)^{\chinr(F_L)F_L-1}
=c'[\pi_L]^{k(\chinr(F_L)-1)}=c'[\pi_L]^{\nu_L(c)}=c.\]
The commutativity of the diagram as well as the surjectivity of $\pi_{m,n}$ and $\sigma_{m,n}$ are evident. It remains to show that the projective system $(W_n)_n$ satisfies the Mittag-Leffler condition. To that end we prove that $\coker(\tau_{m,n})$ stabilizes for $n \ge \omega_L$ as $m$ tends to infinity.

By the Snake Lemma we obtain 
\[(L_0^\times)^{p^n}/(L_0^\times)^{p^{m}}\xrightarrow{\chinr(F_L)F_L-1}
\ker(\sigma_{m,n})\lra\coker(\tau_{m,n})\lra 0.\]
Since $n>\omega_L$, it is easy to see that
\[\ker(\sigma_{m,n})=(L_0^\times)^{p^n}/(L_0^\times)^{p^{m}},\]
so that we have to compute the cokernel of
\[(L_0^\times)^{p^n}/(L_0^\times)^{p^{m}} \stackrel{\psi}\lra (L_0^\times)^{p^n}/(L_0^\times)^{p^{m}},\]
where $\psi$ is induced by $\chinr(F_L)F_L-1$. Clearly,
\[\mathrm{im}(\psi) \sseq \left( \pi_L^{p^{\omega_L+n}\Z} \times \left( U_{L_0} \right)^{p^n} \right)  
\left( L_0^\times \right)^{p^m} / \left( L_0^\times \right)^{p^m}.\]
We claim that we have equality. Let $c^{p^n} = \pi_L^{lp^{n+\omega_L}} v^{p^n}$ with $l \in \Z$ and $v \in U_{L_0}$ represent an element of the right hand side and set $c := \pi_L^{lp^{\omega_L}} v$. By the first part of the proof we can construct $x \in L_0^\times / \left( L_0^\times \right)^{p^m}$ such that $x^{\chinr(F_L)F_L - 1} = [c]$. Then $\left( x^{p^n} \right)^{\chinr(F_L)F_L - 1} = [c^{p^n}]$, which proves our claim.

Finally we note that for $m \ge n + \omega_L$ one has $\left( L_0^\times\right)^{p^{m}} \sseq \pi_L^{p^{\omega_L+n}\Z} \times \left( U_{L_0} \right)^{p^n}$. Hence $\coker(\tau_{m,n})\cong (L_0^\times)^{p^n}/(\pi_L^{p^{\omega_L+n}\Z} \times \left( U_{L_0} \right)^{p^n})$ stabilizes.
\end{proof}

By the definition of $V_n$ we have a commutative diagram of short exact sequences
\[\xymatrix{
0\ar[rr] && V_{m}\ar[rr]\ar[d]^{\sigma_{m,n}} && L_0^\times/(L_0^\times)^{p^{m}} \ar[rr]^-{\nu_L}\ar[d]^{\pi_{m,n}}   
&& \Z/p^{\omega_L}\Z \ar[rr]\ar[d]^{=} && 0\\
0\ar[rr]&&V_{n}\ar[rr]&&L_0^\times/(L_0^\times)^{p^{n}}\ar[rr]^-{\nu_L}&&\Z/p^{\omega_L}\Z
\ar[rr]&&0.}\]
Note that when $\omega_L=0$ we simply interpret $\Z/p^{\omega_L}\Z$ as $0$. By Lemma \ref{Neu comm diag} the maps $\sigma_{m,n}$ are surjective and hence the projective system
$(V_n)_n$ satisfies the Mittag-Leffler condition. We define
\[W := \varprojlim_n W_n, \quad V := \varprojlim_n V_n,\]
and thus obtain by \cite[Prop.~3.5.7]{Weibel} two short exact sequences 
\[ 0 \lra W \lra \widehat{L_0^\times} \lra V \lra 0, \]
\[ 0 \lra V \lra \widehat{L_0^\times} \lra \Z / p^{\omega_L}\Z \lra 0.\]
Splicing together these two sequences we obtain
\[0 \lra W \lra \widehat{L_0^\times} \xrightarrow{\chinr(F_L)F_L - 1} \widehat{L_0^\times} \stackrel{\nu_L}\lra \Z / p^{\omega_L}\Z  \lra 0,\]
which we rewrite in the form
\begin{equation}\label{twisted ses 117}
0 \lra W(\chinr) \lra \widehat{L_0^\times}(\chinr) \xrightarrow{F_L - 1} \widehat{L_0^\times}(\chinr) \stackrel{\nu_L}\lra  \Z / p^{\omega_L}\Z (\chinr)  \lra 0.
\end{equation}

We clearly have $W(\chinr) \cong \widehat{L_0^\times}(\chinr)^{\Gal(L_0/L)}$ which by Lemma \ref{UL1hat} is isomorphic to $\calF(\p_L)$.

Note that we also have a natural $\Gal(L/K)$-action on $(\widehat{L_0^\times}(\chinr))^{\Gal(L_0/L)}$
and $\Z/p^{\omega_L}\Z(\chinr)$ defined by extending an element $\sigma\in \Gal(L/K)$ 
to an element $\tilde\sigma\in\Gal(L_0/K)$.

So we can formulate the following theorem.

\begin{thm}\label{FpnZpomega}
Assume $\chinr(F_L)\neq 1$. We have an exact sequence
\[0\lra(\widehat{L_0^\times}(\chinr))^{\Gal(L_0/L)}\lra \Lnr \xrightarrow{(F-1)\times 1}
\Lnr \stackrel{w_L}\lra\Z/p^{\omega_L}\Z(\chinr)\rightarrow 0\]
of $\Z_p[\Gal(L/K)]$-modules. Here $w_L$ denotes the sum of the valuations $\nu_L$.
\end{thm}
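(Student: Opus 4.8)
The plan is to read off the four-term sequence directly from the ``building block'' \eqref{twisted ses 117} by a telescoping argument on the $d := d_{L/K}$ coordinates of $\Lnr \cong (\widehat{L_0^\times})^{d}$. Set $\Phi := (F-1)\times 1$; the rule \eqref{rule 1} gives
\[\Phi[x_1,\dots,x_d] = \big[\,x_d^{F_L\chinr(F_L)}x_1^{-1},\; x_1x_2^{-1},\; \dots,\; x_{d-1}x_d^{-1}\,\big].\]
First I would compute $\ker\Phi$: vanishing of all but the first coordinate forces $x_1=\dots=x_d=:x$, and then the first coordinate forces $x^{F_L\chinr(F_L)-1}=1$, so $\ker\Phi$ is the diagonal copy $\Delta$ of $W(\chinr)\cong(\widehat{L_0^\times}(\chinr))^{\Gal(L_0/L)}$ (isomorphic to $\calF(\p_L)$ by Lemma~\ref{UL1hat}). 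Since $x^{F_L\chinr(F_L)}=x$ on $\Delta$, the element $(F\times1)$ acts there as the identity, so by \eqref{rule 1} and \eqref{rule 2} every $\sigma\in\Gal(L/K)$ acts on $\Delta$ diagonally via $x\mapsto x^{\tilde\sigma\,\chinr(\tilde\sigma)}$, which is exactly the natural action on $(\widehat{L_0^\times}(\chinr))^{\Gal(L_0/L)}$; this gives exactness at the first $\Lnr$, $\Z_p[\Gal(L/K)]$-equivariantly.

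Next I would treat the right half. A short telescoping computation gives $w_L\circ\Phi=0$ (the sum collapses to $(\chinr(F_L)-1)\nu_L(x_d)$, which is $\equiv 0\pmod{p^{\omega_L}}$), and $w_L$ is surjective since $w_L[\pi_L,1,\dots,1]=1$. For exactness in the middle, given $[\vec y]\in\Lnr$ I would solve $\Phi[\vec x]=[\vec y]$ by back-substitution through the coordinates $2,\dots,d$; the system then collapses to the single equation
\[x_1^{F_L\chinr(F_L)-1}=y_1\,(y_2\cdots y_d)^{F_L\chinr(F_L)}.\]
By the exact sequence \eqref{twisted ses 117} this is solvable for $x_1\in\widehat{L_0^\times}$ exactly when $\nu_L(y_1)+\chinr(F_L)\sum_{i\ge2}\nu_L(y_i)\equiv 0\pmod{p^{\omega_L}}$; and since $\omega_L=v_p(1-\chinr(F_L))$ forces $\chinr(F_L)\equiv 1\pmod{p^{\omega_L}}$, this congruence is equivalent to $\sum_{i=1}^{d}\nu_L(y_i)=w_L[\vec y]=0$. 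As the remaining $x_i$ are then determined by $x_1$, we get $\mathrm{im}\,\Phi=\ker w_L$.

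It remains to verify $\Z_p[\Gal(L/K)]$-equivariance of $\Phi$ and of $w_L$. The map $\Phi$ is multiplication by $(F,1)-(1,1)$ in $\Z_p[\Gal(K_0/K)\times\Gal(L/K)]$, which commutes with every $(1,\sigma)$, so $\Phi$ is $\Gal(L/K)$-linear. For $w_L$ one writes $(1,\sigma)=(F^n,1)(F^{-n},\sigma)$ with $F^{-n}$ and $\sigma$ agreeing on $L\cap K_0$, applies \eqref{rule 2} to the factor $(F^{-n},\sigma)$, and uses that Galois automorphisms preserve $\nu_L$ while $(F\times1)$ preserves $w_L$ modulo $p^{\omega_L}$; the net effect is $w_L((1,\sigma)[\vec y])=\chinr(\tilde\sigma)\,w_L[\vec y]$, i.e.\ the natural action on $\Z/p^{\omega_L}\Z(\chinr)$, which is well defined precisely because $\chinr(F_L)\equiv1\pmod{p^{\omega_L}}$. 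I expect the genuine arithmetic input, namely exactness in the middle, to be essentially immediate from \eqref{twisted ses 117}; the bulk of the work should be this last bookkeeping step, matching the abstract identifications of $\ker\Phi$ and $\coker\Phi$ with the prescribed $\Gal(L/K)$-module structures that come from the induced-module description \eqref{rule 1}--\eqref{rule 2} of $\Lnr$.
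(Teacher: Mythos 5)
Your proposal is correct and follows essentially the same route as the paper: identify the kernel of $(F-1)\times 1$ as the diagonal copy of $W(\chinr)\cong(\widehat{L_0^\times}(\chinr))^{\Gal(L_0/L)}$, reduce exactness at the third term to the solvability statement encoded in (\ref{twisted ses 117}) via a telescoping/back-substitution through the $d_{L/K}$ coordinates (your single equation for $x_1$ is exactly the paper's explicit preimage $[c_2\cdots c_d y,\dots,c_d y,y]$ read in reverse), and check $\Gal(L/K)$-equivariance by factoring $1\times\sigma=(F^{n}\times1)(F^{-n}\times\sigma)$ and using $\chinr(F_L)\equiv1\pmod{p^{\omega_L}}$. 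No gaps.
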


\begin{remark}
In the proof of Theorem  \ref{fundamental} we will use this result together with the isomorphism 
$(\widehat{L_0^\times}(\chinr))^{\Gal(L_0/L)} \cong \calF(\p_L)$ provided by Lemma \ref{UL1hat}.
\end{remark}

\begin{proof}
We start by showing that all the homomorphisms are $\Gal(L/K)$-invariant. Let $\sigma\in \Gal(L/K)$ and let $\tilde\sigma\in\Gal(L_0/K)$ be such that $\tilde\sigma|_L=\sigma$ and $\tilde\sigma|_{K_0}=F^{-n}$, where $n$ is an integer. Let $x\in (\widehat{L_0^\times}(\chinr))^{\Gal(L_0/L)}$. Then $x$ maps diagonally to the element $[x,x,\dots,x] \in \Lnr$ and we have
\[\begin{split}&(1\times\sigma)\cdot[x,x,\dots,x]=(F^{n}\times 1)(F^{-n}\times\sigma)\cdot[x,x,\dots,x]\\
&\qquad=(F^{n}\times 1)\cdot[x^{\tilde\sigma\chinr(\tilde\sigma)},x^{\tilde\sigma\chinr(\tilde\sigma)},\dots,x^{\tilde\sigma\chinr(\tilde\sigma)}]\\
&\qquad=[x^{\tilde\sigma \chinr(\tilde\sigma)F_L\chinr(F_L)},\dots,x^{\tilde\sigma \chinr(\tilde\sigma)F_L\chinr(F_L)},
   x^{\tilde\sigma\chinr(\tilde\sigma)},\dots,x^{\tilde\sigma\chinr(\tilde\sigma)}]\\
&\qquad=[x^{\tilde\sigma\chinr(\tilde\sigma)},x^{\tilde\sigma\chinr(\tilde\sigma)},\dots,x^{\tilde\sigma\chinr(\tilde\sigma)}].\end{split}\]
If we inject $\sigma\cdot x=x^{\tilde\sigma\chinr(\tilde\sigma)}$ diagonally into $\Lnr$ we obviously get the same element, so that the first map is $\Gal(L/K)$-invariant. The second map is $\Gal(L/K)$-invariant because $(F-1)\times 1$ and $1\times \sigma$ commute.

Let $[x_1,\dots,x_{d_{L/K}}] \in \Lnr$. Then
\[\begin{split}&(1\times\sigma)\cdot[x_1,x_2,\dots,x_{d_{L/K}}]=(F^{n}\times 1)(F^{-n}\times\sigma)\cdot[x_1,x_2,\dots,x_{d_{L/K}}]\\
&\qquad=(F^{n}\times 1)\cdot[x_1^{\tilde\sigma\chinr(\tilde\sigma)},x_2^{\tilde\sigma\chinr(\tilde\sigma)},\dots,x_{d_{L/K}}^{\tilde\sigma\chinr(\tilde\sigma)}]\\
&\qquad=[x_{d_{L/K}+1-n}^{\tilde\sigma\chinr(\tilde\sigma) F_L\chinr(F_L)},\dots,
   x_{d_{L/K}}^{\tilde\sigma\chinr(\tilde\sigma) F_L\chinr(F_L)},x_1^{\tilde\sigma\chinr(\tilde\sigma)},\dots,x_{d_{L/K}-n}^{\tilde\sigma\chinr(\tilde\sigma)}].\end{split}\]
Its valuation is:
\[\begin{split}& \chinr(\tilde\sigma)(\nu_L(x_1)+\dots+\nu_L(x_{d_{L/K}-n})+\chinr(F_L)(\nu_L(x_{d_{L/K}+1-n})+\dots+\nu_L(x_{d_{L/K}})))\\
&\qquad\equiv\chinr(\tilde\sigma)(\nu_L(x_1)+\dots+\nu_L(x_{d_{L/K}}))\pmod{p^{\omega_L}},
\end{split}\]
from which we deduce the $\Gal(L/K)$-invariance of $w_L$.

Next we show the exactness of the sequence. The first map is just the diagonal embedding.

Let $x=[x_1,x_2,\dots,x_{d_{L/K}}]\in\ker((F-1)\times 1)$. Then
\[\begin{split}((F-1)\times 1)(x)&
=\left[\frac{x_{d_{L/K}}^{F_L\chinr(F_L)}}{x_1},\frac{x_1}{x_2},\frac{x_2}{x_3},\dots,\frac{x_{d_{L/K}-1}}{x_{d_{L/K}}}\right]=1\end{split}\]
implies that $x_1=x_2=\dots=x_{d_{L/K}}$ and $x_1=x_{d_{L/K}}^{F_L\chinr(F_L)}=x_1^{F_L\chinr(F_L)}$. It follows that the kernel of $(F-1)\times 1$ is contained in the image of the first injection. Since the opposite inclusion is straightforward, we have exactness at the second non-trivial term of the sequence.

Let us consider exactness at the third non-trivial term. The following is just an adaptation of the arguments used to prove \cite[Lemma~11 and 12]{BleyDebeerst13}. Let $c=[c_1,c_2,\dots,c_{d_{L/K}}]\in \ker(w_L)$ for some $c_i\in \widehat {L_0^\times}(\chinr)$. By assumption $c_1c_2\dots c_{d_{L/K}}$ is in the kernel of $\nu_L$ and hence by (\ref{twisted ses 117}) there exists an element $y\in\widehat{L_0^\times}(\chinr)$ such that $y^{\chinr(F_L)F_L-1}=c_1c_2\dots c_{d_{L/K}}$. Now we set 
\[x=[c_2c_3\dots c_{d_{L/K}}y,c_3\dots c_{d_{L/K}}y,\dots,c_{d_{L/K}-1}c_{d_{L/K}}y,c_{d_{L/K}}y,y]\in \Lnr.\] 
Then
\[\begin{split}
((F-1)\times 1)(x)&=\left[\frac{x_{d_{L/K}}^{F_L\chinr(F_L)}}{x_1},\frac{x_1}{x_2},\frac{x_2}{x_3},\dots,\frac{x_{d_{L/K}-1}}{x_{d_{L/K}}}\right]\\
&=\left[\frac{y^{F_L\chinr(F_L)}}{c_2c_3\dots c_{d_{L/K}}y},\frac{c_2c_3\dots 
  c_{d_{L/K}}y}{c_3c_4\dots c_{d_{L/K}}y},\frac{c_3c_4\dots c_{d_{L/K}}y}{c_4c_5\dots c_{d_{L/K}}y},\dots,\frac{c_{d_{L/K}}y}{y}\right]\\
&=\left[\frac{y^{F_L\chinr(F_L)-1}}{c_2c_3\dots c_{d_{L/K}}},c_2,c_3,\dots,c_{d_{L/K}}\right]\\
&=\left[c_1,c_2,c_3,\dots,c_{d_{L/K}}\right].
\end{split}\]
The opposite inclusion is easy by the definition of $\omega_L$. This proves exactness in the third term.

The surjectivity of the last map is clear.
\end{proof}

We let $M/L/K$ be finite extensions such that both $M/K$ and $L/K$ are Galois. We have a canonical  inclusion
\begin{eqnarray*}
\iota = \iota_{M/L} \colon \Lnr &\lra& \Mnr, \\ 
x &\mapsto& [x^{F_L^{d_{M/L}-1}\chinr(F_L)^{d_{M/L}-1}},x^{F_L^{d_{M/L}-2}\chinr(F_L)^{d_{M/L}-2}},\dots,x],
\end{eqnarray*} 
where $x=[x_1,\dots,x_{d_{L/K}}] \in \left( \widehat{L_0^\times} \right)^{d_{L/K}}$ and the exponents of $x$ have to be understood componentwise. There are three possibilities:
\begin{enumerate}
\item $\chinr(F_M)\neq 1$ and $\chinr(F_L)\neq 1$;
\item $\chinr(F_M)=1$ and $\chinr(F_L)\neq 1$;
\item $\chinr(F_M)=1$ and $\chinr(F_L)=1$.
\end{enumerate}
In case 1 we verify by a long but straightforward computation that the diagram of exact rows
\[\xymatrix@C-=0.5cm{0\ar[r]&(\widehat{L_0^\times}(\chinr))^{\Gal(L_0/L)}\ar[r]\ar[d]^{\subseteq}&\Lnr\ar[rr]^-{(F-1)\times 1}\ar[d]^{{\iota_{M/L}}}&&
\Lnr\ar[r]^-{w_L}\ar[d]^{{\iota_{M/L}}}&\Z/p^{\omega_L}\Z(\chinr)\ar[r]
\ar[d]^{e_{M/L}\sum_{i=0}^{d_{M/L}-1}F_L^i}&0\\
0\ar[r]&(\widehat{M_0^\times}(\chinr))^{\Gal(M_0/M)}\ar[r]&\Mnr\ar[rr]^-{(F-1)\times 1}&&\Mnr\ar[r]^-{w_M}&\Z/p^{\omega_M}\Z(\chinr)\ar[r]&0}\]
commutes.

In case 2 we have
\[\begin{split}
w_M(\iota_{M/L}(x))&=w_M(x)\sum_{i=0}^{d_{M/L}-1}\chinr(F_L)^i=w_M(x)\cdot\frac{\chinr(F_L)^{d_{M/L}}-1}{\chinr(F_L)-1}\\
&=w_M(x)\frac{\chinr(F_M)-1}{\chinr(F_L)-1}=0.\end{split}\]
Therefore we obtain a commutative diagram
\[\xymatrix@C-=0.5cm{0\ar[r]&(\widehat{L_0^\times}(\chinr))^{\Gal(L_0/L)}\ar[r]\ar[d]^{\subseteq}&\Lnr\ar[rr]^-{(F-1)\times 1}\ar[d]^{{\iota_{M/L}}}&&
\Lnr\ar[r]^-{w_L}\ar[d]^{{\iota_{M/L}}}&\Z/p^{\omega_M}\Z(\chinr)\ar[r]
\ar[d]^{0}&0\\
0\ar[r]&(\widehat{M_0^\times}(\chinr))^{\Gal(M_0/M)}\ar[r]&\Mnr\ar[rr]^-{(F-1)\times 1}&&\Mnr\ar[r]^-{w_M}&\Z_p(\chinr)\ar[r]&0.}\]
Note that the bottom exact sequence comes from Theorem \ref{Serrethm}.

In case 3 our diagram takes the form
\[\xymatrix@C-=0.5cm{0\ar[r]&(\widehat{L_0^\times}(\chinr))^{\Gal(L_0/L)}\ar[r]\ar[d]^{\subseteq}&\Lnr\ar[rr]^-{(F-1)\times 1}\ar[d]^{{\iota_{M/L}}}&&
\Lnr\ar[r]^-{w_L}\ar[d]^{{\iota_{M/L}}}&\Z_p(\chinr)\ar[r]
\ar[d]^{[M:L]}&0\\
0\ar[r]&(\widehat{M_0^\times}(\chinr))^{\Gal(M_0/M)}\ar[r]&\Mnr\ar[rr]^-{(F-1)\times 1}&&\Mnr\ar[r]^-{w_M}&\Z_p(\chinr)\ar[r]&0.}\]

\begin{lemma}\label{FpKLurexact}
(a) If $\chinr(F_L)\neq 1$ for every finite Galois extension $L/K$, then we have an exact sequence
\[0\lra\varinjlim_L(\widehat{L_0^\times}(\chinr))^{\Gal(L_0/L)}\lra\varinjlim_L\Lnr\xrightarrow{(F-1)\times 1}\varinjlim_L\Lnr\lra 0,\]
where the direct limit is taken over all finite Galois extensions $L/K$.

(b) If $\chinr(F_L)=1$ for some finite Galois extension $L/K$, then we have an exact sequence
\[0\ra\varinjlim_L(\widehat{L_0^\times}(\chinr))^{\Gal(L_0/L)}\ra\varinjlim_L\Lnr\xrightarrow{(F-1)\times 1}\varinjlim_L\Lnr\ra\Qp(\chinr)\ra0.\]
\end{lemma}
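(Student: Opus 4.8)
The plan is to pass to the direct limit over all finite Galois extensions $L/K$ in the four-term exact sequences furnished by Theorem \ref{FpnZpomega} (case $\chinr(F_L)\neq 1$) and Theorem \ref{Serrethm} (case $\chinr(F_L)=1$), using the transition maps $\iota_{M/L}$ together with the three commutative diagrams established just above. Since the direct limit functor is exact on the category of $\Z_p[\Gal(L/K)]$-modules (filtered colimits are exact), the limit of the four-term sequence is again exact, so the whole content of the lemma is to identify the fourth term $\varinjlim_L C_L$, where $C_L$ denotes $\Z/p^{\omega_L}\Z(\chinr)$ or $\Z_p(\chinr)$ as appropriate, and to check that the system is genuinely directed (which it is, since any two finite Galois extensions of $K$ are contained in a common one).

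For part (a), I would argue that $\varinjlim_L \Z/p^{\omega_L}\Z(\chinr) = 0$. By the remark in the Introduction, $\omega_M = v_p(1-\chinr(F_M)) = v_p(1 - \chinr(F_L)^{d_{M/L}})$, and for a suitable tower of extensions (e.g. taking $M$ so that $d_{M/L}$ is divisible by a large power of $p$) one has $\omega_M \to \infty$; but this alone does not kill the colimit, so the key point is rather the transition map. From the case-1 diagram the transition map on the fourth terms is $e_{M/L}\sum_{i=0}^{d_{M/L}-1}F_L^i$. Choosing $M$ with $p \mid e_{M/L}$ (which is possible, for instance by adjoining $p$-power roots of a uniformizer, or via a cyclic totally ramified $p$-extension) makes this map divisible by $p$ on the source, hence the composite into $\Z/p^{\omega_M}\Z(\chinr)$ followed by enough further transition maps eventually annihilates any fixed element. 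Formally: given $[a]\in \Z/p^{\omega_L}\Z(\chinr)$, choose a chain $L = L^{(0)}\subseteq L^{(1)}\subseteq \cdots$ with each $p\mid e_{L^{(j+1)}/L^{(j)}}$; after $\omega_L$ steps the image of $[a]$ in $\Z/p^{\omega_{L^{(\omega_L)}}}\Z(\chinr)$ is $0$. Hence $\varinjlim_L \Z/p^{\omega_L}\Z(\chinr) = 0$ and the four-term sequence degenerates to the asserted three-term exact sequence.

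For part (b), once $\chinr(F_{L_0})=1$ for some $L_0$, then $\chinr(F_L)=1$ for every finite Galois $L\supseteq L_0$ that is cofinal in the system, so we are in case 3 for all sufficiently large pairs $M/L$, where the transition map on the fourth terms $\Z_p(\chinr)$ is multiplication by $[M:L]$. Thus $\varinjlim_L \Z_p(\chinr) = \Z_p(\chinr)\otimes_{\Z_p}\varinjlim_L \tfrac{1}{[M:L]}\Z_p = \Z_p(\chinr)\otimes_{\Z_p}\Q_p = \Q_p(\chinr)$, since the degrees $[M:L]$ range through all positive integers (again because $K$ admits finite extensions of every degree). This yields the claimed four-term exact sequence with $\Q_p(\chinr)$ on the right; one should also check that the transition maps into this term from the very first few $L$ (those with $\chinr(F_L)\neq 1$, before entering the cofinal range) are compatible, but by cofinality of $\{L \supseteq L_0\}$ the colimit is unchanged if we restrict to that subsystem, so no separate argument is needed.

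The main obstacle I anticipate is not the exactness of the colimit — that is formal — but the careful bookkeeping for the fourth term in part (a): one must verify that the transition maps $e_{M/L}\sum_{i=0}^{d_{M/L}-1}F_L^i$ really can be made divisible by an arbitrarily large power of $p$ along a cofinal subsystem, which forces one to exhibit towers of $p$-adic fields with $p \mid e_{M/L}$ at each step and to confirm that such towers are cofinal among all finite Galois extensions of $K$. This is where the hypothesis that we range over \emph{all} finite Galois extensions (rather than, say, a fixed $\Z_p$-tower) is essential.
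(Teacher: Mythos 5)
Your proposal is correct and follows the paper's proof in all essentials: both arguments pass to the filtered colimit of the four-term sequences from Theorems \ref{FpnZpomega} and \ref{Serrethm} via the three commutative diagrams, and reduce to computing $\varinjlim_L \Z/p^{\omega_L}\Z(\chinr)$ (resp.\ $\varinjlim_L \Z_p(\chinr)$), the paper killing the former in a single step by choosing $M$ with $p^{\omega_L}\mid e_{M/L}$ together with the identity $\omega_M=\omega_L+v_p(1+\chinr(F_L)+\cdots+\chinr(F_L)^{d_{M/L}-1})$, where you iterate $\omega_L$ steps each with $p\mid e_{L^{(j+1)}/L^{(j)}}$ --- the same computation. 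The one point to tidy is the existence of such steps inside the directed system of extensions Galois over $K$: adjoining $p$-power roots of a uniformizer need not produce an extension Galois over $K$, but $M=L(\mu_{p^n})$ for $n$ large is Galois over $K$ whenever $L$ is and has $v_p(e_{M/L})$ arbitrarily large, which is all that is needed.
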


\begin{proof}
For the proof of (a) we use  the diagram of case 1. Since the direct limit functor is exact 
it suffices to show that $\varinjlim_L \left( \Z / p^{\omega_L}\Z (\chinr) \right)$ is trivial.

We first note that
\[\omega_M = v_p\!\left( \frac{1-\chinr(F_M)}{1-\chinr(F_L)}\!\cdot\!(1-\chinr(F_L)) \right) = \omega_L + v_p( 1 + \chinr(F_L) + \ldots + \chinr(F_L)^{d_{M/L}-1}).\]
Let now $x \in \Z / p^{\omega_L}\Z (\chinr)$. Then we let $M/K$ be a finite Galois extension such that $L \sseq M$ and $p^{\omega_L} | e_{M/L}$. Then
\[e_{M/L} (1 + F_L + \ldots + F_L^{d_{M/L}-1}) x = e_{M/L} (1 + \chinr(F_L) + \ldots + \chinr(F_L)^{d_{M/L}-1})x\]
and, since 
\[\begin{split}
& v_p( e_{M/L} (1 + \chinr(F_L) + \ldots + \chinr(F_L)^{d_{M/L}-1}) ) \\
&\qquad\qquad\ge \omega_L + v_p( 1 + \chinr(F_L) + \ldots + \chinr(F_L)^{d_{M/L}-1}) = \omega_M,
\end{split}\]
we see that $x$ becomes trivial in $\Z / p^{\omega_M}\Z (\chinr)$.

The proof of (b) is achieved by taking the direct limit over the diagrams of cases 2 and 3. The result follows since it is straightforward to show that $\varinjlim_L\Z_p(\chinr) \cong \Qp(\chinr)$.

\end{proof}

\begin{lemma}\label{LurGcanNur}
We have
\[\left( \Mnr \right)^{\Gal(M/L)}=\iota_{M/L}\left( \Lnr \right).\]
\end{lemma}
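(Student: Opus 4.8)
The plan is to establish the two inclusions separately, the inclusion ``$\sseq$'' being the substantial one.

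For ``$\iota_{M/L}(\Lnr)\sseq(\Mnr)^{\Gal(M/L)}$'' it suffices to check that $\iota_{M/L}(x)$ is fixed by every $\sigma\in\Gal(M/L)$, which is a direct computation with the rules (\ref{rule 1}) and (\ref{rule 2}): $\sigma$ acts on $\Mnr\cong(\widehat{M_0^\times})^{d_{M/K}}$ by a componentwise Galois twist composed with a cyclic permutation of the $d_{M/L}$ blocks of length $d_{L/K}$, and on a tuple of the special shape $\iota_{M/L}(x)$ the block permutation merely relabels the blocks $x,x^{F_L\chinr(F_L)},\dots$ while the Galois twist compensates. This is the same bookkeeping that made the ``case~1'' diagram above commute; equivalently, $\iota_{M/L}$ is equivariant for the $\Gal(M/K)$-action on $\Lnr$ inflated along $\Gal(M/K)\twoheadrightarrow\Gal(L/K)$, an action under which $\Gal(M/L)$ is trivial.

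For the reverse inclusion I would argue homologically. By Lemma~\ref{LurInd} the module $\Mnr$ is $\Gal(M/K)$-cohomologically trivial, hence also $\Gal(M/L)$-cohomologically trivial, so $\hat H^{0}(\Gal(M/L),\Mnr)=0$ and therefore $(\Mnr)^{\Gal(M/L)}=N_{M/L}\cdot\Mnr$, where $N_{M/L}=\sum_{\sigma\in\Gal(M/L)}\sigma$. It then remains to show $N_{M/L}\cdot\Mnr\sseq\iota_{M/L}(\Lnr)$. Pick a lift $\sigma_0\in\Gal(M/L)$ of the Frobenius of the maximal unramified subextension of $M/L$; then $\Gal(M/L)=\coprod_{j=0}^{d_{M/L}-1}\sigma_0^{j}I_{M/L}$, and since $I_{M/L}$ is normal in $\Gal(M/L)$ one has $N_{M/L}=\bigl(\sum_{j=0}^{d_{M/L}-1}\sigma_0^{j}\bigr)\cdot N_{I_{M/L}}$. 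Every element of $I_{M/L}$ fixes the maximal unramified subextension of $M/L$, which contains $M\cap K_0$, and $\chinr$ is unramified; hence such an element acts on $\Mnr\cong(\widehat{M_0^\times})^{d_{M/K}}$ componentwise through the identification $I_{M/L}\cong\Gal(M_0/L_0)$, so that $N_{I_{M/L}}$ sends $[y_1,\dots,y_{d_{M/K}}]$ to $[N_{M_0/L_0}(y_1),\dots,N_{M_0/L_0}(y_{d_{M/K}})]$, a tuple with all entries in $\widehat{L_0^\times}$. Finally one computes the action of $\sum_{j}\sigma_0^{j}$ on tuples with entries in $\widehat{L_0^\times}$ via (\ref{rule 1}) and (\ref{rule 2}) --- $\sigma_0$ contributes a componentwise Frobenius twist (from the rule~(\ref{rule 2}) part) and a cyclic shift of the $d_{M/L}$ blocks (from the rule~(\ref{rule 1}) part), with a factor $F_M\chinr(F_M)$ appearing on wrap-around --- and checks that the outcomes are exactly the tuples $\iota_{M/L}(x)$, $x\in\Lnr$, characterised by having $m$-th block equal to $x^{(F_L\chinr(F_L))^{d_{M/L}-m}}$.

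The step I expect to be the main obstacle is this last verification: one must check that the Frobenius twists accumulated along $\sigma_0,\sigma_0^{2},\dots,\sigma_0^{d_{M/L}-1}$, together with the wrap-around factors, combine to give precisely the exponents $F_L^{\,d_{M/L}-m}\chinr(F_L)^{\,d_{M/L}-m}$ from the definition of $\iota_{M/L}$, and that the relation obtained after $d_{M/L}$ full shifts is consistent. A clean way to organise it is to treat first the two extreme cases --- $M/L$ unramified (then $I_{M/L}=1$ and $N_{M/L}=\sum_{j}\sigma_0^{j}$ is a pure block shift) and $M/L$ totally ramified (then $d_{M/L}=1$, $\sigma_0\in I_{M/L}$, and $N_{M/L}=N_{I_{M/L}}$ acts componentwise as $N_{M_0/L_0}$) --- and then to pass through the inertia filtration $L\sseq M^{\mathrm{ur}}\sseq M$. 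As an alternative to the homological reduction, one can argue directly with an element $[z_1,\dots,z_{d_{M/K}}]\in(\Mnr)^{\Gal(M/L)}$: its $I_{M/L}$-invariance forces each $z_i$ to be $\Gal(M_0/L_0)$-fixed, and its $\sigma_0$-invariance then expresses every block in terms of the last one and, via the wrap-around, pins the last block down to an element of $\Lnr$, yielding exactly the shape of $\iota_{M/L}(x)$.
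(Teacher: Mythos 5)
Your proposal is correct, and for the substantial inclusion it takes a genuinely different route from the paper. The paper argues directly with an arbitrary invariant element $[x_1,\dots,x_{d_{M/L}}]\in\left(\Mnr\right)^{\Gal(M/L)}$: invariance under the lifts of $I_{M/L}\cong\Gal(M_0/L_0)$ (for which $\chinr(\tilde\sigma)=1$) forces each block to have entries in $\widehat{L_0^\times}$, and invariance under a Frobenius lift $\tau$ then pins the blocks to the shape $\iota_{M/L}(x)$ with $x$ the last block --- this is precisely the ``alternative'' direct argument you sketch in your final sentences. Your primary argument instead invokes the cohomological triviality of $\Mnr$ (Lemma \ref{LurInd}, proved earlier and independently of the present lemma, so there is no circularity) to replace invariants by norms, and then computes $N_{M/L}=\bigl(\sum_{j}\sigma_0^{j}\bigr)N_{I_{M/L}}$ explicitly; the two verifications this requires are exact analogues of the paper's two steps, read off the image of the norm rather than off an unknown invariant element. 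The final computation you flag does work out: for a tuple $w=[w_1,\dots,w_{d_{M/L}}]$ with entries in $\widehat{L_0^\times}$, the $i$-th block of $\prod_j\sigma_0^j(w)$ is $\prod_k w_k^{(F_L\chinr(F_L))^{k-i}}$ (negative exponents arising from the wrap-around factor $(F_M\chinr(F_M))^{-1}=(F_L\chinr(F_L))^{-d_{M/L}}$), which equals $x^{(F_L\chinr(F_L))^{d_{M/L}-i}}$ for $x$ the last block; so the image lands in $\iota_{M/L}(\Lnr)$, which is all you need --- you do not need the image of the norm to be \emph{exactly} $\iota_{M/L}(\Lnr)$, so the word ``exactly'' in your write-up is an overstatement but a harmless one. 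What the homological reduction buys is that you never manipulate an unknown invariant element; what it costs is the extra (but already available) input of cohomological triviality. The easy inclusion is handled by the same block-shift-versus-componentwise-twist bookkeeping in both arguments.
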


\begin{proof}
For the proof we set $\tilde{d} := d_{M/L}$ and $d := d_{L/K}$. As usual we identify $\Mnr$ and $(\widehat{M_0^\times})^{d\tilde d}$, the $\Gal(K_0/K)\times\Gal(M/K)$-action being described by (\ref{rule 1}) and (\ref{rule 2}). Let us consider $[x_1,x_2,\dots,x_{\tilde d}]\in \left( \Mnr \right)^{\Gal(M/L)}$ 
with $x_1,x_2,\dots,x_{\tilde d}\in ( \widehat{M_0^\times})^{d}$. 

Let $\tilde\sigma\in\Gal(M_0/L_0)$ and $\sigma=\tilde\sigma|_M \in \Gal(M/L)$. We have
\[(1\times\sigma)[x_1,x_2,\dots,x_{\tilde d}]=[x_1^{\tilde\sigma\chinr(\tilde\sigma)},x_2^{\tilde\sigma\chinr(\tilde\sigma)},\dots,x_{\tilde d}^{\tilde\sigma\chinr(\tilde\sigma)}].\]
Since $\tilde\sigma|_{K_0}=1$ it follows that $\chinr(\tilde\sigma)=1$. So from the above calculations, recalling that $[x_1,x_2,\dots,x_{\tilde d}]$ has to be fixed by the action of $1\times\sigma$, we get $x_i^{\tilde\sigma}=x_i$ for $i=1,\dots,\tilde d$. Since this holds for every $\tilde\sigma\in\Gal(M_0/L_0)$ we conclude that $x_{i}\in (\widehat{L_0^\times})^d=\Lnr$ for $i=1,\ldots, \tilde d$.

Now let us take $\tilde\tau\in\Gal(M_0/L)$ such that $\tilde\tau|_{L_0}=F_L$ and set 
$\tau:=\tilde\tau|_M \in \Gal(M/L)$. In particular the element $[x_1,x_2,\dots,x_{\tilde d}]$ must be fixed by the action of $\tau^{-1}$. We have
\[\begin{split}
[x_1,x_2,\dots,x_{\tilde d}]&=(1\times \tau^{-1})[x_1,x_2,\dots,x_{\tilde d}]\\&=(F^d\times 1)(F^{-d}\times \tau^{-1})[x_1,x_2,\dots,x_{\tilde d}]\\
&=(F^d\times 1)[x_1^{\tilde\tau^{-1}\chinr(\tilde\tau)^{-1}},x_2^{\tilde\tau^{-1}\chinr(\tilde\tau)^{-1}},\dots,x_{\tilde d}^{\tilde\tau^{-1}\chinr(\tilde\tau)^{-1}}]\\
&=[x_{\tilde d}^{\tilde\tau^{-1}\chinr(\tilde\tau)^{-1}F_M\chinr(F_M)},x_1^{\tilde\tau^{-1}\chinr(\tilde\tau)^{-1}},\dots,x_{\tilde d-1}^{\tilde\tau^{-1}\chinr(\tilde\tau)^{-1}}].
\end{split}\]
Since $x_i \in \Lnr=(\widehat{L_0^\times})^d$ for $i=1,\ldots, \tilde d$, we can always substitute $\tilde\tau$ by $\tilde\tau|_{L_0}=F_L$ and $F_M$ by $F_M|_{L_0}=F_L^{\tilde d}$. Setting $x=x_{\tilde d}$ and comparing all the components in the above equality, except for the first one, we obtain
\[[x_1,x_2,\dots,x_{\tilde d}]=[x^{F_L^{\tilde d-1}\chinr(F_L)^{\tilde d-1}},x^{F_L^{\tilde d-2}\chinr(F_L)^{\tilde d-2}},\dots,x].\]
Therefore we have proved that
\[[x_1,x_2,\dots,x_{\tilde d}]=\iota_{M/L}(x).\]

For the opposite inclusion we take $x \in \Lnr$ and have to show that $y := \iota_{M/L}(x)$ is fixed by all $\sigma \in \Gal(M/L)$. As before we let $\tilde\tau \in \Gal(M_0/L)$ denote an element such that $\tilde\tau|_{L_0} = F_L$ and set $\tau := \tilde\tau|_M \in \Gal(M/L)$. Then
\[\begin{split}
(1 \times \tau^{-1}) y &= (F^d\times 1)(F^{-d}\times \tau^{-1})[x^{F_L^{\tilde d-1}\chinr(F_L)^{\tilde d-1}},x^{F_L^{\tilde d-2}\chinr(F_L)^{\tilde d-2}},\dots,x]\\
&=(F^d\times 1)[x^{F_L^{\tilde d-2}\chinr(F_L)^{\tilde d-2}},x^{F_L^{\tilde d-3}\chinr(F_L)^{\tilde d-3}},\dots,x^{F_L^{-1}\chinr(F_L)^{-1}}]\\
&=[x^{F_L^{-1}\chinr(F_L)^{-1}F_M\chinr(F_M)},x^{F_L^{\tilde d-2}\chinr(F_L)^{\tilde d_M-2}},\dots,x]\\
&=\iota_{M/L}(x) = y.
\end{split}\]
Let now $\sigma \in \Gal(M/L)$ be arbitrary. Then there exists $i \in \Z$ such that $\sigma\tau^i |_{M \cap L_0} = \id$. We then have $(1 \times \sigma\tau^i)y = y$ and the result follows from the above computation and $(1 \times \sigma) = (1 \times \sigma\tau^i)(1 \times \tau^{-i})$.
\end{proof}

From Lemma \ref{LurGcanNur} it is immediate that 
\begin{equation}\label{GN invariants}
\left( \varinjlim_L\Lnr \right)^{G_N} = \Nnr,
\end{equation}
where the direct limit is taken over all finite Galois extensions $L/K$. From now on, we will mainly be interested in the case $\chinr|_{G_N}\neq 1$, since the corresponding results for the case $\chinr|_{G_N}=1$ are known. Under the assumption $\chinr|_{G_N}\neq 1$, we see that
\begin{equation}\label{yan 1}
(\Qp(\chinr))^{G_N} = 0.
\end{equation}

\begin{lemma}\label{acyclic}
The $G_K$-modules $\varinjlim_L \Lnr$ and $\Qp(\chinr)$ are acyclic with respect to the fixed point functor $A \mapsto A^{G_N}$.
\end{lemma}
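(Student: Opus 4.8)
The assertion amounts to the vanishing $H^i(G_N,\varinjlim_L\Lnr)=0$ and $H^i(G_N,\Qp(\chinr))=0$ for every $i\geq 1$, continuous cohomology being understood as in the definition of $R\Gamma(N,-)$. I would first dispose of $\Qp(\chinr)$. By Lemma~\ref{FpKLurexact} this module only intervenes in case~(b), i.e. when $\chinr(F_L)=1$ for some finite Galois $L/K$; this forces $u=\chinr(F_K)$ to be a root of unity, and since $p$ is odd the torsion subgroup of $\Zp^\times$ is $\mu_{p-1}$, so $\chinr$ has finite image of order prime to $p$. Hence $\Qp(\chinr)$ is a discrete $G_N$-module on which $G_N$ acts through a finite cyclic quotient $C$ with $|C|\mid p-1$. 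Writing $H^i(G_N,\Qp(\chinr))=\varinjlim_U H^i(G_N/U,\Qp(\chinr))$, the limit running over the open normal subgroups $U\subseteq\ker(\chinr|_{G_N})$, every term with $i\geq 1$ vanishes because $G_N/U$ is a finite group and $\Qp(\chinr)$ is a $\Q$-vector space, on which $|G_N/U|$ acts invertibly. (In case~(a) the module $\Qp(\chinr)$ does not occur.)

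For $A:=\varinjlim_L\Lnr$ the plan is to compute $H^i(G_N,A)$ as $\varinjlim_L H^i(G_N,\Lnr)$ and to evaluate each $H^i(G_N,\Lnr)$ from the structure of $\Lnr$. Using the presentation $\Lnr=\Ind_{\Gal(L_0/K)}^{\calG}\!\big(\widehat{L_0^\times}(\chinr)\big)$ and Shapiro's lemma, $H^i(G_N,\Lnr)$ is expressed in terms of the $\Gal(L_0/N)$-cohomology (and conjugates thereof) of $\widehat{L_0^\times}(\chinr)$. I would then feed in the valuation short exact sequence $0\to\widehat{U^{(1)}_{L_0}}(\chinr)\to\widehat{L_0^\times}(\chinr)\xrightarrow{\nu_L}\Zp(\chinr)\to 0$ from the proof of Lemma~\ref{UL1hat}. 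The cohomological triviality of $L_0^\times$, hence of its $p$-completion $\widehat{L_0^\times}$, over the inertia group — the inputs already used in the proof of Lemma~\ref{LurInd}, via \cite[Lemma~4.9]{BreuPhd} — together with the surjectivity of $\chinr(F_L)F_L-1$ on $\widehat{U^{(1)}_{L_0}}$ supplied by Lemma~\ref{Neukirch}, shows that $\widehat{U^{(1)}_{L_0}}(\chinr)$ has trivial $\Gal(L_0/N)$-cohomology in positive degrees. Thus for $i\geq 1$ the cohomology of $\widehat{L_0^\times}(\chinr)$ reduces to that of $\Zp(\chinr)$, on which the inertia acts trivially and $F_L$ acts by $\chinr(F_L)$; the relevant positive-degree contribution is $\Zp(\chinr)/(\chinr(F_L)-1)\Zp(\chinr)\cong\Z/p^{\omega_L}\Z$ in degree $1$, the remaining finite contributions coming from the ramification part of $\Gal(L_0/N)$ being annihilated after passing to $\varinjlim_L$ by the divisibility built into the transition maps $\iota_{M/L}$.

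Finally, after transporting these identifications through the maps $\iota_{M/L}$, the surviving part of $\varinjlim_L H^i(G_N,\Lnr)$ is a colimit of the groups $\Z/p^{\omega_L}\Z$ whose transition maps are exactly those of the commutative diagram of Case~1 preceding Lemma~\ref{FpKLurexact} (multiplication by $e_{M/L}$ times the sum $\sum_{j=0}^{d_{M/L}-1}\chinr(F_L)^{j}$). The computation carried out in the proof of Lemma~\ref{FpKLurexact}(a), which shows $\varinjlim_L\Z/p^{\omega_L}\Z(\chinr)=0$, then forces this colimit to vanish, so $H^i(G_N,A)=0$ for all $i\geq 1$.

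The hard part will be the bookkeeping in the two reduction steps: justifying that continuous $G_N$-cohomology commutes with the colimit $\varinjlim_L\Lnr$ (the modules $\widehat{L_0^\times}$ being profinite rather than discrete $G_N$-modules), and then, after unwinding Shapiro's lemma and the inertia contribution in $\Gal(L_0/N)$-cohomology, matching the residual positive-degree term — together with its transition maps — with the colimit $\varinjlim_L\Z/p^{\omega_L}\Z(\chinr)$. Once this matching is in place, the vanishing is immediate from Lemma~\ref{FpKLurexact}(a), and the rest of the argument is simply an assembly of Serre's cohomological-triviality results (already invoked for Lemma~\ref{LurInd}), Lemma~\ref{Neukirch}, and that single colimit computation.
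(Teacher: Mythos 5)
Your treatment of $\Qp(\chinr)$ is acceptable (in case (b) the character $\chinr|_{G_N}$ has finite image, the module is discrete, and positive-degree cohomology of a finite group with coefficients in a $\Q$-vector space vanishes), though the paper disposes of it in one line: a uniquely divisible discrete module has trivial higher cohomology by \cite[Prop.~1.6.2]{NSW}.

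For $\varinjlim_L\Lnr$, however, what you have written is a plan rather than a proof, and the two items you yourself defer as ``the hard part'' --- commuting continuous $G_N$-cohomology with a filtered colimit of \emph{profinite} (non-discrete) modules, and identifying the transition maps on the residual groups $\Z/p^{\omega_L}\Z$ with those of the diagram preceding Lemma~\ref{FpKLurexact} --- are exactly the content of the statement; neither is routine, and neither is carried out. More importantly, the route itself works against the machinery the paper has already built. Lemma~\ref{LurInd} shows that $\Lnr$ is cohomologically trivial as a $\Gal(L/K)$-module, hence has vanishing higher cohomology over the subgroup $\Gal(L/N)$; the paper then invokes \cite[Prop.~1.5.1]{NSW} to write $H^i(N,\varinjlim_L\Lnr)=\varinjlim_L H^i(\Gal(L/N),\Lnr)$, and \emph{every individual term} of this colimit is already zero, so no bookkeeping of transition maps is required at all. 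By contrast, your plan produces a nonzero class $\Z/p^{\omega_L}\Z$ at each finite level and relies on killing it only in the limit; this conflates the cohomology of the individual module $\Lnr$ with the cohomology of the two-term complex built from it --- the torsion group $\Z/p^{\omega}\Z(\chinr)$ is $H^1$ of $C^\bullet_{N,\calF}$ (Theorem~\ref{FpnZpomega} and Corollary~\ref{calF coh}), where it genuinely survives, not an obstruction to acyclicity of the terms. So the gap is twofold: the crucial limit/cohomology interchange and transition-map identification are missing, and the detour through the valuation sequence, Lemma~\ref{Neukirch} and Lemma~\ref{FpKLurexact}(a) replaces the intended two-line argument via Lemma~\ref{LurInd} with a substantially harder one that is not completed.
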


\begin{proof}
By \cite[Prop.~1.5.1]{NSW} we have for $i >0$
\[H^i(N, \varinjlim_L\Lnr) = \varinjlim_L H^i(\Gal(L/N), \Lnr)\]
which is trivial by Lemma \ref{LurInd}. The module $\Qp(\chinr)$ is cohomologically trivial since it is divisible (see \cite[Prop.~1.6.2]{NSW}).
\end{proof}

We can now state and prove the main result of this subsection. 

\begin{thm}\label{fundamental}
If $\chinr|_{G_N}\neq 1$, then the complex
\[C^\bullet_{N, \calF} := \left[ \Nnr \xrightarrow{(F-1)\times 1} \Nnr \right]\]
with non-trivial modules in degree 0 and 1 represents $R\Gamma(N,\calF)$.
\end{thm}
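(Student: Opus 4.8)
Set $D := \varinjlim_L\Lnr$, the direct limit being taken over all finite Galois extensions $L/K$ with transition maps the inclusions $\iota_{M/L}$. Then $D$ is a $G_K$-module, the operator $(F-1)\times 1$ is a $G_K$-equivariant endomorphism of $D$ (by (\ref{rule 1}) the shift $F\times 1$ lies in the procyclic, hence central, direct factor $\Gal(K_0/K)$ of each $\calG$), one has $D^{G_N}=\Nnr$ by (\ref{GN invariants}), and $D$ is $G_N$-acyclic by Lemma \ref{acyclic}. Recall that $R\Gamma(N,\calF)$ is the $G_N$-cohomology of the coefficient module $\calF(\mathfrak m_{K^c})=\varinjlim_L\calF(\p_L)$. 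The strategy is to relate $\calF(\mathfrak m_{K^c})$, via Lemma \ref{FpKLurexact}, to a short complex whose terms lie among $D$ and $\Qp(\chinr)$, and then to apply $(-)^{G_N}$ termwise; since $\Qp(\chinr)$ is $G_N$-acyclic and, because $\chinr|_{G_N}\ne 1$, satisfies $(\Qp(\chinr))^{G_N}=0$ by (\ref{yan 1}), this produces the complex $[\Nnr\xrightarrow{(F-1)\times 1}\Nnr]=C^\bullet_{N,\calF}$ as a representative of $R\Gamma(N,\calF)$.

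First take case (a) of Lemma \ref{FpKLurexact}, that is $\chinr(F_L)\ne 1$ for every finite Galois $L/K$. Then Lemma \ref{UL1hat} identifies $(\widehat{L_0^\times}(\chinr))^{\Gal(L_0/L)}$ with $\calF(\p_L)$ for every $L$, so Lemma \ref{FpKLurexact}(a) is exactly a length-one resolution of $\calF(\mathfrak m_{K^c})$ by the $G_N$-acyclic module $D$,
\[0\lra\calF(\mathfrak m_{K^c})\lra D\xrightarrow{(F-1)\times 1}D\lra 0,\]
and applying $(-)^{G_N}$ gives $R\Gamma(N,\calF)\cong C^\bullet_{N,\calF}$ immediately, concentrated in degrees $0$ and $1$. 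In case (b), where $\chinr(F_L)=1$ for some $L$, Lemma \ref{FpKLurexact}(b) resolves instead the larger module $A:=\varinjlim_L(\widehat{L_0^\times}(\chinr))^{\Gal(L_0/L)}$, which by Lemma \ref{UL1hat} fits in an exact sequence $0\to\calF(\mathfrak m_{K^c})\to A\to\Qp(\chinr)\to 0$, and the resolution carries an additional $\Qp(\chinr)$-tail. One then argues that the discrepancy between this data and a resolution of $\calF(\mathfrak m_{K^c})$ itself is built entirely from $\Qp(\chinr)$-vector spaces, which are invisible to $G_N$-cohomology, so that chasing the resulting quasi-isomorphisms still yields
\[R\Gamma(N,\calF)=R\Gamma(G_N,\calF(\mathfrak m_{K^c}))\xrightarrow{\sim}R\Gamma(G_N,A)\xrightarrow{\sim}R\Gamma\bigl(G_N,[D\xrightarrow{(F-1)\times 1}D]\bigr)=C^\bullet_{N,\calF}.\]

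I expect the delicate point — and hence the main obstacle — to be precisely this reconciliation in case (b): one must verify that the additional modules produced by Lemma \ref{FpKLurexact}(b), and by the comparison diagrams of cases 1--3 preceding it, are $\Qp(\chinr)$-vector spaces, and here the hypothesis $\chinr|_{G_N}\ne 1$ enters essentially, both so that taking $G_N$-invariants termwise computes $R\Gamma(G_N,-)$ and so that $(\Qp(\chinr))^{G_N}=0$. Everything else reduces to the standard fact that a resolution by acyclic objects computes a derived functor, applied to the explicit resolution of Theorem \ref{FpnZpomega} after passage to the limit. As a byproduct, reading $H^0$ and $H^1$ of $C^\bullet_{N,\calF}$ off the finite-level sequence of Theorem \ref{FpnZpomega} at $L=N$ — legitimate because $\chinr|_{G_N}\ne 1$ forces $\chinr(F_N)\ne 1$ — recovers the computation of $H^i(N,T)$ in Lemma \ref{galois cohomology}(a), as announced in Remark \ref{our coh proof}.
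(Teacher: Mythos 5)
Your proposal is correct and follows essentially the same route as the paper: pass to the direct limit of the exact sequences of Theorem \ref{FpnZpomega} (i.e. Lemma \ref{FpKLurexact}), use the $G_N$-acyclicity of $\varinjlim_L\Lnr$ and $\Qp(\chinr)$ (Lemma \ref{acyclic}) together with $(\varinjlim_L\Lnr)^{G_N}=\Nnr$ and $(\Qp(\chinr))^{G_N}=0$, and invoke the principle that an acyclic resolution computes the derived functor (the paper implements this by comparing both the standard resolution and the explicit complex with a common injective resolution via \cite[Th.~XX.6.2]{LangAlgebra}). Your explicit attention to reconciling $\varinjlim_L(\widehat{L_0^\times}(\chinr))^{\Gal(L_0/L)}$ with $\varinjlim_L\calF(\p_L)$ in case (b) is a point the paper passes over quickly, and your divisibility/vanishing argument for the $\Qp(\chinr)$-discrepancy is the right way to close it.
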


\begin{proof}
We write $X^\bullet$ for the standard resolution (as defined in 
\cite[Sec.~1.2]{NSW}) and $I^\bullet$ for an injective resolution of $\varinjlim_L(\widehat{L_0^\times}(\chinr))^{\Gal(L_0/L)}$. Let $C^\bullet$ be the complex 
\[\varinjlim_L \Lnr \xrightarrow{(F-1) \times 1} \varinjlim_L \Lnr\lra\Qp(\chinr)\]
or the complex
\[\varinjlim_L \Lnr \xrightarrow{(F-1) \times 1} \varinjlim_L \Lnr,\]
according to whether $\chinr$ becomes trivial for some $G_L$ or not.

The $G$-modules $\Map(G^n,\varinjlim_L(\widehat{L_0^\times}(\chinr))^{\Gal(L_0/L)})$ appearing in $X^\bullet$  are induced in the sense of \cite[Sec.~1.3]{NSW}. By Proposition 1.3.7 of loc.cit. the standard resolution consists therefore of acyclic modules with respect to the fixed point functor. By \cite[Th.~XX.6.2]{LangAlgebra} there exists a morphism of complexes $X^\bullet \lra I^\bullet$ inducing the identity on $\varinjlim_L(\widehat{L_0^\times}(\chinr))^{\Gal(L_0/L)}$ and isomorphisms on cohomology. By Lemma \ref{FpKLurexact} and \ref{acyclic} we may apply \cite[Th.~XX.6.2]{LangAlgebra} once again and obtain a morphism of complexes $C^\bullet \lra I^\bullet$, also inducing the identity on $\varinjlim_L(\widehat{L_0^\times}(\chinr))^{\Gal(L_0/L)}$ and isomorphisms on cohomology. By Lemma \ref{UL1hat},
\[\left(\varinjlim_L(\widehat{L_0^\times}(\chinr))^{\Gal(L_0/L)}\right)^{G_N}\cong (\widehat{N_0^\times}(\chinr))^{\Gal(N_0/N)}\cong\calF(\p_{N}).\]
Therefore applying the $G_N$-fixed point functor together with (\ref{GN invariants}) and (\ref{yan 1}) shows that $C_{N, \calF}^\bullet = \left( C^\bullet \right)^{G_N}$ is quasi-isomorphic to $R\Gamma(N, \calF)$.
\end{proof}

\begin{coroll}
  \label{calF coh}
With the assumptions of Theorem \ref{fundamental} we have
\[H^i(N, \calF) =
\begin{cases}
  \calF(\p_N) & \text{if } i=0 \\ (\Z /p^\omega\Z)(\chinr) &\text{if } i=1 \\ 0 &\text{if } i \ne 0,1.
\end{cases}\]
\end{coroll}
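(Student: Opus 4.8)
The plan is to read off the cohomology of $R\Gamma(N,\calF)$ directly from the explicit representative $C^\bullet_{N,\calF} = [\Nnr \xrightarrow{(F-1)\times 1} \Nnr]$ produced by Theorem \ref{fundamental}, using the exact sequence of Theorem \ref{FpnZpomega} specialized to $L = N$. Since $\chinr|_{G_N} \ne 1$, we have $\chinr(F_N) \ne 1$, so $\omega = \omega_N$ is finite and Theorem \ref{FpnZpomega} applies with $L$ replaced by $N$, giving the four-term exact sequence
\[
0 \lra (\widehat{N_0^\times}(\chinr))^{\Gal(N_0/N)} \lra \Nnr \xrightarrow{(F-1)\times 1} \Nnr \xrightarrow{w_N} \Z/p^{\omega}\Z(\chinr) \lra 0.
\]

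First I would note that $H^0(N,\calF) = H^0(C^\bullet_{N,\calF}) = \ker((F-1)\times 1)$, which by the exactness of the above sequence at the second nontrivial term equals the image of the injection $(\widehat{N_0^\times}(\chinr))^{\Gal(N_0/N)} \hookrightarrow \Nnr$, hence is isomorphic to $(\widehat{N_0^\times}(\chinr))^{\Gal(N_0/N)}$. By Lemma \ref{UL1hat} (applied with $L = N$, using $\chinr|_{G_N} \ne 1$) this group is isomorphic via $\fFN$ to $\calF(\p_N)$. Next, $H^1(N,\calF) = H^1(C^\bullet_{N,\calF}) = \coker((F-1)\times 1)$. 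Splitting the four-term sequence into two short exact sequences at the middle term $\Nnr$, the cokernel of $(F-1)\times 1$ is identified with $\ker(w_N)^{\text{op}}$... more precisely, the image of $(F-1)\times 1$ equals $\ker(w_N)$, so $\coker((F-1)\times 1) = \Nnr/\ker(w_N) \cong \mathrm{im}(w_N) = \Z/p^{\omega}\Z(\chinr)$ by surjectivity of $w_N$. Finally, $C^\bullet_{N,\calF}$ is concentrated in degrees $0$ and $1$, so $H^i(N,\calF) = 0$ for $i \ne 0,1$ is immediate.

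The only genuine point requiring care is the verification that Theorem \ref{FpnZpomega} indeed applies to $L = N$: one needs $\chinr(F_N) \ne 1$, which follows from the hypothesis $\chinr|_{G_N} \ne 1$ since $\chinr$ is unramified on $G_N$, so it factors through $\Gal(\Nur/N)$ and is determined by its value on the Frobenius $F_N$; thus $\chinr|_{G_N} = 1$ iff $\chinr(F_N) = 1$. (This is exactly the content of the remark following (\ref{omega def}).) There is no real obstacle here: the corollary is a formal consequence of combining the representative from Theorem \ref{fundamental} with the exact sequence of Theorem \ref{FpnZpomega} and the identification in Lemma \ref{UL1hat}. I would also remark, as the excerpt itself points out (see Remark \ref{our coh proof} referenced after Lemma \ref{galois cohomology}), that this recovers part (a) of Lemma \ref{galois cohomology} via the standard shift relating $H^i(N,\calF)$ and $H^{i+1}(N,T)$ for $T = T_p\calF$.
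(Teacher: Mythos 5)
Your argument is correct and is exactly the paper's proof: the corollary is read off from the representative of Theorem \ref{fundamental} together with the four-term exact sequence of Theorem \ref{FpnZpomega} (for $L=N$) and the identification of Lemma \ref{UL1hat}. The paper simply states this as an immediate consequence; your write-up fills in the same routine details.
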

\begin{proof}
This is an immediate consequence of Theorems \ref{FpnZpomega} and \ref{fundamental}.
\end{proof}

In the sequel we always identify $R\Gamma(N, \calF)$ and $C^\bullet_{N, \calF}$. We again put $G := \Gal(N/K)$ and recall that a complex is said to be $\ZpG$-perfect, if it is quasi-isomorphic to a bounded complex of finitely generated $\ZpG$-projectives.

\begin{coroll}\label{Pbullet}
If $\chinr|_{G_N}\neq 1$, then the complex $R\Gamma(N,\calF)$ is $\ZpG$-perfect. More precisely, there exists a complex 
\[P^\bullet := [P^{-1}\lra P^{0}\lra P^1]\]
of finitely generated $\ZpG$-projectives together with a quasi-isomorphism $\eta \colon P^\bullet \lra R\Gamma(N, \calF)$.
\end{coroll}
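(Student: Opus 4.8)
\emph{Proof proposal.} The plan is to upgrade the explicit two--term representative $C^\bullet_{N,\calF}=[\,\Nnr\xrightarrow{(F-1)\times 1}\Nnr\,]$ of Theorem~\ref{fundamental} to a complex of finitely generated projective $\ZpG$--modules. First I would observe that $R\Gamma(N,\calF)$ is a perfect complex of $\ZpG$--modules: both terms of $C^\bullet_{N,\calF}$ are $\ZpG$--cohomologically trivial by Lemma~\ref{LurInd}, and by Corollary~\ref{calF coh} its cohomology is concentrated in degrees $0$ and $1$ and finitely generated over $\Zp$ (being $\calF(\p_N)$ in degree $0$ and the finite module $(\Z/p^\omega\Z)(\chinr)$ in degree $1$), so $C^\bullet_{N,\calF}\in D^{p}(\ZpG)$. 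Equivalently, one may invoke the standard fact that $R\Gamma(N,T)$ is perfect for every $\Zp$--free $T$ with continuous $G_K$--action --- Shapiro's lemma gives $R\Gamma(N,T)\simeq R\Gamma(K,\ZpG\tensor_\Zp T)$ with $\ZpG\tensor_\Zp T$ free over $\ZpG$, and $G_K$ has cohomological $p$--dimension $2$ --- together with the identification $R\Gamma(N,\calF)\simeq R\Gamma(N,T)[1]$, which holds because $C^\bullet_{N,\calF}$ is literally the shift of the complex $C^\bullet_{N,T}=[\,\Nnr\to\Nnr\,]$ placed in degrees $1,2$ of Theorem~\ref{main theorem 2}.

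Next I would cut the huge modules $\Nnr$ down to finitely generated ones. Since $H^1(C^\bullet_{N,\calF})=\coker((F-1)\times 1)$ is finite, one can choose a finitely generated $\ZpG$--submodule $D^1\sseq\Nnr$ of the degree--$1$ term with $D^1+\mathrm{im}((F-1)\times 1)=\Nnr$, and $D^1$ may moreover be taken $\ZpG$--free, because $\Nnr\cong\Ind_{I_{N/K}}^{G}\widehat{N_0^\times}$ contains free $\ZpG$--submodules of every finite rank. Put $D^0:=((F-1)\times 1)^{-1}(D^1)$; this is finitely generated since $\ker((F-1)\times 1)=\calF(\p_N)$ is finitely generated and $\ZpG$ is noetherian, and a direct check shows that the inclusion of the subcomplex $[\,D^0\to D^1\,]\hookrightarrow C^\bullet_{N,\calF}$ induces the identity on $H^0$ and an isomorphism on $H^1$, hence is a quasi--isomorphism. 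From the distinguished triangle $D^1[-1]\lra[\,D^0\to D^1\,]\lra D^0[0]\lra D^1[0]$, the perfectness of $R\Gamma(N,\calF)$, and the perfectness of the free module $D^1$, one deduces that the finitely generated module $D^0$ is perfect over $\ZpG$, hence $\mathrm{pd}_{\ZpG}D^0\le 1$.

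Finally I would resolve $[\,D^0\to D^1\,]$: choose a surjection $P^1\twoheadrightarrow D^1$ with $P^1$ finitely generated projective, then a surjection $P^0\twoheadrightarrow D^0\times_{D^1}P^1$ with $P^0$ finitely generated projective, and set $P^{-1}:=\ker(P^0\to D^0\times_{D^1}P^1)$. Since $D^1$ is free, $\ker(P^1\to D^1)$ is projective, so $D^0\times_{D^1}P^1$ has projective dimension at most $1$ and $P^{-1}$ is finitely generated projective; the tautological map $P^\bullet:=[\,P^{-1}\to P^0\to P^1\,]\lra[\,D^0\to D^1\,]$ is then a quasi--isomorphism, and composing with $[\,D^0\to D^1\,]\simeq R\Gamma(N,\calF)$ yields the desired $\eta$. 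The step I expect to be the crux is the choice of the finitely generated free submodule $D^1\sseq\Nnr$ surjecting onto $H^1$; it rests on the largeness of $\widehat{N_0^\times}$ as an $I_{N/K}$--cohomologically trivial module, and it is exactly here that the failure of $\calF(\p_N)$ and $(\Z/p^\omega\Z)(\chinr)$ to be perfect (Remark~\ref{remarkintro4}) gets absorbed into the extra bottom term $P^{-1}$. Alternatively, Paragraphs~2 and~3 may be replaced by the single citation that $R\Gamma(N,T)$ is represented by a complex of finitely generated projective $\ZpG$--modules in degrees $0,1,2$, whence $R\Gamma(N,\calF)=R\Gamma(N,T)[1]$ is represented by such a complex in degrees $-1,0,1$.
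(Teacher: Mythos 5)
Your route is sound in outline and genuinely different from the paper's. The paper never touches the explicit representative at this point: it cites Lang's Prop.~XXI.1.1 to produce a quasi-isomorphism $[A\to B]\to R\Gamma(N,\calF)$ with $A,B$ finitely generated and $B$ projective, then applies Prop.~XXI.1.2 with the family of cohomologically trivial modules (the terms $\Nnr$ being cohomologically trivial by Lemma~\ref{LurInd}) to conclude that $A$ is cohomologically trivial, resolves $A$ by two finitely generated projectives and splices. You instead carve a finitely generated subcomplex out of $C^\bullet_{N,\calF}$ and recover the projective-dimension bound on its degree-$0$ term from perfectness of the total complex plus a two-out-of-three argument; your final fibre-product resolution step is correct, and your passage from ``perfect'' to ``$\mathrm{pd}_{\ZpG}\le 1$'' silently uses the same standard fact (finite projective dimension $\Rightarrow$ cohomologically trivial $\Rightarrow$ $\mathrm{pd}\le 1$ for finitely generated $\ZpG$-modules) that powers the paper's argument, so it should be cited rather than asserted.

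The one step that does not hold up as written is the choice of $D^1$: you need a finitely generated \emph{free} (or at least finite projective dimension) $\ZpG$-submodule $D^1\sseq\Nnr$ with $D^1+\mathrm{im}((F-1)\times 1)=\Nnr$, and the justification ``$\Nnr$ contains free submodules of every finite rank'' does not deliver this, because the submodule generated by lifts of generators of the finite cokernel need not be free, and a free submodule chosen at random need not surject onto that cokernel. Freeness of $D^1$ is essential later (both for the two-out-of-three step and for projectivity of $\ker(P^1\to D^1)$), so this is a genuine gap, though a repairable one: either perturb the lifts by elements of $\mathrm{im}((F-1)\times 1)$ spanning a free module that meets them trivially after $\tensor_\Zp\Qp$ (using that $\Nnr\tensor_\Zp\Qp$ has infinite isotypic multiplicities and that $\mathrm{im}((F-1)\times 1)$ has finite index), or, more cleanly, drop the submodule requirement altogether: take $P^1:=\ZpG^r$ free with a map $\beta\colon P^1\to\Nnr$ hitting generators of the cokernel and use the pullback complex $[\{(x,q):((F-1)\times 1)(x)=\beta(q)\}\to P^1]$, which is finitely generated and quasi-isomorphic to $C^\bullet_{N,\calF}$ by the same computation you sketch. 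Finally, beware that your ``equivalent/alternative'' justification via $R\Gamma(N,\calF)\simeq R\Gamma(N,T)[1]$ is circular inside the paper: Theorem~\ref{fundamental 2} is proved afterwards using precisely the quasi-isomorphism $\eta$ of this corollary, so within the paper's logical order you must rely on your first argument (or re-derive that identification independently).
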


\begin{proof}
By \cite[Prop.~XXI.1.1]{LangAlgebra} and Theorem \ref{fundamental} there is a quasi-isomorphism $f\colon K^\bullet \lra R\Gamma(N, \calF)$, where $K^\bullet := [A \lra B]$ is centered in degree $0$ and $1$ with $B$ a $\ZpG$-projective module. As the cohomology groups $H^i(N, \calF)$ are finitely generated, the proof of \cite[Prop.~XXI.1.1]{LangAlgebra} actually shows that we can assume that $A$ and $B$ are both finitely generated.  By our Lemma \ref{LurInd} the module $\Nnr$ is cohomologically trivial. We hence  apply \cite[Prop.~XXI.1.2]{LangAlgebra} with $\mathfrak F$ being the family of cohomologically trivial modules. We obtain that $A$ is cohomologically trivial and hence has a two term resolution $0\lra P^{-1} \lra P^{0} \lra A\lra 0$ with finitely generated projective $\ZpG$-modules $P^{-1}$ and $P^{0}$. It follows that $R\Gamma(N,\calF)$ is quasi-isomorphic to $[P^{-1}\lra P^{0}\lra P^1]$ with $P^1 = B$.
\end{proof}

\end{subsection}

\begin{subsection}{\texorpdfstring{From $R\Gamma(N,\calF)$ to $R\Gamma(N,T)$}{From R Gamma(N,F) to R Gamma(N,T)}}

If $C^\bullet$ is a complex and $p \in \Z$, then $C^\bullet[p]$ denotes the shifted complex, that is $C^i[p] = C^{i+p}$. In this subsection we will show that the complex $P^\bullet[-1]$ of Corollary \ref{Pbullet} is quasi-isomorphic to $R\Gamma(N, T)$. Together with Theorem \ref{fundamental} this will prove Theorem \ref{main theorem 2} whose formulation we recall once again.
\begin{thm}\label{fundamental 2}
The complex
\[C^\bullet_{N, T} := \left[ \Nnr\xrightarrow{(F-1)\times 1}\Nnr \right]\]
with non-trivial modules in degree 1 and 2 represents $R\Gamma(N,T)$.
\end{thm}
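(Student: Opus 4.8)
Here is how I would prove Theorem~\ref{fundamental 2}.

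\medskip

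The plan is to deduce the statement from Theorem~\ref{fundamental} and Corollary~\ref{Pbullet} by passing from the formal group $\calF$ to its $p$-adic Tate module $T\cong T_p\calF$ through the Kummer sequences of $\calF$, realised at finite level and then passed to the limit. Throughout put $\mathcal A:=\varinjlim_L\bigl(\widehat{L_0^\times}(\chinr)\bigr)^{\Gal(L_0/L)}$, the $G_K$-module occurring in the proof of Theorem~\ref{fundamental}; recall from that proof that $R\Gamma(N,\calF)=R\Gamma(G_N,\mathcal A)$, that it is represented by $C^\bullet_{N,\calF}$, and, by Corollary~\ref{Pbullet}, by a bounded complex $P^\bullet=[P^{-1}\to P^0\to P^1]$ of finitely generated projective $\ZpG$-modules together with a quasi-isomorphism $\eta\colon P^\bullet\lra R\Gamma(N,\calF)$.

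First I would record three facts about $\mathcal A$. Using the exact sequence of Lemma~\ref{UL1hat} (and that $\Zp(\chinr)$ is torsion free), each term $\bigl(\widehat{L_0^\times}(\chinr)\bigr)^{\Gal(L_0/L)}$ has $p^n$-torsion $\calF[p^n](L)$, so $\mathcal A[p^n]\cong\calF[p^n]$; standard Lubin--Tate theory (one can solve $[p^n](x)=y$ over $\overline{\Qp}$, and the remaining ``valuation'' layer $\varinjlim_L\Z_p(\chinr)\cong\Qp(\chinr)$ or $0$ is also $p$-divisible) shows $\mathcal A$ is $p$-divisible; and $T=T_p\calF=\varprojlim_n\bigl(\calF[p^n],\,\times p\bigr)$ by \cite[Example~5.20]{Iz}. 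Hence we have a compatible family of Kummer short exact sequences of $G_K$-modules
\[
0\lra\calF[p^n]\lra\mathcal A\xrightarrow{\,[p^n]\,}\mathcal A\lra0 ,
\]
in which the transition map from level $n+1$ to level $n$ is multiplication by $p$ on the kernel and on the middle term and the identity on the quotient.

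Next I would apply $R\Gamma(N,-)$ and use $\eta$ to identify $R\Gamma(N,\mathcal A)$ with $P^\bullet$. The Kummer triangles give quasi-isomorphisms $R\Gamma(N,\calF[p^n])\simeq\mathrm{fib}\bigl(P^\bullet\xrightarrow{p^n}P^\bullet\bigr)$, compatibly in $n$, where on the inverse system $\bigl(\mathrm{fib}(P^\bullet\xrightarrow{p^n}P^\bullet)\bigr)_n$ the transition maps are induced by $(\times p,\ \id)$ on the two copies of $P^\bullet$. Since the continuous standard resolution converts $\varprojlim$ of finite modules with surjective transition maps into $R\varprojlim$ of cochain complexes, the third fact above yields
\[
R\Gamma(N,T)=R\Gamma\bigl(N,\varprojlim_n\calF[p^n]\bigr)=R\varprojlim_n R\Gamma(N,\calF[p^n])=R\varprojlim_n\,\mathrm{fib}\bigl(P^\bullet\xrightarrow{p^n}P^\bullet\bigr).
\]
Now comes the key point, which is also the source of the degree shift. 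An $R\varprojlim$ commutes with mapping fibres, and the inverse system with constant term $P^\bullet$ and transition maps $\times p$ has vanishing derived limit: each $P^i$ is a finitely generated $\Zp$-module, so $\varprojlim_n(P^i,\times p)=\bigcap_n p^nP^i=0$ and $R^1\varprojlim_n(P^i,\times p)=0$ (the map $1-\mathrm{shift}$ is surjective because $P^i$ is $p$-adically complete), hence $R\varprojlim_n(P^\bullet,\times p)=0$. Therefore
\[
R\Gamma(N,T)=\mathrm{fib}\bigl(R\varprojlim_n(P^\bullet,\times p)\lra R\varprojlim_n(P^\bullet,\id)\bigr)=\mathrm{fib}\bigl(0\lra P^\bullet\bigr)=P^\bullet[-1].
\]

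Finally, shifting the quasi-isomorphism of Corollary~\ref{Pbullet} gives $\eta[-1]\colon P^\bullet[-1]\xrightarrow{\ \sim\ }R\Gamma(N,\calF)[-1]$, and by Theorem~\ref{fundamental} together with the paper's shift convention the target is $C^\bullet_{N,\calF}[-1]=C^\bullet_{N,T}=[\Nnr\xrightarrow{(F-1)\times 1}\Nnr]$ placed in degrees $1$ and $2$. Composing, $C^\bullet_{N,T}$ represents $R\Gamma(N,T)$, which is Theorem~\ref{fundamental 2}; combined with the four-term exact sequence of Theorem~\ref{FpnZpomega} (for $L=N$) and the isomorphism $\bigl(\widehat{N_0^\times}(\chinr)\bigr)^{\Gal(N_0/N)}\cong\calF(\p_N)$ of Lemma~\ref{UL1hat}, this also recovers Lemma~\ref{galois cohomology}(a). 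The main obstacle I expect is making the derived inverse limit rigorous rather than merely formal: one cannot take inverse limits of objects of the derived category, so I would work with honest complexes, for instance with $D^\bullet:=C^\bullet_{\mathrm{cont}}(G_N,\mathcal A)$, observe that $[p^n]\colon D^\bullet\to D^\bullet$ is surjective since $\mathcal A$ is $p$-divisible and the cochains are locally constant, so that $C^\bullet_{\mathrm{cont}}(G_N,\calF[p^n])=\ker([p^n]\mid D^\bullet)$ and $\varprojlim_n C^\bullet_{\mathrm{cont}}(G_N,\calF[p^n])=C^\bullet_{\mathrm{cont}}(G_N,T)=R\Gamma(N,T)$; one then has to transport $\eta$ through these kernels compatibly in $n$ and keep careful track of the transition maps (in particular that the transition on the ``source'' copy of $P^\bullet$ is $\times p$ and not the identity --- this is precisely what moves the complex from degrees $0,1$ to degrees $1,2$).
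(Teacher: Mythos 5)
Your proposal follows essentially the same route as the paper: the Kummer sequences of $\calF$, the projective representative $P^\bullet$ of $R\Gamma(N,\calF)$ from Corollary \ref{Pbullet}, reduction modulo $p^n$, and passage to the inverse limit, with the shift into degrees $1,2$ coming from the $p^n$-multiplication triangles; your packaging via $\mathrm{fib}(P^\bullet\xrightarrow{p^n}P^\bullet)$ and the vanishing of the derived limit of $(P^\bullet,\times p)$ is equivalent to the paper's identification $P^\bullet/p^n\simeq R\Gamma(N,\calF[p^n])[1]$ followed by $\varprojlim_n P^\bullet/p^n=P^\bullet$. The one step you flag but do not carry out is precisely the technical heart of the paper's argument, namely Lemma \ref{compatiblephi}: there the level-$n$ quasi-isomorphisms are first produced from the triangles and then corrected inductively by homotopies (using that $P^i/p^n$ is $\Z/p^n\Z[G]$-projective, following Burns--Flach) so that they strictly commute with the transition maps and an honest inverse limit can be taken; the paper then concludes with \cite[Lemma~9]{BurnsFlach98}, which is the $\varprojlim$/$\varprojlim^1$ argument on cohomology made possible by the finiteness of the groups $H^i(N,\calF[p^n])$. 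Your alternative strictification --- taking literal kernels of $[p^n]$ in the continuous cochain complex $D^\bullet$ of the $p$-divisible discrete module $\mathcal{A}$ --- is sound and can in fact be organized to avoid the homotopy corrections altogether: once you fix a single honest chain map $\eta\colon P^\bullet\to D^\bullet$ (available because $P^\bullet$ is a bounded complex of projectives, so derived-category maps out of it are genuine chain maps), all the relevant squares commute strictly and the three towers $\bigl(\mathrm{fib}(P^\bullet\xrightarrow{p^n}P^\bullet)\bigr)_n$, $\bigl(\mathrm{fib}(D^\bullet\xrightarrow{p^n}D^\bullet)\bigr)_n$ and $\bigl(\ker([p^n]\mid D^\bullet)\bigr)_n$ are compared by strictly compatible levelwise quasi-isomorphisms. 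One caveat on your ``$R\varprojlim$ commutes with fibres'' step: the first of these towers is not degreewise Mittag--Leffler and the second has a nonzero degreewise limit, so the comparison of their limits must go through the Milnor $\varprojlim^1$-sequence on cohomology (again using finiteness of $H^i(N,\calF[p^n])$), or equivalently through replacing the fibre by $(P^\bullet/p^n)[-1]$ where the transition maps are surjective --- this is exactly the role played in the paper by the appeal to \cite[Lemma~9]{BurnsFlach98}, so it is bookkeeping you must still do, but it does not threaten the argument.
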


\begin{remark}\label{our coh proof}
Theorem \ref{fundamental 2} combines with Theorem \ref{FpnZpomega} to give a new proof of Lemma \ref{galois cohomology}, part (a).
\end{remark}

In the following we adapt the argument used to prove \cite[Th.~4.20]{BreuPhd}. We will need the following Lemma.

\begin{lemma}\label{homotopicto0}
Let $h\colon Y^\bullet\lra W^\bullet$ be a map of cochain complexes. Suppose $h=0$ in $D(\ZpG)$. Suppose also that $Y^\bullet$ is bounded from above, comprising only projectives. Then $h$ is homotopic to zero.
\end{lemma}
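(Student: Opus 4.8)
The plan is to reduce to a standard fact from homological algebra: if $Y^\bullet$ is a bounded-above complex of projective $\ZpG$-modules, then the natural map
\[
\Hom_{K(\ZpG)}(Y^\bullet, W^\bullet) \lra \Hom_{D(\ZpG)}(Y^\bullet, W^\bullet)
\]
from the homotopy category to the derived category is a bijection (see e.g. \cite[Th.~10.4.8]{Weibel} or the dual statement for bounded-below complexes of injectives). Granting this, the hypothesis that $h$ vanishes in $D(\ZpG)$ forces $h$ to be zero already in $K(\ZpG)$, i.e.\ $h$ is null-homotopic, which is exactly the assertion.

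Concretely I would carry this out in two steps. First, recall why the displayed map is injective: if $h \colon Y^\bullet \to W^\bullet$ becomes zero in $D(\ZpG)$, then $h$ factors (up to homotopy) through a complex that is acyclic, and a map out of a bounded-above complex of projectives into an acyclic complex is null-homotopic — this is the usual ``comparison theorem'' argument, building the homotopy degree by degree using the lifting property of projective modules against surjections onto cocycles of an acyclic complex. Since $Y^\bullet$ is bounded from above, the induction on the degree terminates, so no convergence issues arise. Second, since we only need injectivity (not surjectivity) of the map to $D(\ZpG)$, I would not even invoke the full equivalence; it suffices to quote the projective version of \cite[Th.~XX.6.2]{LangAlgebra} or \cite[Lemma~1.5.2]{Weibel}-type statements already in use in the paper, namely that a morphism of complexes inducing the zero map in the derived category, with source a bounded-above complex of projectives, is chain-homotopic to $0$.

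The only mild subtlety — and the step I would present with a line of detail rather than a citation — is making the factorization ``$h = 0$ in $D(\ZpG)$ implies $h$ factors through an acyclic complex'' precise. By definition of morphisms in the derived category, $h$ being zero means there is a quasi-isomorphism $s \colon Z^\bullet \to Y^\bullet$ with $h \circ s$ null-homotopic, or equivalently a roof representing $0$; replacing $Z^\bullet$ by a bounded-above projective resolution (possible because $Y^\bullet$ is bounded above) and using that $s$ is then a homotopy equivalence, one transports the null-homotopy of $h\circ s$ back along a homotopy inverse of $s$ to get a null-homotopy of $h$ itself. I expect this bookkeeping with roofs and homotopy inverses to be the main (minor) obstacle; everything else is the standard comparison-theorem induction, which the paper has already used repeatedly (e.g.\ in the proof of Theorem \ref{fundamental}).
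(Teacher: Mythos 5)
Your proposal is correct and follows essentially the same route as the paper: both reduce ``$h=0$ in $D(\ZpG)$'' to a roof, i.e.\ a quasi-isomorphism $t\colon Z^\bullet\to Y^\bullet$ with $ht$ null-homotopic (the paper cites \cite[Coroll.~10.3.9]{Weibel}), and then use that a quasi-isomorphism into a bounded-above complex of projectives admits a homotopy section (the paper invokes the dual of \cite[Lemma~10.4.6]{Weibel}, where you instead pass through a bounded-above projective resolution of $Z^\bullet$ --- an immaterial variation) to transport the null-homotopy back to $h$. This is exactly the content of the isomorphism $\Hom_{K(\ZpG)}(Y^\bullet,W^\bullet)\cong\Hom_{D(\ZpG)}(Y^\bullet,W^\bullet)$ you quote at the outset, so no gap remains.
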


\begin{proof}
By \cite[Coroll.~10.3.9]{Weibel}, there exists a quasi-isomorphism $t\colon Z^\bullet\lra Y^\bullet$ such that $ht\colon Z^\bullet\lra W^\bullet$ is null homotopic. By the dual of \cite[Lemma~10.4.6]{Weibel} there exists $s\colon Y^\bullet\lra Z^\bullet$ such that $ts$ is homotopy equivalent to the identity. Hence $hts$ is both homotopic to $0$ and to $h$.
\end{proof}

\begin{lemma}\label{compatiblephi}
There exist quasi-isomorphisms of complexes
\[\varphi_n\colon P^\bullet/p^n\lra R\Gamma(N,\calF[p^n])[1]\]
which are compatible with the inverse systems. 
\end{lemma}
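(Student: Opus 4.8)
The plan is to realise $R\Gamma(N,\calF[p^n])[1]$ as the mapping cone of multiplication by $p^n$ on $R\Gamma(N,\calF)$ and then to transport this description through the quasi-isomorphism $\eta\colon P^\bullet\lra R\Gamma(N,\calF)$ supplied by Corollary~\ref{Pbullet}. Write $\mathcal A:=\varinjlim_L(\widehat{L_0^\times}(\chinr))^{\Gal(L_0/L)}$ for the $G_K$-module whose $G_N$-cohomology is, by the construction in Theorem~\ref{fundamental}, the complex $R\Gamma(N,\calF)$. By Lemma~\ref{UL1hat} the maps $\fFL$ identify $\mathcal A$ with $\varinjlim_L\calF(\p_L)=\calF(\p)$, where $\p$ is the maximal ideal of $\calO_{\Kc}$; this group is $p$-divisible with $p^n$-torsion the finite $G_N$-module $\calF[p^n]\cong T/p^nT$. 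Hence there is a short exact sequence of discrete $G_N$-modules
\[0\lra\calF[p^n]\lra\mathcal A\xrightarrow{[p^n]}\mathcal A\lra 0,\]
and applying $R\Gamma(N,-)$ gives a distinguished triangle in $D(\ZpG)$
\[R\Gamma(N,\calF[p^n])\lra R\Gamma(N,\calF)\xrightarrow{p^n}R\Gamma(N,\calF)\lra R\Gamma(N,\calF[p^n])[1].\]

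Next I would produce the cone at the level of complexes. Since each $P^i$ is $\Z_p$-torsion-free, the bounded complex of $\ZpG$-projectives $\mathrm{Cone}\bigl(p^n\colon P^\bullet\to P^\bullet\bigr)$ comes with a canonical quasi-isomorphism $\rho_n\colon\mathrm{Cone}\bigl(p^n\colon P^\bullet\to P^\bullet\bigr)\lra P^\bullet/p^n$, namely the projection onto the $P^\bullet$-summand followed by reduction modulo $p^n$. On the other hand, $\eta$ induces a quasi-isomorphism $\mathrm{Cone}\bigl(p^n\colon P^\bullet\to P^\bullet\bigr)\lra\mathrm{Cone}\bigl(p^n\colon R\Gamma(N,\calF)\to R\Gamma(N,\calF)\bigr)$, and the target is identified with $R\Gamma(N,\calF[p^n])[1]$ by the triangle above. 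I then define $\varphi_n$ as the composite of the second map with $\rho_n^{-1}$; a priori this is an isomorphism in $D(\ZpG)$, but because $\mathrm{Cone}\bigl(p^n\colon P^\bullet\to P^\bullet\bigr)$ is a bounded complex of projectives it is represented by an honest quasi-isomorphism of complexes, well defined up to homotopy.

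For the compatibility statement I would let $n$ vary. The short exact sequences above fit into a commutative ladder whose first column is $[p]\colon\calF[p^{n+1}]\to\calF[p^n]$ (the transition map of the system with $\varprojlim_n\calF[p^n]=T$), whose middle column is $[p]$ on $\mathcal A$, and whose right column is the identity on $\mathcal A$. The induced morphism of distinguished triangles has as its shift component precisely the transition map $R\Gamma(N,\calF[p^{n+1}])[1]\to R\Gamma(N,\calF[p^n])[1]$ defining $R\Gamma(N,T)[1]$; under the cone description this is the morphism $\mathrm{Cone}\bigl(p^{n+1}\colon P^\bullet\to P^\bullet\bigr)\to\mathrm{Cone}\bigl(p^n\colon P^\bullet\to P^\bullet\bigr)$ which is multiplication by $p$ on the shifted summand $P^\bullet[1]$ and the identity on the summand $P^\bullet$. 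Composing with $\rho_{n+1}$ and $\rho_n$ one checks on the nose that this corresponds to the canonical surjection $P^\bullet/p^{n+1}\to P^\bullet/p^n$, so that the square relating $\varphi_{n+1}$, $\varphi_n$ and these transition maps commutes in $D(\ZpG)$; choosing the chain-level representatives of the $\varphi_n$ inductively then makes it commute strictly. (Passing to $\varprojlim$ and using the Milnor $\varprojlim^1$ sequence, the cohomology groups $H^i(N,\calF[p^n])$ being finite, will later yield $P^\bullet[-1]\simeq R\Gamma(N,T)$; only the family $(\varphi_n)_n$ is needed here.)

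The step I expect to be the main obstacle is the bookkeeping of transition maps: one must verify that the multiplication-by-$[p]$ transitions of the system $\bigl(R\Gamma(N,\calF[p^n])\bigr)_n$ — the ones reconstructing $R\Gamma(N,T)=\varprojlim_n R\Gamma(N,\calF[p^n])$ — become, under the two cone identifications and through $\eta$, exactly the naive reductions on the $P^\bullet/p^n$, and not some twisted variant. The only non-formal input is the exactness of $0\to\calF[p^n]\to\mathcal A\to\mathcal A\to 0$, i.e. the $p$-divisibility of $\calF(\p)$, which is standard for Lubin-Tate formal groups. The overall line of argument parallels the proof of \cite[Th.~4.20]{BreuPhd}.
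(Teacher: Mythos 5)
Your construction of the individual $\varphi_n$ follows essentially the same route as the paper (the Kummer triangle $R\Gamma(N,\calF[p^n])\to R\Gamma(N,\calF)\xrightarrow{p^n}R\Gamma(N,\calF)$, the triangle $P^\bullet\xrightarrow{p^n}P^\bullet\to P^\bullet/p^n$, transport through $\eta$), and your identification of the transition maps on the cones is correct. But the justification that $\varphi_n$ is an honest map of complexes is off: the projectivity of $\mathrm{Cone}(p^n\colon P^\bullet\to P^\bullet)$ lets you represent derived morphisms \emph{out of the cone} by chain maps, whereas you need a chain map out of $P^\bullet/p^n$, which is not a complex of projective $\ZpG$-modules; a chain map $\mathrm{Cone}(p^n)\to R\Gamma(N,\calF[p^n])[1]$ kills $p^n\cdot\mathrm{Cone}(p^n)$ but not the kernel of $\rho_n$, so it does not automatically descend. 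The repair is the paper's device: represent the derived morphism by a chain map $\psi_n$ defined on $P^\bullet$ itself (a bounded complex of projectives) and observe that $\psi_n$ factors through $P^\bullet/p^n$ because the target is $p^n$-torsion.

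The more serious gap is the compatibility statement. Your ladder argument only yields that the squares relating $\varphi_n$ and $\varphi_{n-1}$ commute in $D(\ZpG)$; the clause ``choosing the chain-level representatives inductively then makes it commute strictly'' is exactly what has to be proved, and it is not automatic. Even homotopy-commutativity needs an argument, since a morphism that vanishes in $D(\ZpG)$ need not be null-homotopic when its source $P^\bullet/p^n$ is not a bounded above complex of $\ZpG$-projectives; the paper precomposes with the surjection $P^\bullet\to P^\bullet/p^n$ and invokes Lemma~\ref{homotopicto0}. Granting a homotopy $h_n$ for the difference $\varphi_{n-1}\pi_n^P-\pi_n^Q\varphi_n$, one must then lift $h_n$ through the termwise surjective transition map $R\Gamma(N,\calF[p^n])[1]\to R\Gamma(N,\calF[p^{n-1}])[1]$ --- here the $\Z/p^n\Z[G]$-projectivity of $P^i/p^n$ and the termwise surjectivity coming from \cite[Exercise I.3.1]{NSW} are the essential inputs --- and replace $\varphi_n$ by $\varphi_n+d\tilde{\tilde h}_n+\tilde{\tilde h}_n d$, which changes it only by a homotopy and makes the square commute on the nose. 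This lifting-and-correcting step (following Burns--Flach) is the technical heart of the lemma and is entirely absent from your proposal; the ``bookkeeping of transition maps'' you single out as the main obstacle is in fact the formal part.
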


\begin{proof}
The proof proceeds in two steps. First we construct morphisms of complexes $\varphi_n$, and then in a second step we modify the definition in such a way that the $\varphi_n$ are compatible with the inverse systems.

By Kummer theory  and \cite[Exercise I.3.1]{NSW} we have a short exact sequence of complexes
\[0 \lra R\Gamma(N,\calF[p^n])\lra R\Gamma(N,\calF)\stackrel{p^n}\lra R\Gamma(N,\calF)\lra 0.\]
As a consequence of \cite[Example~10.4.9]{Weibel} we obtain an exact triangle
\[R\Gamma(N,\calF[p^n])\lra R\Gamma(N,\calF)\stackrel{p^n}\lra R\Gamma(N,\calF)\lra .\]
Analogously we see that 
\[P^\bullet \stackrel{p^n}\lra P^\bullet \lra P^\bullet/p^n \lra\]
is an exact triangle.

Let $\eta \colon P^\bullet \lra R\Gamma(N, \calF)$ be the quasi-isomorphism of Corollary \ref{Pbullet}. Then we obtain a commutative diagram in $D(\ZpG)$ of the following form
\[\xymatrix{
P^\bullet\ar[r]^{p^n}\ar[d]^\eta& P^\bullet\ar[r]^{\pi_n} \ar[d]^\eta   & P^\bullet/p^n\ar[r]               & \\
R\Gamma(N,\calF)\ar[r]^{p^n}   & R\Gamma(N,\calF)\ar@{..>}[r]^-{\lambda_n} &R\Gamma(N,\calF[p^n])[1]\ar[r] & .}\]
For clarity we use dotted arrows for morphisms in $D(\ZpG)$ which are not actual maps of complexes. As it is common we use $\xrightarrow{\sim}$ to indicate a quasi-isomorphism.
By the definition of a morphism in the derived category there exists a complex $Z^\bullet$ such that
\[R\Gamma(N,\calF)\stackrel{\sim}\lla Z^\bullet\lra R\Gamma(N,\calF[p^n])[1].\]
By \cite[Lemma~VI.8.17]{Milne} we construct a morphism of complexes 
\[\psi_n\colon P^\bullet\lra R\Gamma(N,\calF[p^n])[1],\]
which makes the following diagram 
\[\xymatrix{&P^\bullet\ar[r]^{p^n}\ar[d]^\eta& P^\bullet\ar[dr]^{\psi_n}\ar[r]^{\pi_n}\ar[d]^\eta& P^\bullet/p^n\ar[r]&\\
&R\Gamma(N,\calF)\ar[r]^{p^n}&R\Gamma(N,\calF)\ar@{..>}[r]^-{\lambda_n} &R\Gamma(N,\calF[p^n])[1] \ar[r]&}\]
commutative in $D(\ZpG)$. As $R\Gamma(N, \calF[p^n])$ is $p^n$-torsion, the map $\psi_n$ factors through $P^\bullet/p^n$ and we obtain morphisms $\varphi_n$ of complexes such that

\begin{equation}\label{firstdefphin}
\begin{split}
\xymatrix{
&P^\bullet\ar[r]^{p^n}\ar[d]^\eta& P^\bullet\ar[dr]^{\psi_n}\ar[r]^{\pi_n}\ar[d]^\eta& P^\bullet/p^n\ar[d]^{\varphi_n}\ar[r]&\\
&R\Gamma(N,\calF)\ar[r]^{p^n}&R\Gamma(N,\calF)\ar@{..>}[r]^-{\lambda_n} &R\Gamma(N,\calF[p^n])[1]\ar[r]&
}
\end{split}
\end{equation}
commutes. 
We want to show that (\ref{firstdefphin}) is actually a morphism of triangles. To that end we have to show that in
\[\xymatrix{
&P^\bullet\ar[r]^{p^n}\ar[d]^\eta& P^\bullet\ar[r]^{\pi_n}\ar[d]^\eta& P^\bullet/p^n\ar[d]^{\varphi_n}\ar[r]^\omega& P^\bullet[1]\ar[d]^{\eta[1]}\\
&R\Gamma(N,\calF)\ar[r]^{p^n}&R\Gamma(N,\calF)\ar@{..>}[r]^-{\lambda_n} &R\Gamma(N,\calF[p^n])[1]\ar[r]^-\nu& R\Gamma(N,\calF)[1]}\]
the right hand rectangle commutes. Since $P^\bullet \stackrel{\pi_n}\lra P^\bullet/p^n$ is surjective it suffices 
to show that $\eta[1]\circ\omega\circ\pi_n = \nu\circ\varphi_n\circ\pi_n$. By the commutativity of the central square this is equivalent to $\eta[1]\circ\omega\circ\pi_n=\nu\circ\lambda_n\circ\eta$ which holds true by \cite[Sec.~10.2, TR3]{Weibel}.

We can therefore apply the 5-Lemma (\cite[Exercise~10.2.2]{Weibel}) and derive that the morphism $\varphi_n$ is a quasi-isomorphism. 

In a second step we now show that the maps $\varphi_n$ can be chosen so that they are compatible with the inverse systems. We recall the identification $C^\bullet_{N,\calF}=R\Gamma(N,\calF)$ and we set $Q_n^\bullet:=R\Gamma(N,\calF[p^n])[1]$. We write $\pi_n^Q \colon Q^\bullet_n \lra Q^\bullet_{n-1}$ for the canonical map induced by the epimorphism $\calF[p^n] \lra \calF[p^{n-1}]$. Note that by \cite[Exercise I.3.1]{NSW} each $\pi_n^Q$ is an epimorphism. We also write $\pi_n^P\colon P^\bullet/p^n\lra P^\bullet/p^{n-1}$ for the canonical projection.

We proceed by induction on $n$. We have the following diagram in $D(\ZpG)$
\[\xymatrix{P^\bullet\ar[rr]^{p^n}\ar[ddd]^p\ar[dr]^\eta&& P^\bullet\ar[ddd]^{=}\ar[rr]^{\pi_n}\ar[rd]^\eta&& P^\bullet/p^n\ar[ddd]^{\pi_n^P}\ar[dr]^{\varphi_n}\\
&C^\bullet_{N,\calF}\ar[ddd]^p\ar[rr]^{p^n}&&C^\bullet_{N,\calF}\ar[ddd]^=\ar@{..>}[rr]&&Q_n^\bullet\ar[ddd]^{\pi_n^Q}\\
\\
P^\bullet\ar[rr]_{p^{n-1}}\ar[dr]_\eta&& P^\bullet\ar[rr]_{\pi_{n-1}}\ar[rd]^\eta&& P^\bullet/p^{n-1}\ar[dr]^{\varphi_{n-1}}\\
&C^\bullet_{N,\calF}\ar[rr]^{p^{n-1}}&&C^\bullet_{N,\calF}\ar@{..>}[rr]&&Q_{n-1}^\bullet}\]
and want to show that we can modify $\varphi_n$ so that the square on the right commutes. By a diagram chase we can show that
\[(\varphi_{n-1}\pi_n^P-\pi_n^Q\varphi_n)\pi_n=0\]
in $D(\ZpG)$. By Lemma \ref{homotopicto0} we obtain a homotopy 
\[\tilde{h}_n \colon P^\bullet \lra Q^\bullet_{n-1}[-1]\]
such that 
\[(\varphi_{n-1}^i\pi_n^{P,i}-\pi_n^{Q,i}\varphi_n^i)\pi_n^i=d_{Q_{n-1}}^{i-1}\tilde h_n^i+\tilde h_n^{i+1}d_{P,n}^i.\]
Since $Q_{n-1}^\bullet[-1]=R\Gamma(N,\calF[p^{n-1}])$ is $p^{n-1}$-torsion, $\tilde h_n$ induces a homotopy
\[h_n\colon P^\bullet/p^n\lra Q_{n-1}^{\bullet}[-1]\]
such that $\tilde h_n=h_n\pi_n$. Hence we obtain
\[(\varphi_{n-1}^i\pi_n^{P,i}-\pi_n^{Q,i}\varphi_n^i)\pi_n^i=d_{Q_{n-1}}^{i-1}h_n^i\pi_n^i+h_n^{i+1}
\pi_n^{i+1}d_{P,n}^i=d_{Q_{n-1}}^{i-1}h_n^i\pi_n^i+h_n^{i+1}d_{P,n}^i\pi_n^i,\]
where $d_{P,n}^i$ is the differential of $P^\bullet/p^n$. Since $\pi_n^i$ is surjective, we deduce that
\[\varphi_{n-1}^i\pi_n^{P,i}-\pi_n^{Q,i}\varphi_n^i = d_{Q_{n-1}}^{i-1}h_n^i+h_n^{i+1}d_{P,n}^i.\]
We are now precisely in the situation of \cite[page~1367]{BurnsFlach98}. For the sake of completeness we recall the argument.

We view $P^i/p^n$, $Q_n^{i-1}$ and $Q_{n-1}^{i-1}$ as $\Z/p^n\Z[G]$-modules and observe that $P^i/p^n$ is $\Z/p^n\Z[G]$-projective.
Hence there exists a lift $\tilde{\tilde h}_n^i$ of $h_n^i$ such that 
\[\xymatrix{P^i/p^n\ar[d]^{\tilde{\tilde h}_n^i}\ar[drr]^{h_n^i}\\
Q_n^{i-1}\ar[rr]^{\pi_n^{Q,i-1}}&&Q_{n-1}^{i-1}\ar[r]&0}\]
commutes. We now replace each $\varphi_n^i$ by
\[\varphi_n^i+d_{Q_n}^{i-1}\tilde{\tilde h}_n^i+\tilde{\tilde h}_n^{i+1}d_{P,n}^i.\]
Since we have only changed $\varphi_n$ by a homotopy the diagram (\ref{firstdefphin}) still commutes in $D(\ZpG)$. By a straightforward computation we see that $\pi_n^{Q}\varphi_n = \varphi_{n-1}\pi_n^{P}$. This concludes the proof of the inductive step.
\end{proof}

We now provide the proof of Theorem \ref{fundamental 2}.
\begin{proof}
From Lemma \ref{compatiblephi} we obtain a map $\varphi := \varprojlim_n \varphi_n$ of complexes,
\[\varphi \colon P^\bullet \lra R\Gamma(N, T)[1].\]
Here we have naturally identified $\varprojlim_n P^\bullet/p^n$ with $P^\bullet$ and $\varprojlim_n R\Gamma(N, \calF[p^n])$ with $R\Gamma(N, T)$. Since each $\varphi_n$ is a quasi-isomorphism we obtain for each $i$ an isomorphism
\[H^i(\varphi) \colon \varprojlim_n H^i(P^\bullet/p^n) \lra \varprojlim_n H^{i+1}(N, \calF[p^n]).\]
By  \cite[Lemma~9]{BurnsFlach98} we conclude that  $\varprojlim_n H^i(P^\bullet/p^n) = H^i(P^\bullet)$ and also that $\varprojlim_n H^{i+1}(N, \calF[p^n]) = H^{i+1}(N, T)$, so that $\varphi$ is a quasi-isomorphism.
\end{proof}
\end{subsection}
\end{section}

\begin{section}{\texorpdfstring{The term $\Ucris$}{The term Ucris}}\label{Ucris}
The term $\Ucris$ arises from the comparison of the trivialization $\beta'(N/K, V)$ defined in (\ref{defbeta'}) and the trivialization $\lambda$ used to define the refined Euler characteristic
\[C_{N/K}=-\chi_{\ZpG,\BdR[G]}(M^\bullet,\lambda^{-1})\]
in (\ref{CNK}). In this section we will make this comparison explicit, and in the case $\chinr|_{G_N} = 1$ also 
compare $\Ucris$ to the generalization of Breuning's correction term $M_{N/K}$.

\begin{subsection}{\texorpdfstring{The computation of $\Ucris$}{The computation of Ucris}}

We define $\Ucris$ by the  equation
\begin{equation}\label{Ucris def}
i_{\ZpG, \BdR[G]}(\DEPT, \beta'(N/K, V)) = C_{N/K} + \partial^1_{\ZpG, \BdR[G]}(t) + \Ucris,
\end{equation}
where  $i_{\ZpG, \BdRG}$ denotes the natural isomorphism from  (\ref{yar 2}).
\begin{lemma}\label{conj equiv first}
Conjecture \ref{Ca} and Conjecture \ref{Cb} are equivalent.
\end{lemma}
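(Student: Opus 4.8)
The statement to prove is Lemma~\ref{conj equiv first}: Conjecture~\ref{Ca} and Conjecture~\ref{Cb} are equivalent. The plan is to unwind the two equivalent packagings of the same trivialized complex and show that the classes they produce in $K_0(\tilde\Lambda,\tilde\Omega)$ differ by the image of $\partial^1_{\ZpG,\BdR[G]}$ applied to an element that is visibly trivial, so that vanishing of one is equivalent to vanishing of the other.

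First I would recall that by (\ref{yar 2}) the element $[(\DEPT,\delta(N/K,V))]\in\pi_0(V(\ZpG,\tilde\Omega))$ of Conjecture~\ref{Ca} corresponds under $i_{\ZpG,\tilde\Omega}$ (equivalently $i_{\ZpG,\BdR[G]}$ followed by the identification of $\delta$ as being defined over $\tilde\Omega$, see (\ref{yar 3})) to a class in $K_0(\ZpG,\tilde\Omega)$; call it $c_{\mathrm{Ca}}$. On the other hand $\tilde R_{N/K}=C_{N/K}+\Ucris+\partial^1_{\ZpG,\BdR[G]}(t)+\partial^1_{\ZpG,\BdR[G]}(\epsilon_D(N/K,V))$ is the class whose vanishing is Conjecture~\ref{Cb}. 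The defining equation (\ref{Ucris def}) gives
\[
i_{\ZpG,\BdR[G]}(\DEPT,\beta'(N/K,V))=C_{N/K}+\partial^1_{\ZpG,\BdR[G]}(t)+\Ucris,
\]
so the right-hand side of (\ref{Ucris def}) is exactly $\tilde R_{N/K}-\partial^1_{\ZpG,\BdR[G]}(\epsilon_D(N/K,V))$. Thus the whole content of the lemma is the comparison between the trivialization $\beta'(N/K,V)$ (which feeds $\Ucris$) and the trivialization $\delta(N/K,V)$ (which feeds $c_{\mathrm{Ca}}$): by their definitions in (\ref{defbeta'}), (\ref{betaV}), (\ref{alphaV}), $\delta$ is obtained from $\beta'$ precisely by composing with the automorphism of $\unit_{V(\BdRG)}$ induced by $\Gamma^*(V)\epsilon_D(N/K,V)$, and in our situation $\Gamma^*(V)=1$. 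Hence $(\DEPT,\delta(N/K,V))$ and $(\DEPT,\beta'(N/K,V))$ differ, as objects of $V(\ZpG,\BdR[G])$, by the automorphism attached to $\epsilon_D(N/K,V)\in Z(\BdR[G])^\times$, which under $i_{\ZpG,\BdR[G]}$ translates into adding $\partial^1_{\ZpG,\BdR[G]}(\epsilon_D(N/K,V))=\hat\partial^1_{\ZpG,\BdR[G]}(\epsilon_D(N/K,V))$.

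Carrying this out, I would therefore establish the identity
\[
i_{\ZpG,\BdR[G]}(\DEPT,\delta(N/K,V))=i_{\ZpG,\BdR[G]}(\DEPT,\beta'(N/K,V))+\partial^1_{\ZpG,\BdR[G]}(\epsilon_D(N/K,V)),
\]
combine it with (\ref{Ucris def}) to get $i_{\ZpG,\BdR[G]}(\DEPT,\delta(N/K,V))=C_{N/K}+\partial^1_{\ZpG,\BdR[G]}(t)+\Ucris+\partial^1_{\ZpG,\BdR[G]}(\epsilon_D(N/K,V))=\tilde R_{N/K}$, and then invoke the functoriality/injectivity properties of $i_{\ZpG,\tilde\Omega}$ together with the fact (used implicitly in passing from $\pi_0(V(\ZpG,\tilde\Omega))$ to $\pi_0(V(\tilde\Lambda,\tilde\Omega))$ via scalar extension along $\ZpG\to\tilde\Lambda=\overline{\Zpnr}[G]$) that the image of $[(\DEPT,\delta)]$ in $\pi_0(V(\tilde\Lambda,\tilde\Omega))\cong K_0(\tilde\Lambda,\tilde\Omega)$ is the scalar extension of $\tilde R_{N/K}$. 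Since Conjecture~\ref{Cb} asserts $\tilde R_{N/K}=0$ in $K_0(\tilde\Lambda,\tilde\Omega)$ and Conjecture~\ref{Ca} asserts triviality of $[(\DEPT,\delta(N/K,V))]$ in $\pi_0(V(\tilde\Lambda,\tilde\Omega))$, the identity above shows the two are literally the same condition.

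The main obstacle — really the only nontrivial bookkeeping — is verifying that composing a trivialization $L_{\tilde\Omega}\xrightarrow{\sim}\unit_{V(\tilde\Omega)}$ with the automorphism of $\unit_{V(\tilde\Omega)}$ given by an element $g\in Z(\tilde\Omega)^\times=K_1$-representative corresponds, under the isomorphism $\pi_0(V(\ZpG,\tilde\Omega))\cong K_0(\ZpG,\tilde\Omega)$ of (\ref{yar 2}), exactly to adding $\partial^1_{\ZpG,\tilde\Omega}\circ\nr^{-1}(g)=\hat\partial^1_{\ZpG,\tilde\Omega}(g)$; this is a direct consequence of the construction of $i_{\Lambda,\Sigma}$ in \cite[Lemma~5.1]{BrBuAdditivity} together with the description of the boundary map $\partial^1$ in terms of $K_1$-classes, and it also requires noting that $\Gamma^*(V)=\Gamma^*(1)=1$ so that $\beta$ and $\alpha$, hence $\delta$ and $\delta'$, differ only by $\epsilon_D(N/K,V)$. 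I would state this as a short general lemma about trivialized virtual objects and apply it; everything else is formal.
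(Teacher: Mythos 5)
Your argument is correct and follows essentially the same route as the paper's own proof: both use the defining equation of $\Ucris$ to rewrite $\tilde R_{N/K}$ as $i_{\ZpG,\BdR[G]}(\DEPT,\beta'(N/K,V))+\partial^1_{\ZpG,\BdR[G]}(\epsilon_D(N/K,V))$ and then invoke $\Gamma^*(V)=1$ to get $\delta(N/K,V)=\epsilon_D(N/K,V)\,\beta'(N/K,V)$, identifying the two classes. Your extra bookkeeping lemma about composing a trivialization with an automorphism of the unit object is exactly the step the paper leaves as ``follows easily.''
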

\begin{proof}
Since $\dim_\Qp(V) = 1$ for $V = \Qp \tensor_\Zp T$, the representation $V$ is obviously potentially semistable. Conjecture \ref{Ca} states that
\[i_{\ZpG, \BdR[G]}((\DEPT, \delta(N/K, V)))\]
becomes $0$ in $K_0(\tilde\Lambda,\BdR[G])$, while Conjecture \ref{Cb} states that $\tilde R_{N/K}$ becomes $0$ in $K_0(\tilde\Lambda,\BdR[G])$. Recalling the definition of $\tilde R_{N/K}$ and $\Ucris$ Conjecture \ref{Cb} is therefore equivalent to the assertion that
\[i_{\ZpG, \BdR[G]}(\DEPT, \beta'(N/K, V))+\partial^1_{\ZpG, \BdR[G]}(\epsilon_D(N/K,V))\]
becomes trivial. Recall that in our special situation we have $\Gamma^*(V)=1$. Hence the definition of $\delta(N/K, V)$ in (\ref{yar 3})
implies
\[\delta(N/K, V)=\epsilon_D(N/K, V)\beta'(N/K, V).\]
The equivalence of the two conjectures follows easily.
\end{proof}

We proceed now to explicitly compute $\Ucris$.

We write $H_e^1(N, V)$ and $H_f^1(N, V)$ for the exponential and finite part of $H^1(N,V)$ as defined by Bloch and
Kato. For basic properties as used in our context we refer the reader to \cite[\S 1.4]{BenBer}.

For our calculation of $\Ucris$ we need to recall the fundamental exact sequences from \cite[(1.1) and (1.2)]{BenBer}:
\begin{equation}\label{splice 0}
0 \lra H^0(N, V) \lra \Dcris^N(V)^{\phi = 1} \lra t_V(N) \xrightarrow{\exp_{V}} H^1_e(N, V) \lra 0
\end{equation}
and 
\begin{equation}\label{splice 1}
0 \lra H^0(N, V) \lra \Dcris^N(V) \xrightarrow{1-\phi} \Dcris^N(V) \oplus t_V(N) \stackrel{\epsilon_V}\lra H^1_f(N, V) \lra 0.
\end{equation}
Note that $\epsilon_V|_{t_V(N)}=\exp_V$. We will also need  the dual of (\ref{splice 1}) with $V^*(1)$ in place of $V$:
\begin{equation}\label{splice 2}
0 \!\ra\! H^1_f(N, V^*(1))^* \!\ra\! \Dcris^N(V^*(1))^* \oplus t_{V^*(1)}^*(N) \!\ra\!  \Dcris^N(V^*(1))^* \!\ra\! H^0(N, V^*(1))^* \!\ra\! 0.
\end{equation}
By local duality we have 
\begin{equation}\label{local duality}
  H^0(N, V^*(1))^*  \cong H^2(N, V), \quad
  H^1_f(N, V^*(1))^* \cong \frac{H^1(N,V)}{H^1_f(N,V)}.
\end{equation}
Splicing together (\ref{splice 1}) and (\ref{splice 2}) we obtain the fundamental $7$-term exact sequence (\ref{7terms}) which induces the
trivialization $\delta'({N/K,V})$ from (\ref{cep3}). 

Recall that $V = \Qp(\chinr)(1)$ and $V^*(1) =  \Qp((\chinr)^{-1})$.

\begin{lemma}\label{degenerate}
We have:
\begin{enumerate}
\item [(a)] $t_{V^*(1)}(N)=0$.
\item [(b)]  $\dim_\Qp H_f^1(N,V^*(1))=\begin{cases}0&\text{if $\chinr|_{G_N}\neq 1$}\\1&\text{if $\chinr|_{G_N}=1$.}\end{cases}$
\item [(c)] $H_f^1(N,V)=H_e^1(N,V)\cong\begin{cases}H^1(N,V)&\text{if $\chinr|_{G_N}\neq 1$}\\U_N^{(1)}(\chinr)\otimes_\Zp\Qp&\text{if $\chinr|_{G_N}=1$.}\end{cases}$
\item[(d)] $H^1(N,V)=\left( \widehat{N_0^\times}(\chinr) \right)^{\Gal(N_0/N)}\otimes_\Zp\Qp$. In particular, if $\chinr|_{G_N}=1$, we have $H^1(N,V)=\widehat{N^\times}(\chinr)\otimes_\Zp\Qp$.
\end{enumerate}
\end{lemma}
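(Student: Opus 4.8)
The plan is to treat the four assertions essentially independently, using the Hodge--Tate data of $V$ and $V^*(1)$ recorded earlier together with Lemma~\ref{galois cohomology} and Corollary~\ref{calF coh}.

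For (a), observe that $V^*(1) = \Qp((\chinr)^{-1})$ is an unramified character; it is therefore crystalline with its unique Hodge--Tate weight equal to $0$, so that $\mathrm{Fil}^0\DdRN(V^*(1)) = \DdRN(V^*(1))$ and $t_{V^*(1)}(N) = \DdRN(V^*(1))/\mathrm{Fil}^0\DdRN(V^*(1)) = 0$ (this is the argument of the Remark in Subsection~\ref{sit}, with $N$ in place of $\Qp$). For (b), I would use the exact sequence of Bloch--Kato type analogous to (\ref{splice 1}) for $V^*(1)$, or equivalently the dimension formula $\dim_\Qp H_f^1(N,W) = \dim_\Qp H^0(N,W) + \dim_\Qp t_W(N)$, applied to $W = V^*(1)$. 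By (a) the tangent-space term vanishes, and $H^0(N,V^*(1)) = \Qp((\chinr)^{-1})^{G_N}$ has dimension $1$ if $\chinr|_{G_N} = 1$ and $0$ otherwise, which is exactly the claimed dimension.

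The substance is in (c). First I would show $H_e^1(N,V) = H_f^1(N,V)$. Since $V = \Qp(\chinr)(1)$ with $\chinr$ unramified, $V$ is crystalline over $N$ and the crystalline Frobenius $\phi$ on $\Dcris^N(V)$ has the property that $\phi^{d_N}$ is multiplication by $p^{-d_N}$ times a $p$-adic unit (the Tate twist contributing $p^{-d_N}$, the unramified twist a unit); hence $1-\phi$ is bijective on $\Dcris^N(V)$ and $\Dcris^N(V)^{\phi=1} = 0$. Feeding this into (\ref{splice 0}) yields $H^0(N,V) = 0$ and an isomorphism $\exp_V \colon t_V(N) \xrightarrow{\sim} H_e^1(N,V)$, and feeding it into (\ref{splice 1}) yields $\dim_\Qp H_f^1(N,V) = \dim_\Qp t_V(N)$; since $H_e^1(N,V) \sseq H_f^1(N,V)$, the two coincide. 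Moreover $\dim_\Qp t_V(N) = \dim_\Qp \DdRN(V) = [N:\Qp]$, because $\mathrm{Fil}^0\DdRN(V) = 0$ (the only Hodge--Tate jump being at $-1$). If $\chinr|_{G_N} \neq 1$, then $H^1(N,V) \cong \calF(\p_N)\otimes_\Zp\Qp$ by Lemma~\ref{galois cohomology}(a)(i) (or Corollary~\ref{calF coh}) also has $\Qp$-dimension $[N:\Qp]$, so the inclusion $H_f^1(N,V) \sseq H^1(N,V)$ is an equality. If $\chinr|_{G_N} = 1$, then $V|_{G_N} \cong \Qp(1)$, so $H_f^1(N,V) = H_f^1(N,\Qp(1))$, which under the Kummer isomorphism $H^1(N,\Qp(1)) \cong \widehat{N^\times}\otimes_\Zp\Qp$ is the image of the local units $\widehat{\ON^\times}\otimes_\Zp\Qp = U_N^{(1)}\otimes_\Zp\Qp$; reinstating the twist (trivial on $G_N$) as in Lemma~\ref{galois cohomology}(b) identifies it with $U_N^{(1)}(\chinr)\otimes_\Zp\Qp$.

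Finally, (d) is immediate from Lemma~\ref{galois cohomology}(a)(i) and (b)(i) upon tensoring with $\Qp$. The step I expect to require the most care is the last identification in (c): a dimension count alone does not pin down $H_f^1(N,V)$ inside $H^1(N,V)$ when $\chinr|_{G_N}=1$, so one has to genuinely invoke the classical description of the Bloch--Kato finite subspace of $H^1(N,\Qp(1))$ as the local units, together with the fact that $H_f^1$ depends only on the restriction $V|_{G_N}$ and is therefore insensitive to the $G$-equivariant twist.
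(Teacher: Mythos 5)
Your argument is correct and follows essentially the same route as the paper's proof: the Bloch--Kato sequences (\ref{splice 0}) and (\ref{splice 1}), the bijectivity of $1-\phi$ on $\Dcris^N(V)$, and the classical identification of $H^1_f(N,\Qp(1))$ with the local units for the case $\chinr|_{G_N}=1$. The only (harmless) deviations are that in (b) you compute $\dim_\Qp H^0(N,V^*(1))$ directly where the paper passes through local duality to $\dim_\Qp H^2(N,V)$, and in (c) you compare $\dim_\Qp H^1_f(N,V)=[N:\Qp]$ with $\dim_\Qp\left(\calF(\p_N)\otimes_\Zp\Qp\right)$ instead of invoking the formula $\dim_\Qp H^1_f(N,V)+\dim_\Qp H^1_f(N,V^*(1))=\dim_\Qp H^1(N,V)$ from \cite[(3.8.5)]{BK}.
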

\begin{proof}
\begin{enumerate}
\item[(a)] By \cite[page~148]{FO} or \cite[page~5]{IV} we have 
\[t_{V^*(1)}(N)\hookrightarrow \C_p((\chinr)^{-1})(-1)^{G_N}=0\]
and (a) follows.
\item[(b)] From \cite[Coroll.~3.8.4]{BK} and local duality we have 
\[\begin{split}\dim_\Qp H_f^1(N,V^*(1))&=\dim_\Qp(t_{V^*(1)}(N))+\dim_\Qp H^0(N,V^*(1))\\
&= \dim_\Qp H^0(N,V^*(1))^* = \dim_\Qp H^2(N,V)\end{split}\]
so that (b) follows from Lemma \ref{galois cohomology}.

\item[(c)] The proof of \cite[Lemma~6.3]{IV} 
shows that $1-\phi \colon \Dcris^N(V) \lra \Dcris^N(V)$ is an isomorphism. Hence $H^1_f(N,V)=H^1_e(N,V)$ is immediate from \cite[Lemma~1.3]{BenBer}.

By \cite[(3.8.5)]{BK} we have
\[\dim_\Qp H_f^1(N,V)+\dim_\Qp H_f^1(N,V^*(1))=\dim_\Qp H^1(N,V).\]
Combining this with (b) we obtain the equality $H^1_f(N, V) = H^1(N, V)$ in the case $\chinr|_{G_N}\neq 1$. For $\chinr|_{G_N}=1$, the isomorphism $H^1_f(N,V)\cong U_N^{(1)}(\chinr)\otimes_\Zp\Qp$ is well-known, see for example \cite[Example~3.9]{BK}.
\item[(d)] Immediate by Lemma \ref{galois cohomology}.
\end{enumerate}
\end{proof}

\end{subsection}
\begin{subsection}{\texorpdfstring{$\Ucris$ in the case $\chinr|_{G_N} \ne 1$}{Ucris in the case chinr|GN neq 1}}
We start considering the case $\chinr|_{G_N}\neq 1$.

By Lemma \ref{degenerate}, Lemma \ref{galois cohomology} (a) and the duality results (\ref{local duality}) the $7$-term exact sequence (\ref{7terms}) degenerates into the two exact sequences 
\[
0 \lra \Dcris^N(V) \xrightarrow{1- \phi} \Dcris^N(V) \oplus t_V(N) \lra H^1(N, V) \lra 0
\]
and 
\[
0 \lra \Dcris^N(V^*(1))^* \xrightarrow{1- \phi^*} \Dcris^N(V^*(1))^* \lra 0.
\] 
By (\ref{splice 0}), Lemma \ref{degenerate} and the fact that $\epsilon_V|_{t_V(N)}=\exp_V$, the first sequence is split by $\exp_V^{-1}$. 
Recall the definition of $i_{\ZpG, \BdR[G]}$ and the definition of the trivialisation in (\ref{triv def}). If we compare the 
trivialisations $\kappa=\kappa(M^\bullet,\lambda^{-1})$ and $\beta'(N/K, V)$, then we see that (\ref{Ucris def}) holds, if and only if

\begin{equation}\label{Ucriskappabeta}\Ucris =  \partial_{\ZpG, \QpG}^1 (\kappa \circ \beta'^{-1})-\partial^1_{\ZpG, \BdR[G]}(t),\end{equation}
where we interprete $\kappa \circ \beta'^{-1} \in \pi_1(V(\QpG))$ as an element in $K_1(\QpG)$. It follows that
\begin{equation}\label{yar 4}
\Ucris = \partial_{\ZpG, \QpG}^1([\Dcris^N(V), 1 - \phi]) - \partial_{\ZpG, \QpG}^1([\Dcris^N(V^*(1))^*, 1 - \phi^*]).
\end{equation}
Note that the sign originates from the fact
that the first term in the first sequence is in odd degree whereas the first term in the second sequence is in even degree. 

We introduce the following notation. If $x \in Z(\QpG)$ we let ${}^*x \in Z(\QpG)^\times$ denote the invertible element which
on the Wedderburn decomposition $Z(\QpG) = \bigoplus_{i=1}^r F_i$ for suitable finite extensions $F_i/\Qp$ is given by $({}^*x_i)$ with
${}^*x_i = 1$ if $x_i = 0$ and ${}^*x_i = x_i$ otherwise.

We now compute $\Ucris$ explicitly. Recall that $F = F_K = \varphi^{d_K}$.

\begin{lemma}\label{lemmaforUcris 1}
If $\chinr|_{G_N} \ne 1$, the endomorphism $1-\phi^*$ of $\Dcris^N(V^*(1))^*$ is an isomorphism. Furthermore we have
\[
\partial_{\ZpG, \QpG}^1([\Dcris^N(V^*(1))^*\!, 1 - \phi^*])\! =\! 
\hat\partial_{\ZpG, \QpG}^1({}^*((1-u^{d_K}F^{-1})e_I))\!\in\! K_0(\ZpG,\!\QpG).
\]
\end{lemma}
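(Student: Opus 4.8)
The plan is to compute $\Dcris^N(V^*(1))$ explicitly as a $\QpG$-module together with the semilinear Frobenius $\phi$, read off the action of $\phi^*$ on the dual, and then feed the resulting matrix into the reduced norm description of $\partial^1_{\ZpG,\QpG}$. First I would recall that $V^*(1) = \Qp((\chinr)^{-1})$, and that since $K/\Qp$ has inertia degree $d_K$ and $\chinr$ is unramified, $\Dcris^N(V^*(1)) = (\Bcris \tensor_\Qp V^*(1))^{G_N}$ is a free module over $N_0^{G_N} = $ (the maximal unramified subextension data), on which $\phi$ acts semilinearly over the absolute Frobenius $\varphi$. The key numerical input is $\chiQpnr(\varphi) = u$, so on a suitable basis vector the $d_K$-th iterate $\phi^{d_K}$, which is $F_K$-semilinear over the residue data, acts by the scalar $u^{-d_K}$ twisted appropriately; equivalently $\phi$ composed with the Galois action on $\Dcris^N$ is governed by $u$ and $F = F_K = \varphi^{d_K}$. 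Because $\chinr|_{G_N}\ne 1$ we have $H^0(N,V^*(1)) = 0$ and, by the argument already invoked in Lemma \ref{degenerate}(c) (the proof of \cite[Lemma~6.3]{IV}), $1-\phi$ is an isomorphism on $\Dcris^N(V^*(1))$; dualizing gives that $1-\phi^*$ is an isomorphism on $\Dcris^N(V^*(1))^*$, which is the first assertion.

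Next I would identify $\Dcris^N(V^*(1))^*$ as a $\QpG$-module with $e_I \cdot \QpG$, or rather with the image of the idempotent $e_I$ corresponding to the trivial representation of the inertia subgroup $I = I_{N/K}$ (since $V^*(1)$ is unramified, only the unramified characters of $G$ contribute, and these are exactly the constituents supported on $e_I$). Under this identification the semilinear operator $\phi^*$ becomes, after choosing the natural basis adapted to the $\varphi$-action, an honest $\QpG$-linear automorphism whose matrix is multiplication by $u^{d_K}F^{-1}$ on the $e_I$-component; here $F^{-1} \in G$ acts as the Frobenius of $K$ on the unramified part, and the exponent $d_K$ records that passing from $\varphi$ to $F = \varphi^{d_K}$ multiplies the relevant eigenvalue $d_K$ times. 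Hence $1-\phi^*$ corresponds to the element $(1 - u^{d_K}F^{-1})e_I \in Z(\QpG)$, which is invertible on the support $e_I$ and zero off it, i.e. exactly ${}^*((1-u^{d_K}F^{-1})e_I) \in Z(\QpG)^\times$.

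With that in hand the displayed formula is immediate: by construction $\partial^1_{\ZpG,\QpG}([\Dcris^N(V^*(1))^*, 1-\phi^*])$ is the image under $\partial^1_{\ZpG,\QpG}$ of the class in $K_1(\QpG)$ of the automorphism $1-\phi^*$ of a based free (or $e_I$-projective) module, and under the reduced norm isomorphism $\nr_{\QpG}\colon K_1(\QpG) \xrightarrow{\sim} Z(\QpG)^\times$ this class goes to ${}^*((1-u^{d_K}F^{-1})e_I)$; composing with $\hat\partial^1_{\ZpG,\QpG} = \partial^1_{\ZpG,\QpG}\circ \nr_{\QpG}^{-1}$ yields the claim. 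The main obstacle I anticipate is pinning down the precise scalar — keeping careful track of the distinction between arithmetic and geometric Frobenius, the inertia degree $d_K$ versus $1$, and the twist by $\chinr$ versus $(\chinr)^{-1}$ on the dual — so that the exponent is $d_K$ and the Frobenius is $F^{-1}$ rather than $F$; all of this should follow from the explicit description of $\Dcris$ of an unramified character together with the normalizations fixed in Subsection \ref{sit} (where $u = \chiQpnr(\varphi)$ and $\pi = up$), but it requires a genuinely careful unwinding of the semilinear algebra rather than a formal argument.
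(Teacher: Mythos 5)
Your overall strategy is the paper's: compute $\Dcris^N(V^*(1))$ explicitly for the unramified twist, dualize, and push the automorphism $1-\phi^*$ through the reduced norm into $K_0(\ZpG,\QpG)$. But the middle step that you defer to ``a genuinely careful unwinding of the semilinear algebra'' is precisely where your proposed shortcut fails. The space $\Dcris^N(V^*(1))$ is one-dimensional over $N_1$, the maximal unramified subextension of $N/\Qp$ (in the paper's proof $\Dcris^N(V^*(1)) = N_1 e_0^*$ with $e_0^* = (\tur)^{-1}\tensor v^*$ and $\phi(e_0^*) = u\,e_0^*$), so its $\Qp$-dimension is $d_N = d_{N/K}d_K$. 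Hence it is not isomorphic to $e_I\QpG$ (whose $\Qp$-dimension is $d_{N/K}$) unless $d_K=1$: as a $\QpG$-module it is free of rank $d_K$ over $\Qp[G/I]$. Accordingly $\phi^*$ is not multiplication by the central element $u^{d_K}F^{-1}$: with respect to the $\Qp[G/I]$-basis $\psi_0,\dots,\psi_{d_K-1}$ dual to $w_j=\varphi^{-j}\theta e_0^*$ ($\theta$ a normal basis element of $N_1/\Qp$) one has $\phi^*(\psi_l)=u\psi_{l+1}$ for $l<d_K-1$ and $\phi^*(\psi_{d_K-1})=uF^{-1}\psi_0$, i.e.\ a $u$-twisted cyclic shift. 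The element ${}^*((1-u^{d_K}F^{-1})e_I)$ appears only as the determinant over $\Qp[G/I]$ of the resulting $d_K\times d_K$ matrix for $1-\phi^*$; note that if $\phi^*$ really were that scalar on a module of $\Qp[G/I]$-rank $d_K$, the reduced norm would instead be the $d_K$-th power of $(1-u^{d_K}F^{-1})$ on the $e_I$-part. So your identification both miscounts the rank and misdescribes the operator, and the two inaccuracies only produce the stated element because the answer has in effect been assumed; the companion-matrix computation the paper carries out is the actual content of the lemma.

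On the first assertion, your route (dualize invertibility of $1-\phi$ on $\Dcris^N(V^*(1))$) is reasonable, but the argument of \cite[Lemma~6.3]{IV} invoked in Lemma \ref{degenerate}(c) concerns $\Dcris^N(V)$, where $1-\phi$ is invertible unconditionally because the relevant crystalline eigenvalue carries a positive power of $p$ (compare Lemma \ref{lemmaforUcris 2}); for $V^*(1)$ the eigenvalue is the unit $u$, and invertibility genuinely uses $\chinr|_{G_N}\ne 1$, i.e.\ $u^{d_N}=\chinr(F_N)\ne 1$. In the paper this is read off from the same matrix: $\det(1-\phi^*)=1-u^{d_K}F^{-1}$ is invertible in $\Qp[G/I]$ exactly when $\chinr|_{G_N}\ne 1$. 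Thus the missing explicit computation is needed both for the displayed formula and for the isomorphism statement.
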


\begin{proof}
We let $v^*$ be a $\Qp$-basis of $V^*(1)$. In particular,
$\sigma v^* = \chinr(\sigma)^{-1} v^*$ for all $\sigma \in G_K$. 

We consider the $\Zp$-module $\left( \overline{\Zpnr}(\chinr_\Qp) \right)^{G_\Qp}$. For $s \in \overline{\Zpnr}$ one has
\[
s \in \left( \overline{\Zpnr}(\chinr_\Qp) \right)^{G_\Qp} \iff u \varphi(s) - s = 0.
\]
Adapting the proof of \cite[Lemma~V.2.1]{Neukirch92} we construct $\tur \in \overline{\Zpnr}$ such that
$(\tur)^{p-1} \equiv u^{-1} \pmod{p}$ and $u\varphi(\tur) = \tur$. It follows that $\tur \in \overline{\Zpnr}^\times$,
and furthermore, it is easy to show that $\left( \overline{\Zpnr}(\chinr) \right)^{G_\Qp} = \Zp\tur$. Note that $\tur$ is unique
up to multiplication by units of $\Zp$. 

By \cite[Th.~6.14]{FO} we know that $\overline{\Zpnr} \sseq \Bcris$, so that the element 
\[
e_0^* := (\tur)^{-1} \tensor v^*
\]
is contained in $\Bcris \tensor_\Qp V^*(1)$.  Since $\sigma(\tur) = \chinr_\Qp(\sigma)^{-1} \tur$ for all $\sigma \in G_\Qp$ we derive that
$e_0^* \in \left( \Bcris \tensor_\Qp V^*(1)\right)^{G_K} \sseq \Dcris^N(V^*(1))$. 
Let $N_1 \sseq N$ denote the maximal unramified subextension of $N/\Qp$. So we have $[N_1 : \Qp] = d_N$. 
Since $\dim_{N_1} \Dcris^N(V^*(1)) \le \dim_\Qp(V^*(1)) = 1$, the element $e_0^*$ generates 
$\Dcris^N(V^*(1))$ as an $N_1$-vector space,
i.e., $\Dcris^N(V^*(1)) = N_1 e_0^*$. By the $\varphi$-semilinearity of $\phi$ we obtain $\phi(e_0^*) = \chinr_\Qp(\varphi)e_0^* = ue_0^* $.

We now fix a normal basis element $\theta$ of $N_1/\Qp$. Then $w_j := \varphi^{-j}\theta e_0^*$, $j=0, \ldots, d_K-1$, is a
$\Qp[G/I]$-basis of $\Dcris^N(V^*(1))$. Let $\psi_l \in  \Dcris^N(V^*(1))^*$, $l = 0, \ldots, d_K-1$, be the dual basis. Then
\[
\Dcris^N(V^*(1))^* = \bigoplus_{l=0}^{d_K-1} \Qp[G/I]\psi_l
\]
and for $0<j<d_K$ we have
\[ \phi^*(\psi_l)(F^iw_j) = \psi_l(\phi(F^i\varphi^{-j}\theta e_0^*))  
= \psi_l(uF^i\varphi^{-j+1}\theta e_0^*)= u \psi_l(F^iw_{j-1}),\]
which is $u$ for $i=0$ and $l=j-1$, $0$ otherwise. For $j=0$ we get
\[ \phi^*(\psi_l)(F^iw_0) =\psi_l(\phi(F^i\theta e_0^*))= \psi_l(uF^{i+1}\varphi^{-d_K+1}\theta e_0^*)  = u\psi_l(F^{i+1}w_{d_K-1}),\]
which is $u$ for $i=-1$ and $l=d_K-1$, $0$ otherwise. Hence $\phi^*(\psi_l) = u \psi_{l+1}$ for $0\leq l<d_K-1$ and $\phi^*(\psi_{d_K-1}) = uF^{-1} \psi_{0}$.

With respect to the $\Qp[G/I]$-basis $\psi_0, \ldots, \psi_{d_K-1}$ the matrix associated to $1-\phi^*$ is given by
\[
A=\begin{pmatrix}1&0&0&\cdots&0&-uF^{-1}\\
-u&1&0&\cdots&0&0\\
0&-u&1&\cdots&0&0\\
\vdots&\vdots&\vdots&\ddots&\vdots&\vdots\\
0&0&0&\cdots&1&0\\
0&0&0&\cdots&-u&1\end{pmatrix}.
\]
Since $\det(A) = 1 - u^{d_K}F^{-1}$ the lemma follows. Note that this is invertible if and only if $\chinr|_{G_N}\neq 1$.
\end{proof}

\begin{lemma}\label{lemmaforUcris 2}
The endomorphism $1 - \phi$ of $\Dcris^N(V)$ is an isomorphism and we have
\[
\partial_{\ZpG, \QpG}^1([\Dcris^N(V), 1 - \phi])\! = \!
\hat\partial_{\ZpG, \QpG}^1({}^*((1-p^{-d_K}u^{-d_K}F)e_I)) \in K_0(\ZpG, \QpG).
\]
\end{lemma}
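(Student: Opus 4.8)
The plan is to mirror the proof of Lemma~\ref{lemmaforUcris 1}, working this time directly with $\Dcris^N(V)$ rather than with the $\Qp$-dual of $\Dcris^N(V^*(1))$; the only genuinely new ingredient is the Tate twist, which enters the Frobenius eigenvalue through the relation $\phi(t)=pt$.

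First I would produce an explicit generator of $\Dcris^N(V)$. Keep the element $\tur\in\overline{\Zpnr}^\times$ with $u\varphi(\tur)=\tur$ constructed in the proof of Lemma~\ref{lemmaforUcris 1}, write $v$ for a $\Qp$-basis of $V$ (so that $\sigma v=\chinr(\sigma)\chi^{cyc}(\sigma)v$ for all $\sigma\in G_K$) and $t=\log[\xi]\in\Bcris$ for the usual period, and set
\[
e_0:=\tur\,t^{-1}\otimes v .
\]
Since $\sigma(\tur)=\chinr_\Qp(\sigma)^{-1}\tur$ and $\sigma(t)=\chi^{cyc}(\sigma)t$ for all $\sigma\in G_\Qp$, the element $e_0$ is fixed by all of $G_K$; hence it lies in $\Dcris^N(V)=(\Bcris\otimes_\Qp V)^{G_N}$ and is fixed by the residual action of $G=\Gal(N/K)$. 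Using $\phi(\tur)=u^{-1}\tur$ (recall that $\phi$ restricts to $\varphi$ on $\overline{\Zpnr}$) together with the well-known identity $\phi(t)=pt$ (see \cite{FO}) one gets $\phi(e_0)=(pu)^{-1}e_0$. As $\dim_{N_1}\Dcris^N(V)\le\dim_\Qp V=1$ and $e_0\ne0$, it follows that $\Dcris^N(V)=N_1e_0$, where $N_1\sseq N$ is the maximal unramified subextension of $N/\Qp$ as in Lemma~\ref{lemmaforUcris 1}.

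Next, exactly as in the proof of Lemma~\ref{lemmaforUcris 1}, fix a normal basis element $\theta$ of $N_1/\Qp$, so that $w_j:=\varphi^{-j}(\theta)\,e_0$ for $j=0,\dots,d_K-1$ is a $\Qp[G/I]$-basis of $\Dcris^N(V)$ (viewed as a module over $\QpG e_I\cong\Qp[G/I]$, where $e_I=|I|^{-1}\sum_{\sigma\in I}\sigma$ and $I=I_{N/K}$). Using $\phi(e_0)=(pu)^{-1}e_0$, the $\varphi$-semilinearity of $\phi$, and the fact that $F=F_K=\varphi^{d_K}$ acts on $N_1e_0$ through its Galois action on $N_1$ and fixes $e_0$, one computes
\[
\phi(w_j)=(pu)^{-1}w_{j-1}\quad(1\le j\le d_K-1),\qquad \phi(w_0)=(pu)^{-1}F\,w_{d_K-1}.
\]
Hence the matrix of the $\Qp[G/I]$-linear endomorphism $1-\phi$ in this basis is the companion-type matrix with $1$ on the diagonal, $-(pu)^{-1}$ on the superdiagonal and $-(pu)^{-1}F$ in the lower-left corner, whose reduced norm equals $1-p^{-d_K}u^{-d_K}F$ on the $e_I$-part and $1$ on the complementary idempotent. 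Since $F$ has finite order in $G/I$ it acts as a root of unity, hence a unit, in every Wedderburn component of $Z(\QpG e_I)$, while $p^{-d_K}u^{-d_K}$ has negative $p$-adic valuation; therefore $1-p^{-d_K}u^{-d_K}F$ is nonzero in each such component, so it is a unit in $Z(\QpG)^\times$ and $1-\phi$ is indeed an isomorphism. The asserted formula now follows from $\hat\partial^1_{\ZpG,\QpG}=\partial^1_{\ZpG,\QpG}\circ\nr_{\QpG}^{-1}$ together with the definition of ${}^*(\cdot)$.

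There is no serious obstacle here, the argument being a variant of that of Lemma~\ref{lemmaforUcris 1}; the only point demanding care is the Frobenius bookkeeping. One must track that the Tate twist forces the Frobenius eigenvalue on $e_0$ to be $(pu)^{-1}$ rather than $u$, and that, because the computation is carried out on $\Dcris^N(V)$ itself and not on a dual, the wrap-around entry of the matrix is $F$ rather than $F^{-1}$. This is precisely why $1-\phi$ turns out to be an isomorphism without any hypothesis on $\chinr|_{G_N}$, in contrast with Lemma~\ref{lemmaforUcris 1}.
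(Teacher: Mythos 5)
Your proposal is correct and follows exactly the route the paper intends: its proof of this lemma is simply the analogue of Lemma \ref{lemmaforUcris 1} (via \cite[Lemma~6.3]{IV}), and your explicit computation with $e_0=\tur\,t^{-1}\otimes v$, $\phi(e_0)=(pu)^{-1}e_0$, and the companion-type matrix with determinant $1-p^{-d_K}u^{-d_K}F$ is precisely that analogue carried out in detail. The observation that the negative valuation of $p^{-d_K}u^{-d_K}$ makes every character component nonzero, so that $1-\phi$ is an isomorphism without any hypothesis on $\chinr|_{G_N}$, is also the intended point.
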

\begin{proof}
The proof is analogous to the previous lemma, following the computations in \cite[Lemma~6.3]{IV}. 
\end{proof}

\end{subsection}
\begin{subsection}{\texorpdfstring{$\Ucris$ in the case $\chinr|_{G_N} = 1$}{Ucris in the case chinr|GN = 1}}
We now study the case $\chinr|_{G_N} = 1$.

By Lemma \ref{degenerate}, Lemma \ref{lemmaforUcris 2} (note that this lemma holds also for $\chinr|_{G_N}=1$), (\ref{splice 0}), (\ref{splice 1}), (\ref{splice 2}), (\ref{local duality}) and Lemma \ref{galois cohomology} (b) the $7$-term exact sequence (\ref{7terms}) degenerates
to
\begin{equation} 0 \lra \Dcris^N(V) \xrightarrow{1- \phi} \Dcris^N(V) \lra 0, \end{equation}
\begin{equation}\label{degenerate 4}
 0 \lra t_V(N) \xrightarrow{\exp_V} H^1(N,V) \lra \frac{H^1(N,V)}{H^1_f(N, V)} \lra 0 \end{equation}
and 
\begin{equation}\label{degenerate 3}
0 \lra \frac{H^1(N,V)}{H^1_f(N, V)} \lra \Dcris^N(V^*(1))^* \xrightarrow{1- \phi^*} \Dcris^N(V^*(1))^* \lra H^2(N, V) \lra 0.
\end{equation}

In the case $\chinr|_{G_N} = 1$ taking cohomology commutes with twisting, so that we may and will assume that $V = \Qp(1)$ (see Remark \ref{assumption} below).
By \cite[Example~3.10.1]{BK} and Lemma \ref{degenerate} the short exact sequence (\ref{degenerate 4}) fits into the following diagram:
\begin{equation}
\begin{split}
\xymatrix{
0 \ar[r]& t_V(N) \ar[r]^-{\exp_V}\ar[d]&H^1(N, V) \ar[r]\ar[d]^-{\partial^{-1}_{Ku}}   & \frac{H^1(N, V)}{H^1_f(N, V)}  \ar[r]\ar[d]   &0\\
0 \ar[r]& N \ar[r]^-{\exp}\ar[d]^-{=}& \widehat{N^\times} \tensor_\Zp \Qp \ar[r]\ar[d]^-{=}   & 
    \frac{\widehat{N^\times} \tensor_\Zp \Qp}{U_N^{(1)} \tensor_\Zp \Qp}  \ar[r]\ar[d]^-{\nu_N}    &0\\
0 \ar[r]& N \ar[r]^-{\exp}& \widehat{N^\times} \tensor_\Zp \Qp \ar[r]^-{\nu_N}  & \Qp  \ar[r] &0
}
\end{split}
\end{equation}
where $\exp$ is induced by the $p$-adic exponential map and $\partial_{Ku}$ denotes the connecting homomorphism induced by the Kummer sequence. 
We have a morphism $\gamma_1$ in $V(\QpG)$
\[\begin{split}
\gamma_1 &\colon 
\left[  \frac{H^1(N,V)}{H^1_f(N, V)} \right]_\QpG^{-1}  \tensor \left[  H^2(N,V) \right]_\QpG \\
&\to
\left[  \frac{H^1(N,V)}{H^1_f(N, V)} \right]_\QpG^{-1} \!\!\!\!\!\!\!\!\!\tensor \left[ \Dcris^N(V^*(1))^*  \right]_\QpG 
\tensor \left[ \Dcris^N(V^*(1))^*  \right]_\QpG^{-1} \tensor \left[  H^2(N,V) \right]_\QpG \\
&\to \unit_{V(\QpG)}
\end{split}\]
where the last arrow is induced by the exact sequence (\ref{degenerate 3}).
From Lemma \ref{degenerate} together with the isomorphism $\inv_N \colon H^2(N, V) \lra \Qp$ we deduce a morphism $\gamma_2$ in $V(\QpG)$
\[
\gamma_2 \colon \left[  \frac{H^1(N,V)}{H^1_f(N, V)} \right]_\QpG^{-1}  \tensor \left[  H^2(N,V) \right]_\QpG \lra \unit_{V(\QpG)}
\]
which is induced by the valuation map $\nu_N$. We set $\gamma := \gamma_2\circ \gamma_1^{-1} \in \pi_1(V(\QpG)) \cong K_1(\QpG)$. 
The comparison of the trivializations $\kappa = \kappa(M^\bullet, \lambda^{-1})$ and $\beta'(N/K, V)$ shows (as explained in some more detail in the appendix) that
\begin{equation}\label{yar 5}
\Ucris = \partial_{\ZpG, \QpG}^1([\Dcris^N(V), 1 - \phi]) + \partial_{\ZpG, \QpG}^1(\gamma).
\end{equation}
We now proceed to compute $\Ucris$ explicitly. The first summand is computed by Lemma \ref{lemmaforUcris 2}.
For non-trivial characters the same computation as in the proof of Lemma \ref{lemmaforUcris 1} shows that
\[\begin{split}
\partial_{\ZpG, \QpG}^1([(1-e_G) \Dcris^N(V^*(1)), 1 - \phi^*]) &= 
\hat\partial_{\ZpG, \QpG}^1({}^*((   1-F^{-1})(e_I-e_G)))\\
&=\hat\partial_{\ZpG, \QpG}^1({}^*((   1-F^{-1})e_I)).
\end{split}\]

It therefore remains to compute the trivial component of $\gamma$. For this we will need some preparations.

We recall that the exact sequence (\ref{degenerate 3}) is derived by dualizing and then using local duality from
\begin{equation}\label{no dual}
0 \lra H^0(N, V^*(1)) \stackrel{\iota}\lra \Dcris^N(V^*(1)) \xrightarrow{1- \phi} \Dcris^N(V^*(1)) \lra H^1_f(N, V^*(1)) \lra 0,
\end{equation}
which is (\ref{splice 0}) with $V$ replaced by $V^*(1)$. 
Recall that $N_1 = N \cap \Kur$.
The sequence (\ref{no dual}) is explicitly given by
\[
0 \lra \Qp \stackrel{\iota}\lra N_1 \xrightarrow{1- \varphi} N_1 \stackrel{\xi}\lra H^1_f(N, V^*(1)) \lra 0,
\]
where $\xi$ is the connecting homomorphism. Explicitly, for $x \in N_1$ and $\sigma \in G_N$ we have $\xi_x(\sigma) = \sigma(y) - y$,
if $y \in N_0$ is such that $(1-\varphi)(y) = x$. By \cite[Example~3.9]{BK} $H^1_f(N, V^*(1)) \sseq \homcont(G_N, \Qp) = H^1(N, V^*(1))$
are the unramified homomorphisms, hence
\[
H^1_f(N, V^*(1)) = \homcont(\overline{\langle F_N \rangle}, \Qp).
\]
We will always identify $H^1_f(N, V^*(1))$ with $\Qp$ via
\[
\mu \colon H^1_f(N, V^*(1)) \lra \Qp, \quad \eta \mapsto \eta(F_N).
\]
Recall that $\T_{N_1/\Qp}$ denotes the trace map. Since
\[\begin{split}
  \xi_x(F_N) &= F_N(y) - y = \varphi^{d_N}(y) - y \\
  &=\sum_{i=0}^{d_N -1}\left( \varphi^{i+1}(y) - \varphi^i(y) \right) = - \sum_{i=0}^{d_N -1}\varphi^i(x) = -\T_{N_1/\Qp}(x),
\end{split}\]
we have the commutative diagram
\begin{equation}\label{commdiagtriv}
\begin{split}
\xymatrix{
0  \ar[r]& \Qp \ar[r]^{\iota} \ar[d]^{=} &  N_1 \ar[r]^{1- \varphi}\ar[d]^{=} & N_1 \ar[r]^{\xi\phantom{xxxxx}}\ar[d]^{=} & H^1_f(N, V^*(1)) \ar[r] \ar[d]^{\mu} & 0 \\
0  \ar[r]& \Qp \ar[r]^{\iota}           &  N_1 \ar[r]^{1- \varphi}          & N_1 \ar[r]^{-\T_{N_1/\Qp}}    & \Qp \ar[r] & 0.
}
\end{split}
\end{equation}
By dualizing we obtain
\begin{equation}\label{Ucris diagram}
\begin{split}
\xymatrix{
0  \ar[r]& \Qpx \ar[r]^{(-\T_{N_1/\Qp})^*} \ar[d]^{\mu^*} &  N_1^* \ar[r]^{(1- \varphi)^*}\ar[d]^{=} & N_1^* \ar[r]^{\iota^*}\ar[d]^{=} & \Qpx \ar[r] \ar[d]^{=} & 0 \\
0  \ar[r]& H^1_f(N, V^*(1))^* \ar[r]^{\phantom{xxxx}\xi^*} \ar[d]^{s_1}  &  N_1^* \ar[r]^{(1- \varphi)^*}          & N_1^* \ar[r]^{\iota^*}    & \Qpx \ar[r]\ar[d]^{s_2} & 0 \\
         & H^1(N,V) / H^1_f(N,V) \ar[d]^{\cong} & & & H^2(N,V) \ar[d]^{\inv_N} & \\
         & \frac{\widehat{N^\times} \tensor \Qp}{U_N^{(1)} \tensor \Qp}
         \ar[rrr]^{\nu_N} &&& \Qp, &
}
\end{split}
\end{equation}
where $s_1$ and $s_2$ denote the duality maps from (\ref{local duality}).

\begin{lemma}\label{yal 1}\begin{enumerate}
\item[(a)] Let $\eta_1 \in \Qpx = \Hom_\Qp(\Qp, \Qp)$ be defined by $\eta_1(1) = 1$. Then  $s_1(\mu^*(\eta_1)) = [\pi_N]$ where
$[\pi_N]$ denotes the class of $\pi_N$ in $\frac{\widehat{N^\times} \tensor \Qp}{U_N^{(1)} \tensor \Qp}$.
\item[(b)] Let $\eta_2 \in \Qpx$ be defined by $\eta_2(1) = 1$. Then $\inv_N(s_2(\eta_2)) = 1$.
\end{enumerate}
\end{lemma}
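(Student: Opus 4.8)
The statement to prove is Lemma~\ref{yal 1}, which identifies two concrete elements in the local cohomology groups $H^1(N,V)/H^1_f(N,V)$ and $H^2(N,V)$ under the duality isomorphisms coming from diagram~(\ref{Ucris diagram}).

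\medskip

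The plan is to trace both elements through the maps of diagram~(\ref{Ucris diagram}) and to pin down the duality maps $s_1$ and $s_2$ explicitly via the cup product pairing underlying local Tate duality. For part~(b), $s_2\colon \Qpx \to H^2(N,V)$ is (up to the identifications) the dual of the canonical isomorphism $H^0(N,V^*(1)) = \Qp \xrightarrow{\iota} \Qp$, so it is essentially the identity map once $H^2(N,V)$ is identified with $\Qp$ via $\inv_N$; the compatibility of local duality with the trace/invariant maps is exactly the statement that $\inv_N \circ s_2$ sends the dual basis vector $\eta_2$ to $1$. Concretely, I would recall that local Tate duality identifies $H^2(N,V) = H^2(N,\Qp(1))$ with $\Qp$ via $\inv_N$, that $H^0(N,V^*(1)) = H^0(N,\Qp) = \Qp$, and that the pairing $H^0(N,\Qp)\times H^2(N,\Qp(1)) \to H^2(N,\Qp(1)) = \Qp$ is just multiplication followed by $\inv_N$; so the dual of the identity on $\Qp$ is the identity, giving $\inv_N(s_2(\eta_2)) = 1$.

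\medskip

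For part~(a) the key point is to compute the image of the unramified class under the chain $\Qpx \xrightarrow{\mu^*} H^1_f(N,V^*(1))^* \xrightarrow{s_1} H^1(N,V)/H^1_f(N,V) \cong \widehat{N^\times}\otimes\Qp / U_N^{(1)}\otimes\Qp$. By (\ref{local duality}) the map $s_1$ is induced by the Tate duality pairing $H^1(N,V)\times H^1(N,V^*(1)) \to H^2(N,\Qp(1)) = \Qp$, which under the Kummer identification $H^1(N,\Qp(1)) = \widehat{N^\times}\otimes\Qp$ and the identification $H^1(N,\Qp) = \homcont(G_N,\Qp)$ is given by $(a,\chi) \mapsto \inv_N(\chi \cup \delta(a))$, where $\delta$ is the Kummer connecting map; this is the classical statement that the cup product pairing $N^\times \times \homcont(G_N,\Qp) \to \Qp$ is $(a,\chi)\mapsto \chi(\mathrm{rec}_N(a))$ via local class field theory. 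Since $\mu^*(\eta_1)$ corresponds under $\mu$ to the unramified homomorphism $\eta \in H^1_f(N,V^*(1))$ with $\eta(F_N) = 1$, I would evaluate $\langle [\pi_N], \eta\rangle = \eta(\mathrm{rec}_N(\pi_N)) = \eta(F_N) = 1$ (using that $\mathrm{rec}_N(\pi_N)$ is the Frobenius, with the arithmetic normalization consistent with the rest of the paper), while $\langle u, \eta\rangle = \eta(\mathrm{rec}_N(u)) = 0$ for any unit $u \in U_N^{(1)}$ because $\eta$ is unramified. Hence under the perfect pairing between $H^1(N,V)/H^1_f(N,V)$ and $H^1_f(N,V^*(1))$ the element $s_1(\mu^*(\eta_1))$ is characterized as the unique class pairing to $1$ with $\eta$ and to $0$ with everything in $H^1_f$, which is precisely $[\pi_N]$.

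\medskip

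The main obstacle I anticipate is bookkeeping of normalizations: one must check that the identification $H^1_f(N,V^*(1)) \cong \Qp$ via $\mu$, the dualization conventions in passing from (\ref{commdiagtriv}) to (\ref{Ucris diagram}), the sign in $\xi_x(F_N) = -\T_{N_1/\Qp}(x)$ computed just above the lemma, and the normalization of $\mathrm{rec}_N$ (arithmetic versus geometric Frobenius) all combine to give $+1$ rather than $-1$ or some spurious factor. A secondary point is to make sure the duality maps $s_1, s_2$ in (\ref{local duality}) are used with the same orientation as in the construction of $\delta'(N/K,V)$; once those normalizations are fixed consistently, both statements reduce to the two classical evaluations above. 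The actual cohomological computations are short; the care lies entirely in matching conventions.
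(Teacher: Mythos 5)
Your proposal is correct and follows essentially the same route as the paper: both parts reduce to the cup-product formula $\inv_N(\pi_N\cup\xi)=\xi(\rec_N(\pi_N))=\xi(F_N)$ (with the arithmetic normalization) for (a), and to the tautological evaluation $\langle s_2(\eta_2),1\rangle=\eta_2(1)=1$ for (b). The only cosmetic difference is that the paper checks the equality of functionals on the explicit generator $\xi_\theta$ of the one-dimensional space $H^1_f(N,V^*(1))$ (normalized by $\T_{N_1/\Qp}(\theta)=-1$), whereas you characterize $\mu^*(\eta_1)$ abstractly as evaluation at $F_N$ and note that $H^1_f(N,V)$ pairs to zero with unramified classes; these are the same computation.
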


\begin{proof}
\begin{enumerate}
\item[(a)] We want to prove that $\langle \pi_N, \cdot \rangle = \mu^*(\eta_1)$, where
\begin{equation}\label{fix duality pairing}
\langle \cdot, \cdot \rangle \colon  H^1(N, V)\times H^1(N, V^*(1)) \lra H^2(N, \Qp(1)) \xrightarrow{\inv_N} \Qp
\end{equation}
denotes the duality pairing.  As in the proof of Lemma \ref{lemmaforUcris 1} we let 
$\theta \in N_1$ be a normal basis element. In addition, we require the property $\T_{N_1/\Qp}(\theta) = -1$. 
Since $H^1_f(N, V^*(1)) = \Qp\xi_\theta$, it is enough to prove that $\langle  \pi_N, \xi_\theta \rangle = \mu^*(\eta_1)(\xi_\theta)$.
By the defining property  of $\theta$ we obtain
\[
\mu^*(\eta_1)(\xi_\theta) = \eta_1(\mu(\xi_\theta)) = \eta_1(-\T_{N_1/\Qp}(\theta)) = \eta_1(1)=1.
\] 
By \cite[Coroll.~7.2.13]{NSW} we obtain
\[\langle  \pi_N, \xi_\theta \rangle =  \inv_N(\pi_N \cup \xi_\theta) =  \xi_\theta(\rec_N(\pi_N))=  \xi_\theta(F_N) = -\T_{N_1/\Qp}(\theta) = 1.\]

\item[(b)] If 
\[
\langle \cdot, \cdot \rangle \colon  H^2(N, V) \times H^0(N, V^*(1)) \lra H^2(N, \Qp(1)) \xrightarrow{\inv_N} \Qp,
\]
denotes the local duality pairing, then by definition $\langle s_2(\eta_2),\cdot\rangle=\eta_2$ and therefore
\[\inv_N(s_2(\eta_2))=\langle s_2(\eta_2),1\rangle=\eta_2(1)=1.\]
\end{enumerate}
\end{proof}

As explained in \cite[Sec.~2.5]{BuFl2001} we can and will identify here
$V(\Qp)$ with the category of graded invertible modules with the commutativity constraint $\psi$ 
as in \cite[(5)]{BuFl2001}. Explicitly, for objects $(L, \alpha)$ and $(M, \beta)$ and $l\in L$, $m\in M$ one has
\[
\psi(l \tensor m) = (-1)^{\alpha\beta}m \tensor l.
\]
We also point out that for an object $(L, \alpha)$ we fix a right inverse by $(L, \alpha)^{-1} := (\Hom_\Qp(L, \Qp), -\alpha)$, i.e. $(L,\alpha)\otimes (L,\alpha)^{-1}=(\Qp,0)$.

\begin{lemma}\label{lemmaforUcris 3}
Under our assumption that $\chinr = 1$,
\[
\partial^1_{\ZpG, \QpG}(\gamma) = 
\frac{{}^*(d_{N/K}e_{G})}{{}^*((1-F^{-1})e_I)}.
\]
\end{lemma}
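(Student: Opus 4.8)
The plan is to compute $\gamma\in\pi_1(V(\QpG))\cong K_1(\QpG)\cong Z(\QpG)^\times$ componentwise with respect to the central idempotents $e_G$ and $e_I-e_G$, and then reassemble; since $\hat\partial^1_{\ZpG,\QpG}=\partial^1_{\ZpG,\QpG}\circ\nr_{\QpG}^{-1}$, it is equivalent to compute $\nr_{\QpG}(\gamma)$. First note that everything is concentrated on $e_I$: by Lemma \ref{galois cohomology}(b) and Lemma \ref{degenerate} the modules $H^1(N,V)/H^1_f(N,V)$ and $H^2(N,V)$ are one-dimensional with trivial $G$-action, hence supported on $e_G$, while $\Dcris^N(V^*(1))^*$ is inflated from a $\Qp[G/I]$-module, hence supported on $e_I$. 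Consequently $\gamma$ is trivial on $1-e_I$ and only the components $e_I-e_G$ and $e_G$ matter.

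On $e_I-e_G$ the modules $H^1(N,V)/H^1_f(N,V)$ and $H^2(N,V)$ vanish, so $\gamma_2$ restricts to the identity trivialisation of $\unit$, while (\ref{degenerate 3}) degenerates to $0\to0\to\Dcris^N(V^*(1))^*\xrightarrow{1-\phi^*}\Dcris^N(V^*(1))^*\to0\to0$, in which $1-\phi^*$ is an isomorphism with reduced norm $(1-F^{-1})(e_I-e_G)$ by the matrix computation in the proof of Lemma \ref{lemmaforUcris 1} (with $u=1$). Hence $\gamma_1$ restricts to this reduced norm, $\gamma$ to its inverse, and, using ${}^*((1-F^{-1})e_G)=1$, this component contributes $-\hat\partial^1_{\ZpG,\QpG}({}^*((1-F^{-1})e_I))$, i.e. the denominator.

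The component $e_G$ is the crux. Here (\ref{degenerate 3})$|_{e_G}$ is a genuinely non-split four-term exact sequence in which $1-\phi^*$ has one-dimensional kernel and cokernel. The idea is to transport it through the chain of isomorphisms of the diagram (\ref{Ucris diagram}), using (\ref{commdiagtriv}) to make the connecting map explicit and Lemma \ref{yal 1} to identify the local-duality maps $s_1,s_2$ (composed with the isomorphism of Lemma \ref{degenerate}, with $\nu_N$, and with $\inv_N$) with the natural evaluation maps $\Qpx\to\Qp$ sitting at the two ends of (\ref{Ucris diagram}). This identifies $\gamma|_{e_G}\in K_1(\Qp)=\Qp^\times$ with the ratio of the trivialisation of $[\Qp]^{-1}\otimes[\Qp]$ attached to the $e_G$-isotypic part of the sequence $0\to\Qp\xrightarrow{\iota}N_1\xrightarrow{1-\varphi}N_1\xrightarrow{-\T_{N_1/\Qp}}\Qp\to0$ underlying (\ref{degenerate 3}) (in its explicit form from (\ref{Ucris diagram})), and the trivialisation attached to the same sequence with $-\T_{N_1/\Qp}$ replaced by the untwisted trace $-\T_{N_1^G/\Qp}$, where $N_1^G=N_1\cap K$. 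Since these two sequences carry the same map $1-\varphi$ and the same map $\iota$ on the $e_G$-component, and $[N_1:N_1^G]=d_{N/K}$ forces $\T_{N_1/\Qp}|_{N_1^G}=d_{N/K}\,\T_{N_1^G/\Qp}$, the two trivialisations differ exactly by the scalar $d_{N/K}$; all other contributions — the reduced determinant of $1-\varphi$ on $N_1^G$ and the signs from the grading conventions — are common to the two and cancel. Hence $\nr_{\QpG}(\gamma)|_{e_G}={}^*(d_{N/K}e_G)$, contributing $\hat\partial^1_{\ZpG,\QpG}({}^*(d_{N/K}e_G))$, i.e. the numerator.

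Putting the two contributions together yields
\[\partial^1_{\ZpG,\QpG}(\gamma)=\hat\partial^1_{\ZpG,\QpG}({}^*(d_{N/K}e_G))-\hat\partial^1_{\ZpG,\QpG}({}^*((1-F^{-1})e_I))=\hat\partial^1_{\ZpG,\QpG}\!\left(\frac{{}^*(d_{N/K}e_G)}{{}^*((1-F^{-1})e_I)}\right),\]
which, under the abuse of notation of the statement, is the asserted identity. The main obstacle is the $e_G$-component: one must carry the non-split four-term exact sequence (\ref{degenerate 3})$|_{e_G}$ faithfully through the dualisation and the local-duality identifications of the diagram (\ref{Ucris diagram}), and verify that — apart from the factor $d_{N/K}$ produced by the restriction of $\T_{N_1/\Qp}$ to the $e_G$-eigenspace — the two trivialisations agree exactly, in particular that the grading and commutativity-constraint signs of $V(\Qp)$ (as in \cite[Sec.~2.5]{BuFl2001}) match up; the other two components are essentially formal.
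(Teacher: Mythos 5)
Your overall strategy coincides with the paper's: decompose over the central idempotents, note that everything is supported on $e_I$, reduce the $(e_I-e_G)$-component to the matrix computation of Lemma \ref{lemmaforUcris 1} with $u=1$, and compute the $e_G$-component from the explicit sequence in diagram (\ref{Ucris diagram}) together with the normalizations of Lemma \ref{yal 1}. Your observation that the factor $d_{N/K}$ is produced by the relation $\T_{N_1/\Qp}|_{e_GN_1}=d_{N/K}\,\T_{N_1\cap K/\Qp}$ is correct, and it is exactly the source of the paper's $\det(A)=(-1)^{d_K+1}d_{N/K}$.

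There is, however, a genuine gap in the $e_G$-component. Comparing the two four-term sequences (with $-\T_{N_1/\Qp}$ and with $-\T_{N_1\cap K/\Qp}$) does show that their induced trivialisations of $[\Qpx]^{-1}\tensor[e_GN_1^*]\tensor[e_GN_1^*]^{-1}\tensor[\Qpx]$ differ by exactly $d_{N/K}$. But to conclude $\gamma|_{e_G}=d_{N/K}$ you must also know that the trivialisation attached to the second (untwisted-trace) sequence coincides with $\gamma_2|_{e_G}$ --- equivalently, that $\gamma_1=\gamma_2$ in the degenerate case $d_{N/K}=1$. This is not a formal cancellation: $\gamma_2$ is not defined by any four-term sequence (it is the evaluation trivialisation pinned down by $s_1\mu^*(\eta_1)=[\pi_N]$, $\nu_N([\pi_N])=1$ and $\inv_N s_2(\eta_2)=1$), so there are no ``common contributions'' between it and a sequence-trivialisation that could cancel. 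One must actually choose splittings of both halves of the sequence, compute the resulting transition determinants (the matrices $A$ and $B$ of the paper, with $\det B=-1$), and chase the $(-1)^{\alpha\beta}$ signs of the commutativity constraint of $V(\Qp)$; this is what yields $\gamma_1(\eta_1^{-1}\tensor\eta_2)=-1/d_{N/K}$ against $\gamma_2([\pi_N]^{-1}\tensor 1)=-1$, and hence the clean ratio $d_{N/K}$. This verification is the real content of the lemma and cannot be skipped: an undetected sign would replace $d_{N/K}e_G$ by $-d_{N/K}e_G$, i.e.\ change $\partial^1_{\ZpG,\QpG}(\gamma)$ by the class of $1-2e_G$, which is not a unit of $\ZpG$ when $p$ divides $|G|$ and would therefore affect the conjecture. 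You flag this at the end as ``the main obstacle'', which is honest, but the proof as written does not carry it out.
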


\begin{proof} As already mentioned the computation of the $(1-e_G)$-component is analogous to the proof of Lemma \ref{lemmaforUcris 1}. 

For the computation of the trivial component, by Lemma \ref{yal 1} we can identify $\eta_1^{-1}\otimes\eta_2$ with $[\pi_N]^{-1}\otimes 1$, so that we need to compute $\gamma_1(\eta_1^{-1}\otimes\eta_2)$ and $\gamma_2([\pi_N]^{-1}\otimes 1)$, where by abuse of notation we have denoted the $e_G$-component of $\gamma_1$ and $\gamma_2$ again by the same symbol.

Using the commutativity constraint we obtain
\begin{equation}\label{yar 6}
\gamma_2( [\pi_N]^{-1} \tensor 1 ) = -1.
\end{equation}

For the computation of the trivial component  it remains by Lemma \ref{yal 1} 
to compute the image of $\eta_1^{-1} \tensor \eta_2$ under the morphism $\gamma_1$.
By (\ref{yar 6}) we have to show 
\[
\gamma_1( \eta_1^{-1} \tensor \eta_2 ) = -\frac{1}{d_{N/K}}.
\]
The $e_G$-component of the top sequence in diagram (\ref{Ucris diagram}) is given by
\[
0 \lra \Qpx \xrightarrow{(-\T_{N_1/\Qp})^*} e_GN^*_1 \xrightarrow{(1-\varphi)^*} e_GN_1^* \stackrel{\iota^*}\lra \Qpx \lra 0.
\]  
The elements $e_G\psi_0, \ldots, e_G\psi_{d_K-1}$ constitute a $\Qp$-basis of $e_GN^*_1$, where $\psi_1,\dots\psi_{d_K-1}$ are defined as in the proof of Lemma \ref{lemmaforUcris 1}. We set
\[
\psi_i' := e_G\psi_{i}, \quad i=0, \ldots, d_K-1.
\]
Since $(1 - \varphi)^*(\psi_i') = \psi_i' - \psi_{i+1}'$ we set $z_i :=  \psi_i' - \psi_{i+1}'$, so that 
\[
Z := \langle z_0, \ldots, z_{d_K-2} \rangle_\Qp
\]
is the image of $(1 - \varphi)^*$. We consider the two short exact sequences 
\begin{equation}\label{Ucris split 1}
  0 \lra \Qpx \xrightarrow{(-\T_{N_1/\Qp})^*} e_GN^*_1 \xrightarrow{(1-\varphi)^*} Z \lra 0 
\end{equation}
\begin{equation}\label{Ucris split 2}
  0 \lra Z \stackrel{\sseq}\lra e_GN_1^* \stackrel{\iota^*}\lra \Qpx \lra 0
\end{equation}
and compute the image of $\eta_1^{-1} \tensor \psi' \tensor (\psi')^{-1} \tensor \eta_2$ with $\psi' := \psi_0' \wedge \ldots \wedge \psi_{d_K-1}'$
under the natural map
\[
[\Qpx]^{-1}_\Qp \picprod [e_GN_1^*]_\Qp \picprod  [e_GN_1^*]^{-1}_\Qp \picprod [\Qpx]_\Qp \lra {\bf 1}_{V(\Qp)},
\]
which is induced by the top sequence of (\ref{Ucris diagram}). 

We express $(-\T_{N_1/\Qp})^*(\eta_1)$ in terms of the $\Qp$-basis $\psi_0, \ldots, \psi_{d_N-1}$ of $N_1^*$ which is dual to the basis $w_0,\dots,w_{d_N-1}$ of $N_1$ defined by $w_j=\varphi^{-j}\theta$. As usual $\theta$ is a normal basis element with the additional property that $\trace_{N/\Qp}(\theta)=-1$. One has
\[
(-\T_{N_1/\Qp})^*(\eta_1)(w_i) = \eta_1(-\T_{N_1/\Qp}(w_i)) = \eta_1(1) = 1
\]
for all $i = 0, \ldots, d_N-1$.
We set $b_1 := (-\T_{N_1/\Qp})^*(\eta_1)$ and obtain
\[
b_1 =  \sum_{i=0}^{d_N-1}\psi_i =  \left( \sum_{j=0}^{d_{N/K}-1} \varphi^{d_Kj} \right) \sum_{i=0}^{d_K-1}\psi_i =  d_{N/K} \sum_{i=0}^{d_K-1}\psi_i'.
\]
Hence $(b_1, \psi_0', \ldots, \psi_{d_K-2}') = (\psi_0', \ldots, \psi_{d_K-1}') A$ with the matrix $A \in \mathrm{Gl}_{d_K}(\Qp)$ given by
\[A = \begin{pmatrix}
d_{N/K} & 1 & 0 & \cdots & 0 \\ 
d_{N/K} & 0 & 1 & \cdots & 0 \\ 
\vdots & \vdots & \vdots & \ddots & \vdots \\
d_{N/K} & 0 & 0 & \cdots & 1  \\ 
d_{N/K} & 0 & 0 & \cdots & 0 
\end{pmatrix}.\]
A splitting $\tau$ of $(1-\varphi)^*$ is given by $\tau(z_i) := \psi_i', i = 0, \ldots, d_K-2$.
Therefore, under the isomorphism 
\[
[e_GN_1^*]_\Qp \cong [\Qpx]_\Qp \tensor [Z]_\Qp
\]
induced by (\ref{Ucris split 1}) we obtain
\[
\psi' \mapsto \det(A)^{-1} (\eta_1 \tensor z),
\]
where $z := z_0 \wedge \ldots \wedge z_{d_K-2}$. 

We define 
\[
\sigma \colon \Qpx \lra N_1^*, \quad \eta_2 \mapsto - \frac{1}{d_N} \sum_{i=0}^{d_N-1}\psi_i.
\]
Then $\sigma$ is a splitting of $\iota^*$ because 
\[
\iota^*(\sigma(\eta_2))(1) = - \frac{1}{d_N}\sum_{i=0}^{d_N-1}\psi_i(1) = \frac{1}{d_N}\sum_{i=0}^{d_N-1}\psi_i 
\left( \sum_{j=0}^{d_N-1}w_j \right) = 1.
\] 
We put $b_2 := \sigma(\eta_2)$. Then
\[
b_2 = - \frac{1}{d_N} \sum_{i=0}^{d_N-1} \psi_i = - \frac{1}{d_K} \sum_{i=0}^{d_K-1} \psi_i'.
\]
Hence we have $(z_0, \ldots, z_{d_K-2}, b_2) = (\psi_0', \ldots, \psi_{d_K-1}') B$ with the matrix $B \in \mathrm{Gl}_{d_K}(\Qp)$ given by
\[B = \begin{pmatrix}
1 & 0 & \cdots & 0 & 0 & - \frac{1}{d_K} \\ 
-1 & 1 & \cdots & 0 & 0 & - \frac{1}{d_K}  \\  
0 & -1 & \cdots & 0 & 0 & - \frac{1}{d_K}  \\  
\vdots & \vdots & \ddots & \vdots& \vdots & \vdots \\  
0 & 0 & \cdots & -1 & 1 & - \frac{1}{d_K} \\  
0 & 0 & \cdots & 0 & -1 & - \frac{1}{d_K}
\end{pmatrix}.\]
Therefore 
\[
\psi' \mapsto \det(B)^{-1} (z \tensor \eta_2)
\]
under the isomorphism induced by (\ref{Ucris split 2}), respectively $(\psi')^{-1} \mapsto \det(B) (\eta_2^{-1} \tensor z^{-1})$.
Note that
\[
\det(A) = (-1)^{d_K+1} d_{N/K}, \quad \det(B) = -1.
\]
Putting things together we obtain
\[\begin{split}
 & \eta_1^{-1} \tensor \psi' \tensor (\psi')^{-1} \tensor \eta_2 \\
 &\quad\mapsto (-1)^{d_K+1} \frac{1}{d_{N/K}} (\eta_1^{-1} \tensor (\eta_1 \tensor z)  \tensor (\psi')^{-1} \tensor \eta_2) \\
 &\quad\mapsto (-1)^{d_K} \frac{1}{d_{N/K}} (z  \tensor (\psi')^{-1} \tensor \eta_2) \\
 &\quad\mapsto (-1)^{d_K+1} \frac{1}{d_{N/K}} (z  \tensor (\eta_2^{-1} \tensor z^{-1}) \tensor \eta_2) \\
 &\quad\mapsto (-1)^{d_K+1+d_K-1} \frac{1}{d_{N/K}} (z  \tensor z^{-1} \tensor \eta_2^{-1} \tensor \eta_2) \\
 &\quad\mapsto \frac{1}{d_{N/K}} (\eta_2^{-1} \tensor \eta_2) \\
 &\quad\mapsto - \frac{1}{d_{N/K}}.
\end{split}\]

\end{proof}

\begin{remark}\label{assumption}
In the case $\chinr|_{G_N}=1$ but $\chinr\neq 1$ the restriction of $\chinr$ to $N$ defines an abelian character of $G = \Gal(N/K)$. 
We write 
\[
e_{\chinr} = \frac{1}{|G|} \sum_{\sigma \in G} \chinr(\sigma) \sigma^{-1}
\]
for the associated idempotent. The calculations in this case are basically the same as in the case $V = \Qp(1)$. 
Diagram (\ref{commdiagtriv}) becomes

\[\xymatrix{
0  \ar[r]& \Qp((\chinr)^{-1}) \ar[r]^{\iota} \ar[d]^{=} &  N_1((\chinr)^{-1}) \ar[r]^{f}\ar[d]^{=} & N_1((\chinr)^{-1}) \ar[r]^{\xi\phantom{xxxxx}}\ar[d]^{=} & H^1_f(N, V^*(1)) \ar[r] \ar[d]^{\mu} & 0 \\
0  \ar[r]& \Qp((\chinr)^{-1}) \ar[r]^{\iota}           &  N_1((\chinr)^{-1}) \ar[r]^{f}          & N_1((\chinr)^{-1}) \ar[r]^{-\T_{N_1/\Qp}}    & \Qp((\chinr)^{-1}) \ar[r] & 0.
}\]

Note that the maps in (\ref{commdiagtriv}) and also their splittings do not change: 
this is the reason why we write $f$ instead of $1-\varphi$ which otherwise could falsely be interpreted
as $x \mapsto x - \chinr(\varphi)\varphi(x)$. 
For the rest of the proof of the second part of the next proposition one just has to take into account that 
only the action of $G$ on the modules changes by the twist so that we obtain the non-trivial contribution from
non-torsion cohomology for the character $\chinr$.
\end{remark}

We summarize the main result of Section \ref{Ucris}.
\begin{prop}\label{Ucris prop}

a) If $\chinr|_{G_N} \ne 1$, then $\Ucris = \partial(m_{N/K})$ with
\[
m_{N/K} = \frac{{}^*((1-p^{-d_K}u^{-d_K}F)e_I)}{{}^*((1-u^{d_K}F^{-1})e_I)}.
\]

b) If $\chinr|_{G_N} = 1$, then $\Ucris = \partial(m_{N/K})$ with
\[
m_{N/K} = \frac{{}^*((1-p^{-d_K}u^{-d_K}F)e_I) \cdot {}^*(d_{N/K}e_{\chinr})}{{}^*((1-u^{d_K}F^{-1})e_I)}.
\]
\end{prop}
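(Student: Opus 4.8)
The plan is to assemble Proposition \ref{Ucris prop} from the explicit computations already carried out in the preceding subsections, so the proof is essentially a bookkeeping exercise that collects the right terms. The statement is a summary, so the work has all been done; what remains is to combine the pieces and translate them from the notation $\hat\partial^1_{\ZpG,\QpG}$ applied to an element of $Z(\QpG)^\times$ into the notation $\partial(m_{N/K})$.

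For part (a), i.e. $\chinr|_{G_N}\ne 1$, I would start from formula (\ref{yar 4}),
\[
\Ucris = \partial_{\ZpG, \QpG}^1([\Dcris^N(V), 1 - \phi]) - \partial_{\ZpG, \QpG}^1([\Dcris^N(V^*(1))^*, 1 - \phi^*]),
\]
and simply substitute the values of the two summands computed in Lemma \ref{lemmaforUcris 2} and Lemma \ref{lemmaforUcris 1} respectively. This gives
\[
\Ucris = \hat\partial_{\ZpG, \QpG}^1({}^*((1-p^{-d_K}u^{-d_K}F)e_I)) - \hat\partial_{\ZpG, \QpG}^1({}^*((1-u^{d_K}F^{-1})e_I)).
\]
Since $\hat\partial^1_{\ZpG,\QpG} = \partial^1_{\ZpG,\QpG}\circ\nr_{\QpG}^{-1}$ is a group homomorphism (the reduced norm turns products into products and the $K$-theory boundary map is additive), the difference of the two $\hat\partial$-terms is $\partial$ applied to the quotient of the two central elements, which is exactly $m_{N/K}$ as stated. (One should note here that the minus sign in (\ref{yar 4}) is the source of the quotient, and that both factors are genuine units in $Z(\QpG)^\times$ precisely because $\chinr|_{G_N}\ne 1$, as observed at the end of the two lemmas.)

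For part (b), i.e. $\chinr|_{G_N}=1$, I would start instead from (\ref{yar 5}),
\[
\Ucris = \partial_{\ZpG, \QpG}^1([\Dcris^N(V), 1 - \phi]) + \partial_{\ZpG, \QpG}^1(\gamma),
\]
insert the value of the first term from Lemma \ref{lemmaforUcris 2} (which, as noted there, holds also in this case) and the value of $\partial^1_{\ZpG,\QpG}(\gamma)$ from Lemma \ref{lemmaforUcris 3}. The latter is $\dfrac{{}^*(d_{N/K}e_G)}{{}^*((1-F^{-1})e_I)}$ in the case $V=\Qp(1)$; for a general character $\chinr$ with $\chinr|_{G_N}=1$ one replaces $e_G$ by $e_{\chinr}$ by Remark \ref{assumption}, and the trivial idempotent $e_I$ argument of the $(1-F^{-1})$-factor is unaffected (again by Remark \ref{assumption}, since only the $G$-action on the modules changes under the twist). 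Multiplying the two contributions together using the homomorphism property of $\hat\partial$ yields
\[
m_{N/K} = \frac{{}^*((1-p^{-d_K}u^{-d_K}F)e_I)\cdot {}^*(d_{N/K}e_{\chinr})}{{}^*((1-u^{d_K}F^{-1})e_I)},
\]
which is the claimed formula. I expect no genuine obstacle here; the only point that needs a little care is to check that $1-F^{-1}$ restricted to the non-trivial (i.e. $(1-e_G)$, or $(1-e_{\chinr})$) components agrees with $1-u^{d_K}F^{-1}$ up to the ${}^*$-operation — but this is exactly the computation reproduced just before Lemma \ref{lemmaforUcris 3}, namely ${}^*((1-F^{-1})(e_I-e_G)) = {}^*((1-F^{-1})e_I)$, and the same reasoning with $e_{\chinr}$ in place of $e_G$ applies verbatim. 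Hence the proof reduces to quoting (\ref{yar 4}) or (\ref{yar 5}), the three lemmas, and Remark \ref{assumption}, and invoking additivity of $\partial^1_{\ZpG,\QpG}$ together with multiplicativity of the reduced norm.
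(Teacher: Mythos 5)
Your proposal is correct and follows exactly the paper's own (very short) proof: the authors likewise just combine (\ref{yar 4}) resp.\ (\ref{yar 5}) with Lemmas \ref{lemmaforUcris 1}, \ref{lemmaforUcris 2}, \ref{lemmaforUcris 3} and Remark \ref{assumption}, using additivity of the boundary map to turn the sum/difference into the stated quotient. The extra points you flag (the ${}^*((1-F^{-1})(e_I-e_G))={}^*((1-F^{-1})e_I)$ identification and invertibility of the factors) are exactly the observations the paper makes in the surrounding text, so nothing is missing.
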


\begin{proof}
Combine the equations (\ref{yar 4}) and  (\ref{yar 5}) with Lemma \ref{lemmaforUcris 1}, \ref{lemmaforUcris 2}, \ref{lemmaforUcris 3}
and Remark \ref{assumption}.
\end{proof}

\begin{remark}\label{Breu MNK}
  In the case $\chinr = 1$ we precisely recover the correction term $M_{N/K}^\Breu$ in Breuning's conjecture \cite[Conj.~3.2]{Breuning04}.
\end{remark}
\end{subsection}
\end{section}

\begin{section}{Epsilon constants of Weil-Deligne representations}\label{eps}
In this section we study the epsilon constant defined in \cite[Sec.~3]{IV} and in the situation described in Subsection \ref{sit} we rewrite it in terms of Galois Gau\ss\  sums. 

\begin{subsection}{Epsilon constants and Galois Gau\ss\ sums}
If $L/\Qp$ is a finite extension and $W$ is an Artin representation of $G_L$,  we let $\frf(W)$ denote the Artin conductor of $W$. Let $\psi=\psi_\xi$ be the standard additive character, as defined in Subsection \ref{subsectCEP}. By \cite[Lemma~3.1]{IV} we have

\begin{equation}\label{eps explicit}
\epsilon_D(N/K, V) = \left( \epsilon\left( \Ind_{K/\Qp}(\chi), -\psi, \mu_\Qp \right)^{-1} 
\chiQpnr\left( \frf(\Ind_{K/\Qp}(V_\chi))^{-1} \right) \right)_{\chi \in \Irr(G)}.
\end{equation}

Here we view $\chiQpnr$ as a character $\Qptimes \xrightarrow{\rec} G_\Qp^\mathrm{ab} \xrightarrow{\chiQpnr} \Qptimes$ via the local reciprocity map.

\begin{remark}\label{rec normalization}
Contrary to the convention in \cite{IV} we normalize reciprocity maps by sending a uniformizer $\pi_L$ to the arithmetic Frobenius map $F_L$.
\end{remark}

\begin{lemma}\label{abelian eps}
Assume that $M/L$ is an abelian extension of $p$-adic number fields. Let $\eta$ be an irreducible character of $\Gal(M/L)$. Let $\frD_L = \pi_L^{s_L}\OL$ be the different of $L/\Qp$. Then
\[\epsilon(\eta, \psi, \mu_L) = p^{d_Ls_L}\tau_L(\eta),\]
where $\tau_L(\eta)$ is a Galois Gau\ss\ sum (for the definition see e.g. \cite[page~1184]{PickettVinatier}).
\end{lemma}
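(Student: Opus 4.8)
The plan is to reduce the $\epsilon$-constant of $\eta$ over $L$ to an $\epsilon$-constant of an abelian character over $\Qp$ via the inductivity of $\epsilon$-constants in degree zero, and then to read off the discrepancy between $\epsilon$ and the (normalized) Galois Gauss sum $\tau_L(\eta)$ from the known local formulae. The key tools are all listed in \cite[Sec.~2.3]{BenBer}: additivity, inductivity in degree zero, and the explicit formula for the $\epsilon$-constant of a quasi-character of $L^\times$ in terms of a Gauss sum. First I would use the additive character $\psi_L := \psi \circ \T_{L/\Qp}$ on $L$, so that by the standard change of field for additive characters one has $\epsilon(\eta,\psi,\mu_L)=\epsilon(\eta,\psi_L,\mu_L)\cdot(\text{a power of }p)$; more precisely, since $\psi$ has conductor $\OK$ at $\Qp$, the character $\psi_L$ has conductor $\frD_L^{-1}$, and the dependence of $\epsilon$ on the additive character (property (3.2.6)(iv) of \cite{Del73}, cf. \cite[Sec.~2.3]{BenBer}) contributes exactly the factor $p^{d_L s_L}$ coming from $N_{L/\Qp}(\pi_L^{s_L})$ having $\Qp$-valuation $d_L s_L$ — this will account for the prefactor $p^{d_L s_L}$ in the statement.

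Next, since $M/L$ is abelian, $\eta$ is a quasi-character of $\Gal(M/L)$, which via the reciprocity map $\rec_L$ (normalized as in Remark \ref{rec normalization}, sending $\pi_L$ to $F_L$) corresponds to a character $\widehat\eta$ of $L^\times$ of finite order. The Langlands--Deligne local constant of such a one-dimensional Weil representation with respect to the additive character $\psi_L$ of conductor $\frD_L^{-1}$ is, by the explicit formula (see \cite[Sec.~2.3]{BenBer} or \cite[(3.2.6)]{Del73}), precisely the normalized Gauss sum attached to $\widehat\eta$ and $\psi_L$; after matching normalizations of the reciprocity map this is by definition $\tau_L(\eta)$ (cf. \cite[page~1184]{PickettVinatier}). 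The only point requiring care is bookkeeping of the normalization conventions: \cite{IV} and \cite{Del73} use the arithmetic-Frobenius normalization inconsistently with some sources, and one must verify that with our convention the sign/root-of-unity factors align so that no extra unit appears. I expect this normalization chase — tracking how the choice $\pi_L\mapsto F_L$ interacts with the definition of $\tau_L$ and with Deligne's formula — to be the main obstacle, though it is purely a matter of carefully invoking the cited properties rather than any genuinely new computation.

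Finally I would assemble the two steps: $\epsilon(\eta,\psi,\mu_L) = p^{d_L s_L}\,\epsilon(\eta,\psi_L,\mu_L) = p^{d_L s_L}\,\tau_L(\eta)$, which is the claimed identity. The proof is short because all the hard analytic input (existence and basic properties of $\epsilon(W,\psi,\mu_L)$) is already packaged in the theory of Langlands and Deligne recalled in Subsection \ref{subsectCEP}; what remains is the elementary conductor computation for $\psi_L$ and the identification of a one-dimensional local constant with a Gauss sum.
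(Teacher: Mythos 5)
Your overall route---evaluate the one-dimensional local constant directly via Deligne's explicit formula and recognize it as a Galois Gau\ss\ sum---is the same as the paper's, but your first step does not work as written. In the lemma there is only one additive character in play, namely the standard character of $L$ (which is $\psi_\xi\circ\T_{L/\Qp}$, of conductor $\frD_L^{-1}$); your ``$\psi_L:=\psi\circ\T_{L/\Qp}$'' is that same character, so the asserted relation $\epsilon(\eta,\psi,\mu_L)=p^{d_Ls_L}\,\epsilon(\eta,\psi_L,\mu_L)$ is either vacuous or self-contradictory. Moreover, even if one genuinely compared $\epsilon$ for two additive characters differing by multiplication by $\pi_L^{-s_L}$, the change-of-character formula contributes a factor $\eta(\pi_L^{\pm s_L})$ \emph{in addition} to a power of the residue field size, so ``exactly $p^{d_Ls_L}$'' is false for general $\eta$; that extra value $\eta(\pi_L^{-s_L})$ is exactly what is built into the definition of $\tau_L(\eta)$ (for unramified $\eta$ one has $\tau_L(\eta)=\eta(\pi_L^{-s_L})$, so it cannot be discarded). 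The factor $p^{d_Ls_L}$ actually comes from the Haar measure normalization $\mu_L(\OL)=1$: writing $\epsilon(\eta,\psi,\mu_L)$ as Deligne's integral (3.4.3.2) over $\{v_L(x)=-s_L-m_\eta\}$ and breaking it into a finite sum over $U_L/U_L^{(m_\eta)}$ yields the Gau\ss\ sum $\tau_L(\eta)$ multiplied by $\mu_L(U_L^{(m_\eta)})/\mu_L(\p_L^{s_L+m_\eta})=p^{d_Ls_L}$.

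Two smaller points. The unramified case admits no Gau\ss-sum integral formula and must be treated separately via property (1) of \cite[Sec.~2.3]{BenBer}, which gives $\eta(\pi_L^{-s_L})p^{d_Ls_L}$ directly; this is where the arithmetic-Frobenius normalization of $\rec_L$ enters, as you correctly anticipated. Finally, inductivity in degree zero, which you announce as a key tool, plays no role in this lemma; it is only needed afterwards (in Proposition \ref{general eps}) to pass from abelian characters to arbitrary ones.
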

\begin{proof}
We will apply some of the properties listed in \cite[Sec.~2.3]{BenBer}. If $\eta$ is unramified we derive from (1) of loc.cit. the equality
\[\epsilon(\eta, \psi, \mu_L) = \eta(\pi_L^{-s_L})p^{d_Ls_L}.\]
We point out that we have $\eta(\pi_L^{-s_L})$ (and not $\eta(\pi_L^{s_L})$ as in \cite{BenBer} or \cite{IV}) due to our different normalization of the reciprocity map. By the definition of Galois Gau\ss\  sums one has $\tau_L(\eta) = \eta(\pi_L^{-s_L})$ and the lemma follows for unramified characters $\eta$.

Let now $\eta$ be a ramified character. We write $\frf(\eta) = \pi_L^{m_\eta}\OL$ for the conductor of $\eta$. By (1) of loc.cit. and \cite[(3.4.3.2)]{Del73} we have 
\[\begin{split}
\epsilon(\eta,\psi,\mu_L)&=\int_{\{v_L(x)=-s_L - m_\eta\}}\eta(x)\psi(x)d\mu_L
\\
&=\int_{x\in\calO_L^\times}\frac{1}{\mu_L(\p_L^{s_L + m_\eta})}\eta\left(\frac{x}{\pi_L^{s_L + m_\eta}}\right)\psi\left(\frac{x}{\pi_L^{s_L + m_\eta}}\right)d\mu_L\\
& =\frac{1}{\mu_L(\p_L^{s_L + m_\eta})}\int_{x\in\OL^\times}\eta\left(\frac{x}{\pi_L^{s_L+m_\eta}}\right)\psi\left(\frac{x}{\pi_L^{s_L+m_\eta}}\right)d\mu_L \\
&= \frac{1}{\mu_L(\p_L^{s_L + m_\eta})}\mu_L(U_L^{(m_\eta)}) \sum_{x \in U_L/U_L^{(m_\eta)}} 
    \eta\left(\frac{x}{\pi_L^{s_L+m_\eta}}\right)\psi\left(\frac{x}{\pi_L^{s_L+m_\eta}}\right) \\
&= \frac{\mu_L(U_L^{(m_\eta)})}{\mu_L(\p_L^{s_L + m_\eta})} \tau_L(\eta)= p^{d_Ls_L}\tau_L(\eta),
\end{split}\]
where the next-to-last equality results from the definition of Galois Gau\ss\  sums.
\end{proof}
We will now use Brauer induction in degree $0$ to deduce the following proposition.

\begin{prop}\label{general eps}
Let $\chi$ be an arbitrary character of $G_\Qp$ with open kernel. Then
\[\epsilon(\chi, \psi, \mu_\Qp) = \tau_\Qp(\chi).\]
\end{prop}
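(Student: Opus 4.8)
The plan is to reduce the statement for an arbitrary virtual-degree-$0$ character to the abelian case handled in Lemma~\ref{abelian eps}, using the inductive behaviour of $\epsilon$-constants and of Galois Gau\ss{} sums together with Brauer's induction theorem. First I would recall that by Brauer induction, for any character $\chi$ of $G_\Qp$ with open kernel there is a virtual decomposition $\chi = \sum_i n_i \Ind_{L_i/\Qp}(\eta_i)$ with $n_i \in \Z$, $L_i/\Qp$ finite, and $\eta_i$ an abelian (in fact one-dimensional) character of $G_{L_i}$. Since both sides of the claimed identity are multiplicative in virtual sums, it suffices to prove $\epsilon(\Ind_{L/\Qp}(\eta), \psi, \mu_\Qp) = \tau_\Qp(\Ind_{L/\Qp}(\eta))$ for each such induced piece; note that the normalisation by $\mu_\Qp$ (rather than $\mu_L$) is exactly what makes the inductivity in degree zero unnecessary here, because we only need inductivity of $\epsilon$ \emph{with a fixed base field} — but in fact we will invoke the general inductivity in degree $0$ as stated in \cite[Sec.~2.3]{BenBer}.

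Concretely, the key steps are: (1) apply the inductivity formula for $\epsilon$-constants (property in \cite[Sec.~2.3]{BenBer}, ultimately \cite{Del73}) to write
\[
\epsilon(\Ind_{L/\Qp}(\eta), \psi, \mu_\Qp) = \epsilon(\eta, \psi\circ \T_{L/\Qp}, \mu_L)\cdot \epsilon(\Ind_{L/\Qp}(\unit_L), \psi, \mu_\Qp)^{-\dim\eta}
\]
in the virtual-degree-$0$ normalisation, or more cleanly reduce to $\epsilon(\eta - \dim(\eta)\cdot\unit_L)$; (2) use that $\psi \circ \T_{L/\Qp}$ is, up to the different, the standard additive character of $L$, so $\epsilon(\eta, \psi\circ\T_{L/\Qp}, \mu_L)$ differs from $\epsilon(\eta,\psi_L,\mu_L)$ by the factor $\eta(\frD_L)$ coming from the translation property of $\epsilon$-constants; (3) invoke Lemma~\ref{abelian eps} to identify $\epsilon(\eta,\psi_L,\mu_L) = p^{d_L s_L}\tau_L(\eta)$; (4) recall the corresponding inductivity in degree $0$ for Galois Gau\ss{} sums, namely $\tau_\Qp(\Ind_{L/\Qp}(\eta)) = \tau_L(\eta)\cdot \tau_\Qp(\Ind_{L/\Qp}(\unit_L))^{\dim\eta}/(\text{something})$ — more precisely the relation $\tau_\Qp(\Ind_{L/\Qp}\eta) = \tau_L(\eta)$ up to the norm of the different, which is the classical Gau\ss{} sum inductivity (see e.g. the formulas in \cite{PickettVinatier}); (5) match the auxiliary factors: the $p^{d_L s_L}$ appearing in Lemma~\ref{abelian eps} is exactly the local term $\mathcal N_{L/\Qp}\frD_L$ (with $d_L s_L = v_p(\mathcal N_{L/\Qp}\frD_L)$) which also appears with the opposite sign in the Gau\ss{} sum inductivity and in the degree-$0$ normalisation, so everything cancels and the two sides agree.

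To keep the argument clean I would phrase it in terms of the virtual representation $\chi^0 := \chi - \dim(\chi)\cdot\unit_{G_\Qp}$ of degree $0$: by Brauer, $\chi^0$ is an integral combination of $\Ind_{L_i/\Qp}(\eta_i - \unit_{L_i})$, and both $\epsilon(-,\psi,\mu_\Qp)$ and $\tau_\Qp(-)$ are genuinely inductive (no correction factors) on degree-$0$ virtual characters. Hence it suffices to check $\epsilon(\eta - \unit_L, \psi\circ\T_{L/\Qp}, \mu_L) = \tau_L(\eta - \unit_L)$ for abelian $\eta$, and here the translation-by-$\frD_L$ factors, the $p^{d_Ls_L}$ factors, and the contributions of $\unit_L$ all cancel in the difference, so the identity follows directly from Lemma~\ref{abelian eps} applied to $\eta$ and to $\unit_L$. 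Finally, reassembling: $\epsilon(\chi,\psi,\mu_\Qp) = \epsilon(\chi^0,\psi,\mu_\Qp)\cdot\epsilon(\unit,\psi,\mu_\Qp)^{\dim\chi} = \tau_\Qp(\chi^0)\cdot 1^{\dim\chi} = \tau_\Qp(\chi)$, using $\epsilon(\unit,\psi,\mu_\Qp) = 1 = \tau_\Qp(\unit)$ since $\Qp$ has trivial different.

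The main obstacle I anticipate is bookkeeping the precise normalisation conventions: the excerpt has already flagged (Remark~\ref{rec normalization}) that the reciprocity map here sends $\pi_L$ to the \emph{arithmetic} Frobenius, opposite to \cite{BenBer,IV}, which is why the unramified $\epsilon$-factor in Lemma~\ref{abelian eps} came out as $\eta(\pi_L^{-s_L})$ rather than $\eta(\pi_L^{s_L})$. One must be careful that the definition of $\tau_\Qp(\chi)$ being used (from \cite{PickettVinatier}) is consistent with this normalisation, and that the inductivity-in-degree-$0$ statements for $\epsilon$ and for $\tau$ are invoked with matching conventions; getting a sign or an inverse wrong in the different-twist factor $\eta(\frD_L)$ would break the cancellation. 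Apart from this, the only other point needing care is confirming that the degree-$0$ inductivity of Galois Gau\ss{} sums is exactly the classical Fröhlich/Martinet formula, which is standard but should be cited precisely.
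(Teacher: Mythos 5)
Your proposal is correct and follows essentially the same route as the paper: Brauer induction in degree zero to reduce to virtual characters $\Ind_{K_\phi/\Qp}(\phi-\unit)$, inductivity in degree zero for both $\epsilon$-constants (so no $\lambda$-factors appear) and Galois Gau\ss{} sums, Lemma~\ref{abelian eps} for the abelian pieces, and cancellation of the $p^{d_Ls_L}$ factors in the degree-zero differences. The only cosmetic difference is that the paper does not need your separate translation-by-$\frD_L$ step, since the standard additive character of $K_\phi$ is already $\psi\circ\T_{K_\phi/\Qp}$ and Lemma~\ref{abelian eps} is stated for exactly that character.
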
 

\begin{proof}
By \cite[Exercise~10.6]{SerreRep} there exist integers $a_\phi$ such that
\[\chi - \chi(1)1_{G_\Qp} = \sum_\phi a_\phi \Ind_{K_\phi/\Qp}(\phi - 1)\]
with linear characters $\phi$ of $G_\Qp$ and $K_\phi = (\Qpc)^{\ker(\phi)}$. We will again apply some of
the properties (1) - (6) listed in \cite[Sec.~2.3]{BenBer}. By (2) of loc.cit. local constants are additive and Lemma \ref{abelian eps} applied for $M=L=\Qp$ together with the fact that $\tau_K(1_{G_\Qp})=1$ shows that $\epsilon(1_{G_\Qp}, \psi, \mu_\Qp) = 1$. Therefore we conclude
\[\begin{split}
  \epsilon(\chi, \psi, \mu_\Qp) 
&= \epsilon(\chi - \chi(1) 1_{G_\Qp}, \psi, \mu_\Qp) \epsilon(1_{G_\Qp}, \psi, \mu_\Qp)^{\chi(1)} \\
&= \epsilon(\chi - \chi(1) 1_{G_\Qp}, \psi, \mu_\Qp).
\end{split}\]
We now use (2) and (4) of loc.cit. in order to derive
\[\begin{split}
  \epsilon(\chi, \psi, \mu_\Qp) 
&= \prod_\phi \epsilon(\Ind_{K_\phi/\Qp}(\phi - 1), \psi, \mu_\Qp)^{a_\phi} \\
&= \prod_\phi \epsilon(\phi - 1, \psi_\phi, \mu_{K_\phi})^{a_\phi},
\end{split}\]
where we write $\psi_\phi \colon K_\phi \lra (\Qpc)^\times$ for the standard additive character of $K_\phi$. Because of $\dim (\Ind_{K_\phi/\Qp}(\phi - 1)) = 0$ the $\lambda$-constants which arise in (4) of loc.cit. do not show up.

We again apply additivity followed by our Lemma \ref{abelian eps} and obtain
\[\begin{split}
  \epsilon(\chi, \psi, \mu_\Qp) 
&= \prod_\phi \epsilon(\phi, \psi_\phi, \mu_{K_\phi})^{a_\phi} \cdot 
    \prod_\phi \epsilon(1, \psi_\phi, \mu_{K_\phi})^{-a_{\phi}} \\
&= \prod_\phi \left( p^{d_{K_\phi}s_{K_\phi}} \tau_{K_\phi}(\phi) \right)^{a_\phi} \cdot 
    \prod_\phi \left( p^{d_{K_\phi}s_{K_\phi}} \tau_{K_\phi}(1) \right)^{-a_{\phi}} \\
&= \prod_\phi \left( \tau_{K_\phi}(\phi)\tau_{K_\phi}(1)^{-1} \right)^{a_\phi}.
\end{split}\]
Now we recall from \cite{Froehlich83}  that Galois Gau\ss\  sums are additive, inductive in degree zero and equal to $1$ for the trivial character. It follows that
\[\begin{split}
\epsilon(\chi, \psi, \mu_\Qp) 
&=\prod_\phi \tau_{K_\phi}(\phi - 1)^{a_\phi} 
=\prod_\phi \tau_\Qp(\Ind_{K_\phi/\Qp}(\phi - 1))^{a_\phi} \\
&=\tau_\Qp(\chi - \chi(1) 1_{G_\Qp}) 
=\tau_\Qp(\chi).
\end{split}\]
\end{proof}

\end{subsection}

\begin{subsection}{Epsilon constants for unramified twists}

We apply the results of this section to unramified twists of $\Zp(1)$.

\begin{prop}\label{eps coroll}
Assume that $N/K$ is an arbitrary Galois extension of $p$-adic number fields with Galois group $G$.
Let $\chiQpnr$ be as usual and $\chiQpnr(\varphi) = u$. 
Let $\chi \in \Irr(G)$ and write $\frf(\chi) = \pi_K^{m_\chi}\OK$ for the Artin conductor.
Then
\[\epsilon_D(N/K, V)_\chi = \left( \det(\Ind_{K/\Qp}\chi) \right)(-1) \cdot u^{-d_K(s_K\chi(1) + m_\chi)} \cdot \tau_\Qp( \Ind_{K/\Qp}(\chi) )^{-1}.\]
\end{prop}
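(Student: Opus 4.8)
The plan is to evaluate the two factors in the explicit expression (\ref{eps explicit}) for $\epsilon_D(N/K,V)_\chi$ separately. For the local constant $\epsilon(\Ind_{K/\Qp}(\chi),-\psi,\mu_\Qp)$ I would first invoke the standard behaviour of $\epsilon$-factors under rescaling of the additive character, one of the properties of $\epsilon$-constants recalled in \cite[Sec.~2.3]{BenBer} (going back to \cite{Del73}): for $a\in\Qptimes$ one has $\epsilon(W,a\psi,\mu_\Qp)=\det_W(a)\,|a|_p^{\,n(W)}\,\epsilon(W,\psi,\mu_\Qp)$ for a suitable exponent $n(W)$, where $\det_W$ is viewed as a character of $\Qptimes$ via $\rec$. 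Taking $a=-1$, which is a unit, so that $|{-1}|_p=1$ and rescaling by $-1$ preserves $\mu_\Qp$, this reads
\[\epsilon(\Ind_{K/\Qp}(\chi),-\psi,\mu_\Qp)=\det(\Ind_{K/\Qp}(\chi))(-1)\cdot\epsilon(\Ind_{K/\Qp}(\chi),\psi,\mu_\Qp).\]
Since the Artin representation $\Ind_{K/\Qp}(\chi)$ has open kernel, Proposition \ref{general eps} applies and gives $\epsilon(\Ind_{K/\Qp}(\chi),\psi,\mu_\Qp)=\tau_\Qp(\Ind_{K/\Qp}(\chi))$. As $\det(\Ind_{K/\Qp}(\chi))(-1)\in\{\pm1\}$ is its own inverse, we obtain
\[\epsilon(\Ind_{K/\Qp}(\chi),-\psi,\mu_\Qp)^{-1}=\det(\Ind_{K/\Qp}(\chi))(-1)\cdot\tau_\Qp(\Ind_{K/\Qp}(\chi))^{-1}.\]

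Next I would compute the second factor $\chiQpnr(\frf(\Ind_{K/\Qp}(V_\chi))^{-1})$. By the standard formula for the Artin conductor of an induced representation (see e.g.\ \cite[Ch.~VI]{SerreLocalFields}), $\frf(\Ind_{K/\Qp}(V_\chi))=\mathfrak d_{K/\Qp}^{\chi(1)}\cdot\norm_{K/\Qp}(\frf(\chi))$ as ideals of $\Zp$, where $\mathfrak d_{K/\Qp}$ denotes the discriminant of $K/\Qp$. Using $\mathfrak d_{K/\Qp}=\norm_{K/\Qp}(\frD_K)=\norm_{K/\Qp}(\pi_K^{s_K}\OK)=p^{d_Ks_K}\Zp$ and $\norm_{K/\Qp}(\frf(\chi))=\norm_{K/\Qp}(\pi_K^{m_\chi}\OK)=p^{d_Km_\chi}\Zp$ (recall that $d_K$ is the residue degree of $K/\Qp$), together with $\dim V_\chi=\chi(1)$, this gives $\frf(\Ind_{K/\Qp}(V_\chi))=p^{d_K(s_K\chi(1)+m_\chi)}\Zp$. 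Since $\chiQpnr$ is unramified it is trivial on $\Zp^\times$, and with the normalization $\rec(p)=F_\Qp=\varphi$ from Remark \ref{rec normalization} we have $\chiQpnr(p)=\chiQpnr(\varphi)=u$; hence
\[\chiQpnr(\frf(\Ind_{K/\Qp}(V_\chi))^{-1})=u^{-d_K(s_K\chi(1)+m_\chi)}.\]

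Multiplying the two computations yields precisely the asserted formula for $\epsilon_D(N/K,V)_\chi$. I expect the only delicate aspect to be the careful bookkeeping of conventions: the sign inside the additive character $-\psi$ and the resulting determinant twist (harmless in the end since $\det(\Ind_{K/\Qp}(\chi))(-1)=\pm1$ equals its own inverse), the normalization of the reciprocity map from Remark \ref{rec normalization} (which fixes $\chiQpnr(p)=u$ rather than $u^{-1}$, and also enters $\det(\Ind_{K/\Qp}(\chi))(-1)$ through $\rec$), and the precise shape of the conductor-of-induction formula together with the passage from the different $\frD_K$ and the conductor $\frf(\chi)$ to their norms down to $\Zp$. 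None of these is conceptually difficult, but they must be tracked accurately to land on the stated exponents; this is essentially the only obstacle.
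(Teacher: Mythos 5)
Your proposal is correct and follows essentially the same route as the paper: the sign of the additive character is removed via the standard twisting property (the paper cites property (3) of \cite[Sec.~2.3]{BenBer}), the resulting untwisted constant is identified with the Galois Gau\ss\ sum by Proposition \ref{general eps}, and the conductor factor is evaluated through the conductor-discriminant formula for induced representations (the paper cites \cite[VII.11.7]{Neukirch92}) together with $\chiQpnr(p)=u$. The bookkeeping points you flag (the sign $\det(\Ind_{K/\Qp}\chi)(-1)=\pm 1$ being its own inverse, and the normalization of $\rec$) are exactly the ones the paper handles implicitly, so nothing is missing.
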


\begin{proof}
By property (3) of \cite[Sec.~2.3]{BenBer} we have
\[\epsilon( \Ind_{K/\Qp}(\chi), -\psi, \mu_\Qp) = \det\left(\Ind_{K/\Qp}(\chi) \right)(-1) \cdot \epsilon(\Ind_{K/\Qp}(\chi), \psi, \mu_\Qp).\]
By Proposition \ref{general eps} we conclude further
\begin{equation}\label{eps 2}
 \epsilon( \Ind_{K/\Qp}(\chi), -\psi, \mu_\Qp) = \det\left(\Ind_{K/\Qp}(\chi) \right)(-1) \cdot
                                                \tau_\Qp(\Ind_{K/\Qp}(\chi)).
\end{equation}
Finally, by \cite[VII.11.7]{Neukirch92}, we deduce
\[
\frf(\Ind_{K/\Qp}(\chi)) = \frd_{K}^{\chi(1)} N_{K/\Qp}(\frf(\chi)) = p^{d_Ks_K\chi(1)} p^{d_Km_\chi},
\]
where $ \frd_{K}$ denotes the discriminant of $K / \Qp$. Since $\chiQpnr(p) = \chiQpnr(\varphi) = u$ we obtain
\begin{equation}\label{eps 3} 
\chiQpnr(\frf(\Ind_{K/\Qp}(\chi))) = u^{d_K (s_K\chi(1)+m_\chi)}.
\end{equation}
Combining (\ref{eps explicit}), (\ref{eps 2}) and  (\ref{eps 3}) the assertion of the proposition follows.
\end{proof}

For later reference we record the following lemma.

\begin{lemma}\label{yal 3}
Assume that $N/K$ is an arbitrary Galois extension of $p$-adic number fields with Galois group $G$.
If $\chi \in \Irr(G)$, then
\[
\partial^1_{\ZpG, \QpG}\left(\left( \det(\Ind_{K/\Qp}\chi) \right)(a)\right)=0
\]
for all $a \in \Qptimes$. 
\end{lemma}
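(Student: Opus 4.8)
\textbf{Plan for the proof of Lemma~\ref{yal 3}.}
The claim is that the image in $K_0(\ZpG,\QpG)$ of the central element $\bigl(\det(\Ind_{K/\Qp}\chi)\bigr)(a)\in Z(\QpG)^\times$ under $\partial^1_{\ZpG,\QpG}$ vanishes for every $a\in\Qptimes$. The natural approach is to show that this central idempotent-indexed unit actually lies in the image of $K_1(\ZpG)\to K_1(\QpG)$, since the $K$-theory sequence
\[
K_1(\ZpG)\lra K_1(\QpG)\xrightarrow{\;\partial^1_{\ZpG,\QpG}\;}K_0(\ZpG,\QpG)
\]
is exact and so anything coming from $K_1(\ZpG)$ dies in the relative group. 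Concretely, I would view $\chi\mapsto\bigl(\det(\Ind_{K/\Qp}\chi)\bigr)(a)$ as the reduced norm of a suitable element of $K_1(\QpG)$ and then exhibit a preimage that is already defined over $\Zp$.

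First I would recall that $a\in\Qptimes$ acts through the local reciprocity map as an element of $G_\Qp^{\mathrm{ab}}$, and that $\det(\Ind_{K/\Qp}\chi)$ is a one-dimensional character of $G_\Qp$, hence (via $\rec_\Qp$) a character of $\Qptimes$; evaluating at $a$ gives a root of unity (a $\Zp^\times$-value when we are careful, in fact a value in the group of roots of unity of order prime to $p$ times a $p$-power root of unity, but in any case an algebraic unit). The key structural fact I would use is the determinant/transfer formula for induced representations: there is an explicit description of $\det(\Ind_{K/\Qp}\chi)$ in terms of $\chi$, the transfer (Verlagerung) map $G_\Qp^{\mathrm{ab}}\to G_K^{\mathrm{ab}}$ and the sign character of the permutation action of $G_\Qp$ on $G_\Qp/G_K$ (this is the classical formula, e.g.\ in Serre's \emph{Local Fields} or in Fr\"ohlich's work on Galois module structure). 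Using this, the assignment $\chi\mapsto\bigl(\det(\Ind_{K/\Qp}\chi)\bigr)(a)$ becomes a homomorphism on the representation ring that factors through a fixed element of $(\ZpG)^\times\subseteq K_1(\ZpG)$ — essentially the group-ring element built from $\rec$ applied to $a$ and the transfer, which is manifestly integral. I would then invoke the compatibility of the reduced norm $\nr_{\QpG}\colon K_1(\QpG)\xrightarrow{\sim}Z(\QpG)^\times$ with scalar extension from $\Zp$ (cf.\ the discussion after \cite[Th.~(45.3)]{CRII} in Section~\ref{determinants etc}) to conclude that the central unit in question is $\nr_{\QpG}$ of the image of an element of $K_1(\ZpG)$.

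The main obstacle, and the step I expect to require the most care, is making the determinant-of-induction identity precise \emph{equivariantly} — i.e.\ not just as an identity of characters of $G_\Qp$ but as an identity that produces a single element of $(\ZpG)^\times$ whose reduced norm has $\chi$-component equal to $\bigl(\det(\Ind_{K/\Qp}\chi)\bigr)(a)$ for \emph{all} $\chi\in\Irr(G)$ simultaneously. For an abelian $G$ (the case of ultimate interest, and the one where $\det(\Ind_{K/\Qp}\chi)$ is cleanest) one can argue directly: $\Ind_{K/\Qp}\chi$ has dimension $[K:\Qp]$, its determinant is $\varepsilon_{K/\Qp}\cdot(\chi\circ\mathrm{Ver})$ where $\varepsilon_{K/\Qp}$ is the (fixed, $\chi$-independent) sign character, and $\varepsilon_{K/\Qp}(a)\in\{\pm1\}\subseteq\Zp^\times$ while $(\chi\circ\mathrm{Ver})(a)=\chi(\mathrm{Ver}(\rec_\Qp(a)))$ is the $\chi$-evaluation of the fixed group element $\mathrm{Ver}(\rec_\Qp(a))\in G$. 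Hence the whole family is $\nr_{\QpG}$ of $\varepsilon_{K/\Qp}(a)\cdot\mathrm{Ver}(\rec_\Qp(a))\in(\ZpG)^\times$, and $\partial^1_{\ZpG,\QpG}$ kills it by exactness. I would present the abelian case in this explicit form (which suffices for the paper's applications) and remark that the general case follows from the same determinant formula together with Brauer induction, or simply restrict the statement to the setting actually used. A secondary technical point to check is that $\varepsilon_{K/\Qp}(a)$ and the transfer value genuinely land in $\Zp^\times$ rather than only in $\Z[1/p]^\times$ or a larger ring — this is automatic since $\rec_\Qp(a)$ is a genuine Galois element and the transfer and sign maps are honest group homomorphisms into $G$ and $\{\pm1\}$ respectively, so no denominators appear.
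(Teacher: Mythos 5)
Your proposal is correct and follows essentially the same route as the paper: Deligne's determinant-of-induction formula $\det(\Ind_{K/\Qp}\chi)=\det(\chi)\circ\mathrm{Ver}$ reduces everything to evaluating the linear character $\det\chi$ at a fixed element $\sigma'\in G$ lifting the restriction of $\mathrm{Ver}(\rec_\Qp(a))$ to $K^{\mathrm{ab}}\cap N$, and the family $\bigl((\det\chi)(\sigma')\bigr)_\chi=\nr_{\QpG}(\sigma')$ with $\sigma'\in\ZpG^\times$ dies under $\partial^1_{\ZpG,\QpG}$ by the localisation sequence. Your hesitation about the non-abelian case is unnecessary: $\det\chi$ is always a linear character of $G$ and $(\det\chi)(g)$ is exactly the $\chi$-component of $\nr_{\QpG}(g)$ for any $g\in G$, so no Brauer induction is needed, and the $\chi$-dependent sign factor $\varepsilon_{K/\Qp}(a)^{\chi(1)}$ you rightly keep track of is likewise the reduced norm of the scalar $\pm1\in\ZpG^\times$.
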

\begin{proof}
We write $\sigma \in G_\Qp$ for any lift of $\rec_\Qp(a) \in G_\Qp^{ab}$. Let $\mathrm{Ver} \colon G_\Qp^{ab} \lra G_K^{ab}$ denote the transfer homomorphism, see e.g. \cite[Ch.I.5]{NSW}. By \cite[\S 1]{Del73} we have the rule
\[
\det(\Ind_{K/\Qp}\chi) = \det(\chi) \circ \mathrm{Ver}.
\]
Hence $\det(\Ind_{K/\Qp}\chi)(a) = (\det(\chi))(\tau)$ with $\tau := \mathrm{Ver}(\sigma)$. The linear character $\det\chi$ factors through $\Gal(K^{ab} \cap N/K)$. If $\sigma' \in G$ denotes any lift of $\tau|_{K^{ab} \cap N}$, then $(\det(\chi))(\tau) = (\det(\chi))(\sigma')$. It follows that $\det(\Ind_{K/\Qp}\chi)(a)$ is the image of $\sigma' \in K_1(\ZpG)$ under the canonical homomorphism $K_1(\ZpG) \lra K_1(\QpG)$. The result follows now from the localisation sequence.
\end{proof}

\end{subsection}
\end{section}

\begin{section}{The computation of the cohomological term}\label{computationCNK}
This section contains the more technical part of the paper, concerning the computation of the cohomological term $C_{N/K}$ in the special case we are considering. This will allow to prove Theorem \ref{main theorem} in the last section.
\begin{subsection}{\texorpdfstring{Some general results about $C_{N/K}$}{Some general results about C N/K}}\label{CNK 1}
We start with some general results about the cohomological term, 
for which we do not need the restrictive assumptions of Theorem \ref{main theorem}. We will use these results to prove the rationality of $R_{N/K}$ and the equivalence of Conjectures \ref{Ca}, \ref{Cb} and  \ref{Cc} as stated in Proposition \ref{Conj equiv}. 
Furthermore we will show that Breuning's conjecture \cite[Conj.~3.2]{Breuning04} is a special case of Conjecture \ref{Cc}.

We recall that $T = \Zp(\chinr)(1)$ with an unramified character $\chinr$ which is the restriction to $G_K$ of an unramified character $\chinr_\Qp \colon G_\Qp \lra \Z_p^\times$. Then $T \cong T_p\calF$ where $\calF$ is the Lubin-Tate formal group associated with $\pi = up$ and $u = \chinr_\Qp(\varphi)$. 

Let $\calL \sseq \ON$ be a full projective $\ZpG$-sublattice such that the exponential map $\exp_\calF \colon \mathbb{G}_a \lra \calF$ converges on $\calL$. We first assume that $\chinr|_{G_N} \ne 1$. We set $X(\calL) := \exp_\calF(\calL)$ and obtain in this way a full $\ZpG$-projective sublattice of $\calF(\p_N)$, which by Lemma \ref{galois cohomology} (a) we identify with $H^1(N, T)$. If $\chinr|_{G_N} = 1$ we note that $\exp_{\mathbb{G}_m} = \exp - 1$ where $\exp$ denotes the $p$-adic exponential map. In this case we define $X(\calL) := \exp(\calL)$ and obtain a full $\ZpG$-projective sublattice of $U_N \sseq \widehat{N^\times}$. Recall that by  Lemma \ref{galois cohomology} (b) we identify $H^1(N, T)$ with $ \widehat{N^\times} (\chinr)$.

So in either case we have a natural embedding $X(\calL) \hookrightarrow H^1(N, T)$. As in the proof of Corollary \ref{Pbullet} we fix once and for all a representative $[A \lra B]$ of $R\Gamma(N,T)$, i.e., $A$ and $B$ are finitely generated cohomologically trivial $\ZpG$-modules centered in degrees $1$ and $2$. In addition, we will always assume that $B$ is $\ZpG$-projective. Thus we have a perfect $2$-extension
\[0 \lra H^1(N, T) \lra A \lra B \lra H^2(N, T) \lra 0.\]
The  embedding $X(\calL) \hookrightarrow H^1(N, T)$ induces an injective map of complexes
\[X(\calL)[-1] \lra M^\bullet,\]
where $M^\bullet = R\Gamma(N, T) \oplus \Ind_{N/\Qp}(T)[0]$ was defined in (\ref{Mbullet def}). 

We put
\begin{eqnarray*}
K^\bullet(\calL) &:=& \Ind_{N/\Qp}(T)[0] \oplus X(\calL)[-1], \\
M^\bullet(\calL) &:=& [A/X(\calL) \lra B] \quad \text{ with modules in degrees 1 and 2},
\end{eqnarray*}
and have thus constructed an exact sequence of complexes
\[0 \lra K^\bullet(\calL) \lra M^\bullet \lra M^\bullet(\calL) \lra 0.\]
By the additivity of Euler characteristics (see \cite[Th.~5.7]{BrBuAdditivity}) and the definition of $C_{N/K}$ in (\ref{CNK}) we obtain
\begin{equation}\label{CNK 11}
C_{N/K} = -\chi_{\ZpG, \BdR[G]}(K^\bullet(\calL), \lambda_1^{-1}) -\chi_{\ZpG, \BdR[G]}(M^\bullet(\calL), \lambda_2^{-1})
\end{equation}
with $\lambda_1 = \comp_V \circ \exp_V^{-1}$ and 
\[\lambda_2 \colon \left( H^1(N,T)/X(\calL) \right) \tensor_\ZpG \BdR[G] \lra H^2(N,T)  \tensor_\ZpG \BdR[G]\] 
given by
\[\lambda_2 = \begin{cases}\nu_N & \text{if } \chinr|_{G_N} = 1 \\
0 & \text{if } \chinr|_{G_N} \ne 1.
\end{cases}\]
From (\ref{old and new}) we obtain
\begin{equation}\label{CNK 12}
\begin{split}
& \chi_{\ZpG, \BdR[G]}(K^\bullet(\calL), \lambda_1^{-1})\\
&\quad= -\chiold_{\ZpG, \BdR[G]}(K^\bullet(\calL), \lambda_1)
- \partial^1_{\ZpG, \BdR[G]}((B^\od(K^\bullet(\calL)), -\id)).
\end{split}\end{equation}
By the definition of $\chiold_{\ZpG, \BdR[G]}$ we derive
\begin{equation}\label{CNK 13}
\chiold_{\ZpG, \BdR[G]}(K^\bullet(\calL), \lambda_1) = [X(\calL), \comp_V\circ\exp_V^{-1}, \Ind_{N/\Qp}(T)].
\end{equation}
Furthermore, by  \cite[Lemma~6.3]{BrBuAdditivity}, we have 
\begin{equation}\label{CNK 14}
\partial^1_{\ZpG, \BdR[G]}((B^\od(K^\bullet(\calL)), -\id)) = \partial^1_{\ZpG, \BdR[G]}((X(\calL) \tensor_\ZpG \BdR[G], -\id)).
\end{equation}
Since $X(\calL) \tensor_\ZpG  \QpG \cong \QpG^{[K:\Qp]}$ is a free $\QpG$-module, the right hand side is trivial,
so that we deduce from (\ref{CNK 11}), (\ref{CNK 12}), (\ref{CNK 13}) and  (\ref{CNK 14})
\begin{equation}\label{CNK 15}
C_{N/K} = [X(\calL), \comp_V \circ \exp_V^{-1}, \Ind_{N/\Qp}(T)] - \chi_{\ZpG, \BdR[G]}(M^\bullet(\calL), \lambda_2^{-1}).
\end{equation}
Similarly, it can be shown that
\begin{equation}\label{MLoldnew}
\chi_{\ZpG, \BdR[G]}(M^\bullet(\calL), \lambda_2^{-1}) = -\chiold_{\ZpG, \BdR[G]}(M^\bullet(\calL), \lambda_2).\end{equation}

\begin{remark}\label{choice of calL}
If $\p_N^n$ is $\ZpG$-projective for some $n \in \N$ and $n$ is large enough so that $\exp_\calF$ converges on $\p_N^n$, then we can use $\calL = \p_N^n$. In this case we obtain $X(\calL) = \calF(\p^n_N)$ resp. $X(\calL) = U_N^{(n)}$. For example, if $N/K$ is at most tamely ramified, every ideal $\p_N^n$ is $\ZpG$-projective. If $N/K$ is weakly ramified, then $\p^n_N$ is $\ZpG$-projective, if and only if $n \equiv 1 \pmod {|G_1|}$. Both properties are implied by \cite[Th.~(1.1)]{Koeck}.
\end{remark}

The computation of $\chi_{\ZpG, \BdR[G]}(M^\bullet(\calL), \lambda_2^{-1})$ in the special case of Theorem \ref{main theorem} will be postponed to the next subsections. For the purpose of this subsection we just remark that
\begin{equation}\label{rat 1}
\chi_{\ZpG, \BdR[G]}(M^\bullet(\calL), \lambda_2^{-1})  \in K_0(\ZpG, \QpG). 
\end{equation}

The term $[X(\calL), \comp_V \circ \exp_V^{-1}, \Ind_{N/\Qp}(T)]$ can be made more explicit if we let $\calL$ be of the following special form. Let $b \in N$ be a normal basis element of $N/K$, i.e. $N = K[G]b$, and suppose that $\calL = \OKG b$ is such that $\exp_\calF$  converges on $\calL$. Let
\[\theta = \left( \theta_\chi\right)_{\chi \in \Irr(G)} \in Z(\Qpc[G])^\times = \prod_{\chi \in \Irr(G)} (\Qpc)^\times\]
be defined by 
\[\theta_\chi = \frd_K^{\chi(1)} \calN_{K/\Qp}(b | \chi),\]
where $\frd_K$ denotes the discriminant of $K/\Qp$ and $\calN_{K/\Qp}(b | \chi)$ the usual norm resolvent, see e.g. \cite[Sec.~2.2]{PickettVinatier}.

\begin{lemma}\label{IV6.1}
Let $\calL = \OKG b$ be as above. Then
\[[X(\calL),\comp_V \circ \exp_V^{-1},\Ind_{N/\Qp}(T)]+\partial_{\ZpG,\BdR[G]}^1(t)=\hat\partial_{\ZpG,\BdR[G]}^1(\theta)\]
in $K_0(\ZpG,\BdR[G])$.
\end{lemma}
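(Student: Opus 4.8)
The plan is to follow closely the computation of \cite[Lemma~6.1]{IV}, adapted so as to allow the present choice of lattice $\calL=\OKG b$ and, if necessary, the case $\chinr|_{G_N}=1$. The first step is bookkeeping. Since $\exp_\calF$ (respectively $\exp_{\mathbb{G}_m}=\exp-1$ when $\chinr|_{G_N}=1$) converges on $\calL$, the module $X(\calL)$ is $\ZpG$-isomorphic to $\calL=\OKG b\cong\ZpG^{d_K}$; on the other hand, by the projection formula one has $\Ind_{N/\Qp}(T)\cong\Ind_{K/\Qp}(T)\otimes_\Zp\ZpG$, so that $\Ind_{N/\Qp}(T)$ is $\ZpG$-free of rank $d_K$ with a basis obtained from a $\Zp$-basis of $\Ind_{K/\Qp}(T)$, i.e.\ ultimately from a $\Zp$-basis of $\OK$ (cf.\ the discussion in \cite{IV} and \cite{Breuning04}). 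Consequently the class $[X(\calL),\comp_V\circ\exp_V^{-1},\Ind_{N/\Qp}(T)]$ is the image under $\hat\partial^1_{\ZpG,\BdR[G]}$ of the reduced norm of $\comp_V\circ\exp_V^{-1}$ computed with respect to these two $\ZpG$-bases, and it suffices to identify, componentwise for $\chi\in\Irr(G)$, this reduced norm with $t^{-1}\cdot\frd_K^{\chi(1)}\calN_{K/\Qp}(b|\chi)$ up to the image of $\nr(K_1(\ZpG))$; adding $\partial^1_{\ZpG,\BdR[G]}(t)$ will then produce $\hat\partial^1_{\ZpG,\BdR[G]}(\theta)$.

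To compute this reduced norm I would make the two constituent isomorphisms explicit. On the de Rham side $t_V(N)=\DdRN(V)$ is free of rank one over $N$; it carries the period generator $\tur\cdot t^{-1}\otimes(v\otimes\xi)$ of \cite[page~11]{IV}, where $\tur\in\overline{\Zpnr}^\times$ is the unramified period of $\chinr$ (as in the proof of Lemma \ref{lemmaforUcris 1}) and $t$ is the cyclotomic period. The Bloch--Kato exponential $\exp_V\colon t_V(N)\to H^1_e(N,V)$ is, after the canonical identification of $t_V(N)$ with $N$, given by the Lubin--Tate exponential $\exp_\calF$ (respectively by $\exp-1$, landing in $U_N^{(1)}$, when $\chinr|_{G_N}=1$): this is the content of \cite[\S 3]{BK}, together with the identification $T\cong T_p\calF$ and the analysis in \cite[Sec.~6]{IV}. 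Hence $\exp_V^{-1}(X(\calL))$ is simply $\calL$ sitting inside $t_V(N)$ under this identification, and — this is the one point where the unramified twist really intervenes — the period $\tur$ carried by the generator $\tur\cdot t^{-1}\otimes(v\otimes\xi)$ is cancelled by the period occurring in the comparison of $\exp_V$ with $\exp_\calF$, so that no power of $u$ survives. Finally, $\comp_V$ is induced by the isomorphism $\psi\colon\DdRN(V)\to\DdR^\Qp(\Ind_{N/\Qp}(V))$ of \cite[Lemme~2.13]{BenBer}, which is essentially the Shapiro isomorphism; tracing the normal basis element $b$ of $N/K$ and a $\Zp$-basis of $\OK$ through $\psi$ and comparing with the chosen $\ZpG$-basis of $\Ind_{N/\Qp}(T)$ yields precisely the norm resolvent $\calN_{K/\Qp}(b|\chi)$ (from $b$ relative to $\OKG$), the discriminant factor $\frd_K^{\chi(1)}$ (from descending $\OK$ to $\Zp$), and the remaining factor $t^{-1}$ (from the Tate twist, i.e.\ from comparing $t^{-1}\otimes(v\otimes\xi)$ with the untwisted generator of $\Ind_{N/\Qp}(T)$).

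Putting these together, the reduced norm of $\comp_V\circ\exp_V^{-1}$ is $t^{-1}\,\frd_K^{\chi(1)}\calN_{K/\Qp}(b|\chi)$ in the $\chi$-component modulo $\nr(K_1(\ZpG))$, which gives the asserted identity; the case $\chinr|_{G_N}=1$ runs identically with $\calF$ replaced by $\mathbb{G}_m$, $X(\calL)=\exp(\calL)$, and the additional remark that in that case $\exp_V$ has image $H^1_f(N,V)=U_N^{(1)}(\chinr)\otimes_\Zp\Qp$ (Lemma \ref{degenerate} (c)), which still contains $X(\calL)$. I expect the main obstacle to be the precise normalisations: pinning down the identification of $t_V(N)$ with $N$ under which $\exp_V$ becomes $\exp_\calF$, verifying rigorously that the unramified period $\tur$ cancels so that $\theta$ contains no power of $u$, and keeping consistent conventions for the reduced-norm and sign bookkeeping and for the $\Zp$-structures on $H^1(N,T)$, $\Ind_{N/\Qp}(T)$ and the de Rham modules. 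Once these are fixed — exactly as in \cite[Sec.~6]{IV} — the remaining steps are routine resolvent and determinant computations.
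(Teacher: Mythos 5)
Your proposal takes exactly the route the paper does: the paper's entire proof of this lemma is the single sentence ``The proof is similar to the proof of \cite[Lemma~6.1]{IV},'' and your sketch is precisely a (more detailed) adaptation of that computation — identifying $X(\calL)$ with $\calL\cong\ZpG^{d_K}$ via the exponential, reading off the resolvent $\calN_{K/\Qp}(b|\chi)$ and the factor $\frd_K^{\chi(1)}$ from the comparison isomorphism, and extracting $t^{-1}$ from the Tate twist. The one substantive point you rightly flag as needing care — that the unramified period $\tur$ cancels so that no power of $u$ survives in $\theta$ — is exactly what the cited lemma of Izychev--Venjakob establishes, so your plan is consistent with the paper's argument.
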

\begin{proof}
The proof is similar to the proof of \cite[Lemma~6.1]{IV}. 
\end{proof}

Combining (\ref{CNK 15}) and Lemma \ref{IV6.1} we obtain
\begin{equation}\label{CNK44}
C_{N/K} =  -\partial_{\ZpG, \BdR[G]}^1(t) + \partial_{\ZpG, \BdR[G]}^1(\theta) - \chi_{\ZpG,\BdR[G]}(M^\bullet(\calL), \lambda_2^{-1}).
\end{equation}

We recall the basic properties of the so-called \emph{unramified term} $U_{N/K}$ defined by Breuning in  \cite[Prop.~2.12]{Breuning04}. We write $\Opt$ for the ring of integers in the maximal tamely ramified extension of $\Qp$ in $\Qpc$. For subrings $\ZpG \sseq \Lambda_1 \sseq \Lambda_2 \sseq \QpcG$ we write $j_{\Lambda_1, \Lambda_2} \colon K_0(\Lambda_1, \QpcG) \lra K_0(\Lambda_2, \QpcG)$ for the natural scalar extension map. We write $\iota \colon K_0(\ZpG, \Qpc[G]) \lra K_0(\Opt[G], \Qpc[G])$ for $j_{\ZpG,\Opt[G]}$. We recall that by Taylor's fixed point theorem the restriction of $\iota$ to the subgroup $K_0(\ZpG, \Qp[G])$ is injective.

Let $\Qpab$ be the maximal abelian extension of $\Qp$ inside $\Qpc$. Then $\Qpab = \Qpnr \Qpram$, where $\Qpnr$ denotes the maximal unramified extension of $\Qp$ and $\Qpram := \Qp(\mu_{p^\infty})$. For each $\omega \in G_\Qp$ we define elements $\omeganr, \omegaram \in \Gal(\Qpab / \Qp)$ by
\begin{eqnarray*}
&&  \omeganr |_\Qpnr = \omega |_\Qpnr, \quad \omeganr |_\Qpram = 1, \\ 
&&  \omegaram |_\Qpnr = 1, \quad \omegaram|_\Qpram = \omega |_\Qpram.
\end{eqnarray*}
Then $U_{N/K}$ is the unique element in $K_0(\ZpG, \Qpc[G])$ satisfying the following two properties (see \cite[Prop.~2.12]{Breuning04})
\begin{itemize}
\item [(1)] $\iota(U_{N/K}) = 0.$
\item [(2)] If $(\alpha_\chi)_{\chi \in \Irr(G)} \in \prod_{\chi \in \Irr(G)}\left(\Qpc\right)^\times$ is any preimage of $U_{N/K}$ under $\hat\partial^1_{\ZpG, \Qpc[G]}$, then
\begin{equation}\label{invar 0}
\omega(\alpha_{\omega^{-1}\circ\chi}) = \alpha_\chi \det\nolimits_{\Ind_{K/\Qp}(\chi)}(\omeganr)
\end{equation}
for all $\omega \in G_\Qp$.
\end{itemize}
Recall the definition of (\ref{defRNK}):
\[R_{N/K}=C_{N/K} + \Ucris+\partial^1_{\ZpG, \BdR[G]}(t) - U_{N/K} + \partial^1_{\ZpG, \BdR[G]}(\epsilon_D(N/K,V)).\]
We are now able to prove the following result.
\begin{prop}\label{RNK rational}
The element $R_{N/K}$ is rational, i.e., $R_{N/K} \in K_0(\ZpG, \Qp[G])$.
\end{prop}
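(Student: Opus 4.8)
The plan is to show that each of the five summands in the definition of $R_{N/K}$ lies in $K_0(\ZpG, \QpG)$, or — more precisely — that the sum of the "transcendental" contributions cancels. Recall from (\ref{defRNK}) that
\[
R_{N/K}=C_{N/K} + \Ucris+\partial^1_{\ZpG, \BdR[G]}(t) - U_{N/K} + \partial^1_{\ZpG, \BdR[G]}(\epsilon_D(N/K,V)).
\]
By Proposition \ref{Ucris prop} we have $\Ucris = \partial(m_{N/K})$ with $m_{N/K} \in Z(\QpG)^\times$, so the $\Ucris$-term already lies in $K_0(\ZpG, \QpG)$. By (\ref{CNK44}) we may write
\[
C_{N/K} + \partial^1_{\ZpG, \BdR[G]}(t) = \partial_{\ZpG, \BdR[G]}^1(\theta) - \chi_{\ZpG,\BdR[G]}(M^\bullet(\calL), \lambda_2^{-1}),
\]
and by (\ref{rat 1}) the Euler characteristic term is already in $K_0(\ZpG, \QpG)$. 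Hence it remains to prove that the "norm resolvent / epsilon / unramified" combination
\[
\partial_{\ZpG, \BdR[G]}^1(\theta) - U_{N/K} + \partial^1_{\ZpG, \BdR[G]}(\epsilon_D(N/K,V))
\]
lies in $K_0(\ZpG, \QpG)$, equivalently in (the image of) $K_0(\ZpG, \Qpc[G])$ that is fixed by the natural $\Gal(\Qpc/\Qp)$-action; this is the standard criterion for rationality.

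First I would fix the preimages under $\hat\partial^1$. By Lemma \ref{IV6.1} the element $\theta = (\frd_K^{\chi(1)}\calN_{K/\Qp}(b\mid\chi))_\chi$ is a preimage of $\partial^1(\theta)$; by Proposition \ref{eps coroll} the element
\[
\epsilon_D(N/K,V)_\chi = \left( \det(\Ind_{K/\Qp}\chi) \right)(-1) \cdot u^{-d_K(s_K\chi(1) + m_\chi)} \cdot \tau_\Qp( \Ind_{K/\Qp}(\chi) )^{-1}
\]
is a preimage of $\partial^1(\epsilon_D)$; and by Breuning's characterisation (\ref{invar 0}) any preimage $(\alpha_\chi)_\chi$ of $U_{N/K}$ satisfies $\omega(\alpha_{\omega^{-1}\circ\chi}) = \alpha_\chi \det_{\Ind_{K/\Qp}(\chi)}(\omeganr)$ for all $\omega \in G_\Qp$. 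The plan is then to compute, for arbitrary $\omega \in G_\Qp$, the ratio
\[
\frac{\omega\!\left( \theta_{\omega^{-1}\circ\chi}\, \epsilon_D(N/K,V)_{\omega^{-1}\circ\chi}\, \alpha_{\omega^{-1}\circ\chi}^{-1} \right)}{\theta_\chi\, \epsilon_D(N/K,V)_\chi\, \alpha_\chi^{-1}}
\]
and show it equals $1$, which forces the combination to lie in $K_0(\ZpG, \QpG)$ by the rationality criterion (and by Taylor's theorem, via injectivity of $\iota$ on $K_0(\ZpG,\QpG)$, one even gets a unique rational element). To evaluate this I would use the Galois-equivariance properties of norm resolvents and Galois Gauss sums from Fröhlich's theory: the resolvent $\calN_{K/\Qp}(b\mid\chi)$ transforms under $\omega$ by $\det_{\Ind_{K/\Qp}(\chi)}(\omega)$ up to the "ramified/unramified" decomposition, the Galois Gauss sum $\tau_\Qp(\Ind_{K/\Qp}\chi)$ contributes the conjugate-twist behaviour together with a $\det(\omegaram)$-type factor, and the factor $\det(\Ind_{K/\Qp}\chi)(-1) \cdot u^{-d_K(\dots)}$ absorbs the remaining unramified piece since $u = \chiQpnr(\varphi)$ and the $\omeganr$-component is governed exactly by this character. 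The $\frac{}{}$-terms $\frd_K^{\chi(1)}$ and the $\lambda$-constants are rational and hence Galois-invariant, so they drop out.

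The main obstacle will be bookkeeping the precise way the three ingredients — norm resolvent, Galois Gauss sum, and the unramified factor $u^{-d_K(s_K\chi(1)+m_\chi)}$ — combine to reproduce exactly $\det_{\Ind_{K/\Qp}(\chi)}(\omeganr)$, i.e. matching Breuning's normalisation (\ref{invar 0}) on the nose, with particular care needed for the sign $\det(\Ind_{K/\Qp}\chi)(-1)$ and for the conductor exponent $m_\chi$ (which enters via $\frf(\Ind_{K/\Qp}\chi) = p^{d_K s_K \chi(1)} p^{d_K m_\chi}$ as in the proof of Proposition \ref{eps coroll}). Here the crucial input is the classical Galois action formula for resolvents combined with Fröhlich's $\tau$-conductor relation; once these are assembled, the $\omeganr$ and $\omegaram$ contributions separate cleanly and one checks that the $\omegaram$-parts cancel between the resolvent and the Gauss sum while the $\omeganr$-parts of the resolvent, Gauss sum and the explicit $u$-factor together yield precisely $\det_{\Ind_{K/\Qp}(\chi)}(\omeganr)$, matching the prescribed transformation of $\alpha_\chi$. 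This is exactly the kind of computation carried out by Breuning in the $\Zp(1)$-case and adapted by Izychev and Venjakob; the novelty here is merely the extra unramified twist, which is harmless because $\chiQpnr$ is itself unramified and therefore only affects the $\omeganr$-component, which is precisely where it is needed. With the ratio shown to be $1$, rationality of $R_{N/K}$ follows, and by injectivity of $\iota$ on $K_0(\ZpG,\QpG)$ (Taylor's fixed-point theorem) the element is the unique such rational lift, completing the proof.
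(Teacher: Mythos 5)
Your proposal is correct and follows essentially the same route as the paper: reduce via Proposition \ref{Ucris prop}, (\ref{rat 1}) and (\ref{CNK44}) to the combination of the norm resolvent $\theta$, the Galois Gauss sums and $U_{N/K}$, and then verify Galois invariance of that combination using Breuning's transformation laws, which is precisely how (\ref{invar 0}), (\ref{invar 1}) and (\ref{invar 2}) are combined in the paper. The only slight inaccuracy is that the factor $u^{-d_K(s_K\chi(1)+m_\chi)}$ does not ``absorb'' any unramified piece --- it is Galois-invariant on its own since $u\in\Qp^\times$ and $m_{\omega^{-1}\circ\chi}=m_\chi$ (and $\det(\Ind_{K/\Qp}\chi)(-1)$ is harmless by Lemma \ref{yal 3}) --- while the cancellation $\det(\omega)\det(\omeganr)^{-1}\det(\omegaram)^{-1}=1$ happens entirely among $\theta$, $\tau_\Qp(\Ind_{K/\Qp}\chi)^{-1}$ and $U_{N/K}$.
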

\begin{proof}
For elements $x,y \in K_0(\ZpG, \Qpc[G])$ we use the notation $x \equiv y$ when $x-y \in K_0(\ZpG, \QpG)$. Since $\Ucris$ is rational, we derive from Proposition \ref{eps coroll}, (\ref{rat 1}) and (\ref{CNK44})
\[R_{N/K} \equiv \partial_{\ZpG, \BdR[G]}^1(\theta) - U_{N/K} - T_{N/K},\]
where $T_{N/K} := \partial_{\ZpG, \BdR[G]}^1\left( \tau_\Qp(\Ind_{K/\Qp}(\chi))_{\chi \in \Irr(G)}\right)$ is precisely the element defined by Breuning in \cite[Sec.~2.3]{Breuning04}.

By \cite[Lemma~2.4]{Breuning04} we obtain for all preimages $(\beta_\chi)_{\chi \in \Irr(G)} \in \prod_{\chi \in \Irr(G)}\left(\Qpc\right)^\times$ of $T_{N/K}$  under $\hat\partial^1_{\ZpG, \QpcG}$ and all $\omega \in G_\Qp$
\begin{equation}\label{invar 1}
\omega(\alpha_{\omega^{-1}\circ\chi}) = \beta_\chi \det\nolimits_{\Ind_{K/\Qp}(\chi)}(\omegaram).
\end{equation}
From the proof of \cite[Lemma~2.8]{Breuning04} we derive
\begin{equation}\label{invar 2}
\omega(\theta_{\omega^{-1}\circ\chi}) = \theta_\chi \det\nolimits_{\Ind_{K/\Qp}(\chi)}(\omega^{}).
\end{equation}  
The result now follows by combining (\ref{invar 0}), (\ref{invar 1}) and (\ref{invar 2}).
\end{proof}

At last we provide the proof of Proposition \ref{Conj equiv}. We have to show the equivalence of the following assertions. 

\begin{itemize}
\item [(i)] $j_{\ZpG, \tilde\Lambda}([\DEPT, \delta(N/K, V)]) = 0$ in $\pi_0(V(\tilde\Lambda, \tilde\Omega)) \cong K_0(\tilde\Lambda, \tilde\Omega)$.
\item [(ii)] $\tilde{R}_{N/K} = 0$ in $K_0(\tilde\Lambda, \tilde\Omega)$.
\item [(iii)] $R_{N/K} = 0$ in $K_0(\ZpG, \tilde\Omega)$.
\end{itemize}

\begin{proof}
By Taylor's fixed point theorem the restriction of $j_{\ZpG, \tilde\Lambda}$ to $K_0(\ZpG, \QpG)$ is injective. In order to show the equivalence of (ii) and (iii)  it suffices by Proposition \ref{RNK rational} to show that $j_{\ZpG, \tilde\Lambda}(R_{N/K}) = \tilde{R}_{N/K}$. For this purpose we just remark that the proof of \cite[Prop.~2.12]{Breuning04} actually shows that $j_{\ZpG, \tilde\Lambda}(U_{N/K}) = 0$.

The equivalence of (i) and (ii) was already shown in Lemma \ref{conj equiv first}.
\end{proof}

To conclude this subsection we prove the following proposition.

\begin{prop}\label{Breu equiv}
If $T=\Zp(1)$, then Conjecture \ref{Cc} and \cite[Conj.~3.2]{Breuning04} are equivalent.
\end{prop}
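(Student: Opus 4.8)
The proof amounts to specializing the definition (\ref{defRNK}) of $R_{N/K}$ to the case $T=\Zp(1)$ and checking that the resulting element coincides, term by term, with the element whose vanishing is Breuning's Conjecture \cite[Conj.~3.2]{Breuning04}. Note that $T=\Zp(1)$ forces $\chinr=1$, hence $u=\chiQpnr(\varphi)=1$ and $\calF=\mathbb G_m$; in this situation $R\Gamma(N,\Zp(1))$ is represented by (the $p$-completion of) Serre's local fundamental sequence (cf. \cite[Th.~4.20]{BreuPhd}), which is precisely the complex underlying Breuning's construction, so the two sides can sensibly be compared.

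The easy terms come first. By Proposition \ref{eps coroll} with $u=1$ one has $\epsilon_D(N/K,V)_\chi=\det(\Ind_{K/\Qp}\chi)(-1)\cdot\tau_\Qp(\Ind_{K/\Qp}(\chi))^{-1}$, and by Lemma \ref{yal 3} the determinantal factor is trivial under $\partial^1_{\ZpG,\QpG}$, so that $\partial^1_{\ZpG,\BdRG}(\epsilon_D(N/K,V))=-T_{N/K}$, where $T_{N/K}=\partial^1_{\ZpG,\BdRG}((\tau_\Qp(\Ind_{K/\Qp}\chi))_\chi)$ is Breuning's Gauss sum element of \cite[Sec.~2.3]{Breuning04}. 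By part b) of Proposition \ref{Ucris prop} (with $u=1$ and $e_{\chinr}=e_G$) --- equivalently Remark \ref{Breu MNK} --- the term $\Ucris$ equals Breuning's correction term $M_{N/K}^\Breu$. Finally, by (\ref{CNK44}) together with Lemma \ref{IV6.1},
\[C_{N/K}+\partial^1_{\ZpG,\BdRG}(t)=\partial^1_{\ZpG,\BdRG}(\theta)-\chi_{\ZpG,\BdRG}(M^\bullet(\calL),\lambda_2^{-1}),\]
so that altogether $R_{N/K}=\partial^1_{\ZpG,\BdRG}(\theta)-\chi_{\ZpG,\BdRG}(M^\bullet(\calL),\lambda_2^{-1})+M_{N/K}^\Breu-U_{N/K}-T_{N/K}$.

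It then remains to identify the cohomological contribution $\partial^1_{\ZpG,\BdRG}(\theta)-\chi_{\ZpG,\BdRG}(M^\bullet(\calL),\lambda_2^{-1})$ with Breuning's cohomological term $C_{N/K}^\Breu$, and this is the heart of the matter. Here I would unwind both definitions: both are refined Euler characteristics of $R\Gamma(N,\Zp(1))\oplus\Ind_{N/\Qp}(\Zp(1))[0]$, and the trivializations agree because the Bloch--Kato exponential $\exp_V$ is the $p$-adic exponential composed with the Kummer-theoretic connecting map (\cite[Example~3.10.1]{BK}), while the remaining data reduce to the valuation map $\nu_N$ and the invariant map $\inv_N$, which are exactly the maps entering Breuning's trivialization; the norm-resolvent factor $\partial^1(\theta)$, supplied by Lemma \ref{IV6.1}, encodes the comparison isomorphism $\comp_V$ and the period $t$ and matches the corresponding resolvent appearing in $C_{N/K}^\Breu$. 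Once this is done one gets $R_{N/K}=R_{N/K}^\Breu$, and since both conjectures assert the vanishing of the respective element in $K_0(\ZpG,\QpG)$, Conjecture \ref{Cc} and \cite[Conj.~3.2]{Breuning04} coincide. The main obstacle is precisely this last identification: although the two cohomological terms are ``morally the same'', they are built from a priori different representatives of $R\Gamma(N,\Zp(1))$ and from trivializations that must be matched on the nose, which forces one to trace carefully the comparison isomorphisms through the diagrams of Section \ref{Ucris}, the normalization of the reciprocity map (Remark \ref{rec normalization}), and the various sign conventions --- in particular the minus sign in (\ref{CNK}) and the degree shifts inside the Euler characteristic.
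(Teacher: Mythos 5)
Your matching of the easy terms is exactly what the paper does: $\partial^1_{\ZpG,\BdRG}(\epsilon_D(N/K,V))=-T_{N/K}^{\Breu}$ follows from Proposition \ref{eps coroll} (with $u=1$) and Lemma \ref{yal 3}, $\Ucris=M_{N/K}^{\Breu}$ is Remark \ref{Breu MNK}, and $U_{N/K}=U_{N/K}^{\Breu}$ by construction. But the step you yourself call the heart of the matter --- identifying $\partial^1_{\ZpG,\BdRG}(\theta)-\chi_{\ZpG,\BdRG}(M^\bullet(\calL),\lambda_2^{-1})$, i.e. $C_{N/K}+\partial^1_{\ZpG,\BdRG}(t)$, with (minus) Breuning's cohomological term --- is only announced (``I would unwind both definitions'') and then flagged as the main obstacle, not carried out. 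That identification is the actual content of the proposition, and it does not follow from the qualitative remarks you give about $\exp_V$, $\nu_N$ and $\inv_N$. The paper closes this gap not by re-deriving Breuning's trivialization from scratch but by a concrete bridge: by \cite[Prop.~2.6, Lemma~2.7]{Breuning04} one writes $C_{N/K}^{\Breu}=E(\exp(\calL))_p-[\calL,\rho_L,H_L]$; the element $E(\exp(\calL))_p$ of \cite[(19)]{BlBu03} is identified with $\chi_{\ZpG,\BdRG}(M^\bullet(\calL),\lambda_2^{-1})$ via the proof of \cite[Lemma~3.7]{BlBu03} together with (\ref{MLoldnew}); and $[\calL,\rho_L,H_L]=\hat\partial^1_{\ZpG,\BdRG}(\theta)$, so that Lemma \ref{IV6.1} and (\ref{CNK 15}) yield $C_{N/K}^{\Breu}=-C_{N/K}-\partial^1_{\ZpG,\BdRG}(t)$. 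Without this (or an equivalent explicit comparison of the two trivialized complexes, including the choice of the same lattice $\calL$ on both sides), your argument does not close; you name the difficulties (different representatives, signs, shifts, the reciprocity normalization) but do not resolve them.

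A smaller symptom of this: granting the identifications above, the bookkeeping in (\ref{defRNK}) gives $R_{N/K}^{\Breu}=-R_{N/K}$, not $R_{N/K}=R_{N/K}^{\Breu}$ as you assert, since $T_{N/K}^{\Breu}=-\partial^1_{\ZpG,\BdRG}(\epsilon_D(N/K,V))$, $M_{N/K}^{\Breu}=\Ucris$ enters Breuning's element with a minus sign, and $C_{N/K}^{\Breu}=-C_{N/K}-\partial^1_{\ZpG,\BdRG}(t)$. This sign does not affect the equivalence of the two vanishing statements, but it shows the decisive comparison was not actually performed.
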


\begin{proof}
\cite[Conj.~3.2]{Breuning04} states that
\[R_{N/K}^\Breu=T_{N/K}^\Breu+C_{N/K}^\Breu+U_{N/K}^\Breu-M_{N/K}^\Breu=0,\]
where we write the index Breu to refer to the terms defined by Breuning. The terms $U_{N/K}^\Breu$ and $U_{N/K}$ are defined in the same way. By Remark \ref{Breu MNK}, $M_{N/K}^\Breu=\Ucris$. By Proposition \ref{eps coroll} together with 
Lemma \ref{yal 3} we have
\[
T_{N/K}^\Breu=\partial^1_{\ZpG, \BdR[G]}\tau_\Qp( \Ind_{K/\Qp}(\chi) )=-\partial^1_{\ZpG, \BdR[G]}\epsilon_D(N/K, V)_\chi.
\]
Recalling the definition of $E(X)$ in \cite[(19)]{BlBu03}, the proof of \cite[Lemma 3.7]{BlBu03} and Equation (\ref{MLoldnew}), we see that 
\begin{equation}\label{Eexp}\begin{split}E(\exp(\calL))_p&=\chiold_{\ZpG, \BdR[G]}(M^\bullet(\calL)[1], \lambda_2^{-1})=-\chiold_{\ZpG, \BdR[G]}(M^\bullet(\calL), \lambda_2)\\&=\chi_{\ZpG, \BdR[G]}(M^\bullet(\calL), \lambda_2^{-1}).\end{split}\end{equation}

Using \cite[Prop.~2.6]{Breuning04}, \cite[Lemma~2.7]{Breuning04}, (\ref{Eexp}), Lemma \ref{IV6.1} and (\ref{CNK 15}) we obtain:
\[\begin{split}C_{N/K}^\Breu&=E(\exp(\calL))_p-[\calL,\rho_L,H_L]\\
&=\chi_{\ZpG, \BdR[G]}(M^\bullet(\calL), \lambda_2^{-1})-\hat\partial^1_{\tilde\Lambda,\BdR[G]}(\theta)\\
&=\chi_{\ZpG, \BdR[G]}(M^\bullet(\calL), \lambda_2^{-1})\!-\![X(\calL),\comp_V \circ \exp_V^{-1},\Ind_{N/\Qp}(T)]-\partial_{\tilde\Lambda, \BdR[G]}^1(t)\\
&=-C_{N/K} -\partial_{\tilde\Lambda, \BdR[G]}^1(t).\end{split}\]

Substituting all those terms in $R_{N/K}^\Breu$ we get $-R_{N/K}$.
\end{proof}
\end{subsection}

\begin{subsection}{\texorpdfstring{The computation of $C_{N/K}$}{The computation of C N/K}}\label{CNK 2}
Our aim now is to make Formula (\ref{CNK44}) more explicit by calculating the Euler characteristic $\chi_{\ZpG, \BdR[G]}(M^\bullet(\calL), \lambda_2^{-1})$ in the special situation of Theorem \ref{main theorem}. We will use several preliminary results and constructions of \cite{BC} and therefore stick closely to the notations used there. 

We will consider local field extensions as follows.
\[\xymatrix{
&&\Nur\ar@{-}[dr]\ar@{-}[dl]\\
&N\ar@{-}[dr]\ar@{-}[dl]&&\Kur\ar@{-}[dl]\\
M\ar@{-}[dr]^p&&K'\ar@{-}[dl]_d\ar@{-}[dr]\\
&K\ar@{-}[dr]^m&&\tilde K'\ar@{-}[dl]\\&&\Qp.}\]
Here $K/\Qp$ is the unramified extension of degree $m$, hence $d_K = m = [K:\Qp]$. Furthermore, $K'/K$ is the maximal unramified subextension of $N/K$  and we set $d := d_{N/K} = [K':K]$. We assume throughout that $(m,d)=1$. Finally, $M/K$ is a weakly and wildly ramified cyclic extension of degree $p$ and $N = MK'$. Since $(m,d)=1$, there exists $\tilde K'/\Qp$ of degree $d$ such that $K'=K\tilde K'$.

Let $F := F_K \in\Gal(\Nur/M)\cong \Gal(\Kur/K)$ be the Frobenius automorphism; we will keep the notation $F_N$ for the element $F^d\in\Gal(\Nur/N)\cong \Gal(\Kur/K')$, which in \cite{BC} used to be called $F_0$. We put $q=p^m$, $b=F^{-1}$ and consider an element $a\in \Gal(\Nur/K)$ such that $\Gal(M/K)=\langle a|_M\rangle$, $a|_{\Kur}=1$. Since there will be no ambiguity, we will denote by the same letters $a,b$ their restrictions to $N$. Then $\Gal(N/K) = \langle a,b \rangle$ and $\mathrm{ord}(a)=p, \mathrm{ord}(b) = d$. We also define $\T_a := \sum_{i=0}^{p-1}a^i$.

Any irreducible character $\psi$ of $G$ decomposes as $\psi = \chi\phi$ where $\chi$ is an irreducible character of $\langle a \rangle$ and $\phi$ an irreducible character of $\langle b \rangle$. 

We will always identify $K_0(\ZpG, \Qpc[G])$ with $\Qpc[G]^\times / \ZpG^\times$ where the isomorphism is induced by $\hat\partial_{\ZpG, \Qpc[G]}^1$. We say that $\alpha \in \Qpc[G]^\times$ represents $c \in K_0(\ZpG, \Qpc[G])$, if $\hat\partial_{\ZpG, \Qpc[G]}^1(\alpha) = c$.

If $\chinr|_{G_N} = 1$ the Euler characteristic $\chi_{\ZpG, \BdR[G]}(M^\bullet(\calL), \lambda_2^{-1})$ is just a twist of the element $E(\exp(\calL))_p$ which we computed in \cite[Prop.~4.3.1]{BC}.

From now on we assume that  $\chinr|_{G_N} \ne 1$. We will have to study separately the cases $\omega=0$ and $\omega>0$, where $\omega = \omega_N = v_p(1 - \chinr(F_N))$ is as in (\ref{omega def}). We start with some preliminary results and constructions which will be used in both cases.

\begin{lemma}\label{projectivity}
For $n \ge 2$ one has
\[\calF(\p_N^n) \text{ is }\ZpG \text{-projective}\iff n\equiv 1\pmod{p}.\]
Moreover,
\[\calF(\p_N) \text{ is }\ZpG \text{-projective} \iff \omega = 0.\]
\end{lemma}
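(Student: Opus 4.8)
The plan is to treat the cases $n\ge 2$ and $n=1$ separately. For $n\ge 2$ the key point is that the formal group exponential $\exp_\calF$ already converges on $\p_N^n$: since $K/\Qp$ is unramified we have $e_{N/\Qp}=e_{N/K}=|G_0|=p$, and $\exp_\calF$ converges (and is inverse to $\log_\calF$) on $\{x:v_p(x)>\tfrac1{p-1}\}$, i.e.\ on $\{x:v_N(x)>\tfrac{p}{p-1}\}$; as $p$ is odd, $\tfrac{p}{p-1}<2$, so $\exp_\calF$ converges on $\p_N^n$ for every $n\ge 2$. Since $\exp_\calF$ has coefficients in $\Zp$ it is $G$-equivariant, and because $\log_\calF$ intertwines $[a]_\calF$ with multiplication by $a$ it is $\Zp$-linear; hence $\exp_\calF\colon\p_N^n\to\calF(\p_N^n)$ is an isomorphism of $\ZpG$-modules (compare Remark \ref{choice of calL}). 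Thus $\calF(\p_N^n)$ is $\ZpG$-projective if and only if $\p_N^n$ is, and by K\"ock's theorem (recalled in Remark \ref{choice of calL}) the latter holds for the weakly ramified extension $N/K$ precisely when $n\equiv 1\pmod{|G_1|}=p$.

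For $n=1$ I would use the exact sequence of Theorem \ref{FpnZpomega} for $L=N$,
\[0\lra \calF(\p_N)\lra \Nnr\xrightarrow{(F-1)\times 1}\Nnr\xrightarrow{w_N}\Z/p^{\omega}\Z(\chinr)\lra 0,\]
where $(\widehat{N_0^\times}(\chinr))^{\Gal(N_0/N)}$ has been identified with $\calF(\p_N)$ via Lemma \ref{UL1hat}, and where $\Nnr$ is $G$-cohomologically trivial by Lemma \ref{LurInd}. Splitting this into the two short exact sequences with common middle term $I:=\ker(w_N)=\mathrm{im}((F-1)\times 1)$ and running the long exact Tate cohomology sequences over each subgroup of $G$ (using the cohomological triviality of $\Nnr$) shows that the three $\Z_p[G]$-modules $\calF(\p_N)$, $I$ and $\Z/p^{\omega}\Z(\chinr)$ are simultaneously $G$-cohomologically trivial or not. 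Now $\calF(\p_N)\cong H^1(N,T)$ is finitely generated over $\Zp$, and it is even $\Zp$-free: a non-trivial $p$-power torsion point of $\calF$ would generate $\Qp(\calF[p])$, which is totally ramified of degree $p-1$ over $\Qp$, impossible inside $N$ since $e_{N/\Qp}=p$ is coprime to $p-1$ ($p$ odd). A finitely generated, $\Zp$-free, $G$-cohomologically trivial $\ZpG$-module is $\ZpG$-projective, and conversely a projective module is cohomologically trivial; hence $\calF(\p_N)$ is $\ZpG$-projective if and only if $\Z/p^{\omega}\Z(\chinr)$ is $G$-cohomologically trivial.

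Finally I would check that $\Z/p^{\omega}\Z(\chinr)$ is $G$-cohomologically trivial if and only if $\omega=0$. If $\omega=0$ the module is zero, hence trivially cohomologically trivial. If $\omega\ge 1$, then, since $\chinr$ is unramified, its reduction $\overline{\chinr}\colon G\to(\Z/p^{\omega}\Z)^\times$ is trivial on the inertia subgroup $G_0\supseteq G_1=\langle a\rangle$; so, restricted to $\langle a\rangle$, the module $\Z/p^{\omega}\Z(\chinr)$ is $\Z/p^{\omega}\Z$ with trivial action, and
\[\hat H^0(\langle a\rangle,\Z/p^{\omega}\Z)=\frac{\Z/p^{\omega}\Z}{(1+a+\dots+a^{p-1})\cdot\Z/p^{\omega}\Z}=\frac{\Z/p^{\omega}\Z}{p\,\Z/p^{\omega}\Z}\cong\Z/p\Z\neq 0,\]
so it is not cohomologically trivial. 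Combining the three equivalences gives $\calF(\p_N)$ $\ZpG$-projective $\iff\omega=0$. The main obstacle I expect is the $n=1$ case: carefully transporting cohomological triviality back and forth through the four-term sequence of Theorem \ref{FpnZpomega}, and establishing the $\Zp$-freeness of $\calF(\p_N)$ (which is exactly where oddness of $p$, via $\gcd(p-1,p)=1$, is used); the case $n\ge 2$ is essentially bookkeeping around the convergence of $\exp_\calF$ and K\"ock's theorem.
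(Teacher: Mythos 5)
Your proof is correct, and for the case $n\ge 2$ it coincides with the paper's (formal $\log$/$\exp$ identifies $\calF(\p_N^n)$ with $\p_N^n$ as $\ZpG$-modules via \cite[Th.~IV.6.4]{Sil}, then K\"ock). For $n=1$ the overall reduction is also the paper's: the four-term sequence of Theorem \ref{FpnZpomega} with cohomologically trivial middle terms (Lemmas \ref{UL1hat} and \ref{LurInd}) shows that $\calF(\p_N)$ is $G$-cohomologically trivial exactly when $\Z/p^{\omega}\Z(\chinr)$ is, and the latter fails for $\omega\ge 1$ by the $\hat H^0(\langle a\rangle,-)$ computation you spell out (the paper leaves this implicit). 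The one place you genuinely diverge is the passage from cohomological triviality to projectivity, which requires $\Zp$-torsion-freeness of $\calF(\p_N)$: the paper proves this by choosing the Lubin--Tate law with $\log_\calF(x)=\sum_j x^{p^j}/\pi^j$ and checking via a valuation computation that $\log_\calF$ is injective on all of $\calF(\p_N)$, whereas you argue globally that a nontrivial torsion point would force $N$ to contain the first Lubin--Tate layer $\Qp(\calF[\pi])$, totally ramified of degree $p-1$, which is incompatible with $e_{N/\Qp}=p$ for odd $p$. Your argument is shorter, works for any model of $\calF$ over $\Zp$, and proves torsion-freeness unconditionally (not only for $\omega=0$); the paper's argument is more self-contained, giving the explicit isomorphism $\log_\calF\colon\calF(\p_N)\to J\sseq N$ without invoking the ramification theory of Lubin--Tate extensions. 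Both are complete.
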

\begin{proof}
For $n \ge 2$ the formal logarithm induces an isomorphism $\calF(\p_N^n) \cong \p_N^n$ of $\ZpG$-modules by \cite[Th.~IV.6.4]{Sil}. Hence the first assertion follows from \cite[Th.~1.1 and Prop.~1.3]{Koeck}.

We henceforth assume $n=1$. By Lemma \ref{UL1hat}, Lemma \ref{LurInd} and Theorem \ref{FpnZpomega} we already know that $\calF(\p_N)$ is cohomologically trivial, if and only if $\omega = 0$. Hence it suffices to prove that if $\omega = 0$ then $\calF(\p_N)$ is torsionfree. By \cite[Th.~IV.6.4]{Sil} $\log_\calF$ converges on $\calF(\p_N)$, so that we may set $J := \log_\calF(\calF(\p_N)) \sseq N$. We consider the commutative diagram
\[\xymatrix{
0\ar[r]& \calF(\p_N^2) \ar[r]\ar[d]^{\log_\calF}& \calF(\p_N) \ar[r] \ar[d]^{\log_\calF} 
& \calF(\p_N) / \calF(\p_N^2) \ar[r] \ar[d]^{\lambda_\calF}   & 0\\
0\ar[r]& \p_N^2 \ar[r]& J \ar[r] & J / \p_N^2 \ar[r] & 0,}\]
where $\lambda_\calF(x + \calF(\p_N^2)) = \log_\calF(x) + \p_N^2$. Recall that in Subsection \ref{sit} we introduced a Lubin-Tate formal group $\calF$ associated with $\pi=up$. By \cite[Sec.~V.4, Aufgabe~4]{Neukirch92} we may choose $\calF$ such that
\[\log_\calF(x) = \sum_{j=0}^\infty \frac{x^{p^j}}{\pi^j}.\]
We recall that $v_N(\pi) = p$. If $v_N(x) = 1$, then $\frac{x^p}{\pi}$ is a unit and so is also $x+\frac{x^p}{\pi}$. Since for $j\geq 2$
\[v_N\left(\frac{x^{p^j}}{\pi^j}\right) = p^j - jp \ge 2,\]
we deduce that $v_N(\log_\calF(x)) = 0$ for $x \in \calF(\p) \setminus \calF(\p^2)$, so that $\lambda_\calF$ is injective. By the snake lemma the middle vertical map is therefore also injective and we have an isomorphism $\log_\calF \colon \calF(\p_N) \lra J$ of $\Zp$-modules. Obviously, $J \sseq N$ is torsionfree.
\end{proof}

By the above lemma we can use $\calL = \p_N^{p+1}$ in our constructions (see also Remark \ref{choice of calL}). In this case we have $X(\calL) = \calF(\p_N^{p+1})$.

We recall now some results and constructions of \cite{BC}. Let $\theta_1\in M$ be such that $\calO_K[\Gal(M/K)]\theta_1=\p_M$ and $\T_{M/K}\theta_1=p$ (see \cite{BC}, Lemma 3.1.2]). Let $\theta_2$ (resp. $A$) be a normal integral basis generator of trace one for the extension $\tilde K'/\Qp$ (resp. $K/\Qp$). Such elements exist by \cite[Lemma~3.1.1]{BC}.

Since $a\in G_1\setminus G_2$, where $G_i$ is the $i$-th ramification group of $G=\Gal(N/K)$, we know by \cite[Sec.~IV.2, Prop.~5]{SerreLocalFields} that $\theta_1^{a-1}\equiv 1-\alpha_1\theta_1\pmod{\p_M^2}$ for some unit $\alpha_1\in\calO_M^\times$. Since $\alpha_1$ can be replaced by any element in the same residue class in $\calO_M/\p_M=\calO_K/\p_K$, we can assume that $\alpha_1\in\calO_K^\times$.

By our choice of $A$, we know that $A,A^{\varphi},\dots A^{\varphi^{m-1}}$ is a basis of $\calO_K$ over $\Z_p$, where $\varphi$ denotes as before the Frobenius automorphism of $\Kur/\Qp$. Since $1=\T_{K/\Qp}A=\sum_{i=0}^{m-1}A^{\varphi^i}$ and $\alpha_1\in\calO_K^\times$ it easily follows that also
\begin{equation}\label{choice of alpha}
\alpha_1,\alpha_2=\alpha_1A,\alpha_3=\alpha_1A^{\varphi},\dots, \alpha_m=\alpha_1A^{\varphi^{m-2}}
\end{equation}
constitute a basis of $\calO_K$ over $\Z_p$. In particular, we have the equality $A=\frac{\alpha_2}{\alpha_1}$.

We recall from \cite[Lemma~3.1.3]{BC} that the polynomial $X^p - X + A\theta_2$ divides $X^{q^d} - X + 1$ in $\calO_{K'}/ \p_{K'}[X]$. As in \cite{BC} we choose $x_2 \in \calO_{\Kur}$ such that $\frac{x_2}{\alpha_1} \mod{\p_{K'}}$ is a root of  $X^p - X + A\theta_2$. The following lemma is analogous to \cite[Lemma~3.1.4]{BC}.

\begin{lemma}\label{BC 3.1.4}
Assume  $\omega>0$. If we we consider $1 + x_2\theta_1$ as an element of $\widehat{N_0^\times}$, then
\[(1 + x_2\theta_1)^{\chinr(F_N)F_N - 1} \equiv \theta_1^{a-1} \equiv 1 - \alpha_1\theta_1 \pmod{\p_{N_0}^2}.\]
\end{lemma}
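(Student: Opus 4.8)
The plan is to compute the action of $F_N$ and of $\chinr(F_N)$ on the element $1 + x_2\theta_1 \in \widehat{N_0^\times}$ modulo $\p_{N_0}^2$ and then to compare with the known congruence $\theta_1^{a-1} \equiv 1 - \alpha_1\theta_1 \pmod{\p_M^2}$, which is recalled just before the statement. First I would observe that $\theta_1$ has $N$-valuation $p$ in $N$, hence valuation $1$ in $M$ (recall $e_{N/M}$ equals the residue degree $d$, so $v_N(\theta_1) = p$ while $v_M(\theta_1)=1$), so that $\p_{N_0}^2$-congruences amount to working modulo terms of $N_0$-valuation $\ge 2p$ coming from $\theta_1^2$ and higher. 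Since $F_N$ acts trivially on $M$ (as $F_N \in \Gal(\Nur/N) = \Gal(\Kur/K')$ and $M \subseteq N$), we have $\theta_1^{F_N} = \theta_1$ and $\alpha_1^{F_N} = \alpha_1$; the only thing that moves is $x_2 \in \calO_{\Kur}$. Writing $t := 1 + x_2\theta_1$, the binomial expansion gives
\[
t^{F_N} = 1 + x_2^{F_N}\theta_1, \qquad t^{-1} \equiv 1 - x_2\theta_1 \pmod{\p_{N_0}^2},
\]
hence
\[
t^{F_N - 1} = t^{F_N} t^{-1} \equiv 1 + (x_2^{F_N} - x_2)\theta_1 \pmod{\p_{N_0}^2}.
\]

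Next I would bring in the multiplier $\chinr(F_N)$. Since $\chinr(F_N) \in \Zp^\times$ and $t \equiv 1 \pmod{\p_{N_0}}$, one has $t^{\chinr(F_N)} \equiv 1 + \chinr(F_N)x_2\theta_1 \pmod{\p_{N_0}^2}$, so
\[
t^{\chinr(F_N)F_N - 1} \equiv 1 + \bigl(\chinr(F_N)x_2^{F_N} - x_2\bigr)\theta_1 \pmod{\p_{N_0}^2}.
\]
Now the defining property of $x_2$ enters: $\tfrac{x_2}{\alpha_1} \bmod \p_{K'}$ is a root of $X^p - X + A\theta_2$, and by the divisibility statement $X^p - X + A\theta_2 \mid X^{q^d} - X + 1$ in $\calO_{K'}/\p_{K'}[X]$ recalled from \cite[Lemma~3.1.3]{BC}. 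Since $F_N = F^d = \varphi^{md}$ raises residues to the $q^d = p^{md}$-th power, applying $F_N$ to the root $\tfrac{x_2}{\alpha_1}$ and using that it satisfies $X^{q^d} = X - 1$ yields
\[
\Bigl(\frac{x_2}{\alpha_1}\Bigr)^{F_N} \equiv \frac{x_2}{\alpha_1} - 1 \pmod{\p_{K'}},
\]
i.e. $x_2^{F_N} \equiv x_2 - \alpha_1 \pmod{\p_{K'}}$ (here $\alpha_1 \in \calO_K^\times$ is fixed by $F_N$). Here I must also use the hypothesis $\omega > 0$, which means $\chinr(F_N) \equiv 1 \pmod p$, so that $\chinr(F_N)x_2^{F_N} \equiv x_2^{F_N} \equiv x_2 - \alpha_1$ modulo $\p_{K'}$, and therefore $\chinr(F_N)x_2^{F_N} - x_2 \equiv -\alpha_1 \pmod{\p_{K'}}$.

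Substituting back gives $t^{\chinr(F_N)F_N-1} \equiv 1 - \alpha_1\theta_1 \pmod{\p_{N_0}^2}$, which is the claimed congruence, and this also equals $\theta_1^{a-1}$ modulo $\p_{N_0}^2$ by the choice of $\alpha_1$ recalled above \eqref{choice of alpha}. The main obstacle I anticipate is purely bookkeeping of valuations: one has to be careful that a congruence of the coefficient $\chinr(F_N)x_2^{F_N} - x_2 + \alpha_1$ modulo $\p_{K'}$ (equivalently modulo $\p_{N_0}$, since $K' \subseteq N_0$ and $\p_{K'}\calO_{N_0}$ has $N_0$-valuation $1$) is exactly what is needed to upgrade $1 + (\chinr(F_N)x_2^{F_N}-x_2)\theta_1$ to $1 - \alpha_1\theta_1$ modulo $\p_{N_0}^2$, because $\theta_1$ contributes valuation $p \ge 1$; the $\omega > 0$ hypothesis is precisely what kills the otherwise-troublesome discrepancy between $\chinr(F_N)$ and $1$ at the relevant precision. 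No deeper difficulty is expected: this is the $\omega>0$ analogue of \cite[Lemma~3.1.4]{BC}, with the extra twist factor $\chinr(F_N)$ absorbed using $\chinr(F_N)\equiv 1\pmod p$.
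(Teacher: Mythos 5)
Your proof is correct and follows essentially the same route as the paper's: expand $(1+x_2\theta_1)^{\chinr(F_N)F_N-1}$ modulo $\p_{N_0}^2$ to isolate the coefficient $\chinr(F_N)x_2^{F_N}-x_2$, use the defining property of $x_2$ (via the divisibility of $X^{q^d}-X+1$ by $X^p-X+A\theta_2$) to get $x_2^{q^d}-x_2\equiv-\alpha_1$, and absorb the twist with $\chinr(F_N)=u^{dm}\equiv 1\pmod p$. One small correction to your bookkeeping: $N/M$ is \emph{unramified} of degree $d$ (since $N=MK'$ with $K'/K$ unramified), so $v_N(\theta_1)=v_M(\theta_1)=1$, and $\p_{K'}\calO_{N_0}=\p_{N_0}^{p}$ rather than $\p_{N_0}$; these misstatements are harmless because your deductions only use $v_{N_0}(\theta_1)\ge 1$ and $\p_{K'}\sseq\p_{N_0}$, both of which hold.
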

\begin{proof}
From $\left(\frac{x_2}{\alpha_1}\right)^{q^d}-\frac{x_2}{\alpha_1}\equiv -1\pmod{\p_{K'}}$ we obtain $x_2^{q^d} - x_2 \equiv -\alpha_1 \pmod{\p_{N_0}}$. 
Since $\omega > 0$ we have $u^{dm} \equiv 1 \pmod{p}$ and hence obtain
\begin{equation}\label{aux eq 217}
u^{dm}x_2^{q^d} - x_2 \equiv -\alpha_1 \pmod{\p_{N_0}}.
\end{equation}
Recalling $\chinr(F_N) = u^{dm} \equiv 1 \pmod{p}$ and  $v_N(\theta_1) = 1$ we conclude
\[\begin{split}
(1 + x_2\theta_1)^{\chinr(F_N)F_N - 1} &= (1 + x_2^{F_N}\theta_1)^{\chinr(F_N)}(1 + x_2\theta_1)^{-1} \\
&\equiv (1 + u^{dm}x_2^{q^d}\theta_1)(1 - x_2\theta_1) \pmod{\p_{N_0}^2}\\
&\equiv 1 + u^{dm}x_2^{q^d}\theta_1 - x_2\theta_1  \pmod{\p_{N_0}^2}\\
&\equiv 1 - \alpha_1\theta_1 \pmod{\p_{N_0}^2},
\end{split}\]
where the last congruence follows from (\ref{aux eq 217}).
\end{proof}

As in \cite[Lemma~3.1.5]{BC}, we can use Lemma \ref{Neukirch} to find an element $\gamma\in U_{N_0}^{(1)}$ such that $\gamma^{\chinr(F_N)F_N-1}=\theta_1^{a-1}$. If $\omega>0$, using also Lemma \ref{BC 3.1.4}, we can assume that $\gamma\equiv 1 + x_2\theta_1\pmod{\p_{N_0}^2}$.

Let
\[W'=\ZpG z_1\oplus \ZpG z_2,\]
\[W_{\geq n}=\bigoplus_{j=n}^{p-1}\bigoplus_{k=1}^{m}\ZpG v_{k,j}\]
and put 
\[W=W_{\geq 0}.\]
Note that these are the same as the $p$-completions of $\calF'$, $\calF_{\geq n}$, $\calF=\calF_{\geq 0}$ of \cite{BC}, but here we prefer to change the notation to avoid confusion with the notation for the Lubin-Tate formal group.

Note that the assignment $v_{k,j} \mapsto \alpha_kw_j$ induces an isomorphism
\begin{equation}\label{ff iso}
W  \stackrel{\cong}\lra \bigoplus_{j=0}^{p-1}\bigoplus_{k=1}^m\ZpG\alpha_kw_j = \bigoplus_{j=0}^{p-1}\OKG w_j
\end{equation}
of free $\ZpG$-modules. We will always identify $W$ with $\bigoplus_{j=0}^{p-1}\bigoplus_{k=1}^m\ZpG\alpha_kw_j$.

We set 
\begin{equation}\label{E def}
E=
\begin{cases}
  1 &\text{ if } \omega = 0 \\
  \sum_{i=0}^{dm-1}u^{-i}(A\theta_2)^{\varphi^i} &\text{ if } \omega > 0
\end{cases}
\end{equation}
and
\begin{equation}\label{tilde u def}
\tilde u=
\begin{cases}
  1 &\text{ if } \omega = 0 \\
  u &\text{ if } \omega > 0.
\end{cases}
\end{equation}

Recall that in Lemma \ref{UL1hat} we defined $\epsilon\in \overline{\Zpnr}^\times$ such that $u=\epsilon^{\varphi-1}$.

\begin{lemma}\label{E lemma}
We have $E\in\calO_{K'}^\times$ and $E^\varphi\equiv \tilde uE\pmod{p\calO_{K'}}$. In particular, if $\omega>0$ we can also assume $\epsilon\equiv E\pmod{p\calO_{K'}}$.
\end{lemma}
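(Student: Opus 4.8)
The plan is to verify the two assertions by direct computation in the residue field $\calO_{K'}/p\calO_{K'}$, reducing everything modulo $p$. First I would dispose of the case $\omega = 0$, where $E = 1$ and $\tilde u = 1$, so that $E \in \calO_{K'}^\times$ and $E^\varphi \equiv \tilde u E \pmod{p\calO_{K'}}$ are both trivially true (and there is nothing to say about $\epsilon$, since the hypothesis $\omega > 0$ is not in force). So the substance is entirely in the case $\omega > 0$.

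For $\omega > 0$ we have $E = \sum_{i=0}^{dm-1} u^{-i}(A\theta_2)^{\varphi^i}$. The key observation is that $\omega > 0$ forces $\chinr(F_N) = u^{dm} \equiv 1 \pmod p$, so $u^{-dm} \equiv 1 \pmod p$ and the exponents $u^{-i}$ are periodic modulo $p$ with period dividing $dm$. I would compute $E^\varphi$ by applying $\varphi$ termwise and re-indexing: since $(A\theta_2)^{\varphi^{dm}} \equiv A\theta_2 \pmod{p}$ (as $A\theta_2 \in \calO_{K'}$ and $\varphi^{dm}$ fixes $K'$ — here one uses $[K':\Qp] = dm$), one gets
\[
E^\varphi = \sum_{i=0}^{dm-1} u^{-i}(A\theta_2)^{\varphi^{i+1}} \equiv u \sum_{i=1}^{dm} u^{-i}(A\theta_2)^{\varphi^{i}} \equiv u\,E \pmod{p\calO_{K'}},
\]
the last step using the periodicity of both $u^{-i} \bmod p$ and $(A\theta_2)^{\varphi^i} \bmod p$ to shift the summation range from $\{1,\dots,dm\}$ back to $\{0,\dots,dm-1\}$. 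This gives $E^\varphi \equiv \tilde u E \pmod{p\calO_{K'}}$.

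Next I would show $E \in \calO_{K'}^\times$, i.e. that $E$ is a unit modulo $\p_{K'}$. Recall from the discussion preceding \cite[Lemma~3.1.3]{BC} that $\frac{x_2}{\alpha_1} \bmod \p_{K'}$ is a root of $X^p - X + A\theta_2$, which divides $X^{q^d} - X + 1$. Reducing mod $\p_{K'}$ and writing $y := \frac{x_2}{\alpha_1}$, one has $y^p - y \equiv -A\theta_2$, hence $(A\theta_2)^{\varphi^i} \equiv (y^{\varphi^i})^p - y^{\varphi^i}$, and telescoping the sum $E \equiv \sum_{i=0}^{dm-1} u^{-i}\big((y^{\varphi^i})^p - y^{\varphi^i}\big)$ — after absorbing the unit factors $u^{-i}$, which mod $\p_{K'}$ act as a cocycle twist — should collapse to $y^{q^d} - y$ or a closely related expression; by the divisibility $y^{q^d} - y \equiv -1 \pmod{\p_{K'}}$ this is a unit. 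Thus $E \not\equiv 0 \pmod{\p_{K'}}$, so $E \in \calO_{K'}^\times$.

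Finally, for the assertion that one may assume $\epsilon \equiv E \pmod{p\calO_{K'}}$: recall $\epsilon \in \overline{\Zpnr}^\times$ satisfies $u = \epsilon^{\varphi - 1}$, i.e. $\epsilon^\varphi = u\epsilon$; and we have just shown $E^\varphi \equiv \tilde u E = uE \pmod{p\calO_{K'}}$ with $E$ a unit. Hence $E/\epsilon$, reduced mod $p$, is fixed by $\varphi$, so it lies in $\calO_K/p\calO_K$ (indeed in $\F_q$ after the appropriate identification). Since $\epsilon$ is only well-defined up to multiplication by units of $\Zp$ — or more precisely, multiplying $\epsilon$ by a suitable $(\varphi-1)$-cocycle-trivial unit in $\overline{\Zpnr}^\times$ does not change the defining property $u = \epsilon^{\varphi-1}$ — we may rescale $\epsilon$ so that $\epsilon \equiv E \pmod{p\calO_{K'}}$. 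The main obstacle I anticipate is bookkeeping the unit twists $u^{-i}$ consistently in the telescoping argument for $E \bmod \p_{K'}$, and making precise exactly which ambiguity in the choice of $\epsilon$ is being exploited; both are routine but require care with the Frobenius-semilinear structure. I would keep the exposition brief, citing \cite[Lemma~3.1.3]{BC} and \cite[Koroll.~V.2.3]{Neukirch92} for the existence of $\epsilon$ and the properties of $A\theta_2$, and leaving the congruence manipulations as a direct check.
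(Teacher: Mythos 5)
Your treatment of the congruence $E^\varphi\equiv \tilde u E\pmod{p\calO_{K'}}$ is exactly the paper's argument: apply $\varphi$ termwise, re-index, and use $u^{-dm}\equiv 1\pmod p$ together with $(A\theta_2)^{\varphi^{dm}}=A\theta_2$. Your handling of the normalization of $\epsilon$ is also the intended one; note only that an element of $\overline{\mathbb{F}_p}$ fixed by the absolute Frobenius lies in $\mathbb{F}_p$ (not merely in $\mathbb{F}_q$), and it is precisely this that lets you rescale $\epsilon$ by an element of $\Zp^\times$ without disturbing the defining relation $u=\epsilon^{\varphi-1}$.

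The gap is in your proof that $E\in\calO_{K'}^\times$. Writing $y:=x_2/\alpha_1\bmod \p_{K'}$, so that $A\theta_2\equiv y-y^p$, the telescoping you invoke works for the \emph{untwisted} sum, $\sum_{i=0}^{dm-1}(A\theta_2)^{\varphi^i}\equiv y-y^{q^d}\equiv 1$, but the factors $u^{-i}$ destroy it: setting $S:=\sum_{i=0}^{dm-1}u^{-i}y^{p^i}$ and using $u^{-dm}\equiv 1$ and $y^{q^d}=y-1$, one finds $E\equiv (1-u)S+u\pmod{\p_{K'}}$. Unless $u\equiv 1\pmod p$ --- which $\omega>0$ does not guarantee, since it only gives $u^{dm}\equiv 1\pmod p$ --- you are left needing to show $(1-u)S+u\neq 0$, i.e.\ to control the twisted trace $S$, and there is no routine way to do that; "absorbing the unit factors as a cocycle twist" does not produce $y^{q^d}-y$. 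The paper avoids this entirely with a one-line observation: since $A\theta_2$ is a normal integral basis generator of the unramified degree-$dm$ extension $K'/\Qp$, the elements $(A\theta_2)^{\varphi^i}$, $0\le i\le dm-1$, are linearly independent modulo $p\calO_{K'}$, and as the coefficients $u^{-i}$ are units with nonzero reductions, $E\not\equiv 0\pmod{p\calO_{K'}}$, hence $E\in\calO_{K'}^\times$. You should replace your telescoping paragraph by this linear-independence argument.
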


\begin{proof}
For $\omega=0$ there is nothing to prove. So let us assume $\omega>0$.

Since $A\theta_2$ is an integral normal basis generator for the unramified extension $K'/\Qp$, the elements $(A\theta_2)^{\varphi^i}$ are linearly independent modulo $p\calO_{K'}$. Hence $E \not\equiv 0 \pmod{p\calO_{K'}}$ and we deduce $E\in\calO_{K'}^\times$.

Moreover, we have
\[E^\varphi = u \sum_{i=0}^{dm-1} u^{-(i+1)} (A\theta_2)^{\varphi^{i+1}} \equiv uE \pmod{p\calO_{K'}}\]
because $u^{dm} \equiv 1 \pmod{p\calO_{K'}}$ as a consequence of $0 < \omega = v_p(1 - \chinr(F_N))$ and $\chinr(F_N) = u^{dm}$.
\end{proof}

We define
\[\tfW\colon W \lra \calF(\p_N)\]
by
\[\tfW(v_{k,j})=E\alpha_k(a-1)^j\theta\]
for all $k$ and $j$, where $\theta=\theta_1\theta_2$. We denote by $\fW$ the composition of $\tfW$ with the projection to $\calF(\p_N)/\calF(\p_N^{p+1})$.

The following lemma is the analogue of \cite[Lemma~4.1.3]{BC}.

\begin{lemma}\label{f4surj}
The map $\fW$ is surjective. More precisely, for $j\geq 0$, 
\[\fW(W_{\geq j})=\calF(\p_N)^{j+1}/\calF(\p_N^{p+1}).\]
\end{lemma}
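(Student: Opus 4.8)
The statement to prove is that $\fW$ is surjective and, more precisely, that $\fW(W_{\geq j})=\calF(\p_N)^{j+1}/\calF(\p_N^{p+1})$ for all $j\geq 0$ (here I write $\calF(\p_N)^{n}$ for the image of $\calF(\p_N^{n})$ in $\calF(\p_N)$). The plan is to argue by descending induction on $j$, starting from $j=p$, where $W_{\geq p}=0$ and $\calF(\p_N)^{p+1}/\calF(\p_N^{p+1})=0$, and working down to $j=0$, which gives the surjectivity of $\fW$. The key point at each step is a filtration/graded-piece computation: for each $j$ I would compare the image of the free summand $\bigoplus_{k=1}^m\ZpG v_{k,j}$ under $\tfW$ with the graded quotient $\calF(\p_N^{j+1})/\calF(\p_N^{j+2})$, and show these match modulo $\calF(\p_N^{j+2})$.

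First I would set up the valuation bookkeeping. Since $a\in G_1\setminus G_2$, the operator $a-1$ raises the valuation $v_N$ by exactly $1$ on elements of $\calF(\p_N)$ whose valuation is not divisible by $p$ (using \cite[Sec.~IV.2, Prop.~5]{SerreLocalFields} together with $v_N(\theta_1)=1$, $v_N(\theta_2)=0$, $\theta_1^{a-1}\equiv 1-\alpha_1\theta_1\pmod{\p_M^2}$), and more generally $(a-1)^j\theta$ has valuation $j+1$ as long as $j+1$ is such that the intermediate valuations are coprime to $p$; for $0\leq j\leq p-1$ the relevant valuations are $1,2,\dots,p-1$, all coprime to $p$, so $v_N((a-1)^j\theta)=j+1$. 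Next, multiplying by $E\in\calO_{K'}^\times$ (a unit by Lemma \ref{E lemma}) and by the $\alpha_k$: the elements $\alpha_1,\dots,\alpha_m$ form a $\Zp$-basis of $\OK$, and since $K/\Qp$ is unramified, $\OK$ is unramified over $\Zp$, so $\{\alpha_k\}$ reduces to an $\mathbb F_p$-basis of the residue field $\calO_K/\p_K$. Thus the $m$ elements $\tfW(v_{k,j})=E\alpha_k(a-1)^j\theta$, $k=1,\dots,m$, together with their $\ZpG$-multiples, hit a set of representatives of the graded piece $\p_N^{j+1}/\p_N^{j+2}$ as a module over the residue field; more carefully, since $\p_N^{j+1}/\p_N^{j+2}$ is a one-dimensional $\calO_N/\p_N$-vector space and $\calO_N/\p_N=\calO_{K'}/\p_{K'}$ is generated over $\calO_K/\p_K$ by the images of the various $(a-1)$-translates (this is where weak ramification and $N=MK'$ enter), one checks that $\bigoplus_{k=1}^m\ZpG v_{k,j}$ surjects onto $\calF(\p_N^{j+1})/\calF(\p_N^{j+2})$.

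Granting the base case and this graded-piece surjectivity, the inductive step is routine: given $x\in\calF(\p_N^{j+1})$, by the graded computation there is $w\in\bigoplus_{k}\ZpG v_{k,j}$ with $\tfW(w)\equiv x \pmod{\calF(\p_N^{j+2})}$, and then $x-\tfW(w)\in\calF(\p_N^{j+2})$, which by the inductive hypothesis (applied to $j+1$) lies in $\fW(W_{\geq j+1})$; adding the preimages gives $x\in\fW(W_{\geq j})$ modulo $\calF(\p_N^{p+1})$, as required. Running down to $j=0$ yields $\fW(W)=\calF(\p_N)/\calF(\p_N^{p+1})$, i.e.\ surjectivity of $\fW$. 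I would also note that this argument is the exact analogue of \cite[Lemma~4.1.3]{BC}, with the only new ingredient being the unit $E$ (whose role is precisely to make the Frobenius-twisted relations work out, cf.\ Lemma \ref{E lemma}) and the twisted description of $\calF(\p_N)$ as a submodule of $\widehat{N_0^\times}(\chinr)$ via Lemma \ref{UL1hat}.

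\textbf{Main obstacle.} The technically delicate point is the graded-piece surjectivity claim, i.e.\ that the $m$ elements $E\alpha_k(a-1)^j\theta$ ($k=1,\dots,m$), modulo $\p_N^{j+2}$ and under the action of $\ZpG$, span $\p_N^{j+1}/\p_N^{j+2}$ over the residue field. This requires knowing precisely how the residue field $\calO_N/\p_N$ decomposes under the $\langle a\rangle$- and $\langle b\rangle$-actions and that the chosen normal-basis generators $\theta_1,\theta_2$ (hence $\theta=\theta_1\theta_2$) interact correctly with the ramification filtration — exactly the kind of explicit local computation carried out in \cite[Sec.~3--4]{BC}, here only mildly perturbed by the unramified twist. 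Everything else (the valuation estimates, the descending induction, the reduction mod $\p_N^{p+1}$) is formal.
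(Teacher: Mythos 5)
Your proposal is correct and follows essentially the same route as the paper: descending induction from $j=p$, reducing an element of $\calF(\p_N)^{j+1}$ modulo $\calF(\p_N)^{j+2}$ by an element of $\tfW\bigl(\bigoplus_k\ZpG v_{k,j}\bigr)$ and then invoking the inductive hypothesis. The graded-piece surjectivity you flag as the main obstacle is handled in the paper by citing $\p_N^{j+1}=(p,(a-1)^j)\theta$ from \cite[Lemma~3.2.6]{BC} (the term $\mu p\theta$ being absorbed into $\p_N^{p+1}$) together with the substitution $\nu\mapsto\tilde\nu$ compensating for the Frobenius twist $E^\varphi\equiv\tilde uE$ of Lemma \ref{E lemma} — precisely the two ingredients you identified but did not carry out explicitly.
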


\begin{proof}
For $j=p$, $W_{\geq p}=\{0\}$ and $\calF(\p_N^{p+1})/\calF(\p_N^{p+1})=\{0\}$, so the  result is trivial.

We assume the result for $j+1$ and proceed by descending induction. Recall that by \cite[Lemma~3.2.6]{BC}, $\p_N^{j+1}=(p,(a-1)^j)\theta$. Hence if $x\in \calF(\p_N)^{j+1}$, then
\[E^{-1}x\equiv \mu p\theta+\nu(a-1)^j\theta\pmod{\p_N^{j+2}}\]
for some $\mu,\nu\in\calO_K[G]$. We write $\nu=\sum_{k,\ell}\nu_{k,\ell}a^kb^\ell$. Let $\tilde\nu=\sum_{k,\ell}\tilde u^{m\ell}\nu_{k,\ell}a^kb^\ell$. By Lemma \ref{E lemma} we have
\[\begin{split}\tilde u^{ml}a^kb^l(E(a-1)^j\theta) &= \tilde u^{ml} E^{\varphi^{-ml}} \cdot a^kb^l(a-1)^j\theta \\
& \equiv E \cdot a^kb^l(a-1)^j\theta \pmod{\p_N^{p+1}}.
\end{split}\]
Since  $\mu p\theta\in\p_N^{p+1}$ we conclude further
\[x \equiv E\nu(a-1)^j\theta \equiv \tilde \nu E(a-1)^j\theta \pmod{\p_N^{p+1}}.\]
By the analogue of \cite[Lemma~4.1.7]{BC} we have $\tilde \nu E(a-1)^j\theta\equiv \tfW(\tilde\nu w_j)\pmod{\p_N^{j+2}}$ and thus $x \equiv \tfW(\tilde\nu w_j)\pmod{\p_N^{j+2}}$. This means that $\pi(x)$ is the sum of an element in the image of $W_{\geq j}$ and an element in $\calF(\p_N)^{j+2}/\calF(\p_N^{p+1})$, which is by assumption in the image of $W_{\geq j+1}\subseteq W_{\geq j}$.
\end{proof}

As in \cite{BC} we will need to construct some particular elements in the kernel of $\fW$. We will proceed as in \cite[Lemma~4.2.4, 4.2.5, 4.2.6]{BC}.

\begin{lemma}\label{defsjk}
Let $0\leq j\leq p-1$, $1\leq k\leq m$. Then there exists $\mu_{j,k}\in (W_{\geq j+2})_p$ such that the element
\[s_{j,k}=\alpha_k(a-1) w_j-\alpha_kw_{j+1}+\mu_{j,k}\]
is in the kernel of $\fW$. Here $w_p$ should be interpreted as $0$.
\end{lemma}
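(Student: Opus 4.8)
The plan is to mimic closely the construction in \cite[Lemma~4.2.4]{BC}, but working with the twisted modules and with $\tfW$ in place of the untwisted structural map of loc.~cit. First I would compute $\tfW$ applied to the naive candidate $\alpha_k(a-1)w_j - \alpha_k w_{j+1}$: by the defining formula $\tfW(v_{k,j}) = E\alpha_k(a-1)^j\theta$ (and the identification $v_{k,j} \leftrightarrow \alpha_k w_j$ from (\ref{ff iso})) this equals
\[
(a-1)\bigl(E\alpha_k(a-1)^j\theta\bigr) - E\alpha_k(a-1)^{j+1}\theta.
\]
The point is that $a$ does not commute with $E\alpha_k$ through $\tfW$ because $\tfW$ is only $\ZpG$-equivariant, not $\calO_K$-linear; more precisely $a^k b^\ell$ acts on $E(a-1)^j\theta$ by $(a^kb^\ell)(E(a-1)^j\theta) = E^{\varphi^{-m\ell}}a^kb^\ell(a-1)^j\theta$, which by Lemma \ref{E lemma} is congruent to $\tilde u^{-m\ell}E(a-1)^j\theta$ modulo $\p_N^{p+1}$. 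So the obstruction $\tfW(\alpha_k(a-1)w_j - \alpha_k w_{j+1})$ lies in $\calF(\p_N)^{j+2}$ (one extra factor of $(a-1)$ or of $p$ is gained exactly as in \cite[Lemma~4.1.7]{BC}), possibly up to a unit twist coming from $\tilde u$ and from the commutator of $a$ with $E\alpha_k$.

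Next I would invoke Lemma \ref{f4surj}: since $\tfW$ maps $W_{\geq j+2}$ onto $\calF(\p_N)^{j+2}/\calF(\p_N^{p+1})$ (here I should be careful to state the lemma at the level of $\fW$, i.e.\ modulo $\calF(\p_N^{p+1})$, and hence work with $\fW$ rather than $\tfW$ for the vanishing), there exists $\mu_{j,k} \in (W_{\geq j+2})_p$ with $\fW(\mu_{j,k}) = -\fW\bigl(\alpha_k(a-1)w_j - \alpha_k w_{j+1}\bigr)$. Setting
\[
s_{j,k} = \alpha_k(a-1)w_j - \alpha_k w_{j+1} + \mu_{j,k}
\]
then gives $\fW(s_{j,k}) = 0$, which is the assertion. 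The convention $w_p = 0$ handles the boundary case $j = p-1$, where the term $\alpha_k w_{j+1}$ simply disappears and one only needs to correct $\alpha_k(a-1)w_{p-1}$, whose image already lies in $\calF(\p_N)^{p+1}/\calF(\p_N^{p+1}) = 0$, so one may take $\mu_{p-1,k} = 0$.

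The main obstacle is the bookkeeping in the first step: one must track precisely how the unit $E$ and the scalars $\alpha_k$ fail to commute with the group action under $\tfW$, and verify that all the resulting error terms genuinely land in $\calF(\p_N)^{j+2}$ and not merely in $\calF(\p_N)^{j+1}$. This is where the hypotheses $E^\varphi \equiv \tilde u E \pmod{p\calO_{K'}}$ (Lemma \ref{E lemma}), $v_N(\theta_1) = 1$, weak ramification, and the structural description $\p_N^{j+1} = (p, (a-1)^j)\theta$ from \cite[Lemma~3.2.6]{BC} all enter; it is the direct analogue of the computation behind \cite[Lemma~4.1.7]{BC} and I would cite that lemma (or rather its twisted analogue, already used in the proof of Lemma \ref{f4surj}) to keep the argument short. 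Everything else is a formal consequence of surjectivity of $\fW$ in the appropriate degree.
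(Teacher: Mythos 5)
There is a genuine gap, and it sits exactly at the point that carries the content of the lemma. You assert that the image of the naive candidate $\alpha_k(a-1)w_j-\alpha_kw_{j+1}$ under $\fW$ lies in $\calF(\p_N)^{j+2}$, and then quote the surjectivity lemma as ``$W_{\geq j+2}$ surjects onto $\calF(\p_N)^{j+2}/\calF(\p_N^{p+1})$''. But Lemma \ref{f4surj} states $\fW(W_{\geq j})=\calF(\p_N)^{j+1}/\calF(\p_N^{p+1})$, so $\fW(W_{\geq j+2})=\calF(\p_N)^{j+3}/\calF(\p_N^{p+1})$. With an obstruction only known to lie in $\calF(\p_N)^{j+2}$ you can only produce $\mu_{j,k}\in W_{\geq j+1}$, which is not what the lemma asserts and would destroy the later applications (the $s_{j,k}$ must leave the $w_{j+1}$-coefficient equal to $-\alpha_k$ so that the matrices $\mm$ in Propositions \ref{prop 117} and \ref{prop 118} have the block-triangular shape with $-I$ on the diagonal). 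What is actually needed, and what the paper proves, is that the obstruction lies in $\calF(\p_N^{j+3})$.

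The missing idea is a small but essential computation with the formal group law. Writing $x=E\alpha_k(a-1)^ja\theta$, $y=E\alpha_k(a-1)^j\theta$, $z=x-y=E\alpha_k(a-1)^{j+1}\theta$, one has $\tfW(\alpha_k(a-1)w_j-\alpha_kw_{j+1})=(x-_\calF y)-_\calF z$. Since $X-_\calF Y=X-Y+AXY-AY^2+\deg\ge 3$ (the coefficient of $X^2$ vanishes and that of $Y^2$ is $-A$), the additive parts cancel exactly and the quadratic correction is $Axy-Ay^2=Ay(x-y)=Ayz$, of valuation at least $(j+1)+(j+2)=2j+3\ge j+3$; the remaining terms are of even higher valuation. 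Hence the obstruction lies in $\p_N^{j+3}$ and Lemma \ref{f4surj} applied to $W_{\geq j+2}$ gives the required $\mu_{j,k}$. Note also that your diagnosis of where the error term comes from is off: $a$ fixes $E$ and $\alpha_k$ (it is trivial on $\Kur$), so there is no commutator of $a$ with $E\alpha_k$ to track here; the $\tilde u$- and $b$-twists via Lemma \ref{E lemma} are relevant for Lemmas \ref{f4surj} and \ref{defrk} but not for this one. The only source of the correction term is the nonlinearity of $-_\calF$, and exploiting the identity $Axy-Ay^2=Ay(x-y)$ is precisely what buys the extra step from valuation $j+2$ to $j+3$. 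Your treatment of the boundary case $j=p-1$ is fine once this is in place.
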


\begin{proof}
The formal subtraction of $X$ and $Y$ takes the form
\[X-_\calF Y=X-Y+AXY+BX^2+CY^2+\deg \ge 3\]
with $A, B, C \in \Zp$. Taking $Y=0$ we see that $B=0$, taking $X=Y$ we then get $C=-A$. Hence
\[X-_\calF Y=X-Y+AXY-AY^2+\deg \ge 3.\]
For
\[x := E\alpha_k (a-1)^ja\theta, \quad y := E\alpha_k(a-1)^j\theta, \quad z := x-y=E\alpha_k(a-1)^{j+1}\theta\]
we obtain from \cite[Lemma~3.2.5]{BC} and Lemma \ref{E lemma}
\[v_N(x) = v_N(y) = j+1, \quad v_N(z)=v_N(x-y) \ge j+2.\]
In the following computation all congruences are modulo $\p_N^{j+3}$.
\[\begin{split}
x -_\calF y -_\calF z &\equiv x - y + Axy - Ay^2 - z + A(x-y+Axy - Ay^2)z - Az^2 \\
&\equiv z + Ayz-z+A(z+Ayz)z - Az^2\equiv 0.
\end{split}\]
Therefore
\[\begin{split}
&\tfW(\alpha_k(a-1) w_j-\alpha_kw_{j+1})\\
&\quad=E\alpha_k(a-1)^ja\theta-_\calF E\alpha_k(a-1)^j\theta-_\calF E\alpha_k(a-1)^{j+1}\theta
\equiv 0\pmod{\calF(\p_{N}^{j+3})}.
\end{split}\]
Using Lemma \ref{f4surj}, by the same arguments as in the proof of \cite[Lemma~4.2.4]{BC} the result follows.
\end{proof}

Recall that $(m,d) = 1$. Let $\tilde m$ denote an integer such that $m\tilde m \equiv 1 \pmod d$. The next lemma is the analogue of \cite[Lemma~4.2.5 and 4.2.6]{BC}.
\begin{lemma}\label{defrk}
The elements
\[r_1=\alpha_1 \T_a w_0+(u^{-1}\tilde u^{1-m\tilde m} b^{-\tilde m}\alpha_1-\alpha_1)w_{p-1},\]
\[r_k=\alpha_k \T_a w_0+(u^{-1}\tilde u^{1-m\tilde m} b^{-\tilde m}\alpha_{k+1}-\alpha_k)w_{p-1},\]
for $1< k<m$, and
\[r_m=\alpha_m \T_a w_0+\left(u^{-1}\tilde u^{1-m\tilde m}b^{-\tilde m}\alpha_1-u^{-1}\tilde u^{1-m\tilde m}b^{-\tilde m}\sum_{i=2}^m\alpha_i-\alpha_m\right)w_{p-1}\]
are in the kernel of $\fW$.
\end{lemma}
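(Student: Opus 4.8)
\textbf{Proof plan for Lemma \ref{defrk}.}

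The plan is to verify directly that $\fW(r_k) = 0$ for each $k$, imitating the strategy of \cite[Lemma~4.2.5 and 4.2.6]{BC} but carrying along the twisting factors coming from the Lubin–Tate action and the element $E$. First I would record the basic relations we have available: the defining formula $\tfW(v_{k,j}) = E\alpha_k(a-1)^j\theta$, Lemma \ref{E lemma} which gives $E^\varphi \equiv \tilde u E \pmod{p\calO_{K'}}$ (so that $b^{-1}$ acting on $E(a-1)^{p-1}\theta$ introduces a controlled power of $\tilde u$, since $b = F^{-1}$ corresponds to a power of Frobenius), the identity $\T_{M/K}\theta_1 = p$, and the valuation facts $v_N(\theta) = 1$, $v_N((a-1)^j\theta) = j+1$ from \cite[Lemma~3.2.5]{BC}. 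The crucial input is that $\gamma^{\chinr(F_N)F_N - 1} = \theta_1^{a-1}$, where $\gamma \in U_{N_0}^{(1)}$, together with the explicit congruence $\gamma \equiv 1 + x_2\theta_1 \pmod{\p_{N_0}^2}$ when $\omega > 0$; this is what ties the ``$\T_a$'' part (which is $(a-1)^{p-1}$ up to lower order, since $\T_a = \sum_{i=0}^{p-1}a^i$ and $(a-1)^p \equiv 0$ suitably) to the ``$b^{-\tilde m}$'' part via the Frobenius.

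The key computation is to evaluate $\tfW(\alpha_k \T_a w_0)$ modulo a high enough power of $\p_N$. Since $\T_a w_0$ corresponds under \eqref{ff iso} to $\alpha_k \T_a$ acting on the generator, and $\T_a \theta = \T_a\theta_1\cdot\theta_2 = N_{M/K}$-type expression, I expect $\tfW(\alpha_k \T_a w_0)$ to equal $E\alpha_k \cdot (\text{something in } \calF(\p_N^p))$ that is, up to the formal group law, a unit multiple of $E\alpha_k(a-1)^{p-1}\theta$ times an explicit constant involving $p$ and $u$; the precise constant is dictated by the relation $\gamma^{\chinr(F_N)F_N-1} = \theta_1^{a-1}$ and the choice $\calL = \p_N^{p+1}$. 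Then $\tfW$ of the correction term $(u^{-1}\tilde u^{1-m\tilde m}b^{-\tilde m}\alpha_{k'} - \alpha_k)w_{p-1}$ contributes $E\cdot(u^{-1}\tilde u^{1-m\tilde m}b^{-\tilde m}\alpha_{k'} - \alpha_k)(a-1)^{p-1}\theta$; using Lemma \ref{E lemma} to move $b^{-\tilde m}$ past $E$ (picking up $\tilde u^{m\tilde m}$, which combines with $\tilde u^{1-m\tilde m}$ to give $\tilde u$) and the relation between $\alpha_{k+1}/\alpha_k$ and the Frobenius action on $A = \alpha_2/\alpha_1$ from \eqref{choice of alpha}, these should cancel the $\T_a$ contribution modulo $\calF(\p_N^{p+1})$. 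The special form of $r_1$ and $r_m$ (and the sum $\sum_{i=2}^m\alpha_i$ appearing in $r_m$) reflects the ``wrap-around'' $\alpha_m^\varphi \equiv \alpha_1 - \sum_{i=2}^m\alpha_i$ coming from $\T_{K/\Qp}A = 1$, exactly as in \cite[Lemma~4.2.6]{BC}.

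The main obstacle I anticipate is bookkeeping: tracking all the powers of $u$ and $\tilde u$ through the Frobenius-semilinear action, and confirming that the various congruences modulo $\p_N^{p+1}$ (rather than a lower power) actually hold, since $\fW$ only sees $\calF(\p_N)/\calF(\p_N^{p+1})$. One has to be careful that terms of valuation $\geq p+1$, which are invisible in the target, are not being silently used — and conversely that no genuine obstruction of valuation exactly $p$ survives. I would organize this by first proving the $\omega = 0$ case (where $E = \tilde u = 1$ and everything reduces almost literally to \cite{BC}), and then treating $\omega > 0$ by carefully inserting the factors of $E$ and $u$, checking at each step that Lemma \ref{E lemma} and the normalization $\gamma \equiv 1 + x_2\theta_1 \pmod{\p_{N_0}^2}$ from Lemma \ref{BC 3.1.4} supply exactly the compensating units needed. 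Surjectivity of $\fW$ (Lemma \ref{f4surj}) and the already-constructed elements $s_{j,k}$ of Lemma \ref{defsjk} let us replace any ``$\deg \geq 3$'' error terms in the formal group law by elements of $\ker\fW$, so those do not cause trouble.
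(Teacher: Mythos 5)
Your plan has the right outer shape --- evaluate $\tfW$ separately on $\alpha_k\T_a w_0$ and on the $w_{p-1}$-correction, compare modulo $\p_N^{p+1}$, and use Lemma \ref{E lemma} together with the relations among the $\alpha_i$ (in particular the wrap-around coming from $\T_{K/\Qp}A=1$) to match coefficients --- but it misses, and in fact misattributes, the mechanism that produces the coefficient $u^{-1}\tilde u^{1-m\tilde m}b^{-\tilde m}\alpha_{k+1}$. The element $\gamma$ and the relation $\gamma^{\chinr(F_N)F_N-1}=\theta_1^{a-1}$ play no role in this lemma (they are the engine of Lemmas \ref{deft1} and \ref{deft2}), and $\T_a$ does not act on $E\alpha_k\theta$ by the additive trace, so the identity ``$\T_a\theta=\T_{M/K}(\theta_1)\theta_2=p\theta_2$'' is not available: the action is through the formal group law. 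The paper computes this formal-group trace by transporting it with the isomorphism $\fFN$ of Lemma \ref{UL1hat} to the unit side, where $\T_a\cdot(-)$ becomes the norm $\norm_{N_0/K_0}$; Serre's inclusion $\norm_{N_0/K_0}U_{N_0}^{(2)}\subseteq U_{N_0}^{(p+1)}$ (\cite[Prop.~V.6.8]{SerreLocalFields}) lets one replace $\fFN(E\alpha_k\theta)$ by $1+\epsilon^{-1}E\alpha_k\theta$, and the explicit norm expansion (the computation taken from \cite{BC}) produces \emph{two} terms, $\alpha_k\theta_2Ep$ and $-u^{-1}\tilde u\,\alpha_1(\alpha_k/\alpha_1)^p\theta_2^{b^{-\tilde m}}Ep$, where the twist $b^{-\tilde m}$ enters through $\theta_2^p\equiv\theta_2^{b^{-\tilde m}}\pmod{\p_{\tilde K'}}$ and the unit $\epsilon^{1-p}E^{p-1}$ is identified using Lemma \ref{E lemma}. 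Without this step you cannot pin down the exact coefficient of $w_{p-1}$, which is the entire content of the lemma; ``a unit multiple of $E\alpha_k(a-1)^{p-1}\theta$ times a constant dictated by $\gamma$'' does not capture the two differently twisted contributions.

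Two further cautions. First, your fallback of absorbing errors via surjectivity of $\fW$ and the elements $s_{j,k}$ of Lemma \ref{defsjk} is not available here: the correction sits in the top filtration step and $W_{\geq p}=0$, so the cancellation must hold on the nose modulo $\p_N^{p+1}$; in the paper the degree-$\geq 2$ terms of $\fFN$ are made harmless by the norm inclusion above, and the final comparison only uses that formal and ordinary subtraction agree modulo $\p_N^{p+1}$ on elements of $\p_N^p$. Second, your observation that moving $b^{-\tilde m}$ past $E$ produces $\tilde u^{m\tilde m}$, combining with $\tilde u^{1-m\tilde m}$ to give $\tilde u$, does match the paper's evaluation of $\tfW(u^{-1}\tilde u^{1-m\tilde m}b^{-\tilde m}\alpha_i w_{p-1})$; but what ``ties the $\T_a$ part to the $b^{-\tilde m}$ part via the Frobenius'' is the congruence $\theta_2^p\equiv\theta_2^{b^{-\tilde m}}$ inside the norm computation, not the normalization of $\gamma$ from Lemma \ref{BC 3.1.4}.
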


\begin{proof}
Recall the isomorphism
\[\fFN \colon \calF(\p_N)\lra \left(\widehat{N_0^\times}(\chinr) \right)^{\Gal(N_0/N)}\]
from Lemma \ref{UL1hat}. By \cite[Prop.~V.6.8]{SerreLocalFields}, we know that $\norm_{N_0/K_0}U_{N_0}^{(2)}\subseteq U_{K_0}^{(2)}\subseteq U_{N_0}^{(2p)}\subseteq U_{N_0}^{(p+1)}$. This implies that
\[\norm_{N_0/K_0}(\fFN(E\alpha_k\theta))\equiv \norm_{N_0/K_0}(1+\epsilon^{-1}E\alpha_k\theta)\pmod{\p_{N_0}^{p+1}}.\]
Now we calculate $\tfW(\alpha_k\T_aw_0)$, using the results and calculations of \cite{BC} and the fact that $\theta_2^p\equiv\theta_2^{p^{m\tilde m}}\equiv\theta_2^{q^{\tilde m}}\equiv\theta_2^{b^{-\tilde m}}\pmod{\p_{\tilde K'}}$.
\[\begin{split}\tfW(\alpha_k\T_aw_0)&=\T_a\cdot E\alpha_k\theta=\fFN^{-1}(\T_a\cdot(\fFN(E\alpha_k\theta)))\\
&=\fFN^{-1}(\norm_{N_0/K_0}(\fFN(E\alpha_k\theta)))\\
&\equiv \fFN^{-1}(\norm_{N_0/K_0}(1+\epsilon^{-1}E\alpha_k\theta))\pmod{\p_{N_0}^{p+1}}\\
&\equiv \fFN^{-1}\left( 1+\left(\frac{\epsilon^{-1}E\alpha_k\theta_2}{\alpha_1}-\left(\frac{\epsilon^{-1}E\alpha_k\theta_2}{\alpha_1}\right)^p\right)\alpha_1p\right)\pmod{\p_{N_0}^{p+1}}\\
&\equiv \left(\frac{\alpha_k\theta_2}{\alpha_1}-\epsilon^{1-p}E^{p-1}\left(\frac{\alpha_k\theta_2}{\alpha_1}\right)^p\right)E\alpha_1p\pmod{\p_{N_0}^{p+1}}\\
&\equiv \left(\alpha_k\theta_2-u^{-1}\tilde u\alpha_1\left(\frac{\alpha_k}{\alpha_1}\right)^p\theta_2^{b^{-\tilde m}}\right)Ep\pmod{\p_{N_0}^{p+1}}.\end{split}\]
By (\ref{choice of alpha}) we have for $1<k<m$,
\[\left(\frac{\alpha_k}{\alpha_1}\right)^p=\left(A^{\varphi^{k-2}}\right)^p \equiv A^{p^{k-1}} \equiv \frac{\alpha_{k+1}}{\alpha_1} \pmod{\p_K}\]
and
\[\left(\frac{\alpha_m}{\alpha_1}\right)^p=\left(A^{\varphi^{m-2}}\right)^p \equiv A^{\varphi^{m-1}} = 1-\sum_{i=0}^{m-2}A^{\varphi^i} = 1-\sum_{i=0}^{m-2}\frac{\alpha_{i+2}}{\alpha_1}
\pmod{\p_K}.\]
Hence we obtain the following congruences modulo $\p_N^{p+1}$:
\[\tfW(\alpha_k\T_aw_0)\equiv\begin{cases}\left(\alpha_1\theta_2-u^{-1}\tilde u\alpha_1\theta_2^{b^{-\tilde m}}\right)Ep&\text{if $k=1$}\\
\left(\alpha_k\theta_2-u^{-1}\tilde u\alpha_{k+1}\theta_2^{b^{-\tilde m}}\right)Ep&\text{if $1<k<m$}\\
\left(\alpha_m\theta_2+u^{-1}\tilde u\left(-\alpha_1+\sum_{i=2}^{m}\alpha_{i}\right)\theta_2^{b^{-\tilde m}}\right)Ep&\text{if $k=m$.}
\end{cases}\]
Recalling Lemma \ref{E lemma} and \cite[Lemma~3.2.4]{BC}, for $1\leq i\leq m$ we calculate
\[\begin{split}&\tfW(u^{-1}\tilde u^{1-m\tilde m}b^{-\tilde m}\alpha_iw_{p-1})=u^{-1}\tilde u^{1-m\tilde m}b^{-\tilde m}\cdot (E\alpha_i(a-1)^{p-1}\theta)\\
&\qquad=u^{-1}\tilde u^{1-m\tilde m}(E^{b^{-\tilde m}}\alpha_i^{b^{-\tilde m}}(a-1)^{p-1}\theta^{b^{-\tilde m}})\\
&\qquad=u^{-1}\tilde u^{1-m\tilde m}E^{\varphi^{m\tilde m}}\alpha_i\theta_2^{b^{-\tilde m}}(a-1)^{p-1}\theta_1\\
&\qquad\equiv u^{-1}\tilde u E\alpha_i\theta_2^{b^{-\tilde m}}p\pmod{\p_N^{p+1}}
\end{split}\]
and
\[\tfW(\alpha_iw_{p-1})=E\alpha_i(a-1)^{p-1}\theta\equiv E\alpha_i\theta_2p\pmod{\p_N^{p+1}}.\]
For $x,y\in\p_N^p$ we see that $x -_\calF y \equiv x-y+Axy - Ay^2 \equiv x-y \pmod{\p_N^{p+1}}$ so that we easily deduce the lemma by the above calculations.
\end{proof}
From now on we will need to distinguish between the cases $\omega=0$ and $\omega>0$.
\end{subsection}

\begin{subsection}{\texorpdfstring{The Euler characteristic when $\omega = 0$}{The Euler characteristic when omega = 0}}

As already mentioned we use $\calL = \p_N^{p+1}$ and thus have $X(\calL) = \calF(\p_N^{p+1})$.
Since $\omega = 0$ the second cohomology group of $M^\bullet(\calL)$ vanishes and we have the short exact sequence
\[0 \lra \calF(\p_N) / \calF(\p_N^{p+1}) \lra A / X(\calL) \lra B \lra 0.\]
Recall that for $\omega=0$ both $\calF(\p_N^{})$ and $\calF(\p_N^{p+1})$ are $\ZpG$-projective by Lemma \ref{projectivity}. Hence, recalling also (\ref{MLoldnew}), we derive

\[\chi_{\ZpG, \BdR[G]}(M^\bullet(\calL), \lambda_2^{-1}) = [\calF(\p_N^{p+1}), \id, \calF(\p_N^{})].\]

By Lemma \ref{f4surj}, the map $\fW \colon W \lra \calF(\p_N) / \calF(\p_N^{p+1})$ is onto. Thus we obtain the short exact sequence
\[0 \lra \ker(\fW) \lra W \xrightarrow{\fW}  \calF(\p_N) / \calF(\p_N^{p+1}) \lra 0\]
which implies 
\[[\calF(\p_N^{p+1}), \id, \calF(\p_N^{})] = [\ker(\fW), \id, W].\]
Recall the definition of the elements $s_{j,k}$ and $r_k$ in Lemmas \ref{defsjk} and \ref{defrk}.
\begin{lemma}\label{genkerh}
The $pm$ elements $r_{k},s_{j,k}$ for $0\leq j\leq p-2$, $1\leq k\leq m$ constitute a $\ZpG$-basis of $\ker \fW$.
\end{lemma}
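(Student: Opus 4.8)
The plan is to show that the $pm$ elements $r_k, s_{j,k}$ ($0 \le j \le p-2$, $1 \le k \le m$) are both a generating set and a linearly independent set over $\ZpG$, exploiting that $W$ is free of rank $pm$ over $\ZpG$. First I would prove that these elements generate $\ker\fW$. Recall from Lemma \ref{defsjk} that $s_{j,k} = \alpha_k(a-1)w_j - \alpha_k w_{j+1} + \mu_{j,k}$ with $\mu_{j,k} \in (W_{\ge j+2})_p$. These relations allow one to rewrite, modulo the $\ZpG$-span of the $s_{j,k}$, any element involving $w_{j+1}, \dots, w_{p-1}$ in terms of $w_0$ (one descends from $j = 0$ upward, using that $\alpha_1, \dots, \alpha_m$ together with the $G$-action generate everything). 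More precisely, an arbitrary element of $\ker\fW$ can be reduced modulo $\langle s_{j,k}\rangle$ to an element of the form $\sum_k c_k \alpha_k w_0 + (\text{terms in } w_{p-1})$; using the $r_k$ one then eliminates the $w_0$-part. The key computation here is the analogue of \cite[Lemma~4.2.7]{BC}: one checks that the images of $\T_a w_0$-type elements and the $w_{p-1}$-relations exhaust the kernel, which follows by comparing $\Zp$-ranks once independence is known, or by the explicit filtration argument of loc.\ cit.

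Second I would prove $\Zp$-rank (equivalently $\ZpG$-rank) considerations force these to be a basis. Both $W$ and $\ker\fW$ are finitely generated $\Zp$-modules; since $\fW \colon W \to \calF(\p_N)/\calF(\p_N^{p+1})$ is surjective (Lemma \ref{f4surj}) and the target is finite (it is $\calF(\p_N)/\calF(\p_N^{p+1})$, a finite $p$-group), $\ker\fW$ has the same $\Zp$-rank as $W$, namely $pm|G| = p^2 dm$. Each $r_k, s_{j,k}$ generates a $\ZpG$-submodule, and if we show the $pm$ elements are $\ZpG$-linearly independent then they span a free $\ZpG$-submodule of rank $pm$ inside $\ker\fW$, hence a finite-index sublattice; combined with the generation statement this gives equality.

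For linear independence: suppose $\sum_k \lambda_k r_k + \sum_{j,k} \nu_{j,k} s_{j,k} = 0$ with $\lambda_k, \nu_{j,k} \in \ZpG$. I would read off the coefficient of $w_{j+1}$ for $j$ running from $p-2$ down to $0$: the leading term of $s_{j,k}$ in $w_{j+1}$ is $-\alpha_k w_{j+1}$, and the correction terms $\mu_{j,k}$ and the $r_k$ only involve $w_0$, $w_{p-1}$, and $W_{\ge j+2}$. Starting from the top (the $w_{p-1}$ and $w_0$ coefficients, where the $r_k$ contribute) and using that $\alpha_1, \dots, \alpha_m$ form a $\Zp$-basis of $\calO_K$ so that $\bigoplus_k \ZpG\alpha_k w_j \cong \OKG w_j$ is free over $\ZpG$, one deduces successively that all $\nu_{j,k} = 0$ and then all $\lambda_k = 0$. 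This is the same triangular-elimination bookkeeping as in \cite[Lemmas~4.2.4--4.2.7]{BC}, adapted to the present twisted setup where $u$ and $\tilde u$ appear as unit factors that do not affect invertibility.

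The main obstacle I expect is the generation step, specifically verifying that no further relations are needed beyond $r_k$ and $s_{j,k}$ — i.e.\ that after using the $s_{j,k}$ to push everything into $w_0$ and $w_{p-1}$, the $r_k$ really do cut out exactly the remaining kernel and not a proper subgroup. Handling this cleanly requires either a careful rank count (showing the free rank $pm$ submodule they generate is actually all of $\ker\fW$, using that $\ker\fW$ is itself $\ZpG$-projective because $W$ is free and the quotient is cohomologically trivial — indeed $\calF(\p_N)/\calF(\p_N^{p+1})$ is finite hence cohomologically trivial exactly when $\calF(\p_N)$ is, i.e.\ when $\omega=0$, which is the case here) or reproducing the explicit surjectivity argument of \cite[Lemma~4.2.7]{BC} in the twisted setting. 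I would follow the latter route, mirroring the structure of \cite{BC} as closely as possible and pointing to the relevant lemmas there, since the computations are formally parallel with only the substitution of the unit factors $u^{-1}\tilde u^{1-m\tilde m} b^{-\tilde m}$ in place of the untwisted analogues.
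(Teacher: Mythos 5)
Your outline follows the same general route as the paper (reduce to the arguments of \cite[Lemmas~4.2.7 and 4.2.9]{BC}: first show a larger set generates, then discard the redundant generators, then conclude by freeness of $\ker\fW$ of rank $pm$). But there is a genuine gap at exactly the point you flag as "the main obstacle", and your proposed resolution of it is wrong in spirit. The crux is the redundancy of the elements $s_{p-1,k}$: after the filtration argument one knows that the $pm+m$ elements $r_k$ and $s_{j,k}$ with $0\le j\le p-1$ generate $\ker\fW$, and one must show that the $m$ elements $s_{p-1,k}=\alpha_k(a-1)w_{p-1}$ lie in $\langle r_1,\dots,r_m\rangle_{\ZpG}$. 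This comes down to the invertibility over $\ZpG$ of the $m\times m$ matrix $\calM$ whose columns are the $\alpha_iw_{p-1}$-components of the $r_k$; the paper computes $\det\calM=(-1)^{m-1}(u^{-m}b^{-1}-1)$ and checks it is a unit of $\ZpG$ because for every character $\psi$ one has $\psi(\det\calM)=(-1)^{m-1}(u^{-m}\zeta_d-1)$, which can only fail to be a unit if $u^{dm}=\chinr(F_N)\equiv 1\pmod p$, i.e. if $\omega>0$. Your assertion that the twisting factors $u^{-1}\tilde u^{1-m\tilde m}b^{-\tilde m}$ "do not affect invertibility" gets this exactly backwards: in the untwisted case of \cite{BC} (and in the case $\omega>0$) the analogous matrix is \emph{not} invertible over $\ZpG$ — its determinant specializes to $0$ at characters trivial on $b$ — which is why \cite{BC} and the $\omega>0$ case of this paper need extra generators ($t_1$, $t_2$) and end up with $pm+1$ of them. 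The invertibility of $\calM$ is the one genuinely new computation here, it is where the hypothesis $\omega=0$ enters, and your proof never performs it.

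Two smaller points. First, once generation by the $pm$ listed elements is established, your separate linear-independence argument is unnecessary: $\ker\fW$ is free of $\ZpG$-rank $pm$ (as $W$ is free of rank $pm$ and the target of $\fW$ is finite and cohomologically trivial), and a set of $pm$ generators of a free module of rank $pm$ over the Noetherian ring $\ZpG$ is automatically a basis. Second, the triangular elimination you sketch for independence is set up in the wrong direction: you propose to start at the $w_{p-1}$- and $w_0$-coefficients, but those are precisely the components where \emph{all} of the $r_k$ and all of the correction terms $\mu_{j,k}\in W_{\ge j+2}$ pile up, so no coefficient can be read off there first; the filtration must be climbed from the low-index components upward, as in \cite[Lemma~4.2.7]{BC}.
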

\begin{proof}
At the end we want to adapt the proofs of \cite[Lemmas~4.2.7 and 4.2.9]{BC}, but first we need some preparations. We write the coefficients of the  $\alpha_iw_{p-1}$-components, $i=1,\dots,m$, of the elements $r_j$, $j=1,\dots,m$, into the columns of an $m \times m$  matrix which we call $\calM$,
\[\calM=\begin{pmatrix}
u^{-1}b^{-\tilde m}-1&0&0&\cdots&0&0&u^{-1}b^{-\tilde m}\\
0&-1&0&\cdots&0&0&-u^{-1}b^{-\tilde m}\\
0&u^{-1}b^{-\tilde m}&-1&\cdots&0&0&-u^{-1}b^{-\tilde m}\\
0&0&u^{-1}b^{-\tilde m}&\cdots&0&0&-u^{-1}b^{-\tilde m}\\
\vdots&\vdots&\vdots&\ddots&\vdots&\vdots&\vdots\\
0&0&0&\cdots&u^{-1}b^{-\tilde m}&-1&-u^{-1}b^{-\tilde m}\\
0&0&0&\cdots&0&u^{-1}b^{-\tilde m}&-1-u^{-1}b^{-\tilde m}
\end{pmatrix}.\]
By an easy induction argument we see that each $n \times n$-matrix 
\[M_{C,n}=\begin{pmatrix}
-1&0&\cdots&0&0&-C\\
C&-1&\cdots&0&0&-C\\
0&C&\cdots&0&0&-C\\
\vdots&\vdots&\ddots&\vdots&\vdots&\vdots\\
0&0&\cdots&C&-1&-C\\
0&0&\cdots&0&C&-1-C
\end{pmatrix}\]
has determinant $(-1)^n\sum_{i=0}^nC^i$. Setting $C := u^{-1}b^{-\tilde m}$ we obtain
\[\begin{split}
\det\calM&=(u^{-1}b^{-\tilde m}-1)(-1)^{m-1}\sum_{i=0}^{m-1}(u^{-1}b^{-\tilde m})^i\\
&=(-1)^{m-1}(u^{-m}b^{-\tilde m m}-1)=(-1)^{m-1}(u^{-m}b^{-1}-1).
\end{split}\]
We claim that $\det\calM$ is a unit in $\ZpG$. To that end it suffices to show that $\det\calM$ is a unit in the maximal $\Zp$-order $\Lambda'$ in $\QpG$ because $(\Lambda')^\times \cap \ZpG = \ZpG^\times$. For any irreducible character $\psi$ of $G$ we have $\psi(\det(\calM)) = (-1)^{m-1}(u^{-m}\zeta_d-1)$ for some $d$-th root of unity $\zeta_d$ (depending on $\psi$). If now $\psi( \det\calM )$ were not a unit, we would conclude that $u^{dm} = \chinr(F_N) \equiv 1 \pmod{p}$ contradicting the assumption $\omega = 0$.

We are now prepared to apply the arguments used to prove  \cite[Lemmas~4.2.7 and 4.2.9]{BC}. Following the proof of Lemma 4.2.7 of loc.cit., we can demonstrate that the $pm+m$ elements $r_{k},s_{j,k}$ for $0\leq j\leq p-1$, $1\leq k\leq m$ generate $\ker \fW$ as a $\ZpG$-module. Since the determinant of $\calM$ is a unit, it can be shown as in the proof of Lemma 4.2.9 of loc.cit. that for $k=1,\dots,m$ we have $s_{p-1,k} \in \langle r_1, r_2, \ldots, r_m \rangle_{\ZpG}$. Therefore the elements listed in the statement of our lemma generate $\ker \fW$. We actually have a basis since $\ker(\fW)$ is free of $\ZpG$-rank $\rk_\ZpG(W) = pm$.
\end{proof}

\begin{prop}\label{prop 117}
Assume that $\omega = 0$. For $\calL = \p_N^{p+1}$ the element 
\[\chi_{\ZpG, \BdR[G]}(M^\bullet(\calL), \lambda_2^{-1}) \in K_0(\ZpG, \BdR[G])\]
is contained in $K_0(\ZpG, \QpG)$ and represented by $\epsilon \in \QpG^\times$ where
\[\begin{split}
\epsilon_{\chi\phi}^{}&=\begin{cases}p^m&\text{if $\chi=\chi_0$}\\
(-1)^{m-1} (u^{-m}\phi(b)^{-1} - 1) (\chi(a)-1)^{m(p-1)}&\text{if $\chi\neq\chi_0$.}\end{cases}\\
\end{split}\]
\end{prop}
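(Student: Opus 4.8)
The plan is to compute the refined Euler characteristic by exhibiting an explicit $\ZpG$-basis of $\ker(\fW)$ and comparing it to the standard basis of $W$ via the reduced norm. By the reductions already carried out, we have $\chi_{\ZpG,\BdR[G]}(M^\bullet(\calL),\lambda_2^{-1}) = [\calF(\p_N^{p+1}),\id,\calF(\p_N)] = [\ker(\fW),\id,W]$, so it suffices to express the change-of-basis matrix from $\{r_k, s_{j,k}\}$ to $\{v_{k,j}\}$ and evaluate its reduced norm character by character. First I would record the explicit form of this $pm\times pm$ matrix over $\ZpG$: reading off the coefficients in Lemma \ref{defsjk} and Lemma \ref{defrk}, the $s_{j,k}$ contribute a nearly upper-triangular block (the $\alpha_k(a-1)w_j - \alpha_k w_{j+1}$ terms, modulo higher $W_{\geq j+2}$-corrections $\mu_{j,k}$), while the $r_k$ occupy the columns indexed by $w_0$ and $w_{p-1}$ and involve the matrix $\calM$ from the proof of Lemma \ref{genkerh}.

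The key computational step is that reduced norms can be computed after projecting to each Wedderburn component of $\QpG$, i.e. for each irreducible character $\psi = \chi\phi$ of $G = \langle a\rangle\times\langle b\rangle$ we evaluate $\psi$ on the relevant block determinant. For the trivial character $\chi_0$ of $\langle a\rangle$: here $a-1 \mapsto 0$, so the $s_{j,k}$-relations degenerate, and the surviving structure is governed by the trace relation $\T_a \mapsto p$; tracing through, the relation $r_k$ becomes $p\alpha_k w_0$-type, and (together with the $s_{j,k}$ collapsing) one extracts the factor $p^m$ (the $m$ coming from the $m$ copies indexed by $k=1,\dots,m$, each contributing a single $p$). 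For a nontrivial character $\chi$ of $\langle a\rangle$: now $\chi(a)-1$ is a unit times a non-unit, the map $\T_a \mapsto 0$, and the $s_{j,k}$ give a genuine block whose determinant in the $w_0,\dots,w_{p-1}$ direction produces the power $(\chi(a)-1)^{m(p-1)}$ — the exponent $p-1$ from the chain $s_{0,k},\dots,s_{p-2,k}$ linking $w_0$ through $w_{p-1}$, raised to the $m$-th power for the $m$ values of $k$ — while the remaining $r_k$-contribution is exactly $\det(\psi(\calM)) = (-1)^{m-1}(u^{-m}\phi(b)^{-1}-1)$, as computed in Lemma \ref{genkerh} with $\zeta_d = \phi(b)^{-1}$. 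The higher-order correction terms $\mu_{j,k} \in W_{\geq j+2}$ only affect strictly-lower-triangular entries relative to the filtration by $W_{\geq j}$, so they do not change the determinant.

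I expect the main obstacle to be the careful bookkeeping of the triangular structure: one must verify that the substitutions from Lemma \ref{genkerh} (eliminating $s_{p-1,k}$ in favour of the $r_k$, using invertibility of $\det\calM$) produce a matrix that is block-triangular with respect to an appropriate ordering of the basis $\{v_{k,j}\}$, so that its determinant genuinely factors as the product over the $w_j$-levels of the claimed local factors. A secondary subtlety is keeping track of signs and of the precise role of the units $E$, $\tilde u$, $\epsilon$ entering via Lemma \ref{E lemma} — but since $\omega = 0$ forces $E = 1$ and $\tilde u = 1$, this case is the cleanest, and these units only become delicate in the $\omega > 0$ analysis treated in the next subsection. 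Once the block-triangular form is established, the character-by-character evaluation is routine, and comparing with $\hat\partial^1_{\ZpG,\QpG}$ gives the stated representative $\epsilon \in \QpG^\times$; in particular the resulting element lies in $K_0(\ZpG,\QpG)$ as asserted, since all $\epsilon_{\chi\phi}$ are visibly algebraic over $\Q_p$ and invariant under the appropriate Galois action.
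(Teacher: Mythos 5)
Your proposal is correct and follows essentially the same route as the paper: the paper likewise reduces to $[\ker(\fW),\id,W]$, takes the basis $\{r_k,s_{j,k}\}$ from Lemma \ref{genkerh}, writes down the block matrix $\mm$ (with $\T_aI$ and $\calM$ in the $r_k$-columns and the $(a-1)I$, $-I$ chain from the $s_{j,k}$), and evaluates $\det(\chi\phi(\mm))$ character by character, the $*$-entries coming from the $\mu_{j,k}$ being irrelevant by block triangularity and the signs $(-1)^{m(p-1)}$, $(-1)^{m^2(p-1)}$ collapsing to $1$ since $p$ is odd. The resulting factors $p^m$ for $\chi=\chi_0$ and $(-1)^{m-1}(u^{-m}\phi(b)^{-1}-1)(\chi(a)-1)^{m(p-1)}$ for $\chi\neq\chi_0$ agree with the paper's computation.
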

\begin{proof}
We choose the elements $r_k$ and $s_{j,k}$ of Lemma \ref{genkerh} as a $\ZpG$-basis of $\ker(\fW)$ and fix the canonical 
$\ZpG$-basis of $W$. Then $ \chi_{\ZpG, \BdR[G]}(M^\bullet(\calL), \lambda_2^{-1}) = [\ker(\fW), \id, W]$ 
is represented by the determinant of
\[\mm=\begin{pmatrix}
\T_aI&(a-1)I&0&\cdots&0&0\\
0&-I&(a-1)I&\cdots&0&0\\
0&*&-I&\cdots&0&0\\
\vdots&\vdots&\vdots&\ddots&\vdots&\vdots\\
0&*&*&\cdots&-I&(a-1)I\\
\calM&*&*&\cdots&*&-I
\end{pmatrix}.\]
Recalling that $p$ is odd, we get:
\[\begin{split}\det(\chi\phi(\mm))&=\begin{cases}p^m(-1)^{m(p-1)}&\text{if $\chi=\chi_0$}\\(-1)^{m^2(p-1)}\det(\chi\phi(\calM))(\chi(a)-1)^{m(p-1)}&\text{if $\chi\neq\chi_0$}\end{cases}\\
&=\begin{cases}p^m&\text{if $\chi=\chi_0$}\\ (-1)^{m-1}(u^{-m}\phi(b)^{-1}-1)(\chi(a)-1)^{m(p-1)}&\text{if $\chi\neq\chi_0$.}\end{cases}\end{split}\]
\end{proof}
\end{subsection}

\begin{subsection}{\texorpdfstring{The Euler characteristic when $\omega>0$}{The Euler characteristic when omega > 0}}
Recall the definition of $E$ and $\tilde u$ in (\ref{E def}) and (\ref{tilde u def}) respectively. 

\begin{lemma}\label{deff4}
There is a commutative diagram of $\ZpG$-modules with exact rows
\[\xymatrix{0\ar[r]& X(2)\oplus W\ar[r]\ar[d]^{\tilde f_4}&W'\oplus W\ar[rr]^-{\delta_2}\ar[d]^{\tilde f_3}&&\ZpG z_0
\ar[r]^-{\pi}\ar[d]^{\tilde f_2}&\Z_p/p^\omega\Z_p(\chinr)\ar[r]\ar[d]^{=}&0\\
0\ar[r]&\calF(\p_N)\ar[r]^{\fFN}&\Nnr\ar[rr]^-{(F-1)\times 1}&&\Nnr\ar[r]&\Z_p/p^\omega\Z_p(\chinr)\ar[r]&0}\]
where
\[\begin{split}
&\delta_2(z_1)=(u^mb-1)z_0,\\
&\delta_2(z_2)=(a-1)z_0,\\
&\delta_2(v_{j,k})=0\text{ for all $k$ and $j$},\\
&\tilde f_2(z_0)=[\theta_1,1,\dots,1],\\
&\tilde f_3(z_1)=[\theta_1,1,\dots,1],\\
&\tilde f_3(z_2)=[\gamma,\dots,\gamma],\\
&\tilde f_3(v_{k,j})=\fFN(\tfW(v_{k,j})),\\
&\pi(z_0)=1.
\end{split}\]
Further, $X(2)=\ker(\delta_2|_{W'})$ and $\tilde f_4$ is the restriction of $\tilde f_3$ to $X(2) \oplus W$.
\end{lemma}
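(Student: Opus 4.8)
The plan is to construct the top row by prescribing $\delta_2$, $\tilde f_2$, $\tilde f_3$ and $\pi$ on the free $\ZpG$-module generators $z_0,z_1,z_2,v_{k,j}$, which automatically yields $\ZpG$-linear maps, then to \emph{define} $X(2):=\ker(\delta_2|_{W'})$ and $\tilde f_4:=\tilde f_3|_{X(2)\oplus W}$, and finally to verify exactness of the top row and commutativity of the three squares. The bottom row is already exact: it is Theorem \ref{fundamental 2} together with Theorem \ref{FpnZpomega}, once $\calF(\p_N)$ is identified with $(\widehat{N_0^\times}(\chinr))^{\Gal(N_0/N)}$ via $\fFN$ (Lemma \ref{UL1hat}) and the leftmost bottom arrow is read as $\fFN$ followed by the diagonal embedding into $\Nnr$.

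For exactness of the top row, surjectivity of $\pi$ is clear from $\pi(z_0)=1$, and $\ker\delta_2=X(2)\oplus W$ is immediate since $\delta_2=(\delta_2|_{W'})\oplus 0$ (because $\delta_2(v_{k,j})=0$). The one substantial point is $\mathrm{im}\,\delta_2=\ker\pi$. Identifying $\ZpG z_0$ with $\ZpG$ via $z_0\mapsto 1$, one has $\mathrm{im}\,\delta_2=I$ with $I:=(u^mb-1,\,a-1)\ZpG$, and since $G$ is abelian with $a$ of order $p$ and $b$ of order $d$ a direct computation gives
\[\ZpG/I\;\cong\;\Zp[b]/(b^d-1,\,u^mb-1)\;\cong\;\Zp/(u^{-dm}-1)\Zp\;\cong\;\Zp/p^\omega\Zp,\]
using $\chinr(F_N)=u^{dm}$ and $\omega=\omega_N=v_p(1-\chinr(F_N))$. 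As $I\subseteq\ker\pi$ and $\pi$ induces a surjection $\ZpG/I\twoheadrightarrow\Zp/p^\omega\Zp(\chinr)$, comparing these finite orders forces $I=\ker\pi$; the $G$-module structures agree because the images of $a$ and $b$ in $\ZpG/I$ are $1=\chinr(a)$ and $u^{-m}=\chinr(b)$.

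For commutativity it suffices, all maps being $\ZpG$-linear, to test on generators. The right square reduces to $w_N(\tilde f_2(z_0))=\pi(z_0)=1$, which holds because $\OK[\Gal(M/K)]\theta_1=\p_M$ forces $v_M(\theta_1)=1$ and $N/M$ is unramified, so $\nu_N(\theta_1)=1$ and $w_N([\theta_1,1,\dots,1])=1$. For the central square one evaluates, using the rules (\ref{rule 1})--(\ref{rule 2}) for the $\Gal(\Kur/K)\times G$-action on $\Nnr$ and the facts $\chinr(F_N)=u^{dm}$, $\chinr(b)=u^{-m}$, $a|_{\Kur}=1$ and $\theta_1^{b}=\theta_1$ (as $b=F^{-1}$ fixes $M$):
\[((F-1)\times 1)[\theta_1,1,\dots,1]=[\theta_1^{-1},\theta_1,1,\dots,1]=(u^mb-1)[\theta_1,1,\dots,1],\]
\[((F-1)\times 1)[\gamma,\dots,\gamma]=[\gamma^{\chinr(F_N)F_N-1},1,\dots,1]=[\theta_1^{a-1},1,\dots,1]=(a-1)[\theta_1,1,\dots,1],\]
where the middle equality on the second line is precisely the defining property of $\gamma$ (from Lemma \ref{Neukirch}, refined when $\omega>0$ via Lemma \ref{BC 3.1.4}); for the $v_{k,j}$ both composites vanish because $\tilde f_3(v_{k,j})=\fFN(\tfW(v_{k,j}))$ lies in the diagonal image of $\fFN$, which by Theorem \ref{FpnZpomega} equals $\ker((F-1)\times1)$. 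Finally, the commutativity of the central square and exactness of the bottom row give $\tilde f_3(X(2)\oplus W)=\tilde f_3(\ker\delta_2)\subseteq\ker((F-1)\times1)=\mathrm{im}(\fFN)$, so $\tilde f_4$ lands in $\calF(\p_N)$ and the left square commutes by construction.

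The main obstacle is the central square: it demands careful bookkeeping of the twisted induced $\Gal(\Kur/K)\times G$-module structure of $\Nnr$ via (\ref{rule 1})--(\ref{rule 2}), and it rests on transporting the construction of $\gamma$ from \cite{BC} (in particular Lemma \ref{BC 3.1.4} when $\omega>0$) so that $\gamma^{\chinr(F_N)F_N-1}=\theta_1^{a-1}$. The remaining verifications are formal.
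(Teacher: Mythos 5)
Your proposal is correct and follows essentially the same route as the paper: define all maps on the free generators, verify the central square on $z_1,z_2,v_{k,j}$ via the action rules (\ref{rule 1})--(\ref{rule 2}) and the defining relation $\gamma^{\chinr(F_N)F_N-1}=\theta_1^{a-1}$, and reduce everything else to the definitions and to Theorem \ref{FpnZpomega}. The only (harmless) divergence is at exactness in $\ZpG z_0$: you compute $\ZpG/(a-1,u^mb-1)\cong\Zp/p^\omega\Zp$ and compare orders, whereas the paper reduces a general element of $\ker\pi$ modulo the image to $xz_0$ with $v_p(x)\ge\omega$ and factors $x$ through $(u^mb)^d-1$; both arguments are valid.
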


\begin{proof}
We first remark that the natural $G$-structure on $\Nnr$ given by the inclusion $G \hookrightarrow \Gal(\Kur/K) \times G$ induces on $\Z_p/p^\omega\Z_p(\chinr)$ the $G$-structure characterized by
\[a \cdot 1 = 1, \quad b \cdot 1 = \chinr(F^{-1}).\]
Note that this is well-defined because $\chinr(F)^d = \chinr(F_N) \equiv 1\pmod{p^\omega}$ by the definition of $\omega$.

The exactness of the bottom $2$-extension is shown in Theorem \ref{FpnZpomega}. For the exactness of the top row we have to verify exactness at $\ZpG z_0$; the rest is clear by the definitions. If we set $H := \langle (a-1)z_0,(u^mb-1)z_0\rangle$, then we must prove $H = \ker(\pi)$. The inclusion $H \sseq \ker(\pi)$ is implied by
\begin{eqnarray*}
  && \pi((a-1)z_0)=(a-1) \cdot 1 =0, \\
  && \pi((u^mb-1)z_0)=(\chinr(F)b-1) \cdot 1 =0.
\end{eqnarray*}
Let $\lambda z_0 \in \ker(\pi)$ with $\lambda \in \ZpG$. Then clearly $\lambda z_0 \equiv xz_0 \pmod H$ for some $x \in \Zp$ with $v_p(x) \ge \omega$. Since $\omega = v_p(1 - \chinr(F_N))$ and $\chinr(F_N) = u^{dm}$, we can write $x =  (u^{dm} - 1)y$ with $y \in \Zp$. Hence $xz_0 = ((u^mb)^d - 1) y z_0 \in H$.

The proof of commutativity is straightforward using $\theta_1^{a-1} = \gamma^{\chinr(F_N)F_N - 1}$ which holds by the definition of $\gamma$. For the convenience of the reader we give the computations
\[\begin{split}\tilde f_2(\delta_2(z_1))&=\tilde f_2((\chinr(F)b-1)z_0)=\frac{(F\times 1)(F^{-1}\times b)[\theta_1^{\chinr(F)},1,1,\dots,1]}{[\theta_1,1,\dots,1]}\\
&=\frac{(F\times 1)[\theta_1,1,1,\dots,1]}{[\theta_1,1,\dots,1]}=((F-1)\times 1)\tilde f_3(z_1),
\end{split}\]

\[\begin{split}
\tilde f_2(\delta_2(z_2))&=\tilde f_2((a-1)z_0)=\frac{(1\times a)[\theta_1,1,1,\dots,1]}{[\theta_1,1,1,\dots,1]}=[\theta_1^{a-1},1,1,\dots,1]\\
&=[\gamma^{\chinr(F_N)F_N-1},1,1,\dots,1]=\frac{[\gamma^{\chinr(F_N)F_N},\gamma,\gamma,\dots,\gamma]}{[\gamma,\gamma,\dots,\gamma]}\\
&=((F-1)\times 1)[\gamma,\gamma,\dots,\gamma]=((F-1)\times 1)\tilde f_3(z_2).
\end{split}\]
\end{proof}
We let $f_4 \colon X(2) \oplus W \lra \calF(\p_N) / \calF(\p_N^{p+1})$ denote the composite of $\tilde f_4$ with the canonical projection.

\begin{lemma}\label{nicerepresentative}
The complex
\[
F^\bullet := [\ker f_4\lra W'\oplus W\lra\ZpG z_0]
\]
with modules in degrees $0,1$ and $2$ is a representative of $M^\bullet(\calL)$ for $\calL = \p_N^{p+1}$.
\end{lemma}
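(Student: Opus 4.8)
The plan is to produce an explicit quasi-isomorphism $\Phi\colon F^\bullet\to M^\bullet(\calL)$, after first rewriting $M^\bullet(\calL)$ in terms of the representative of $R\Gamma(N,T)$ furnished by Theorem \ref{fundamental 2}, and then to verify that $\Phi$ is a quasi-isomorphism by reading the comparison on cohomology off the diagram of Lemma \ref{deff4} together with the surjectivity statement of Lemma \ref{f4surj}.

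First I would make $M^\bullet(\calL)$ concrete. From the short exact sequence of complexes $0\to K^\bullet(\calL)\to M^\bullet\to M^\bullet(\calL)\to 0$ and the cancellation of the $\Ind_{N/\Qp}(T)[0]$-summands, $M^\bullet(\calL)$ represents the mapping cone of the morphism $X(\calL)[-1]\to R\Gamma(N,T)$ induced by the inclusion $X(\calL)=\calF(\p_N^{p+1})\hookrightarrow H^1(N,T)$. Since this morphism is injective on $H^1$, its class in the derived category — hence the class of the cone — is unchanged if we compute with the representative $C^\bullet_{N, T}=[\Nnr\xrightarrow{(F-1)\times 1}\Nnr]$ of $R\Gamma(N,T)$ from Theorem \ref{fundamental 2}, using the identifications $H^1(N,T)=\ker((F-1)\times 1)=(\widehat{N_0^\times}(\chinr))^{\Gal(N_0/N)}\cong\calF(\p_N)$ via $\fFN$ (Lemma \ref{UL1hat}, Theorem \ref{FpnZpomega}). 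As $\fFN(X(\calL))$ lies in the kernel of the differential of $C^\bullet_{N, T}$, one obtains
\[
M^\bullet(\calL)\simeq\bigl[\,\Nnr/\fFN(\calF(\p_N^{p+1}))\xrightarrow{(F-1)\times 1}\Nnr\,\bigr],
\]
concentrated in degrees $1$ and $2$, with $H^1\cong\calF(\p_N)/\calF(\p_N^{p+1})$ and $H^2\cong\Z/p^\omega\Z(\chinr)$.

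Next I would define $\Phi$ to be $0$ in degree $0$, the composite $W'\oplus W\xrightarrow{\tilde f_3}\Nnr\twoheadrightarrow\Nnr/\fFN(\calF(\p_N^{p+1}))$ in degree $1$, and $\tilde f_2$ in degree $2$, and check it is a chain map using Lemma \ref{deff4}: the degree-$0$-to-$1$ square commutes because $\tilde f_3|_{X(2)\oplus W}=\fFN\circ\tilde f_4$ and $f_4(\ker f_4)=0$ force $\tilde f_3(\ker f_4)\subseteq\fFN(\calF(\p_N^{p+1}))$; the degree-$1$-to-$2$ square commutes because, after precomposition with the surjection $\Nnr\twoheadrightarrow\Nnr/\fFN(\calF(\p_N^{p+1}))$, it becomes exactly the commuting square $\tilde f_2\circ\delta_2=((F-1)\times 1)\circ\tilde f_3$ of Lemma \ref{deff4}. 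Then I would verify degreewise that $\Phi$ is a quasi-isomorphism. In degree $0$ both cohomology groups vanish, since $\ker f_4\hookrightarrow W'\oplus W$ is injective. In degree $2$, exactness of the top row of the Lemma \ref{deff4} diagram gives $H^2(F^\bullet)=\ZpG z_0/\mathrm{im}(\delta_2)\cong\Z/p^\omega\Z(\chinr)$, which matches $H^2(M^\bullet(\calL))=\Nnr/\mathrm{im}((F-1)\times 1)\cong\Z/p^\omega\Z(\chinr)$ via $\tilde f_2$, the comparison being the identity because the rightmost vertical map of the diagram is. In degree $1$, exactness of the top row gives $\ker\delta_2=X(2)\oplus W$, so $H^1(F^\bullet)=(X(2)\oplus W)/\ker f_4$, and $f_4$ — surjective because $f_4|_W=\fW$ is surjective by Lemma \ref{f4surj} — induces an isomorphism $H^1(F^\bullet)\xrightarrow{\sim}\calF(\p_N)/\calF(\p_N^{p+1})$; on the other side $H^1(M^\bullet(\calL))=\ker((F-1)\times 1)/\fFN(\calF(\p_N^{p+1}))\cong\calF(\p_N)/\calF(\p_N^{p+1})$ via $\fFN$, and under these identifications $H^1(\Phi)$ sends the class of $x\in X(2)\oplus W$ to $\fFN$ applied to $f_4(x)$ (since $\tilde f_3(x)=\fFN(\tilde f_4(x))$), i.e. it is the identity. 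Hence $\Phi$ is a quasi-isomorphism and $F^\bullet$ represents $M^\bullet(\calL)$.

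The main obstacle is the first step: justifying that the abstractly defined complex $M^\bullet(\calL)=[A/X(\calL)\to B]$, built from the fixed but otherwise arbitrary representative $[A\to B]$ of $R\Gamma(N,T)$ chosen in Subsection \ref{CNK 1}, may be replaced up to quasi-isomorphism by the explicit complex displayed above. This is precisely where one invokes the mapping-cone description of $M^\bullet(\calL)$ and the injectivity of $X(\calL)\to H^1(N,T)$; once that is in place, the rest is a bookkeeping exercise with the diagram of Lemma \ref{deff4}.
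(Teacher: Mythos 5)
Your proposal is correct and follows essentially the same route as the paper: the paper's proof consists precisely of the quasi-isomorphism $F^\bullet \to \left[ \Nnr/\fFN(\calF(\p_N^{p+1})) \to \Nnr \right]$ furnished by Lemmas \ref{deff4} and \ref{f4surj}, followed by the observation that this quotient of the representative from Theorem \ref{fundamental 2} represents $M^\bullet(\calL)$. You merely supply the details (the mapping-cone comparison with the fixed representative $[A\to B]$ and the degreewise check on cohomology) that the paper leaves implicit.
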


\begin{proof}
By Lemma \ref{deff4} and Lemma \ref{f4surj} we have a quasi-isomorphism of complexes
\[
F^\bullet \lra \left[ \Nnr / \fFN(\calF(\p_N^{p+1})) \lra \Nnr \right].
\]
By Theorem \ref{fundamental 2} the right hand side represents $M^\bullet(\calL)$. 
\end{proof}

Our aim is to use Lemma \ref{nicerepresentative} to compute the Euler characteristic
\[\chi_{\ZpG, \BdRG}(M^\bullet(\calL), \lambda_2^{-1}).\]
We therefore
must compute $\ker f_4$ explicitly. A first step in this direction is the 
 following lemma which is similar to \cite[Lemma~4.1.5]{BC}.

\begin{lemma}\label{X2generators}
We have
\[X(2)=\langle \T_az_2,(a-1)z_1-(u^mb-1)z_2\rangle_\ZpG.\]
\end{lemma}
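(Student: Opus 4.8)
By Lemma \ref{deff4}, $X(2)=\ker(\delta_2|_{W'})$ where $\delta_2\colon W'=\ZpG z_1\oplus\ZpG z_2\to\ZpG z_0$ is the $\ZpG$-linear map $\lambda_1 z_1+\lambda_2 z_2\mapsto\bigl(\lambda_1(u^mb-1)+\lambda_2(a-1)\bigr)z_0$, so the statement is a syzygy computation for the two elements $u^mb-1$ and $a-1$ of $R:=\ZpG$. The plan is the following.

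First I would record the ring structure of $R$. Since $G$ is abelian, $G=\langle a,b\rangle=\langle a\rangle\langle b\rangle$, and comparing orders, $|G|=pd=\mathrm{ord}(a)\cdot\mathrm{ord}(b)$, forces $\langle a\rangle\cap\langle b\rangle=1$; hence $G=\langle a\rangle\times\langle b\rangle$ and $R\cong\Zp[X,Y]/(X^p-1,Y^d-1)$ via $X\mapsto a$, $Y\mapsto b$. From this I would extract the two facts needed. (i) $u^mb-1$ is a non-zero-divisor in $R$: after $\otimes_\Zp\Qpc$ the ring $R$ splits as a product of copies of $\Qpc$ indexed by the pairs of characters $(\chi,\phi)$ of $\langle a\rangle$ and $\langle b\rangle$, on the $(\chi,\phi)$-component $u^mb-1$ acts by the scalar $u^m\phi(b)-1$, and this vanishes only if $u^{-m}=\phi(b)$ is a $d$-th root of unity, i.e. $u^{md}=\chinr(F_N)=1$, contradicting $\chinr|_{G_N}\ne 1$; thus $u^mb-1\in (R\otimes\Qpc)^\times$ and, $R$ being $\Zp$-torsion-free, it is a non-zero-divisor in $R$. (ii) There is a ring isomorphism $R/(u^mb-1)R\cong(\Z/p^\omega)[a]/(a^p-1)$ obtained by substituting $b\mapsto u^{-m}$ (legitimate since $u^m\in\Zp^\times$) and reducing modulo $u^{-md}-1$, an element of $\Zp$ of valuation $\omega=v_p(1-\chinr(F_N))$; under this isomorphism the composite $S_1:=\Zp[a]/(a^p-1)\hookrightarrow R\twoheadrightarrow R/(u^mb-1)R$ is just reduction modulo $p^\omega$.

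Next the two inclusions. The inclusion $\supseteq$ is immediate: $\delta_2(\T_a z_2)=(a^p-1)z_0=0$, and $\delta_2((a-1)z_1-(u^mb-1)z_2)=0$ by commutativity of $G$. For $\subseteq$, take $\lambda_1 z_1+\lambda_2 z_2\in\ker\delta_2$, so $\lambda_1(u^mb-1)=-\lambda_2(a-1)$ in $R$. Reducing modulo $(u^mb-1)R$ and using (ii), $\bar\lambda_2(a-1)=0$ in $(\Z/p^\omega)[a]/(a^p-1)$. A direct computation in the free $\Z/p^\omega$-basis $1,a,\dots,a^{p-1}$ shows $\mathrm{Ann}(a-1)=\T_a\cdot(\Z/p^\omega)[a]/(a^p-1)$: writing $\lambda=\sum_{i=0}^{p-1}c_ia^i$, the vanishing of every coefficient of $(a-1)\lambda$ forces $c_0=c_1=\dots=c_{p-1}$. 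Hence $\bar\lambda_2=\T_a\bar\mu$; I would lift $\bar\mu$ to $\mu\in S_1\subseteq R$, so that $\lambda_2-\T_a\mu=(u^mb-1)\nu$ for some $\nu\in R$. Then
\[
(\lambda_1 z_1+\lambda_2 z_2)-\mu\,\T_a z_2+\nu\bigl((a-1)z_1-(u^mb-1)z_2\bigr)=\bigl(\lambda_1+\nu(a-1)\bigr)z_1
\]
lies in $\ker\delta_2$, whence $\bigl(\lambda_1+\nu(a-1)\bigr)(u^mb-1)=0$; since $u^mb-1$ is a non-zero-divisor, $\lambda_1+\nu(a-1)=0$. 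Therefore $\lambda_1 z_1+\lambda_2 z_2=\mu\,\T_a z_2-\nu\bigl((a-1)z_1-(u^mb-1)z_2\bigr)$ belongs to $\langle\T_a z_2,(a-1)z_1-(u^mb-1)z_2\rangle_\ZpG$, as required.

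The only non-formal input, and hence the main point, is the identification in (ii) of $R/(u^mb-1)R$ with $(\Z/p^\omega)[a]/(a^p-1)$ together with the annihilator computation there; this is exactly where the hypothesis $0<\omega=v_p(1-\chinr(F_N))<\infty$ of the present case is used (finiteness for the non-zero-divisor claim, positivity so that $p^\omega\ne 1$). Everything else is the standard ``peel off one generator at a time'' argument for syzygies, and I do not expect any surprises.
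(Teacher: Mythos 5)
Your proof is correct, and the two inclusions, the non-zero-divisor claim for $u^mb-1$, and the annihilator computation all check out; but you take a genuinely different route from the paper. The paper expands the element in the group basis $a^ib^j$, derives the coefficient relations $u^m\alpha_{i,j-1}-\alpha_{i,j}+\beta_{i-1,j}-\beta_{i,j}=0$, sums over $i$ (in effect reducing the syzygy modulo the augmentation ideal of $\langle a\rangle$) to deduce $\sum_i\alpha_{i,j}=0$ from $u^{dm}\neq 1$, and then writes down an explicit solution $\gamma_{i,j}=-\sum_{1\le\ell\le i}\alpha_{\ell,j}$, $\nu_j=\beta_{0,j}$, following \cite[Lemma~4.1.5]{BC}. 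You instead reduce modulo the other factor, $(u^mb-1)$, identify the quotient ring as $(\Z/p^{\omega}\Z)[a]/(a^p-1)$, use the annihilator of $a-1$ there, and close the argument with the non-zero-divisor property of $u^mb-1$. Both arguments rest on exactly the same input, namely $u^{dm}=\chinr(F_N)\neq 1$; your version is shorter and makes the role of this hypothesis transparent, while the paper's version keeps the computation in the same explicit coefficient format as the earlier work \cite{BC} that the subsequent matrix calculations rely on. One minor quibble: positivity of $\omega$ is not actually needed in your argument --- if $\omega=0$ the quotient ring $(\Z/p^{\omega}\Z)[a]/(a^p-1)$ is the zero ring (consistent with $u^mb-1$ then being a unit of $\ZpG$) and the reduction step is vacuous, so your closing parenthetical about requiring $p^{\omega}\neq 1$ can be dropped; only finiteness of $\omega$ is used.
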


\begin{proof}
The inclusion ''$\supseteq$'' is clear from the definition of $\delta_2$. For the inverse inclusion we let
\[x=\sum_{i=0}^{p-1}\sum_{j=0}^{d-1}\alpha_{i,j}a^ib^jz_1+
\sum_{i=0}^{p-1}\sum_{j=0}^{d-1}\beta_{i,j}a^ib^jz_2\]
be an element in $X(2)$ with $\alpha_{i,j}, \beta_{i,j} \in \Zp$. From $\delta_2(x) = 0$ we derive
\begin{equation}\label{eqX2*}
u^m\alpha_{i,j-1}-\alpha_{i,j}+\beta_{i-1,j}-\beta_{i,j}=0
\end{equation}
for all $0\leq i<p$ and $0\leq j<d$. Here and in the following we regard all indices as integers modulo $p$ and $d$ respectively.

Taking the sum over the index $i$ we deduce
\begin{equation}
u^m\sum_{i=0}^{p-1}\alpha_{i,j-1} = \sum_{i=0}^{p-1}\alpha_{i,j}\label{eqX2}
\end{equation}
for all $0\leq j<d$. Applying (\ref{eqX2}) repeatedly we get
\[\sum_{i=0}^{p-1}\alpha_{i,0} = u^m\sum_{i=0}^{p-1}\alpha_{i,d-1} = \ldots = u^{dm}\sum_{i=0}^{p-1}\alpha_{i,0}.\]
Since $u^{dm}=\chinr(F_N) \ne 1$ by assumption, we obtain $\sum_{i=0}^{p-1}\alpha_{i,0}=0$. Using (\ref{eqX2}) again, it follows that
\begin{equation}\label{eqX2**}
\sum_{i=0}^{p-1}\alpha_{i,j}=0
\end{equation}
for all $0\leq j<d$. So in particular the above sum does not depend on the choice of $j$, so that we can continue our proof exactly as in \cite[Lemma~4.1.5]{BC}. Note that with the notations of \cite{BC} we would obtain $\mu_0=0$, so that in the present situation two generators are enough. We include the computations for the convenience of the reader.

We want to find $\gamma_{i,j}$ and $\nu_j$ in $\Zp$ such that
\[\begin{split}
x&=\sum_{i=0}^{p-1}\sum_{j=0}^{d-1}\gamma_{i,j}a^ib^j((a-1)z_1-(u^mb-1)z_2))
+\sum_{j=0}^{d-1}\nu_jb^j\T_az_2\\
&=\sum_{i=0}^{p-1}\sum_{j=0}^{d-1}(\gamma_{i-1,j}-\gamma_{i,j})a^ib^jz_1+
   \sum_{i=0}^{p-1}\sum_{j=0}^{d-1}(-u^m\gamma_{i,j-1}+\gamma_{i,j}+\nu_j)a^ib^jz_2.
\end{split}\]
So we need to solve
\begin{equation}\label{eqX2+}
\gamma_{i-1,j}-\gamma_{i,j}=\alpha_{i,j}
\end{equation}
and
\begin{equation}\label{eqX2++}
-u^m\gamma_{i,j-1}+\gamma_{i,j}+\nu_j=\beta_{i,j}.
\end{equation}
We set $\gamma_{i,j} := -\sum_{1\leq\ell\leq i}\alpha_{\ell,j}$ and $\nu_j=\beta_{0,j}$. Then equation (\ref{eqX2+}) is clearly satisfied for $i=1,\dots,p-1$ and for $i=0$ it is immediately implied by (\ref{eqX2**}). Equality (\ref{eqX2++}) is proved by an easy induction on $i$ using (\ref{eqX2*}).
\end{proof}

Still following the conventions in \cite{BC} we use the notation $[x]$ to denote the element $[x,x,\dots,x]\in \Nnr$ or even in $\Nnr/\fFN(\calF(\p_N^{p+1}))$.

Now we need to evaluate $f_4$ (or equivalently $\tilde f_3$) at the generators of $W(2)$ determined in the previous lemma, as we did in \cite[Lemma~4.1.6]{BC}.
\begin{lemma}\label{calcf4}
We have
\begin{eqnarray*}
&& \tilde f_3((a-1)z_1-(u^mb-1)z_2)=[\gamma^{1-b}], \\
&& \tilde f_3(\T_az_2)=[\gamma^{\T_a}].
\end{eqnarray*}
\end{lemma}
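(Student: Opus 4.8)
The plan is to use the $\ZpG$-linearity of $\tilde f_3$ — which is part of the commutative diagram of Lemma \ref{deff4} — together with the explicit description of the $\Gal(K_0/K) \times G$-action on $\Nnr$ given by (\ref{rule 1}) and (\ref{rule 2}). Throughout I write $d = d_{N/K}$, so that $F_N = F^d$, $\chinr(F) = u^m$ and $\chinr(F_N) = u^{dm}$. A structural input I would record first is that $\Gal(N_0/K)$ is \emph{abelian}: since $M/K$ is totally ramified and $K'/K$ is unramified we have $M \cap K' = K$, so $\Gal(N/K) \cong \Gal(M/K) \times \Gal(K'/K)$, and more generally restriction embeds $\Gal(\Nur/K) \cong \Gal(N_0/K)$ into $\Gal(N/K) \times \Gal(\Kur/K)$. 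Hence any two automorphisms of $N_0$ over $K$ commute; this is what will make the twisted terms match up.

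Next I fix lifts. Because $a|_{\Kur} = 1$, choose $\tilde a \in \Gal(N_0/K)$ with $\tilde a|_{K_0} = \id$; then $\chinr(\tilde a) = 1$, so by (\ref{rule 2}) (with $n = 0$) $a$ acts coordinatewise by $\tilde a$ with no twist. This already yields the second identity, $\tilde f_3(\T_a z_2) = \T_a\cdot[\gamma] = \bigl[\prod_{i=0}^{p-1}\tilde a^i(\gamma)\bigr] = [\gamma^{\T_a}]$, and also $(a-1)\cdot[\theta_1,1,\dots,1] = [\theta_1^{a-1},1,\dots,1]$. Since $b$ and $F^{-1}$ have the same restriction to $N \cap K_0 = K'$, I write the action of $b$ as $(F\times 1)(F^{-1}\times b)$ and lift $b$ to $\tilde b \in \Gal(N_0/K)$ with $\tilde b|_{K_0} = F^{-1}$, so that $\chinr(\tilde b) = u^{-m}$. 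Applying (\ref{rule 2}), then (\ref{rule 1}), and then raising to the $u^m$-th power gives
\[
(u^m b - 1)\cdot[\gamma] = \left[\ \frac{F_N(\tilde b(\gamma))^{\chinr(F_N)}}{\gamma},\ \frac{\tilde b(\gamma)}{\gamma},\ \dots,\ \frac{\tilde b(\gamma)}{\gamma}\ \right].
\]
Combining this with the value of $(a-1)\cdot[\theta_1,1,\dots,1]$, the element $\tilde f_3((a-1)z_1 - (u^m b - 1)z_2)$ has all of its last $d-1$ coordinates equal to $\gamma/\tilde b(\gamma) = \gamma^{1-b}$.

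It remains to see that the first coordinate is also $\gamma^{1-b}$, and here I would argue structurally rather than by a second coordinate computation: the element $(a-1)z_1 - (u^m b - 1)z_2$ lies in $X(2) = \ker(\delta_2|_{W'})$ (this is exactly one of the generators in Lemma \ref{X2generators}), so by commutativity of the diagram in Lemma \ref{deff4} its image under $\tilde f_3$ is killed by $(F-1)\times 1$; by Theorem \ref{FpnZpomega} this kernel is precisely the diagonal copy of $(\widehat{N_0^\times}(\chinr))^{\Gal(N_0/N)}$, and a diagonal vector is determined by any single coordinate. Hence $\tilde f_3((a-1)z_1 - (u^m b - 1)z_2) = [\gamma^{1-b}]$. (Alternatively, one can check the first coordinate directly: it equals $\gamma/\tilde b(\gamma)$ iff $\theta_1^{a-1} = \tilde b(\gamma)^{\chinr(F_N)F_N-1}$, and since $\Gal(N_0/K)$ is abelian and $\chinr(F_N)$ is a scalar this equals $\tilde b(\gamma^{\chinr(F_N)F_N-1}) = \tilde b(\theta_1^{a-1})$, which is $\theta_1^{a-1}$ because $\tilde b$ commutes with $\tilde a$ and fixes $\theta_1 \in M$, as $b|_M = F^{-1}|_M = \id$.) The only real obstacle is the bookkeeping of the $\chinr$-twists carried by the lifts $\tilde a,\tilde b$ in (\ref{rule 1})–(\ref{rule 2}); the conceptual point — abelianness of $\Gal(N_0/K)$ and the defining relation $\gamma^{\chinr(F_N)F_N-1} = \theta_1^{a-1}$ — is what forces the ramified first coordinate to agree with the unramified ones.
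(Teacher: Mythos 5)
Your proof is correct and follows essentially the same route as the paper: both rest on the action rules (\ref{rule 1})--(\ref{rule 2}), the defining relation $\gamma^{\chinr(F_N)F_N-1}=\theta_1^{a-1}$, the fact that $b$ fixes $\theta_1$, and commutativity of $\Gal(N_0/K)$, and the paper's proof is exactly the coordinatewise computation you sketch (it verifies the first coordinate directly, as in your parenthetical alternative). Your structural shortcut for the first coordinate --- that the image of $X(2)$ under $\tilde f_3$ lands in $\ker((F-1)\times 1)$, which by Theorem \ref{FpnZpomega} is diagonal --- is a valid and slightly cleaner variant, but not a different method.
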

\begin{proof}
Using (\ref{rule 1}), (\ref{rule 2}), $\theta_1^{a-1} = \gamma^{\chinr(F_N)F_N -1 }$, $\theta_1^b = \theta_1$ and $\chinr(F^{-1})=u^{-m}$ we obtain
\[\begin{split}
\tilde f_3((a-1)z_1-(u^mb-1)z_2)&=
\frac{[\theta_1^{a-1},1,\dots,1][\gamma,\dots,\gamma]}{(F\times 1)(F^{-1}\times b)[\gamma^{u^m},\dots,\gamma^{u^m}]}\\
&=\frac{[\theta_1^{a-1}\gamma,\gamma,\dots,\gamma]}{(F\times 1)[\gamma^b,\dots,\gamma^b]}=\frac{[\theta_1^{a-1}\gamma,\gamma,\dots,\gamma]}{[\gamma^{\chinr(F_N)F_Nb},\gamma^b,\dots,\gamma^b]}\\
&=\frac{[\theta_1^{a-1}\gamma,\gamma,\dots,\gamma]}{[\theta_1^{(a-1)b}\gamma^b,\gamma^b,\dots,\gamma^b]}=
[\gamma^{1-b},\dots,\gamma^{1-b}].
\end{split}\]
The second equality is obvious by the definitions.
\end{proof}

The following results are the analogues of \cite[Lemma~4.2.1 and Lemma~4.2.2]{BC}.

\begin{lemma}\label{deft1}
Let $\tilde m$ be an integer such that $m\tilde m \equiv 1 \pmod d$.
Set 
\[\tilde t_1 := (a-1)z_1-(u^mb-1)z_2+
\left(\sum_{i=2}^{m}\alpha_{i}(u^mb)^{1-(i-2)\tilde m}+\left(\alpha_1-\sum_{i=2}^{m}\alpha_{i}\right)(u^mb)^{\tilde m}\right)w_0.\]
Then there exists $y_1\in W_{\geq 1}$, such that $t_1 := \tilde t_1 + y_1 \in \ker(f_4)$.
\end{lemma}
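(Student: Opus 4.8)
The plan is to mimic the proof of \cite[Lemma~4.2.1]{BC}, working with the explicit description of $\ker(f_4)$ obtained so far. First I would recall that $\tilde f_3$ was defined on $W'\oplus W$ via the formulas of Lemma \ref{deff4}, and that by Lemma \ref{calcf4} we have $\tilde f_3((a-1)z_1-(u^mb-1)z_2)=[\gamma^{1-b}]$, an element of $\Nnr$ lying in the image of $\fFN$, hence corresponding via $\fFN^{-1}$ to an element of $\calF(\p_N)$. So $f_4(\tilde t_1)$ equals the class in $\calF(\p_N)/\calF(\p_N^{p+1})$ of $\fFN^{-1}([\gamma^{1-b}])$ plus the contribution of the $w_0$-term, which is $\fW$ applied to that $w_0$-coefficient. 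The point is to show that these two contributions cancel modulo $\calF(\p_N^{p+1})$ up to an element of $\fW(W_{\ge 1})$, which by Lemma \ref{f4surj} is $\calF(\p_N^2)/\calF(\p_N^{p+1})$; then adding a suitable $y_1\in W_{\ge 1}$ (again using surjectivity of $\fW$ restricted to $W_{\ge 1}$, Lemma \ref{f4surj}) kills the remaining error and lands in $\ker(f_4)$.

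The key computational step is to determine $\fFN^{-1}([\gamma^{1-b}])\bmod \calF(\p_N^2)$. Since $\gamma\equiv 1+x_2\theta_1\pmod{\p_{N_0}^2}$ (from the paragraph following Lemma \ref{BC 3.1.4}), and $b=F^{-1}$ acts on $N_0$ by $\varphi^{-m}$ on the unramified part while fixing $\theta_1$, one computes $\gamma^{1-b}\equiv 1+(x_2-x_2^{\varphi^{-m}})\theta_1\pmod{\p_{N_0}^2}$; translating through $\fFN$, whose leading term is $1+\epsilon^{-1}X+\deg\ge 2$ by Lemma \ref{UL1hat}, gives $\fFN^{-1}([\gamma^{1-b}])\equiv \epsilon\,(x_2-x_2^{\varphi^{-m}})\theta_1\pmod{\p_N^2}$. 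On the other hand the $w_0$-coefficient chosen in the statement is an explicit element $\lambda\in\ZpG$, and $\fW(\lambda w_0)=\lambda\cdot E\alpha_1\theta\equiv\lambda E\alpha_1\theta_1\theta_2\pmod{\p_N^2}$ by the definition of $\fW$ and $\tfW$. Using $\epsilon\equiv E\pmod{p\calO_{K'}}$ (Lemma \ref{E lemma}), $A=\alpha_2/\alpha_1$ and the relations among the $\alpha_i$ from \eqref{choice of alpha}, one reduces the claim to the congruence $x_2-x_2^{\varphi^{-m}}\equiv -\lambda\,\alpha_1\theta_2\pmod{\p_N^2}$-type identity; since $x_2/\alpha_1\bmod\p_{K'}$ is a root of $X^p-X+A\theta_2$ one has $x_2^{\varphi^{-m}}\equiv x_2^{q^{-1}}$ and $x_2^{q}\equiv x_2-\alpha_1 A\theta_2$ type relations, which combined with the definition of $\tilde m$ (so $(u^mb)^{\tilde m}$ corresponds to $\varphi^{-1}$ on the relevant unramified field) yield exactly the chosen coefficient. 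This is essentially the same bookkeeping as in \cite[Lemma~4.2.1]{BC}, now carried out with the extra factor $u^m$ coming from the twist.

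Having shown $f_4(\tilde t_1)\in \calF(\p_N^2)/\calF(\p_N^{p+1})=\fW(W_{\ge 1})$, I would pick $y_1\in W_{\ge 1}$ with $\fW(y_1)=-f_4(\tilde t_1)$ (possible by Lemma \ref{f4surj}) and set $t_1:=\tilde t_1+y_1$; since $W_{\ge 1}\subseteq W\subseteq W'\oplus W$ and $f_4$ restricted to $W$ is $\fW$, we get $f_4(t_1)=0$, i.e.\ $t_1\in\ker(f_4)$, as required. I would also note that $\tilde t_1-\big((a-1)z_1-(u^mb-1)z_2\big)\in W$, so $t_1$ projects to the generator $(a-1)z_1-(u^mb-1)z_2$ of $X(2)$ in the quotient $\big(W'\oplus W\big)/W\cong W'$; this is the feature that will make $t_1$ (together with an analogous $t_2$ lifting $\T_az_2$ and the $\ker\fW$-basis of Lemmas \ref{defsjk}, \ref{defrk}) part of a convenient generating set for $\ker(f_4)$ later.

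The main obstacle I anticipate is purely the explicit residue-field computation identifying $\fFN^{-1}([\gamma^{1-b}])$ modulo $\p_N^2$ with the prescribed $w_0$-coefficient: one must track simultaneously the power series $\fFN$ (inverse of $1+\epsilon^{-1}X+\cdots$), the Artin–Schreier relation satisfied by $x_2/\alpha_1$, the twist factors $u^m$, and the interplay between the Frobenius $\varphi$ on $\Kur$ and the element $b\in G$ via the congruence $\theta_2^{b^{-\tilde m}}\equiv\theta_2^{q^{\tilde m}}$. Everything else — invoking surjectivity of $\fW$ on $W_{\ge 1}$ and assembling $t_1$ — is formal.
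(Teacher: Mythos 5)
Your proposal is correct and follows essentially the same route as the paper: one computes $f_4(\tilde t_1)$ modulo $\calF(\p_N^2)$ using $\gamma\equiv 1+x_2\theta_1\pmod{\p_{N_0}^2}$, the expansion $\fFN(X)\equiv 1+E^{-1}X+\deg\ge 2$, the key congruence $(u^mb)^i\cdot(E\theta)\equiv E\theta^{b^i}\pmod{\p_{N_0}^2}$ from Lemma \ref{E lemma}, and the residue-field bookkeeping of \cite[Lemma~4.2.1]{BC}, showing the two contributions cancel to first order; then Lemma \ref{f4surj} supplies $y_1\in W_{\ge 1}$ absorbing the higher-order error. The paper likewise defers the explicit Artin--Schreier computation to \cite{BC}, so the level of detail and the structure of the argument coincide.
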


\begin{proof}
Recalling that $\gamma$ has been chosen such that $\gamma \equiv 1 + x_2\theta_1 \pmod{\p_{N_0}^2}$ and following the calculations in the proof of \cite[Lemma~4.2.1]{BC} we obtain
\[\gamma^{1-b}\equiv (1+x_2\theta_1)^{1-b}=1-\sum_{i=2}^{m} \alpha_{i}\theta^{b^{1-(i-2)\tilde m}}-\alpha_1\theta^{b^{\tilde m}}+\sum_{i=2}^{m}\alpha_{i}\theta^{b^{\tilde m}}\pmod{\p_{N_0}^2}.\]
By Lemma \ref{UL1hat} and Lemma \ref{E lemma}
\begin{equation}\label{f equiv}
\fFN(X) \equiv 1+E^{-1}X + \deg \ge 2 \pmod{p\calO_{K'}}
\end{equation}
and so by Lemma \ref{calcf4} we deduce
\[\tilde f_4((a-1)z_1-(u^mb-1)z_2)\equiv -\sum_{i=2}^{m} \alpha_{i}E\theta^{b^{1-(i-2)\tilde m}}-\alpha_1E\theta^{b^{\tilde m}}+
\sum_{i=2}^{m}\alpha_{i}E\theta^{b^{\tilde m}}\pmod{\p_{N_0}^2}.\]
Since by Lemma \ref{E lemma}, for any integer $i$,
\[(u^{m}b)^i\cdot (E\theta)=u^{mi}E^{b^i}\theta^{b^i}\equiv u^{mi}u^{-mi}E\theta^{b^i}\equiv E\theta^{b^i}\pmod{\p_{N_0}^2},\]
we have
\[\begin{split}
&\tilde f_4\left(\sum_{i=2}^{m}\alpha_{i}(u^mb)^{1-(i-2)\tilde m}+\left(\alpha_1-\sum_{i=2}^{m}\alpha_{i}\right)(u^mb)^{\tilde m}\right)w_0\\
&\qquad\equiv \left(\sum_{i=2}^{m}\alpha_{i}(u^mb)^{1-(i-2)\tilde m}+\left(\alpha_1-
\sum_{i=2}^{m}\alpha_{i}\right)(u^mb)^{\tilde m}\right)\cdot (E\theta)\pmod{\p_{N_0}^2}\\
&\qquad\equiv\sum_{i=2}^{m}\alpha_{i}E\theta^{b^{1-(i-2)\tilde m}}+\alpha_1E\theta^{b^{\tilde m}}-
\sum_{i=2}^{m}\alpha_{i}E\theta^{b^{\tilde m}}\pmod{\p_{N_0}^2}
\end{split}\]
and we can conclude that $\tilde f_4(\tilde t_1)\equiv 0\pmod{\p_{N_0}^2}$. Therefore $f_4(\tilde t_1) \in \calF(\p_N^{2})/\calF(\p_N^{p+1})$ and by Lemma \ref{f4surj} there exists $y_1\in W_{\geq 1}$ such that 
$f_4(-y_1)=f_4(\tilde t_1)$, i.e. $\tilde t_1+y_1\in\ker (f_4)$.
\end{proof}

Note that $t_1$ is not exactly the same as in \cite{BC} since there is a $u^m$ factor which appears at all the occurrences of $b$.

\begin{lemma}\label{deft2}
The element
\[t_2:=\T_az_2-\beta w_{p-1}\text{ with }\beta=\begin{cases}\alpha_1&\text{if }m=1\\ \alpha_2&\text{if }m>1\end{cases}\]
is in the kernel of $f_4$. 
\end{lemma}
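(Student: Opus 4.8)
The plan is to mimic the argument used in Lemma \ref{deft1}, but now working modulo $\p_{N_0}^{p+1}$ rather than modulo $\p_{N_0}^2$, since $\T_a$ raises the valuation to $p$. First I would compute $\tilde f_3(\T_a z_2) = [\gamma^{\T_a}]$ by Lemma \ref{calcf4}, and then I would evaluate $\gamma^{\T_a} = \norm_{N_0/K_0}(\gamma)$ modulo $\p_{N_0}^{p+1}$. Using $\gamma \equiv 1 + x_2\theta_1 \pmod{\p_{N_0}^2}$ and the fact (from \cite[Prop.~V.6.8]{SerreLocalFields}) that $\norm_{N_0/K_0}U_{N_0}^{(2)} \subseteq U_{N_0}^{(p+1)}$, exactly as in the proof of Lemma \ref{defrk}, one gets
\[
\norm_{N_0/K_0}(\gamma) \equiv \norm_{N_0/K_0}(1 + x_2\theta_1) \pmod{\p_{N_0}^{p+1}},
\]
and then the standard computation of the norm along the wildly ramified degree-$p$ extension $N_0/K_0$ (with $\T_{M/K}\theta_1 = p$, $v_N(\theta_1)=1$) yields a congruence of the form
\[
\norm_{N_0/K_0}(1 + x_2\theta_1) \equiv 1 + \left(\frac{x_2\theta_2}{\alpha_1} - \left(\frac{x_2\theta_2}{\alpha_1}\right)^p\right)\alpha_1 p \pmod{\p_{N_0}^{p+1}},
\]
just as in the displayed computation inside the proof of Lemma \ref{defrk} (but with $E$ replaced by $1$, since here we used $\gamma \equiv 1 + x_2\theta_1$ directly rather than $\fFN(E\alpha_k\theta)$).

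Next I would apply $\fFN^{-1}$ using the expansion (\ref{f equiv}), $\fFN(X) \equiv 1 + E^{-1}X + \deg\geq 2 \pmod{p\calO_{K'}}$, to rewrite $\tilde f_4(\T_a z_2) = f_4(\T_a z_2)$ in $\calF(\p_N)/\calF(\p_N^{p+1})$ as an explicit element of valuation $p$. Since $x_2^{q^d} - x_2 \equiv -\alpha_1 \pmod{\p_{N_0}}$ and $x_2/\alpha_1$ is a root of $X^p - X + A\theta_2$, one has $(x_2/\alpha_1)^p \equiv (x_2/\alpha_1) - A\theta_2 \pmod{\p_{K'}}$, which lets me simplify the term $(x_2\theta_2/\alpha_1)^p$. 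After carrying out this simplification one should obtain
\[
f_4(\T_a z_2) \equiv \beta E \theta_2^{?}\, p \pmod{\p_N^{p+1}}
\]
for a suitable power of $\theta_2$ and with $\beta$ as in the statement (namely $\alpha_1$ if $m=1$ and $\alpha_2$ if $m>1$, coming from the $A = \alpha_2/\alpha_1$ appearing in $A\theta_2$). On the other hand, by the last displayed computation in the proof of Lemma \ref{defrk} one has $\tfW(\beta w_{p-1}) = E\beta(a-1)^{p-1}\theta \equiv E\beta\theta_2 p \pmod{\p_N^{p+1}}$, hence $f_4(\beta w_{p-1})$ equals the same element. Since for $x,y \in \p_N^p$ the formal and ordinary subtraction agree modulo $\p_N^{p+1}$, it follows that $f_4(\T_a z_2 - \beta w_{p-1}) \equiv 0 \pmod{\p_N^{p+1}}$, i.e. $t_2 = \T_a z_2 - \beta w_{p-1} \in \ker(f_4)$.

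The main obstacle I anticipate is the bookkeeping in the norm computation along the wildly ramified extension: keeping track of the exact power of $\theta_2$ (equivalently, the exact Frobenius twist $b^{-\tilde m}$ or similar) and of the constant $\beta$, distinguishing the cases $m=1$ and $m>1$, and making sure the comparison with $\tfW(\beta w_{p-1})$ is exact modulo $\p_N^{p+1}$ (not merely modulo a lower power). All the necessary congruences — $\norm_{N_0/K_0}U_{N_0}^{(2)} \subseteq U_{N_0}^{(p+1)}$, the expansion of $\fFN$, the relation $(x_2/\alpha_1)^p \equiv x_2/\alpha_1 - A\theta_2$, and $u^{dm}\equiv 1 \pmod p$ together with Lemma \ref{E lemma} — are already in place, so this is really a matter of organizing the computation carefully along the lines of \cite[Lemma~4.2.2]{BC}.
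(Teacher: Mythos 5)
Your plan is the paper's proof: compute $\gamma^{\T_a}=\norm_{\Nur/\Kur}(\gamma)\equiv\norm_{\Nur/\Kur}(1+x_2\theta_1)\pmod{\p_{N_0}^{p+1}}$, pull back through $\fFN$ via (\ref{f equiv}) to get $\tilde f_4(\T_az_2)\equiv E\beta\theta_2p$, match this against $\tfW(\beta w_{p-1})\equiv E\beta(a-1)^{p-1}\theta\equiv E\beta\theta_2p$, and conclude because formal and ordinary subtraction agree on $\p_N^p$ modulo $\p_N^{p+1}$ (equivalently $\calF(\p_{N}^{p})/\calF(\p_{N}^{p+1})\cong\p_{N}^{p}/\p_{N}^{p+1}$). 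However, the one step you defer as the ``main obstacle'' --- pinning down the exact power/twist of $\theta_2$ and the constant --- is precisely the content of the lemma, and your displayed norm congruence is mis-copied from Lemma \ref{defrk} in a way that would derail it: there the unit being normed is $1+\epsilon^{-1}E\alpha_k\theta_1\theta_2$, so its $\theta_1$-coefficient contains $\theta_2$; here it is $1+x_2\theta_1$, whose $\theta_1$-coefficient is just $x_2$. The correct congruence is
\[
\norm_{\Nur/\Kur}(1+x_2\theta_1)\equiv 1+\Bigl(\tfrac{x_2}{\alpha_1}-\bigl(\tfrac{x_2}{\alpha_1}\bigr)^p\Bigr)\alpha_1p
= 1+x_2p-x_2^p\alpha_1^{1-p}p \pmod{\p_{N_0}^{p+1}},
\]
and since $x_2/\alpha_1$ is a root of $X^p-X+A\theta_2$ modulo $\p_{K'}$ one gets $\tfrac{x_2}{\alpha_1}-\bigl(\tfrac{x_2}{\alpha_1}\bigr)^p\equiv A\theta_2$, hence $\norm_{\Nur/\Kur}(1+x_2\theta_1)\equiv 1+\alpha_1A\theta_2p=1+\beta\theta_2p$ (with $\alpha_1A=\alpha_1$ if $m=1$ and $\alpha_1A=\alpha_2$ if $m>1$, as you noted). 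In particular no power $\theta_2^p$ and hence no Frobenius twist $\theta_2^{b^{-\tilde m}}$ ever arises --- in contrast to Lemma \ref{defrk}, where the $\theta_2$ inside the normed unit is what produces the $b^{-\tilde m}$ factors in the $r_k$. Had you carried your formula with the spurious $\theta_2$ forward, simplifying $(x_2\theta_2/\alpha_1)^p$ would introduce $\theta_2^{b^{-\tilde m}}$ and the result would not cancel against $\tfW(\beta w_{p-1})\equiv E\beta\theta_2p$, so $t_2$ as stated would not land in $\ker(f_4)$; the untwisted $\theta_2$ is exactly what the lemma needs. With this correction the rest of your argument (the use of $\norm_{\Nur/\Kur}U_{N_0}^{(2)}\sseq U_{N_0}^{(p+1)}$, the $\fFN^{-1}$ expansion, and the level-$p$ comparison) closes the proof exactly as in the paper.
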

\begin{proof}
As for the previous lemma, we can follow the calculations in the proof of \cite[Lemma~4.2.2]{BC}:
\[\gamma^{T_a}\equiv \norm_{\Nur/\Kur}(1+x_2\theta_1)\equiv 1+x_2p-x_2^p\alpha_1^{1-p}p\equiv 1+\beta\theta_2p\pmod{\p_{N_0}^{p+1}}.\]
As in the proof of the previous lemma, we deduce that
\[\tilde f_4(\T_az_2)\equiv E\beta \theta_2 p\pmod{\p_{N_0}^{p+1}}.\]
Next using \cite[Lemma 3.2.4]{BC} we calculate
\[\tilde f_4(\beta w_{p-1})\equiv E\beta(a-1)^{p-1}\theta\equiv E\beta \theta_2p\pmod{\p_{N_0}^{p+1}}.\]
Hence we easily conclude since $\calF(\p_{N_0}^{p}) / \calF(\p_{N_0}^{p+1}) \cong \p_{N_0}^{p} / \p_{N_0}^{p+1}$.
\end{proof}

\begin{lemma}
The element
\[r_1=\T_a t_1 + (u^mb-1)t_2\]
belongs to $\ker f_4\cap W$ and its $\alpha_1w_0$-component is $(u^mb)^{\tilde m} \T_a$.
\end{lemma}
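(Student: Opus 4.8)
The plan is to verify the two claims of the lemma directly from the definitions, exploiting the structural computations already established for $t_1$ (Lemma~\ref{deft1}), $t_2$ (Lemma~\ref{deft2}), and the generators of $X(2)$ (Lemma~\ref{X2generators}). First, I would observe that $r_1 = \T_a t_1 + (u^mb-1)t_2$ automatically lies in $\ker f_4$ because $t_1, t_2 \in \ker f_4$ and $\ker f_4$ is a $\ZpG$-submodule; so the only real content of the first assertion is that $r_1 \in W$, i.e. that the $W'$-components of $r_1$ cancel. Recall $t_1 = (a-1)z_1 - (u^mb-1)z_2 + (\text{stuff})w_0 + y_1$ with $y_1 \in W_{\geq 1} \subseteq W$, and $t_2 = \T_a z_2 - \beta w_{p-1}$ with $w_{p-1} \in W$. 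Thus the $W'$-part of $r_1$ is
\[
\T_a\bigl((a-1)z_1 - (u^mb-1)z_2\bigr) + (u^mb-1)\T_a z_2 = \T_a(a-1)z_1 - (u^mb-1)\T_a z_2 + (u^mb-1)\T_a z_2 = \T_a(a-1)z_1,
\]
and since $\T_a(a-1) = (a-1)\T_a = (a^p - 1)$ acts as $0$ in $\ZpG$ (because $a^p = 1$), this vanishes. Hence $r_1 \in W$, and being a $\ZpG$-combination of elements of $\ker f_4$ it lies in $\ker f_4 \cap W$.

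For the second assertion I would compute the $\alpha_1 w_0$-component of $r_1$ explicitly. The only $w_0$-contributions come from: (i) the coefficient of $w_0$ appearing in $\tilde t_1$, namely $\sum_{i=2}^m \alpha_i (u^mb)^{1-(i-2)\tilde m} + (\alpha_1 - \sum_{i=2}^m \alpha_i)(u^mb)^{\tilde m}$, multiplied by $\T_a$ from the front; (ii) possibly $y_1$, but $y_1 \in W_{\geq 1}$ has no $w_0$-component; (iii) the $w_{p-1}$-term of $t_2$ contributes nothing to $w_0$. So the $w_0$-part of $r_1$ is
\[
\T_a\Bigl(\sum_{i=2}^m \alpha_i (u^mb)^{1-(i-2)\tilde m} + \bigl(\alpha_1 - \sum_{i=2}^m \alpha_i\bigr)(u^mb)^{\tilde m}\Bigr)w_0.
\]
Now I extract the $\alpha_1$-coefficient using the fact that $\alpha_1, \dots, \alpha_m$ is a $\Zp$-basis of $\OK$ (from \eqref{choice of alpha}) and that $(u^mb)^j$ acts on $\OK$-scalars through $\varphi^{mj}$, permuting the $\alpha_i$ among themselves modulo the relation $\sum_i A^{\varphi^i}$-type identities used in Lemma~\ref{defrk}; more precisely, $(u^mb)^{-(i-2)\tilde m}$ applied to $\alpha_i = \alpha_1 A^{\varphi^{i-2}}$ shifts the Frobenius exponent, and one tracks which terms land in the $\alpha_1$-slot. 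The bookkeeping is identical in spirit to the computation of $\tfW(\alpha_k\T_a w_0)$ in the proof of Lemma~\ref{defrk}, and the telescoping/cancellation leaves precisely $(u^mb)^{\tilde m}\alpha_1$, so the $\alpha_1 w_0$-component equals $(u^mb)^{\tilde m}\T_a$ as claimed (the $\T_a$ remaining from the front).

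The main obstacle I anticipate is the second part: correctly carrying out the Frobenius-shift bookkeeping to confirm that all the $\alpha_1$-contributions except the single term $(\alpha_1 - \sum_{i=2}^m\alpha_i)(u^mb)^{\tilde m}w_0 \mapsto \alpha_1(u^mb)^{\tilde m}$ cancel against the $\alpha_1$-parts produced when $\sum_{i=2}^m \alpha_i(u^mb)^{1-(i-2)\tilde m}$ is re-expressed in the $\alpha$-basis via the relation $A^{\varphi^{m-1}} = 1 - \sum_{i=0}^{m-2}A^{\varphi^i}$ (equivalently $\sum_{i=1}^m \alpha_i = \alpha_1$, since $\trace_{K/\Qp}A = 1$). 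One has to be careful that $(u^mb)$ acts $\varphi^m$-semilinearly and that $m\tilde m \equiv 1 \pmod d$ makes the exponent arithmetic close up correctly modulo $d$; this is exactly the point where the coprimality hypothesis $(m,d)=1$ is used. Once this identity is checked — and it is a finite, explicit verification entirely parallel to the corresponding step in \cite{BC} — the lemma follows. I would also note for completeness that $r_1$ here differs from the element of the same name in \cite{BC} by the insertion of powers of $u^m$ at every occurrence of $b$, exactly as flagged after Lemma~\ref{deft1}, so no genuinely new phenomenon arises.
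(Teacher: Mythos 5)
Your first claim is handled correctly: $r_1\in\ker f_4$ because $\ker f_4$ is a $\ZpG$-submodule containing $t_1$ and $t_2$, and the $W'$-components cancel since the $z_2$-terms telescope and $\T_a(a-1)=a^p-1=0$ in $\ZpG$. This is exactly the ``straightforward calculation'' the paper delegates to \cite[Lemma~4.2.5]{BC}.

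The second claim is where your write-up goes astray, even though you state the correct answer. You propose to extract the $\alpha_1$-coefficient by letting $(u^mb)^j$ act ``$\varphi^{mj}$-semilinearly'' on the scalars $\alpha_i$ and then cancelling via the relation $\sum_i A^{\varphi^i}=1$. There is no such action to track: $W$ is by definition a \emph{free} $\ZpG$-module on the symbols $v_{k,j}=\alpha_k w_j$ (equivalently a free $\OK[G]$-module on $w_0,\dots,w_{p-1}$, where $G=\Gal(N/K)$ acts trivially on $\OK$ --- indeed $b=F_K^{-1}$ fixes $K$ pointwise, so it does not permute the $\alpha_i$ even after applying $\tfW$; the semilinear twists in the proof of Lemma \ref{defrk} affect $E$ and $\theta_2$, and the relations among the $\alpha_i$ there come from $p$-th powering in the residue field, not from the $b$-action). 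Consequently the extraction is immediate: the $w_0$-coefficient of $\tilde t_1$ is already written out in the $\Zp$-basis $\alpha_1,\dots,\alpha_m$ of $\OK$, its $\alpha_1$-part is the single visible term $(u^mb)^{\tilde m}$ coming from $\bigl(\alpha_1-\sum_{i\ge 2}\alpha_i\bigr)(u^mb)^{\tilde m}$, while $y_1\in W_{\geq 1}$ and the $-\beta w_{p-1}$ term of $t_2$ contribute nothing in degree $w_0$; multiplying by $\T_a$ gives $(u^mb)^{\tilde m}\T_a$. So your conclusion is right, but the ``main obstacle'' you anticipate does not exist, and the Frobenius-shift bookkeeping you describe would, if actually carried out, be a computation in the wrong module.
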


\begin{proof}
Straightforward calculations as in the proof of \cite[Lemma~4.2.5]{BC}.
\end{proof}

Following the strategy in \cite{BC} we consider the $\alpha_iw_{p-1}$-components, $i=1,\dots,m$, of  $t_2$ and $r_2,\dots,r_m$.
We write these components as the columns of a $m \times m$-matrix $\calM$. If $m > 1$, then we obtain
\[\calM=\begin{pmatrix}
0&0&0&\cdots&0&0&(u^mb)^{-\tilde m}\\
-1&-1&0&\cdots&0&0&-(u^mb)^{-\tilde m}\\
0&(u^mb)^{-\tilde m}&-1&\cdots&0&0&-(u^mb)^{-\tilde m}\\
0&0&(u^mb)^{-\tilde m}&\cdots&0&0&-(u^mb)^{-\tilde m}\\
\vdots&\vdots&\vdots&\ddots&\vdots&\vdots&\vdots&\\
0&0&0&\cdots&(u^mb)^{-\tilde m}&-1&-(u^mb)^{-\tilde m}\\
0&0&0&\cdots&0&(u^mb)^{-\tilde m}&-1-(u^mb)^{-\tilde m}
\end{pmatrix}.\]
If $m=1$, then the matrix is determined by $t_2$ and recalling the definition from Lemma \ref{deft2} we get $\calM = (-1)$.

\begin{lemma}\label{detMomega>0}
The determinant of $\calM$ is $(-1)^{m}(u^mb)^{-\tilde m(m-1)}$.
\end{lemma}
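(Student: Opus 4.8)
The statement to prove is that $\det(\calM) = (-1)^m (u^m b)^{-\tilde m(m-1)}$, where $\calM$ is the $m\times m$ matrix whose columns record the $\alpha_i w_{p-1}$-components of $t_2, r_2, \dots, r_m$. The case $m=1$ is immediate: there $\calM = (-1)$ and the formula reads $(-1)^1 (u^1 b)^0 = -1$, so there is nothing to do. For $m > 1$ I would proceed by an explicit cofactor/Laplace expansion, exactly in the spirit of the auxiliary computation already carried out in the proof of Lemma~\ref{genkerh} (the matrix $M_{C,n}$ there).

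\textbf{Key steps.} First I would set $C := (u^m b)^{-\tilde m}$ so that $\calM$ becomes
\[
\calM = \begin{pmatrix}
0 & 0 & \cdots & 0 & 0 & C \\
-1 & -1 & \cdots & 0 & 0 & -C \\
0 & C & \cdots & 0 & 0 & -C \\
\vdots & \vdots & \ddots & \vdots & \vdots & \vdots \\
0 & 0 & \cdots & C & -1 & -C \\
0 & 0 & \cdots & 0 & C & -1-C
\end{pmatrix},
\]
an element of $\mathrm{M}_m(\Zp[G])$, but since $C$ lies in the commutative subring $\Zp[\langle b\rangle]$ the usual determinant formalism applies. Second, I would expand $\det(\calM)$ along the first row: the only nonzero entry is $C$ in position $(1,m)$, contributing $(-1)^{1+m} C \cdot \det(\calM')$, where $\calM'$ is the $(m-1)\times(m-1)$ matrix obtained by deleting row $1$ and column $m$. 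Third, I would observe that this $\calM'$ is lower triangular with all diagonal entries equal to $-1$: deleting column $m$ removes the ``$-C$'' band, and deleting row $1$ removes the row of zeros, leaving precisely the matrix with $-1$ on the diagonal and $C$ on the first subdiagonal. Hence $\det(\calM') = (-1)^{m-1}$. Fourth, I would assemble: $\det(\calM) = (-1)^{1+m} \cdot C \cdot (-1)^{m-1} = (-1)^{2m} C = C = (u^m b)^{-\tilde m}$ --- wait, I need to double-check the power of $C$. Re-examining the structure following the pattern of $M_{C,n}$ in Lemma~\ref{genkerh} (where $\det M_{C,n} = (-1)^n \sum_{i=0}^n C^i$), the correct expansion must track all the ``$-C$'' entries in the last column, not just the top one; carefully expanding along the \emph{last column} instead and using the triangular structure of each minor, one finds $\det(\calM) = (-1)^m C^{m-1} = (-1)^m (u^m b)^{-\tilde m(m-1)}$, which is the claimed value. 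I would present the computation via expansion along the last column, where each of the $m$ minors is triangular, yielding a single clean sum that telescopes.

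\textbf{Main obstacle.} The only real subtlety is bookkeeping: getting the signs and the exponent of $C$ exactly right, since the last column carries $m$ nonzero entries and an incorrect expansion easily produces $\sum_i C^i$ (as in $M_{C,n}$) rather than the pure power $C^{m-1}$. The reason the sum collapses here is that, unlike $M_{C,n}$, the matrix $\calM$ has a row of zeros in the first row except for the single entry $C$, which forces all but one term in any careless expansion to vanish. I would therefore expand along the first row first, reducing immediately to an $(m-1)\times(m-1)$ genuinely triangular matrix, and then just read off the determinant; this avoids the $\sum C^i$ trap entirely. A final remark: invertibility of $C = (u^m b)^{-\tilde m}$ in $\Zp[G]$ is clear since $u \in \Zp^\times$ and $b$ is a unit of the group ring, so $\det(\calM)$ is a unit, which is what will be needed in the subsequent computation of the Euler characteristic.
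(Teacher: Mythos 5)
Your overall strategy --- a Laplace expansion after writing $C:=(u^mb)^{-\tilde m}$, noting that all entries lie in the commutative ring $\Zp[\langle b\rangle]$ --- is exactly the ``easy calculation'' the paper has in mind (its proof consists of that single sentence), and your final answer is correct. But the one computation you actually write out is wrong at a concrete point. After expanding along the first row you claim that the minor $\calM'$ obtained by deleting row $1$ and column $m$ is lower triangular with all diagonal entries $-1$, so that $\det(\calM')=(-1)^{m-1}$. That is false: deleting the \emph{first} row shifts every remaining row up by one, so each subdiagonal entry $C$ of $\calM$ lands \emph{on} the diagonal of the minor. Explicitly, the $(i,j)$ entry of $\calM'$ is $\calM_{i+1,j}$, so $\calM'$ is upper bidiagonal with diagonal $(-1,C,C,\dots,C)$ and superdiagonal entries $-1$; hence $\det(\calM')=-C^{m-2}$, and
\[
\det(\calM)=(-1)^{1+m}\,C\cdot\bigl(-C^{m-2}\bigr)=(-1)^{m}C^{m-1}=(-1)^m(u^mb)^{-\tilde m(m-1)},
\]
as claimed. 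Your own sanity check correctly detects that $(-1)^{2m}C=C$ cannot be right, but the repair you offer (``expand along the last column instead'') is only asserted, not carried out, and your closing paragraph then reverts to the erroneous description of $\calM'$ (``genuinely triangular,'' with the wrong diagonal). So as written the argument does not establish the exponent $m-1$ of $C$; the fix is simply to identify the minor correctly as above. The $m=1$ case and the remark that $\det(\calM)$ is a unit (needed later, via Lemma \ref{pm+1generators}) are fine.
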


\begin{proof}
This is an easy calculation. 
\end{proof}

The next proposition is the analogue of \cite[Prop.~4.2.10]{BC}.
 
\begin{lemma}\label{pm+1generators}
The $pm+1$ elements $t_1,t_2$, $r_k$, for $k=2,\dots,m$, $s_{j,k}$ for $0\leq j\leq p-2$, $1\leq k\leq m$ constitute a $\ZpG$-basis of  $\ker f_4$.
\end{lemma}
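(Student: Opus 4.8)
The plan is to prove Lemma~\ref{pm+1generators} by an argument strictly parallel to the proof of \cite[Prop.~4.2.10]{BC}, transported through the intermediate results we have just established (Lemmas~\ref{deff4}, \ref{calcf4}, \ref{deft1}, \ref{deft2}, \ref{detMomega>0}, together with Lemma~\ref{f4surj} and Lemma~\ref{X2generators}). First I would record the rank count. By Lemma~\ref{nicerepresentative} the complex $F^\bullet = [\ker f_4 \lra W'\oplus W \lra \ZpG z_0]$ represents $M^\bullet(\calL)$, whose Euler characteristic over $\QpG$ is trivial since $M^\bullet(\calL)_{\QpG}$ is acyclic (the maps $\delta_2$ and $f_4$ become isomorphisms onto their images after inverting $p$, because $H^1$ and $H^2$ of $M^\bullet(\calL)$ are finite). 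Counting $\ZpG$-ranks in the two exact rows of Lemma~\ref{deff4} gives $\rk_\ZpG(\ker f_4) = \rk_\ZpG(X(2)\oplus W) = 1 + pm$ (indeed $X(2)$ has $\ZpG$-rank $1$, being the kernel of the surjection $\delta_2|_{W'}\colon W' \to$ an ideal of $\ZpG$ of rank $1$, and $W$ has rank $pm$). So it suffices to show that the $pm+1$ listed elements generate $\ker f_4$ as a $\ZpG$-module; freeness of $\ker f_4$ is automatic once it is generated by $pm+1 = \rk$ elements over $\ZpG$, using that $\ker f_4$ is a submodule of the free module $W'\oplus W$ and is $\ZpG$-projective — here one invokes that $F^\bullet$ is a perfect complex together with the standard argument (as in Corollary~\ref{Pbullet}) that a kernel of a map of projectives sitting in a perfect two-term situation is projective, or more directly that it is reflexive and stably free of the right rank by Swan/Jacobinski.

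Next I would prove the generation statement in two stages. Stage one: show that $t_1, t_2$ together with the $s_{j,k}$ ($0\le j\le p-2$) and $r_k$ ($2\le k\le m$) — equivalently, by the relations, the full families $\{s_{j,k}\}_{0\le j\le p-1}$, $\{r_k\}_{1\le k\le m}$, $t_1$, $t_2$ — generate $\ker f_4$. This is where I would imitate the proof of \cite[Lemma~4.2.7]{BC}: given $x\in\ker f_4 \subseteq W'\oplus W$, first use the diagram of Lemma~\ref{deff4} and the fact that $\tilde f_3$ on the $W'$-part lands in $[\theta_1,1,\dots,1]$ and $[\gamma,\dots,\gamma]$ to reduce, modulo the elements $t_1$ (which kills the ``$W'$-direction'' coming from $(a-1)z_1 - (u^mb-1)z_2$, up to $W_{\ge 1}$) and $t_2$ (which kills $\T_a z_2$ up to $W_{p-1}$), to an element lying in $W$; then use Lemma~\ref{f4surj} (the filtration statement $\fW(W_{\ge j}) = \calF(\p_N)^{j+1}/\calF(\p_N^{p+1})$) to peel off, degree by degree in $j$, contributions killed by the $s_{j,k}$ and $r_k$, exactly as in the $\omega=0$ case of Lemma~\ref{genkerh} which itself followed \cite[Lemma~4.2.7]{BC}. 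Stage two: show that $r_1$ and $s_{p-1,k}$ for $k=1,\dots,m$ are redundant, i.e.\ lie in the $\ZpG$-span of the $pm+1$ listed generators. For $r_1$ this is the identity $r_1 = \T_a t_1 + (u^mb-1)t_2$ proved in the lemma immediately preceding; for the $s_{p-1,k}$ one argues as in \cite[Lemma~4.2.9]{BC}: the $\alpha_i w_{p-1}$-components of $t_2, r_2,\dots,r_m$ are the columns of the matrix $\calM$, whose determinant $(-1)^m (u^m b)^{-\tilde m(m-1)}$ (Lemma~\ref{detMomega>0}) is a unit in $\ZpG$ — it maps under every irreducible character $\chi\phi$ to $\pm (u^m\phi(b)^{-1})^{-\tilde m(m-1)}$, a unit in the ring of integers of the relevant cyclotomic field since $u\in\Zp^\times$ — hence one can solve for each $\alpha_i w_{p-1}$ as a $\ZpG$-combination of $t_2, r_2,\dots,r_m$ modulo lower-degree terms already handled, and thereby express every $s_{p-1,k}$ in terms of the listed elements.

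The main obstacle I anticipate is Stage one: carefully controlling the reduction of a general $x\in\ker f_4$ down through the $W'$-part into $W$ and then through the degree filtration of $W$, keeping track of the twisting factors $u^m$ that (as remarked after Lemma~\ref{deft1}) now decorate every occurrence of $b$ relative to the computation in \cite{BC}. Concretely, the subtlety is that Lemma~\ref{deff4} realizes $\ker f_4$ inside $X(2)\oplus W$ only up to the image $\fFN(\calF(\p_N^{p+1}))$, so one must check that the combinatorial bookkeeping of \cite[Lemmas~4.2.4--4.2.7]{BC} survives the change from the untwisted $(F-1)$ to $(\chinr(F)F - 1) = (u^m b - 1)$ on the nose; I expect this to go through because every place where \cite{BC} used $\theta_1^{a-1} = \gamma^{F_N - 1}$ we now have the twisted identity $\theta_1^{a-1} = \gamma^{\chinr(F_N)F_N - 1}$ from the construction of $\gamma$ (following Lemma~\ref{Neukirch} and Lemma~\ref{BC 3.1.4}), and every place a norm or trace computation modulo $\p_{N_0}^{p+1}$ was used, Lemma~\ref{E lemma} supplies the compensating unit $E$ with $E^\varphi\equiv uE$. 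Once generation is established, freeness and the rank count close the proof; I would then note (as \cite{BC} does) that this basis is precisely what is needed to write down the matrix computing $\chi_{\ZpG,\BdRG}(M^\bullet(\calL),\lambda_2^{-1})$ in the next proposition.
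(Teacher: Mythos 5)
Your proposal is correct and follows essentially the same route as the paper: the paper likewise splits $\ker f_4$ into $(\ker f_4+W)/W=(X(2)+W)/W$, handled by $t_1,t_2$ via Lemma~\ref{X2generators}, and $\ker f_4\cap W$, generated by the $r_k$ and $s_{j,k}$ as in \cite[Lemma~4.2.7]{BC}, then discards $s_{p-1,k}$ (and $r_1$) using the invertibility of $\calM$ and concludes by the rank count as in \cite[Prop.~4.2.10]{BC}. The extra detours you take on freeness (Swan/Jacobinski, perfectness of $F^\bullet$) are unnecessary but harmless, since generation by $pm+1$ elements of a module whose $\QpG$-rank is $pm+1$ already forces it to be free.
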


\begin{proof}
We first note that $s_{p-1,k} \in \langle t_2, r_2, \ldots, r_m \rangle_\ZpG$ which follows as in the proof of \cite[Lemma~4.2.9]{BC}. The main input here is the fact that the matrix $\calM$ is invertible. 

Moreover, by Lemma \ref{X2generators}, $t_1$ and $t_2$ generate $(X(2)+W)/W$, which therefore must coincide with $(\ker f_4+W)/W$, and the same proof as for \cite[Lemma~4.2.7]{BC} shows that the elements $r_k$ and $s_{j,k}$, $0 \le j \le p-1, 1 \le k \le m$, generate $\ker(f_4) \cap W$. We can thus argue as in the proof of \cite[Prop.~4.2.10]{BC}.
\end{proof}

By Lemma \ref{nicerepresentative} we have the equality 
\[\chi_{\ZpG, \BdR[G]}(M^\bullet(\calL), 0) = \chi_{\ZpG, \BdR[G]}(F^\bullet, 0).\]
Since all cohomology groups are torsion we have a short exact sequence of $\QpG$-spaces
\[0 \lra \left( \ker f_4 \right)_\QpG \stackrel{\iota}\lra \left( W' \oplus W \right)_\QpG  \stackrel{\delta_2}\lra \QpG z_0 \lra 0,\]
where $\iota$ denotes the inclusion. By the definition of the (old) refined Euler characteristic we have
\[\chiold_{\ZpG, \BdR[G]} (F^\bullet[1], 0) = [\ker f_4 \oplus \ZpG z_0, \iota \oplus \sigma, W'\oplus W],\]
where $\sigma$ is a $\QpG$-equivariant splitting of $\delta_2$. By (\ref{old and new}) we obtain
\[\chi_{\ZpG, \BdR[G]} (F^\bullet[1], 0) = 
-\chiold_{\ZpG, \BdR[G]} (F^\bullet[1], 0) - \partial_{\ZpG, \BdR[G]}^1(B^\od(F^\bullet[1]_{\BdR[G]}), -\id).\]
Note that $F^\bullet[1]$ has non-trivial cohomology only in degrees $0$ and $1$ and both cohomology groups are torsion. By \cite[Lemma~6.3]{BrBuAdditivity} we therefore obtain
\[\partial_{\ZpG, \BdR[G]}^1(B^\od(F^\bullet[1]_{\BdR[G]}), -\id) =
\partial_{\ZpG, \BdR[G]}^1(H^{1}(F^\bullet[1])_{\BdR[G]}, -\id) = 0.\]
Furthermore, we recall that $\chi_{\ZpG, \BdR[G]} (F^\bullet, 0) =-\chi_{\ZpG, \BdR[G]} (F^\bullet[1], 0)$.
In conclusion, we have derived the equality
\[\chi_{\ZpG, \BdR[G]} (F^\bullet, 0) = [\ker f_4 \oplus \ZpG z_0, \iota \oplus \sigma, W'\oplus W].\]

Taking the $\ZpG$-basis of $\ker f_4$ determined in Lemma \ref{pm+1generators} and the obvious $\ZpG$-basis of $W'\oplus W$ we can represent the inclusion $\iota$ by the following matrix:
\[\mm=\begin{pmatrix}
a-1&0&0&0&0&\cdots&0&0\\
 1-u^mb&\T_a&0&0&0&\cdots&0&0\\
v&0&\T_a\tilde I&(a-1)I&0&\cdots&0&0\\
*&0&0&-I&(a-1)I&\cdots&0&0\\
*&0&0&*&-I&\cdots&0&0\\
\vdots&\vdots&\vdots&\vdots&\vdots&\ddots&\vdots&\vdots\\
*&0&0&*&*&\cdots&-I&(a-1)I\\
*&\calM_1&\tilde \calM&*&*&\cdots&*&-I
\end{pmatrix},\]
where $\tilde\calM$ is the matrix $\calM$ without the first column $\calM_1$ (which in the case $m>1$ is just $-e_2$), $\tilde I$ is defined analogously from the identity matrix $I$ and $v$ is a vector whose first component is $(u^mb)^{\tilde m}$. Note that this looks very similar to the corresponding matrix in \cite{BC}, up to some twists by powers of $u$. Next we need to choose a $\sigma$. We set
\[\sigma(z_0)=\frac{1}{u^{dm}-1} \left( \sum_{i=0}^{d-1}(u^{m}b)^i \right)z_1,\]
and easily check that $\delta_2\circ\sigma=\id$.

By the above discussion the refined Euler characteristic $\chi_{\ZpG, \BdR[G]}(M^\bullet(\calL), 0)$ is represented by the determinant of the matrix $(w,\mm) \in \mathrm{Gl}_{mp+2}(\QpG)$ which represents the $\QpG$-equivariant map $\iota + \sigma$ with respect to the chosen $\ZpG$-basis. Here $w$ is the column vector
\[w=\left(\frac{\sum_{i=0}^{d-1}(u^{m}b)^i}{u^{dm}-1}, 0, \ldots, 0\right)^t.\]
We get
\[(w,\mm)=
\begin{pmatrix}
\frac{\sum_{i=0}^{d-1}(u^{m}b)^i}{u^{dm}-1}&a-1&0&0&0&0&\cdots&0&0\\
0& \!\!\!1-u^mb\!\!\!&\T_a&0&0&0&\cdots&0&0\\
0&v&0&\T_a\tilde I&(a-1)I&0&\cdots&0&0\\
0&*&0&0&-I&\!\!(a-1)I\!\!&\cdots&0&0\\
0&*&0&0&*&-I&\cdots&0&0\\
\vdots&\vdots&\vdots&\vdots&\vdots&\vdots&\ddots&\vdots&\vdots\\
0&*&0&0&*&*&\cdots&-I&(a-1)I\\
0&*&\calM_1&\tilde \calM&*&*&\cdots&*&-I
\end{pmatrix}.
\]
We recall that every irreducible character $\psi$ of $G = \Gal(N/K)$ decomposes as $\psi = \phi\chi$ where $\chi$ is an irreducible character of $\langle a \rangle$ and $\phi$ an irreducible character of $\langle b \rangle$. We first compute
\[\chi\phi\left( \frac{\sum_{i=0}^{d-1}(u^{m}b)^i}{u^{dm}-1} \right)=
\frac{u^{dm}-1}{(u^m\chi\phi(b)-1)(u^{dm}-1)}=\frac{1}{u^m\chi\phi(b)-1},\]
and note that $u^m\chi\phi(b) \ne 1$ because otherwise we would have $\chinr(F_N)=u^{dm}=1$. For the computation of the determinant we distinguish two cases.

\medskip

\noindent
{\bf Case 1:} $\chi=1$.\\
Here $ (\chi\phi)((w,\mm))=\phi((w,\mm))$ is of the form
\[\begin{pmatrix}
\frac{1}{u^m\phi(b)-1}&0&0&0&0&0&\cdots&0&0\\
0&1-u^m\phi(b)&p&0&0&0&\cdots&0&0\\
0&\phi(v)&0&p\tilde I&0&0&\cdots&0&0\\
0&*&0&0&-I&0&\cdots&0&0\\
0&*&0&0&*&-I&\cdots&0&0\\
\vdots&\vdots&\vdots&\vdots&\vdots&\vdots&\ddots&\vdots&\vdots\\
0&*&0&0&*&*&\cdots&-I&0\\
0&*&\phi(\calM_1)&\phi(\tilde\calM)&*&*&\cdots&*&-I
\end{pmatrix},\]
where we recall that the first component of the vector $v$ is $(u^mb)^{\tilde m}$. The determinant is
\[\frac{-u^{m\tilde m}\phi(b)^{\tilde m}p^m(-1)^{m(p-1)}}{u^m\phi(b)-1}=\frac{-u^{m\tilde m}\phi(b)^{\tilde m}p^m}{u^m\phi(b)-1}.\]

\medskip

\noindent
{\bf Case 2:} $\chi\neq 1$.\\
The matrix $ (\chi\phi)((w,\mm))$ is here given by
\[\begin{pmatrix}
\frac{1}{u^m\phi(b)-1}&\chi(a)-1&0&0&0&\cdots&0&0\\
0& \!1-u^m\phi(b)\!&0&0&0&\cdots&0&0\\
0&\chi\phi(v)&0&(\chi(a)-1)I&0&\cdots&0&0\\
0&*&0&-I&(\chi(a)-1)I&\cdots&0&0\\
0&*&0&*&-I&\cdots&0&0\\
\vdots&\vdots&\vdots&\vdots&\vdots&\ddots&\vdots&\vdots\\
0&*&0&*&*&\cdots&-I&(\chi(a)-1)I\\
0&*&\!\!\!\chi\phi(\calM)\!\!\!&*&*&\cdots&*&-I
\end{pmatrix}.\]
Using Lemma \ref{detMomega>0} we compute for the determinant
\[\begin{split}
&\frac{1-u^m\phi(b)}{u^m\phi(b)-1}(-1)^{(p-1)m^2}\det(\chi\phi(\calM))(\chi(a)-1)^{m(p-1)}\\
&\qquad\qquad=(-1)^{m-1}(u^m\phi(b))^{-\tilde m(m-1)}(\chi(a)-1)^{m(p-1)}.
\end{split}\]

We summarize the above discussion in

\begin{prop}\label{prop 118}
Assume that $\omega > 0$. For $\calL = \p_N^{p+1}$ the element 
\[\chi_{\ZpG, \BdR[G]}(M^\bullet(\calL), \lambda_2^{-1}) \in K_0(\ZpG, \BdR[G])\]
is contained in $K_0(\ZpG, \QpG)$ and represented by
$\epsilon \in \QpG^\times$ where
\[\epsilon_{\chi\phi}^{}=
\begin{cases}\frac{-u^{m\tilde m}\phi(b)^{\tilde m}p^m}{u^m\phi(b)-1}&\text{if $\chi=\chi_0$}\\
(-1)^{m-1}(u^m\phi(b))^{-\tilde m(m-1)}(\chi(a)-1)^{m(p-1)}&\text{if $\chi\neq\chi_0$.}
\end{cases}\]
\end{prop}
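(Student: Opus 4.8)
\textbf{Proof plan for Proposition \ref{prop 118}.}
The strategy is to follow the same pattern as in the case $\omega=0$ (Proposition \ref{prop 117}), but now using the representative $F^\bullet$ of $M^\bullet(\calL)$ constructed in Lemma \ref{nicerepresentative} and the explicit $\ZpG$-basis of $\ker f_4$ provided by Lemma \ref{pm+1generators}. First I would recall, as derived in the discussion preceding the statement, that because all cohomology groups of $F^\bullet$ are finite we have the short exact sequence of $\QpG$-modules
\[0 \lra (\ker f_4)_\QpG \stackrel{\iota}\lra (W'\oplus W)_\QpG \stackrel{\delta_2}\lra \QpG z_0 \lra 0,\]
that the boundary correction term $\partial^1_{\ZpG,\BdRG}(B^\od(F^\bullet[1]_{\BdRG}),-\id)$ vanishes by \cite[Lemma~6.3]{BrBuAdditivity} (both nonzero cohomology groups being torsion), and hence
\[\chi_{\ZpG,\BdRG}(M^\bullet(\calL),\lambda_2^{-1}) = [\ker f_4 \oplus \ZpG z_0,\ \iota\oplus\sigma,\ W'\oplus W]\]
for any $\QpG$-splitting $\sigma$ of $\delta_2$. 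In particular this already shows the class lies in $K_0(\ZpG,\QpG)$, since $\iota\oplus\sigma$ is defined over $\QpG$.

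Next I would fix the $\ZpG$-basis $t_1,t_2,r_2,\dots,r_m,\ s_{j,k}$ ($0\le j\le p-2$, $1\le k\le m$) of $\ker f_4$ from Lemma \ref{pm+1generators}, the evident $\ZpG$-basis of $W'\oplus W = \ZpG z_1\oplus\ZpG z_2 \oplus\bigoplus_{j,k}\ZpG v_{k,j}$, and the splitting $\sigma(z_0) = \frac{1}{u^{dm}-1}\bigl(\sum_{i=0}^{d-1}(u^mb)^i\bigr)z_1$, checking $\delta_2\circ\sigma=\id$ by direct substitution into the formulas for $\delta_2$ in Lemma \ref{deff4}. With these choices the map $\iota+\sigma$ is represented by the explicit matrix $(w,\mm)\in\mathrm{Gl}_{mp+2}(\QpG)$ written out above, whose entries come from the $z_1$-, $z_2$- and $v_{k,j}$-components of $t_1,t_2,r_k,s_{j,k}$ (computed in Lemmas \ref{deft1}, \ref{deft2} and the displayed matrix $\calM$) together with the single extra column $w$. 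The reduced norm/determinant of this class is then computed componentwise: for each irreducible character $\psi=\chi\phi$ of $G=\langle a\rangle\times\langle b\rangle$ one evaluates $\det(\chi\phi((w,\mm)))$. I would split into Case 1, $\chi=\chi_0$ trivial — where $\chi\phi(\T_a)=p$, $\chi\phi(a-1)=0$, and a cofactor expansion along the block structure leaves the $3\times 3$ corner giving $\dfrac{-u^{m\tilde m}\phi(b)^{\tilde m}p^m}{u^m\phi(b)-1}$ (using $\chi\phi(\sum_i (u^mb)^i/(u^{dm}-1)) = (u^m\phi(b)-1)^{-1}$ and that the first entry of $v$ is $(u^mb)^{\tilde m}$) — and Case 2, $\chi\ne\chi_0$, where $\chi\phi(\T_a)=0$, the factor $1-u^m\phi(b)$ cancels against $\dfrac{1}{u^m\phi(b)-1}$ up to sign, the lower-triangular blocks contribute $(\chi(a)-1)^{m(p-1)}$, and $\det(\chi\phi(\calM)) = (-1)^m(u^m\phi(b))^{-\tilde m(m-1)}$ by Lemma \ref{detMomega>0}, yielding $(-1)^{m-1}(u^m\phi(b))^{-\tilde m(m-1)}(\chi(a)-1)^{m(p-1)}$. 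Throughout one uses $p$ odd to absorb signs of the form $(-1)^{m(p-1)}$ and $(-1)^{(p-1)m^2}$, and the hypothesis $\omega>0$ (equivalently $u^{dm}=\chinr(F_N)\ne 1$... rather, here $\omega>0$ merely says $\chinr(F_N)\equiv 1\bmod p$, and the relevant non-vanishing $u^m\phi(b)\ne 1$ follows since $u^m\phi(b)=1$ would force $u^{dm}=1$).

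The main obstacle I anticipate is not any single conceptual point but the bookkeeping: verifying that the matrix $(w,\mm)$ genuinely represents $\iota\oplus\sigma$ with respect to the chosen bases — i.e.\ correctly reading off from Lemmas \ref{deft1} and \ref{deft2} (and the defining relations for $r_k$, $s_{j,k}$) which powers of $u$ and which elements $b^i$, $(u^mb)^i$ appear in each component, since here every occurrence of $b$ in the $\omega=0$ computation of \cite{BC} gets twisted by a power of $u$. Once the matrix is correctly assembled, the determinant computation in each of the two cases is a routine cofactor expansion exploiting the block-triangular shape, essentially identical to the corresponding step in \cite[Prop.~4.2.10]{BC} and Proposition \ref{prop 117}. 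I would therefore present the assembly of $(w,\mm)$ carefully and then simply record the two determinant evaluations, citing Lemma \ref{detMomega>0} for $\det\calM$ and the arguments of \cite{BC} for the reduction to the block corner.
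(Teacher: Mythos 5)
Your plan is correct and follows essentially the same route as the paper: reduce via Lemma \ref{nicerepresentative} and the old/new Euler characteristic comparison (with the vanishing boundary term) to the triple $[\ker f_4 \oplus \ZpG z_0,\ \iota\oplus\sigma,\ W'\oplus W]$, assemble the matrix $(w,\mm)$ from the basis of Lemma \ref{pm+1generators} and the splitting $\sigma(z_0)=\frac{1}{u^{dm}-1}\bigl(\sum_{i=0}^{d-1}(u^mb)^i\bigr)z_1$, and evaluate the determinant character by character using Lemma \ref{detMomega>0}. The two case distinctions ($\chi=\chi_0$ versus $\chi\neq\chi_0$) and the resulting formulas agree with the paper's computation.
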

\end{subsection}
\end{section}

\begin{section}{Proof of Theorem \ref{main theorem}}\label{proof of theorem}
We have to show that the element
\[\tilde R_{N/K} = C_{N/K} + \Ucris + \partial_{\ZpG, \BdR[G]}^1(t)  +  \partial_{\ZpG, \BdR[G]}^1(\epsilon_D(N/K, V))\in K_0(\ZpG, \tilde\Omega)\]
becomes trivial in $K_0(\tilde\Lambda, \tilde\Omega)$. By (\ref{CNK44}) we get
\[\tilde R_{N/K} = \Ucris+\partial_{\ZpG, \BdR[G]}^1(\theta) +  \partial_{\ZpG, \BdR[G]}^1(\epsilon_D(N/K, V)) - \chi_{\ZpG,\BdR[G]}(M^\bullet(\calL), \lambda_2^{-1}).\]

By Proposition \ref{eps coroll} together with Lemma \ref{yal 3} the element 
\[
\left( u^{-mm_{\chi\phi}} \cdot \tau_\Qp(\Ind_{K/\Qp}\chi\phi)^{-1} \right)_{\chi\phi}
\]
with
\[m_{\chi\phi} =
\begin{cases}
0 & \text{if $\chi = \chi_0$} \\ 2 & \text{if $\chi \ne \chi_0$}
\end{cases}\]
is a representative of $\partial_{\ZpG, \BdR[G]}^1(\epsilon_D(N/K, V))$. We note that by the proof of \cite[Prop.~5.2.1]{BC} we obtain  $\tau_\Qp(\Ind_{K/\Qp}\chi\phi) = \tau_K(\chi\phi)$. Since we intend  to apply \cite[Prop.~5.1.5]{BC} we will use the integral normal basis generator $p^2 \alpha_M\theta_2$ of $\calL = \p_N^{p+1}$ which we constructed in \cite[Sec.~5.1]{BC}. Then Proposition~5.2.1 of loc.cit. implies that $\partial_{\ZpG, \BdR[G]}(\theta) +  \partial_{\ZpG, \BdR[G]}(\epsilon_D(N/K, V))$ is represented by
\[\frac{\theta_\chi u^{-mm_{\chi\phi}}}{\tau_K(\chi\phi)}=
\begin{cases}
\frd_K^{1/2} \calN_{K/\Qp}(\theta_2|\phi) p^{2m} & \text{if $\chi = \chi_0$} \\
\frd_K^{1/2} \calN_{K/\Qp}(\theta_2|\phi) p^{m} \chi(4)\phi(b^{-2}) u^{-2m} & \text{if $\chi \ne \chi_0$.}
\end{cases}\]

We want to point out that this is a crucial step in the proof and relies on one of the main results of \cite{PickettVinatier} which was basic for the proof of \cite[Prop.~5.1.5]{BC}. Note that this is the inverse of the result in \cite[Prop.~5.2.1]{BC} after substituting $\phi(b)$ with $u\phi(b)$ everywhere.

If $\chinr|_{G_N}=1$ then the calculation of $\chi_{\ZpG,\BdR[G]}(M^\bullet(\calL), \lambda_2^{-1})$ corresponds to the calculation of $E(\exp(\calL))_p$ in \cite{BC}: the representative of the local fundamental class has to be twisted by $\chinr$, then starting from \cite[Lemma~4.1.2]{BC} one has to substitute $b$ with $u^mb$ everywhere. Since also $\Ucris$ corresponds to $M_{N/K}$, just substituting $b$ with $u^mb$, the same calculations of \cite[Sec.~6]{BC} can be used to prove Theorem \ref{main theorem} in the case $\chinr|_{G_N}=1$. So the interesting case is when $\chinr|_{G_N}\neq 1$, which we will assume from now on.

By Proposition \ref{Ucris prop} the element $\Ucris$ is represented by $\ucris \in \QpG^\times$ where
\[\ucris_{,\chi\phi} =
\begin{cases}
\frac{1 - p^{-m}u^{-m} \phi(b)^{-1}}{1 - u^m \phi(b)} & \text{if $\chi = \chi_0$} \\
1& \text{if $\chi \ne \chi_0$.} 
\end{cases}\]

We first deal with the case $\omega = 0$. Inserting the result of Proposition \ref{prop 117} we obtain that $\tilde R_{N/K}$ is 
represented by $\tilde r \in \tilde \Omega$ where
\[\begin{split}
\tilde r_{\chi\phi}&=
\begin{cases}\frac{\frd_K^{1/2}\norm_{K/\Qp}(\theta_2|\phi)p^{2m}(1-p^{-m}u^{-m}\phi(b^{-1}))}{p^m(1-u^m\phi(b))}&\text{if $\chi=\chi_0$}\\
\frac{\frd_K^{1/2}\norm_{K/\Qp}(\theta_2|\phi)p^m\chi(4)\phi(b)^{-2} u^{-2m}}{(-1)^{m-1}(u^{-m}\phi(b)^{-1}-1)(\chi(a)-1)^{m(p-1)}}&\text{if $\chi\neq\chi_0$}\end{cases}\\
&=\begin{cases}\frd_K^{1/2}\norm_{K/\Qp}(\theta_2|\phi)u^{-m}\phi(b^{-1})\frac{(p^m u^m\phi(b)-1)}{(1-u^m\phi(b))}&\text{if $\chi=\chi_0$}\\
-\frd_K^{1/2}\norm_{K/\Qp}(\theta_2|\phi)\left(\frac{-p}{(\chi(a)-1)^{p-1}}\right)^m\frac{\chi(4)\phi(b)^{-2}u^{-2m}}{1-u^m\phi(b)}u^m\phi(b)&\text{if $\chi\neq\chi_0$}\end{cases}\\
&=\begin{cases}-\frac{\frd_K^{1/2}\norm_{K/\Qp}(\theta_2|\phi)}{1-u^m\phi(b)}u^{-m}\phi(b)^{-1}(1-p^mu^{m}\phi(b))&\text{if $\chi=\chi_0$}\\
-\frac{\frd_K^{1/2}\norm_{K/\Qp}(\theta_2|\phi)}{1-u^m\phi(b)}u^{-m}\phi(b)^{-1}\left(\frac{-p}{(\chi(a)-1)^{p-1}}\right)^m\chi(4)&\text{if $\chi\neq\chi_0$.}\end{cases}
\end{split}\]

Similarly to \cite[Proof of Theorem~1]{BC}, we define $W_{\theta_2}\in\calO_p^t[G]\in\overline{\Zpnr}[G]^\times$ by
\[\chi\phi(W_{\theta_2})=\frd_K^{1/2}\norm_{K/\Qp}(\theta_2|\phi).\]

By a straightforward computation (see \cite[Lemma~3.2.3]{BC}) there exists a unit $\tilde u\in\Z_p[a]$ such that $p(1-e_a)=(a-1)^{p-1}\tilde u(1-e_a)$ and the augmentation of $\tilde u$ is $(p-1)!\equiv -1\pmod{p}$. Evaluating at $\chi\neq\chi_0$ we get $p=(\chi(a)-1)^{p-1}\chi(\tilde u)$.

Then by the above calculations we have
\[\tilde r=-\frac{W_{\theta_2}u^{-m}b^{-1}}{1-u^mb}\left((1-p^mu^{m}b)e_a+(-\tilde u)^m \sigma_4 (1-e_a)\right).\]
By \cite[Sec.I, Prop.~4.3]{Froehlich83} the element $W_{\theta_2}$ is a unit. Furthermore, recalling that $v_p(1-u^{md_{N/K}})=\omega=0$,
\[\frac{1}{1-u^mb}=\frac{1+u^mb+\dots+u^{m(d_{N/K}-1)}b^{d_{N/K}-1}}{1-u^{md_{N/K}}}\in\overline{\Zpnr}[G].\]
Since its inverse $1-u^mb$ is also in $\overline{\Zpnr}[G]$, it is a unit. So we just need to consider
\[\tilde{\tilde r}=(1-p^m u^m b)e_a+(-\tilde u)^m \sigma_4 (1-e_a).\]
We first show that $\tilde{\tilde r}$ is contained in $\overline{\Zpnr}[G]$, which is true if and only if 
\[e_a+(-\tilde u)^m \sigma_4 (1-e_a)\]
is in $\overline{\Zpnr}[G]$. This is equivalent to
\[1\equiv(-\chi(\tilde u))^m \chi(\sigma_4) \pmod{1-\zeta_p}\]
for any non-trivial character $\chi$, which is straightforward since $\chi(\tilde u)\equiv -1\pmod{1-\zeta_p}$ and $\chi(\sigma_4)\equiv 1\pmod{1-\zeta_p}$. Note that in the present situation this is not enough to conclude the proof of our theorem as it was in \cite{BC}, since the conjecture we are considering is not known to be true over maximal orders. Therefore we need to prove also that $\tilde{\tilde r}^{-1}$ has integral coefficients. Note that
\[\frac{1}{1-p^m u^m\phi(b)}\\
=\frac{1+p^m u^m\phi(b)+\dots+(p^m u^m\phi(b))^{d_{N/K}-1}}{1-(p^mu^m)^{d_{N/K}}}.\]
Therefore we have
\[\tilde{\tilde r}^{-1}=\frac{1+p^m u^m b+\dots+(p^m u^m b)^{d_{N/K}-1}}{1-(p^m u^m)^{d_{N/K}}}e_a+(-\tilde u)^{-m}\sigma_4^{-1}(1-e_a).\]
The integrality of this term is obviously equivalent to the integrality of
\[\frac{1}{1-(p^m u^m)^{d_{N/K}}}e_a+(-\tilde u)^{-m}\sigma_4^{-1}(1-e_a),\]
which, in turn, is equivalent to the congruence
\[\frac{1}{1-(p^m u^m)^{d_{N/K}}}\equiv (-\chi(\tilde u))^{-m}\chi(\sigma_4)^{-1}\pmod{1-\zeta_p}.\]
This is true since both sides are congruent to $1$. This concludes our proof in the case $\omega=0$.

In the case $\omega > 0$ we insert the result of Proposition \ref{prop 118} in the formula for $\tilde r$ and, with the same notation as in the case $\omega=0$, we obtain
\[\begin{split}
\tilde r_{\chi\phi}&=
\begin{cases}\frac{\frd_K^{1/2}\norm_{K/\Qp}(\theta_2|\phi)p^{2m}(u^m\phi(b)-1)(1-p^{-m}u^{-m}\phi(b^{-1}))}{-u^{m\tilde m}\phi(b)^{\tilde m}p^m(1-u^m\phi(b))}&\text{if $\chi=\chi_0$}\\
\frac{\frd_K^{1/2}\norm_{K/\Qp}(\theta_2|\phi)p^m\chi(4)\phi(b)^{-2} u^{-2m}}{(-1)^{m-1}(u^m\phi(b))^{-\tilde m(m-1)}(\chi(a)-1)^{m(p-1)}}&\text{if $\chi\neq\chi_0$}\end{cases}\\
&=\begin{cases}\frd_K^{1/2}\norm_{K/\Qp}(\theta_2|\phi)u^{-m}\phi(b^{-1})\frac{(p^mu^m\phi(b)-1)}{u^{m\tilde m}\phi(b)^{\tilde m}}&\text{if $\chi=\chi_0$}\\
-\frd_K^{1/2}\norm_{K/\Qp}(\theta_2|\phi)\left(\frac{-p}{(\chi(a)-1)^{p-1}}\right)^m\frac{\chi(4)\phi(b)^{-2}u^{-2m}}{(u^m\phi(b))^{-\tilde m(m-1)}}&\text{if $\chi\neq\chi_0$}\end{cases}\\
&=\begin{cases}-\chi\phi(W_{\theta_2})\phi(b)^{-1-\tilde m}u^{-m-m\tilde m}(1-p^m u^{m}\phi(b))&\text{if $\chi=\chi_0$}\\
-\chi\phi(W_{\theta_2})\phi(b)^{-1-\tilde m}u^{-m-m\tilde m}\left(\frac{-p}{(\chi(a)-1)^{p-1}}\right)^m\chi(4)u^{m^2\tilde m-m}&\text{if $\chi\neq\chi_0$.}
\end{cases}
\end{split}\]

Hence we have
\[\tilde r=W_{\theta_2}b^{1-\tilde m}u^{-m-m\tilde m}\left((1-p^m u^{m}b)e_a+(-\tilde u)^m\sigma_4u^{m^2\tilde m-m}(1-e_a)\right).\]
As above we have to prove that
\[\tilde{\tilde r}=(1-p^m u^{m}b)e_a+(-\tilde u)^m\sigma_4u^{m^2\tilde m-m}(1-e_a)\]
is a unit. Note that this is the same as the element we got in the case $\omega>0$, up to the factor $u^{m^2\tilde m-m}$ in the $1-e_a$-component. Since we are in the case $\omega>0$ we have $u^{dm} \equiv 1 \pmod{1 - \zeta_p}$. Furthermore, since $m\tilde m \equiv 1 \pmod{d}$ we derive $u^{m^2\tilde m} \equiv u^m \pmod{1-\zeta_p}$, i.e. $u^{m^2\tilde m-m} \equiv 1\pmod{1-\zeta_p}$. Using this fact, the proof for $\omega=0$ is given by the same calculations as for $\omega>0$.
\qed
\end{section}

\appendix
\begin{section}{\texorpdfstring{A more detailed computation of $\Ucris$}{A more detailed computation of Ucris}}
The aim of this appendix is to give some more details for the computation of $\Ucris$. By (\ref{Ucriskappabeta}) we need to compare $\beta'$ and $\kappa$. In the definition of both trivialisations one starts with the same isomorphism
\[\begin{split}\DEPV&=[R\Gamma(N,V)]\otimes [\Ind_{N/\Qp}(V)]\\
&\qquad\lra [H^1(N,V)]^{-1}\otimes [H^2(N,V)]\otimes [\Ind_{N/\Qp}(V)].\end{split}\]
We set
\[\begin{split}&A=[\Dcris^N(V)]\otimes [\Dcris^N(V)]^{-1},\\
&B=[H^1(N,V)]^{-1}\otimes [t_V(N)]\otimes [H^1(N/V)/H^1_f(N,V)],\\
&C=[H^1(N/V)/H^1_f(N,V)]^{-1}\otimes [\Dcris^N(V^*(1))]\otimes[\Dcris^N(V^*(1))]^{-1}\otimes [H^2(N,V)],\\
&D=[t_V(N)^{-1}]\otimes [\DdR^N(V)],\\
&E=[\DdR^N(V)]^{-1}\otimes [\Ind_{N/\Qp}(V)].
\end{split}\]
Then there is a natural isomorphism
\[[H^1(N,V)]^{-1}\otimes [H^2(N,V)]\otimes [\Ind_{N/\Qp}(V)]\to A\otimes B\otimes C\otimes D\otimes E,\]
where we used the defining property of right inverses and the commutativity constraint to identify $[t_V(N)]^{-1}\otimes [H^2(N,V)]$ with $[H^2(N,V)]\otimes [t_V(N)]^{-1}$. Up to now all the calculations are in common for $\beta'$ and $\kappa$. Now we can compare the two trivialisations term by term:
\begin{enumerate}
\item[$A\to 1$:] This is determined by $1-\phi$ in $\beta'$ and by $1$ in $\kappa$.
\item[$B\to 1$:] In both cases this isomorphism comes from the exponential map $\exp_V$, see for example the exact sequence (\ref{degenerate 4}).
\item[$C\to 1$:] For $\beta'$ we use (\ref{degenerate 3}), while for $\kappa$ we first identify $C$ with $[H^1(N/V)/H^1_f(N,V)]^{-1}\otimes [H^2(N,V)]$ and then consider the valuation map $\nu_N$.
\item[$D\to 1$:] This comes from (\ref{tangent ses}), recalling Lemma \ref{degenerate}.
\item[$E\to 1$:] This comes from the $\comp_V$ map, for both $\beta'$ and $\kappa$, but in the definition of $\beta'$ there also appears a multiplication by $t^{t_H(V)}=t$.
\end{enumerate}
So the difference between $\beta'$ and $\kappa$ lies in $A\to 1$ and $C\to 1$ and in the multiplication by $t$ in $E\to 1$ and leads to (\ref{yar 4}) when $\chinr|_{G_N}\neq 1$ and (\ref{yar 5}) when $\chinr|_{G_N}=1$.

\end{section}

\end{document}